\documentclass[11pt,reqno]{amsart}
\usepackage{amsmath}
\usepackage{amsthm}
\usepackage{amssymb}
\usepackage{graphics}
\usepackage{bm}
\usepackage{setspace}
\usepackage{fullpage}

\usepackage{amsfonts}
\usepackage{latexsym}
\usepackage{mathdots}
\usepackage{mathrsfs}
\usepackage{enumitem}
\usepackage{eucal}
\usepackage{url}
\usepackage{color}




\newcommand{\be}{\begin{equation}}
\newcommand{\ee}{\end{equation}}
\newcommand{\ba}{\begin{eqnarray}}
\newcommand{\ea}{\end{eqnarray}}
\newcommand{\bi}{\begin{itemize}}
\newcommand{\ei}{\end{itemize}}
\newcommand{\bn}{\begin{enumerate}}
\newcommand{\en}{\end{enumerate}}
\newcommand{\bbm}{\begin{bmatrix}}
\newcommand{\ebm}{\end{bmatrix}}
\newcommand{\bp}{\begin{proof}}
\newcommand{\ep}{\end{proof}}
\newcommand{\nn}{\nonumber}


\newcommand{\mr}{\ensuremath{\mathrm}}

\newcommand{\mbf}{\ensuremath{\mathbf}}
\newcommand{\mc}{\ensuremath{\mathcal}}

\newcommand{\ov}{\ensuremath{\overline}}
\newcommand{\sm}{\ensuremath{\setminus}}
\newcommand{\wt}{\ensuremath{\widetilde}}


\newcommand{\Ga}{\ensuremath{\Gamma}}
\newcommand{\ga}{\ensuremath{\gamma}}
\newcommand{\Om}{\ensuremath{\Omega}}

\newcommand{\la}{\ensuremath{\lambda }}
\newcommand{\om}{\ensuremath{\omega}}


\def\C{\mathbb{C}}
\def\R{\mathbb{R}}
\def\D{\mathbb{D}}
\def\T{\mathbb{T}}
\def\Z{\mathbb{Z}}
\def\N{\mathbb{N}}

\renewcommand{\H}{\ensuremath{\mathcal{H} }}

\newcommand{\K}{\ensuremath{\mathcal{K} }}
\renewcommand{\L}{\ensuremath{\mathscr{L} }}
\newcommand{\F}{\ensuremath{\mathbb{F} }}

\newcommand{\intfty}{\ensuremath{\int _{-\infty} ^{\infty}} }


\newcommand{\ip}[2]{\ensuremath{\langle {#1} , {#2} \rangle}}

\newcommand{\dom}[1]{\ensuremath{\mathrm{Dom} ({#1}) }}
\renewcommand{\dim}[1]{\ensuremath{\mathrm{dim} \left( {#1} \right) }}
\newcommand{\ran}[1]{\ensuremath{\mathrm{Ran} \left( {#1} \right) }}

\renewcommand{\ker}[1]{\ensuremath{\mathrm{Ker} ({#1}) }}

\newcommand{\im}[1]{\ensuremath{\mathrm{Im} \left( {#1} \right) }}


\numberwithin{subsection}{section}

\newtheorem{thm}[subsection]{Theorem}
\newtheorem*{thm*}{Theorem}

\newtheorem{lemma}[subsection]{Lemma}
\newtheorem{prop}[subsection]{Proposition}
\newtheorem{cor}[subsection]{Corollary}

\theoremstyle{definition}
\newtheorem{defn}[subsection]{Definition}
\newtheorem{remark}[subsection]{Remark}
\newtheorem{eg}[subsection]{Example}








\title[Time-varying bandlimit]{Function spaces obeying a time-varying bandlimit}

\author{R.T.W. Martin}
\address{University of Cape Town}
\email{rtwmartin@gmail.com}

\author{A. Kempf}
\address{University of Waterloo}
\email{akempf@uwaterloo.ca}

\begin{document}

\bibliographystyle{unsrt}
\maketitle
\onehalfspace

\begin{abstract}
    Motivated by applications to signal processing and mathematical physics, recent work on the concept of time-varying bandwidth has produced a class of function spaces which generalize the Paley-Wiener spaces of bandlimited functions: any regular simple symmetric linear transformation with deficiency indices $(1,1)$ is naturally represented as multiplication by the independent variable in one of these spaces. We explicitly demonstrate the equivalence of this model for such linear transformations to several other functional models based on the theories of meromorphic model spaces of Hardy space and purely atomic Herglotz measures on the real line, respectively. This theory provides a precise notion of a time-varying or local bandwidth, and we describe how it may be applied to construct signal processing techniques that are adapted to signals obeying a time-varying bandlimit.

\vspace{5mm}   \noindent {\it Key words and phrases}: time-varying bandlimit, local bandwidth, sampling theory and signal processing, symmetric operators, self-adjoint extensions, Hardy spaces, deBranges spaces, reproducing kernel Hilbert spaces.

\vspace{3mm}
\noindent {\it 2010 Mathematics Subject Classification} ---47B32 Operators on reproducing kernel Hilbert spaces; 46E22 Hilbert spaces with reproducing kernels; 47B25 Symmetric and selfadjoint operators (unbounded);
\end{abstract}

\section{Introduction}


Information theory distinguishes between continuous and discrete forms of information; \emph{e.g.} music signals on one hand and discrete sets of symbols on the other. The crucial bridge between continuous and discrete is provided by Shannon sampling theory and its generalizations \cite{Shan,Jerri1977,Ben2012,Marks2012}. For example, a raw, continuous audio signal, $f_{raw} (t)$, is a pressure-valued function of time. In order to record all the information required to reconstruct $f_{raw}$ perfectly, it would appear to be necessary to record its amplitudes at all (uncountably many) points in time, and this is unfeasible. In signal processing, this problem is overcome by applying the fact that the average human ear is incapable of sensing frequencies above $22$kHz \cite[pg.163]{Rosen2011}. It follows that the audio signal $f_{raw}$ can be low pass filtered accordingly to obtain a signal $f(t)$ which contains no frequencies greater in magnitude than $A=22$kHz.  The filtered signal is said to be $A-$bandlimited, the positive number $A$ is called the bandlimit, and the subspace $B(A):= \mc{F} ^{-1} L^2 [-A, A] \subset L^2 (R)$ ($\mc{F}$ denotes Fourier transform) is called the Paley-Wiener space of $A$-bandlimited functions. As Shannon pointed out, this filtering has a tremendous benefit: In order to record and later reconstruct any such filtered signal, it suffices to record the signal's amplitudes or \emph{samples}
$\{ f(t_n ) \}$
at a discrete set of sample times $\{ t_n \}$ with spacing $t_{n+1}-t_n=\pi/A$, the so-called Nyquist spacing. Any such discrete set can be arranged in strictly increasing order and there is a one-parameter family of such sequences of sample points, or \emph{sampling sequences} $t_n (\vartheta ):= (n +\vartheta ) \frac{\pi}{A}; \ \vartheta \in [0,1)$.
For a fixed choice of $\vartheta$, the samples $\{ f(t_n(\vartheta)) \}$ then completely determine and represent the $A-$bandlimited function $f$.
Indeed, the celebrated Shannon sampling formula can be used to reconstruct $f$ perfectly (in theory) from these discrete values:
$$ f(t) = \sum _{n \in \Z } f\left( t_n (\vartheta) \right) \frac{\sin \left( A (t - t_n (\vartheta ) \right)}{A (t-t_n (\vartheta ) ) } ; \quad \quad
t_n (\vartheta ):= (n +\vartheta ) \frac{\pi}{A}, \ \vartheta \in [0,1). $$ This key reconstruction property is applied ubiquitously in signal processing to discretize and later reconstruct audio or video signals \cite{Jerri1977,Ben2012,Marks2012}.

In practical applications, the bandlimit $A$ is necessarily the largest frequency that occurs in the set of signals considered. The larger the value of $A$, the smaller the spacing $\pi/A$ of the sample times at which the samples need to be recorded. Even if a given signal appears to have low `bandwidth' for most of its duration, and to be a linear combination of a wide range of frequencies for only a short time interval, the samples need to be taken at a high rate for all time in order to apply the Shannon sampling formula. This is intuitively inefficient and motivates the extension of signal processing methods such as filtering, sampling and reconstruction to the setting of time-varying bandwidth. What exactly, however, is a time-varying bandlimit? The traditional notion of bandlimit is determined by the Fourier transform of the entire signal and hence is non-local, it depends on the signal's global behaviour.  This makes it difficult to make the concept of a time-varying bandlimit precise. For several approaches in the literature, see, for example, \cite{Groch2015,TVband1,TVband2,TVband3,TVband4,TVband5}.

Our definition of time-varying bandlimit (Definition \ref{TVbanddefn}) is based on the observation that, in conventional Shannon sampling theory, the constant bandlimit $A$ is inversely proportional to the constant spacing $\pi/A$ of the standard Nyquist sampling sequences.  We then identify the sample points in each of these sampling sequences $( t_n (\vartheta ) = (n+\vartheta ) \frac{\pi}{A} )$ for $\vartheta \in [0,1 )$, appearing in the Shannon sampling formula, with the simple eigenvalues of a self-adjoint operator $Z _\vartheta$. We further observe that the family $\{ Z _\vartheta | \ \vartheta \in [0, 1) \}$ is the one-parameter family of self-adjoint extensions of a single symmetric linear transformation $Z$ which acts as multiplication by the independent variable on a dense domain in $B(A)$ (and which is simple, regular and has deficiency indices $(1,1)$, we will recall these basic definitions in Subsection \ref{Notation}) \cite{Kempf2000,Kempf2004}. One can combine the spectra of these self-adjoint extensions to define a smooth, strictly increasing bijection on the real line, $t (n +\vartheta ) :=  t_n (\vartheta )$. If $\ga $ denotes the compositional inverse of $t$, we observe that
$$ \pi \ga ' (t) = A, $$ is the bandlimit. The derivative $\ga' (t)$ is then a measure of the local density of the sampling sequences $( t_n (\vartheta ) )$ near the point $t$, and it is proportional to the constant bandlimit in the case of Shannon sampling.

Crucially, the spectra of the self-adjoint extensions of such a symmetric operator, $T$, need not be equidistant. It is possible, therefore, to straightforwardly generalize Shannon sampling theory using the representation theory of regular simple symmetric linear transformations with defect indices $(1,1)$ (we will review the definitions in Subsection \ref{Notation}).  We will develop this theory to show that any such symmetric $T$ is unitarily equivalent to multiplication by the independent variable in a \emph{local bandlimit space}, $\K (T)$, a Hilbert space of functions on $\R$ with the same special reconstruction properties as the Paley-Wiener spaces, $B(A)$, of $A-$bandlimited functions. Namely, we will prove that any $f \in \K (T)$ can be reconstructed from its samples taken on any sampling sequence $(t_n (\theta ) )$, $\theta \in [0,1 )$, where the $t_n (\theta )$ are the simple eigenvalues of a self-adjoint extension, $T_\theta$, of $T$ (see Theorem \ref{tvbspace}).  The local density of the sampling sequences $(t_n (\theta) )$ will then provide a natural notion of time-varying bandlimit that recovers the classical definition in the case where $\K (T) = B(A)$ (Example \ref{PW}, and Subsection \ref{TVbandsection}).

The goal of this paper is two-fold. Our first aim is to apply the spectral theory of regular simple symmetric linear transformations, $T$, with indices $(1,1)$ to construct the local bandlimit spaces $\K (T)$ as introduced in \cite{Kempf2000,Kempf2004,Hao2011}, and to demonstrate that these spaces obey Shannon-type sampling formulas. We will further show that $T$ is unitarily equivalent to the operator of multiplication by the independent variable in $\K (T)$, and that $\K (T)$ can be embedded isometrically in measure spaces $L^2 (\R , d\la )$, for a class of positive measures $\la$ which are equivalent to Lebesgue measure (Theorem \ref{tvbspace}). We then develop equivalent representations of such $T$ as multiplication by the independent variable in (a) meromorphic model subspaces of Hardy space, and (b) $L^2$ spaces of functions square integrable with respect to purely atomic Herglotz measures on $\R$ whose atoms have no finite accumulation point \cite{Krein1944,Krein1944one,GG,dB,Martin-dB,Martin-sym,Martin-ext,AMR,GMR}. Connecting these theories will provide powerful new tools for studying local bandlimit spaces.  In particular, this will yield a precise notion of time-varying bandlimit. Our second aim is to apply these results to develop more efficient signal processing techniques that are adapted to time-varying bandwidths. Namely, we extend concepts and concrete tools related to filtering, sampling and reconstruction to the time-varying setting.

\subsection{Outline} \label{Outline}
Let $\mc{S} ^R$ denote the family of closed regular simple symmetric linear transformations with deficiency indices $(1,1)$ defined on a domain in some separable Hilbert space. If $T \in \mc{S} ^R$ is defined in $\H$, we write $T \in \mc{S} ^R (\H)$. We will recall the definition of these terms, and of self-adjoint extensions of symmetric linear transformations in the upcoming Subsection \ref{Notation}.

Section \ref{FAmodel} presents the theory of function spaces obeying a time-varying bandlimit as developed in \cite{Kempf2000,Kempf2004}. This is an abstract functional analytic approach to the representation theory of $\mc{S} ^R$. We begin with a spectral characterization of $\mc{S} ^R$ in Theorem \ref{FApic}. This theorem shows that there is a natural (and essentially bijective) correspondence between symmetric linear $T \in \mc{S} ^R$, and what we call \emph{bandlimit pairs} of real sequences $(\mbf{t}, \mbf{t}' )$; $\mbf{t} = (t_n )$, $\mbf{t' } = (t_n ') $ (Definition \ref{TVpairdef}). Given any bandlimit pair of sequences $(\mbf{t} , \mbf{t} ')$, Theorem \ref{FApic} shows that there is a symmetric $T \in \mc{S} ^R$ so that
the real sequence $\mbf{t}$ consists of the simple (multiplicity one) eigenvalues of a self-adjoint extension, $T_0$, of $T$.  Moreover the spectra of the entire (one-parameter) family of self-adjoint extensions of $T$ can be combined to construct a smooth, strictly increasing function on $\R$, the \emph{spectral function}, $t$, of $T$ (Definition \ref{specfundef}) so that $t_n = t (n)$ and, up to a fixed constant, $t' _n \simeq t' (n) >0$ (Lemma \ref{defcoeff}).

Given any initial bandlimit pair of sequences $(\mbf{t} , \mbf{t} ' )$ and corresponding symmetric $T \in \mc{S} ^R$, Proposition \ref{kernelprop} constructs a smooth positive kernel function $K ^T : \R \times \R \rightarrow \R$. By classical reproducing kernel Hilbert space (RKHS) theory (see Subsection \ref{RKHStheory}), there is a unique RKHS $\K (T) = \K (\mbf{t} , \mbf{t} ' )$ of functions on $\R$ which has $K^T$ as its reproducing kernel. We call this space a local bandlimit space or a sampling space. As part of Theorem \ref{tvbspace}, we prove:
\begin{thm*}
Let $\mu : [0,1) \rightarrow [0,1)$ be any smooth parametrization of $[0,1)$. The local bandlimit space $\K (T)$ is a reproducing kernel Hilbert subspace of $L^2 (\R , (\mu \circ \tau ) ' (t) dt)$, where $\tau = t^{-1}$, the compositional inverse of the spectral function of $T$. Any $f \in \K (T)$ obeys the sampling formulas:
$$ f(t) = \sum f (t_n (\theta) ) K ^T (t , t_n (\theta)  ); \quad \quad t_n (\theta) := t(n +\theta), \ \theta \in [0,1). $$
\end{thm*}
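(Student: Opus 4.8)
The plan is to reduce the theorem to three ingredients already assembled in the paper: the spectral function $t$ and its inverse $\tau=t^{-1}$ from Theorem \ref{FApic}, the explicit positive kernel $K^T$ from Proposition \ref{kernelprop}, and the classical RKHS correspondence from Subsection \ref{RKHStheory}. First I would exhibit the measure: write $d\la_\mu := (\mu\circ\tau)'(t)\,dt$ and observe that, since $\mu$ is a smooth parametrization (hence a diffeomorphism) of $[0,1)$ and $\tau$ is a smooth strictly increasing bijection of $\R$ onto $\R$, the density $(\mu\circ\tau)'(t) = \mu'(\tau(t))\,\tau'(t)$ is smooth and strictly positive, so $d\la_\mu$ is equivalent to Lebesgue measure and $L^2(\R,d\la_\mu)$ is a genuine Hilbert space. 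The natural candidate embedding is the one already identified in the abstract: $\K(T)$ sits inside $L^2(\R,d\la)$ for this class of measures, and the point to verify is that, with the RKHS norm transported from the abstract model of $T$ under the unitary equivalence with multiplication by the independent variable, the inclusion $\K(T)\hookrightarrow L^2(\R,d\la_\mu)$ is isometric.

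The key computation is to check that $K^T$ reproduces point evaluation \emph{as an element of $L^2(\R,d\la_\mu)$}, i.e.\ $f(w) = \int_\R f(t)\,\overline{K^T(t,w)}\,d\la_\mu(t)$ for all $f\in\K(T)$ and all $w\in\R$. By Proposition \ref{kernelprop} the kernel $K^T$ is built from the spectral data $(\mbf{t},\mbf{t}')$, and I expect it to have the Shannon-type closed form $K^T(t,w) = c\,\dfrac{E(t)\overline{E(w)} - \text{c.c.}}{\ \cdot\ (t-w)}$ familiar from deBranges/Paley–Wiener theory, in particular vanishing at $t=t_n(\theta)$ except when $w=t_n(\theta)$. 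The cleanest route is then: (i) for a fixed self-adjoint extension $T_\theta$ with simple eigenvalues $t_n(\theta)$, the normalized eigenvectors of $T_\theta$ map under the unitary equivalence to an orthonormal (or orthogonal, up to the coefficients $t_n'$ of Lemma \ref{defcoeff}) basis of point-evaluation functionals $\{K^T(\cdot,t_n(\theta))\}$ in $\K(T)$; (ii) expanding $f\in\K(T)$ in this basis gives $f = \sum_n \langle f, \hat K_n\rangle\,\hat K_n$ with $\langle f,\hat K_n\rangle$ proportional to $f(t_n(\theta))$, which is precisely the stated sampling formula once the normalization constants are matched; (iii) summing the rank-one projections and using that the spectral measure of $T_\theta$ pushes forward (via $\tau$ and the parametrization $\mu$) to $d\la_\mu$ yields the integral reproducing identity, hence the isometric embedding. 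Steps (i)–(ii) are the standard passage from a discrete spectral resolution of a self-adjoint extension to a sampling formula; the RKHS uniqueness then upgrades the pointwise identity to the Hilbert-space statement.

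For the measure identification in step (iii) the natural device is a change of variables. Under $x = \tau(t)$ (equivalently $t=t(x)$) the atoms $t_n(\theta)=t(n+\theta)$ of $T_\theta$ pull back to the arithmetic progression $n+\theta$, so the relevant spectral/counting data becomes translation-invariant in $x$; integrating over $\theta\in[0,1)$ against the chosen smooth parametrization $\mu$ converts the sum over $n$ into an integral over $x\in\R$ against $\mu'(x)\,dx$, and pushing forward to the $t$-variable produces exactly $(\mu\circ\tau)'(t)\,dt$ by the chain rule. I would phrase this as: the family of sampling formulas, integrated against $d\mu(\theta)$, collapses (by a Fubini/averaging argument of the type used to prove oversampling and continuous-frame identities in Paley–Wiener space) to $\langle f,g\rangle_{\K(T)} = \int_\R f\bar g\,d\la_\mu$.

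The main obstacle I anticipate is not any single estimate but bookkeeping: reconciling the three normalizations — the RKHS norm on $\K(T)$, the coefficients $t_n'\simeq t'(n)$ from Lemma \ref{defcoeff} that rescale the eigenvector expansion, and the Jacobian $\tau'$ introduced by the change of variables — so that they cancel exactly and leave the clean density $(\mu\circ\tau)'$. A secondary technical point is justifying convergence: the sampling series converges in $\K(T)$ (orthogonal expansion), but one must argue it also converges pointwise and in $L^2(\R,d\la_\mu)$, which follows from the boundedness of point evaluations in an RKHS together with the isometry, but should be stated carefully. Once the normalizations are pinned down for one extension $T_\theta$, the independence of the formula from $\theta$ is immediate since each $T_\theta$ yields an orthonormal basis of the same space $\K(T)$, so the reproducing identity it produces is the same.
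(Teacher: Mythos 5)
Your proposal is correct and follows essentially the same route as the paper: the sampling formula comes from expanding $f$ in the orthonormal basis $\{K^T_{t_n(\theta)}\}$ of point-evaluation vectors (the images under $U^T$ of the normalized eigenvectors of $T_\theta$, which by construction of $K^T$ in Proposition \ref{kernelprop} are already orthonormal, so no extra $t_n'$ rescaling survives), and the isometric embedding is obtained exactly by your averaging step --- writing $\|f\|^2 = \int_0^1 \|f\|^2\,\mu'(\theta)\,d\theta$, applying Parseval for each $\theta$, converting the sum over $n$ and integral over $\theta$ into a single integral over $s=n+\theta$, and changing variables $s=\tau(t)$ to produce the density $(\mu\circ\tau)'(t)$. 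The only cosmetic difference is that the paper verifies the norm identity directly rather than the $L^2$ reproducing identity, which is equivalent by polarization.
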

As before, $t$ denotes the spectral function of $T$, and $t_n (0) = t(n) = t_n$. This theorem shows that the spaces $\K (T)$ all have the same special reconstruction properties as the Paley-Wiener spaces of bandlimited functions: there is a one-parameter family of sampling sequences $\mbf{t} _\theta := ( t_n (\theta ) )$ (which cover the real line exactly once, see Lemma \ref{Rcover}), so that any $f \in \K (T)$ can be reconstructed perfectly from its samples taken on $\mbf{t} _\theta$.  Theorem \ref{tvbspace} further shows that $T$ is unitarily equivalent to an operator $M^T \in \mc{S} ^R ( \K (T) ) $ which acts as multiplication by the independent variable on its domain. This is one natural functional model for elements of $\mc{S} ^R$, and it provides a natural definition of a time-varying low-pass filter as the orthogonal projection of a raw signal onto a local bandlimit space (Definition \ref{tvlowpass}).  We show that the classical Paley-Wiener spaces are a special case of local bandlimit spaces in Example \ref{PW}.

The overall goal of the remaining sections is to connect the theory of the local bandlimit spaces $\K (T)$ to the classical theory of Hardy spaces of analytic functions in the upper half-plane, as well as to the spectral theory of self-adjoint multiplication operators on $L^2$ spaces associated to purely atomic measures on $\R$.  These theories will provide valuable new tools and insights that will lead to a natural definition of time-varying bandlimit (Definition \ref{TVbanddefn}), and will enable us to calculate any sampling sequence $\mbf{t} _\theta$ from the knowledge of an initial bandlimit pair of sequences $(\mbf{t} , \mbf{t} ')$. It is necessary to know these sequences in order to apply the above sampling formulas for the local bandlimit space $\K (T)$.

Section \ref{Hardy} develops the representation theory of $T \in \mc{S} ^R$ as multiplication by $z$ in a meromorphic model subspace of the Hardy space $H^2 (\C ^+ )$ of the upper half-plane \cite{Martin-dB,AMR}. We also review the Liv\v{s}ic characteristic function $\Theta _T$ of any $T \in \mc{S} ^R$, a complete unitary invariant for $\mc{S} ^R$ (Subsection \ref{Lcharfun}). This is a meromorphic inner function, \emph{i.e.} a bounded analytic function on $\C ^+$ which has a meromorphic extension to the whole complex plane and has unit modulus on the real axis. Corollary \ref{charfunspec} will show that the sampling sequences $\mbf{t} _\theta := ( t_n (\theta ) )$ are the solutions to $\Theta _T (t_n (\theta ) ) = e^{i2\pi \theta}$, and this will provide one method of calculating the entire family of sampling sequences $\mbf{t} _\theta; \ \theta \in [0,1)$ given an initial bandlimit pair of sequences $(\mbf{t} , \mbf{t} ' )$. Theorem \ref{TVmultiplier} will connect the theory of local bandlimit spaces to the theory of meromorphic model spaces of Hardy space (as well as to deBranges spaces of entire functions) by showing that any local bandlimit space, $\K (T)$, is the image of the meromorphic model space, $K(\Theta _T) := H^2 (\C ^+ ) \ominus \Theta _T H^2 (\C ^+ )$, under multiplication by a fixed function $M(t)$, and that this multiplication defines an onto isometry. It will follow, in particular, that elements of our local bandlimit spaces are, up to multiplication by a fixed function, meromorphic (in fact entire). Subsection \ref{TVbandsection} applies the theory of automorphisms of the unit disk to motivate and construct our definition of time-varying bandlimit, Definition \ref{TVbanddefn}.

Section \ref{Measmodel} develops a third class of models (or representations) for $\mc{S} ^R$ as multiplication by the independent variable in a measure space $L^2 (\R , \Ga )$, where $\Ga $ is a purely atomic positive measure whose atoms have no finite accumulation point (this is Theorem \ref{Hergmod}). We apply this model to compute formulas for the Liv\v{s}ic characteristic function of any $T \in \mc{S} ^R$ in terms of any of the sampling sequences $\mbf{t} _\theta = (t_n (\theta ) ), \ \theta \in [0,1 )$  associated to the self-adjoint extensions $T_\theta$ of $T$, see Corollary \ref{ACinner}. While the Aleksandrov-Clark measure representation of any contractive analytic function on $\C ^+$ is well-known, our identification of the weights of the purely atomic Aleksandrov-Clark measures for the meromorphic inner Liv\v{s}ic function $\Theta _T$ of $T \in \mc{S} ^R$ with the derivatives of the spectral function of $T$ may be novel. In Corollary \ref{corodie} we show that the spectral function $t$ is the unique solution to a first order ordinary differential equation obeying a certain initial condition. Given a local bandlimit space $\K (\mbf{t} , \mbf{t} ' ) = \K (T) $, both Corollary \ref{ACinner} and Corollary \ref{corodie} provide formulas for computing the sampling sequences $\mbf{t} _\theta = (t_n (\theta ) )$ from the knowledge of the initial bandlimit pair $(\mbf{t}, \mbf{t} ' )$.

\subsection{Symmetric linear transformations} \label{Notation}

Let $\H$ be a separable Hilbert space. Let $T$ be a (typically unbounded) linear transformation $T$ with with domain $\dom{T} \subset \H$.

\begin{defn}
    The linear transformation $T$ is called:
    \bn
        \item \emph{symmetric} if
        $$ \ip{Tx}{y} = \ip{x}{Ty} \quad \quad \forall x,y \in \dom{T}.$$
        \item \emph{densely defined} if $\dom{T}$ is dense in $\H$.
        \item \emph{simple} if there is no non-trivial proper subspace $S \subset \H$ so that the restriction of $T$ to $\dom{T} \cap S$ is self-adjoint.
        \item \emph{regular} if $T -t$ is bounded below on $\dom{T}$ for all $t \in \R$.
        \item \emph{closed} if the graph of $T$ is closed in $\H \oplus \H$.
    \en
    The \emph{deficiency indices}, $(n_+ , n_- )$ of $T$ are defined as $$ n _\pm := \dim{\ker{T^* \mp i}}.$$
    We will use the notation $\mc{S} $ to denote the family of all closed simple symmetric linear transformations with equal indices $(1,1)$ defined on a domain
in some separable Hilbert space. $\mc{S} ^R $ will denote the subfamily of all closed regular simple symmetric transformations with indices $(1,1)$ and similarly we
define $\mc{S} ( \H ), \mc{S} ^R (\H )$. Note that any symmetric $T$ always has a minimal closed extension, so there is no loss of generality in assuming that $T$ is closed \cite{Glazman}. We will call $T$ a symmetric operator if and only if $T$ is densely defined.
\end{defn}

Consider the map $$ b(z) := \frac{z-i}{z+i}, $$ with compositional inverse
$$ b^{-1} (z) = i \frac{1-z}{1+z}.$$ The map $b$ is an analytic bijection of the open upper half-plane $\C ^+$ onto the open unit disk $\D$. Moreover
$b$ is a bijection of the real line $\R$ onto $\T \sm \{1 \}$, the unit circle minus a point.

Let $\mc{V}  $ denote the family of all completely non-unitary (c.n.u.)
partial isometries with deficiency indices $(1,1)$ acting on a separable Hilbert space. Here the defect or deficiency indices of a partial isometry $V$ are defined by $n_+ := \dim{ \ker{V}}$ and $n_- := \dim{\ran{V} ^\perp}$. As shown in several standard texts \cite{Glazman,RS}, the map $T \mapsto b(T)$ defines a bijection of $\mc{S} _n$ (closed simple symmetric linear transformations with indices $(n,n)$) onto $\mc{V} _n$. Namely, given any $T \in \mc{S} _n$ one can define $b(T)$ as an isometric linear transformation from $\ran{T+i}$ onto $\ran{T-i}$. We can then view $V= b(T)$
as a partial isometry on $\H$ with initial space $\ker{V} ^\perp = \ran{T+i}$. Conversely given any $V \in \mc{V} _n$, one can define $b^{-1} (V)=T$ on the domain
$\ran{(V-I) V^* V }$, and then $T \in \mc{S} _n $ and $T = b^{-1} (b (T))$.

\subsection{Self-adjoint extensions} \label{saextsect}

Given $T \in \mc{S} $ let $V = b(T) \in \mc{V}$. One can construct a $\mc{U} (1)$ parameter family of
unitary extensions of $V$ as follows. Fix vectors $\phi _\pm$ of equal norm such that $$ \phi _+ \in \ker{V} = \ker{T^* -i} = \ran{T+i} ^\perp, $$ and $$\phi _- \in \ran{V} ^\perp = \ker{T^* +i} = \ran{T-i} ^\perp.$$ Define
 \be U (\alpha) := V + \frac{\alpha}{\| \phi _+ \| ^2} \ip{\cdot}{\phi_+} \phi _-; \ \alpha \in \T \quad \quad \mbox{and} \quad U_\theta := U(e^{i2\pi\theta}); \ \theta \in [0,1), \label{uniextensions} \ee where $\T$ is the unit circle in the complex plane. The set of all $U (\alpha)$ (or $U_\theta$) is the one-parameter family of all unitary extensions of $V$ on $\H$. The $U (\alpha)$ extend $V$ in the sense that $U (\alpha ) V^* V = V$ for all $\alpha \in \T$, they agree with $V$ on its initial space. We write $V \subseteq U(\alpha )$ to denote that $U(\alpha )$ extends $V$ in this way. Similarly, the subset notation $T \subset S$ for closed linear transformations $T,S$ denotes that $\dom{T} \subset \dom{S}$ and $S | _{\dom{T}} =T$, \emph{i.e.} $S$ is an extension of $T$.

We then define
  $$ T (\alpha ) := b^{-1} (U (\alpha )), \quad \quad T_\theta = T (e^{i2\pi \theta } ), $$ so that $T \subset T(\alpha ) \subset T^*$ for all $\alpha \in \T$. The functional calculus implies that each $T(\alpha) $ is a densely defined self-adjoint operator if and only if $1$ is not an eigenvalue of $U (\alpha )$, and the set of all $T(\alpha )$ (for which this expression is defined) is the set of all self-adjoint extensions of $T$. Note the assumption that $V$ be c.n.u. implies that $1$ is an eigenvalue to at most one $U(\alpha )$.

\subsection{Reproducing kernel Hilbert Spaces} \label{RKHStheory}
We will use basic reproducing kernel Hilbert space theory throughout this paper \cite{Paulsen-rkhs}.

Recall that a reproducing kernel Hilbert space (RKHS), $\K$, on some
set $X \subset \C$ is a Hilbert space of functions on $X$ with the property that point evaluation at any $x \in X$ defines a bounded linear functional, $\delta _x$, on $\K$. By the Riesz representation lemma, for any $x \in X$ there is a unique \emph{point evaluation vector} $K_x \in \K$ so that for any $F \in \K$,
$$ F(x) = \delta _x (F) = \ip{K_x}{F} _\K.$$ (All inner products are assumed to be conjugate linear in the first argument.) The  \emph{reproducing kernel} of $\K$ is the function
$K : X \times X \rightarrow \C$ defined by:
$$ K(x,y) := \ip{K_x}{K_y} _\K, $$ and one usually writes $\H (K) := \K$.
This reproducing kernel, $K$, is a \emph{positive kernel function} on $X \times X$, \emph{i.e.}, it has the property that for any finite set $\{ x_k \} _{k=1} ^N \subset X$,
the $N \times N $ matrix:
$$  [ K (x_k , x_j ) ] _{ 1 \leq k,j \leq N} \geq 0, $$ is non-negative. The classical theory of RKHS of Aronszajn and Moore (see \emph{e.g.} \cite{Paulsen-rkhs}) shows that there is a bijective correspondence between positive kernel functions $K$ on $X \times X$ and RKHS $\H (K)$ on $X$. That is, given any positive kernel function $K$, one can construct a RKHS $\H (K)$ which has $K$ as its reproducing kernel.

\subsection{Multipliers between RKHS} \label{multiplier}

Let $\H (k), \H (K)$ be two RKHS of $\C$-valued functions on some set $X$ with reproducing kernel functions $k, K$, respectively.  A function $F : X \rightarrow \C$ is called a \emph{multiplier} from $\H(k)$ into $\H (K)$
if $F h \in \H (K)$ for any $h \in \H (k)$. That is, $F$  is a multiplier if and only if multiplication by $F$,
$$ (M_F h) (x ) = F(x) h(x); \quad \quad x \in X, $$ defines a linear \emph{multiplication map}, $M_F :\H (k) \rightarrow \H (K)$. Let $\mr{Mult} (\H (k) , \H (K) )$ denote the set of all multipliers from $\H (k)$ into $\H (K)$. Standard functional analytic arguments show that (identifying $F$ with $M_F$)
$ \mr{Mult} (\H (k) , \H (K) ) \subseteq \L (\H (k) , \H (K) )$, and that $\mr{Mult} (\H (k) , \H (K) )$ is closed in the weak operator topology. The following elementary facts about multipliers will be useful (see, \emph{e.g.} \cite{GMR,AMR,Paulsen-rkhs}):
\begin{lemma} \label{multcri}
   A bounded linear map $M : \H (k) \rightarrow \H (K)$ is a multiplication map if and only if there is a function $m: X \rightarrow \C$ so that $$ M ^* K _x = \ov{m(x)} k_x; \quad \quad x \in X. $$ $M$ and $m$ satisfy this equation if and only if $m$ is a multiplier and $M = M_m$ is the corresponding multiplication map. A function $m : X \rightarrow \C$ belongs to $\mr{Mult} (\H (k) , \H (K) )$ if and only if there is a $C>0$ so that
$$ M(x) k(x,y) \ov{M (y)} \leq C K(x,y), $$ as positive kernel functions on $X \times X$. The function $m$ is an onto isometric multiplier if and only if equality holds with $C=1$.
\end{lemma}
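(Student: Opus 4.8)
The plan is to prove the three assertions of the lemma in turn, each time translating a statement about the operator $M$ (or $M_m$) into its action on the reproducing kernel vectors $k_x, K_x$. First I would establish the equivalence ``$M$ is a multiplication map'' $\iff$ ``$M^* K_x = \ov{m(x)} k_x$ for all $x$, for some $m : X \to \C$''. For the forward direction, assuming $M = M_m$, the reproducing property gives, for each $x \in X$ and each $h \in \H(k)$,
\[ \ip{M^* K_x}{h}_{\H(k)} = \ip{K_x}{M h}_{\H(K)} = (M h)(x) = m(x) h(x) = \ip{\ov{m(x)} k_x}{h}_{\H(k)}, \]
and since $h$ is arbitrary we get $M^* K_x = \ov{m(x)} k_x$. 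Conversely, if $M$ is bounded and satisfies this relation, reading the same chain backwards shows $(Mh)(x) = m(x) h(x)$ for all $x$ and $h$, so $M = M_m$ and $m$ is automatically a multiplier; this also disposes of the ``if and only if'' in the second sentence of the lemma.

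Next I would treat the positive-kernel criterion. The preliminary observation is that $m \in \mr{Mult}(\H(k),\H(K))$ iff $M_m$ is bounded: the graph of $M_m$ is closed because norm convergence in a RKHS forces pointwise convergence (so $h_n \to h$ and $m h_n \to g$ imply $g = m h$), and then the closed graph theorem applies. Given boundedness, the estimate $\| M_m \|^2 \le C$ is equivalent to the operator inequality $M_m M_m^* \le C\, I_{\H(K)}$, and because $\mr{span}\{ K_x : x \in X \}$ is dense in $\H(K)$ this holds iff $\| M_m^* \xi \|_{\H(k)}^2 \le C\, \| \xi \|_{\H(K)}^2$ for every finite linear combination $\xi = \sum_j c_j K_{x_j}$. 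Substituting $M_m^* K_{x_j} = \ov{m(x_j)} k_{x_j}$ from the first step turns the left-hand side into $\sum_{i,j} \ov{c_i} c_j\, m(x_i) \ov{m(x_j)} k(x_i,x_j)$ and the right-hand side into $C \sum_{i,j} \ov{c_i} c_j\, K(x_i,x_j)$; hence the inequality is exactly the assertion that $C K(x,y) - m(x) k(x,y) \ov{m(y)}$ is a positive kernel on $X \times X$, which is the stated criterion (written there as $m(x) k(x,y) \ov{m(y)} \le C K(x,y)$).

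For the last assertion I would note that equality with $C = 1$ is, by the previous paragraph, equivalent to $M_m M_m^* = I_{\H(K)}$ --- indeed, unwinding this identity on kernel vectors via the first step gives $K(x,y) = \ip{M_m^* K_x}{M_m^* K_y}_{\H(k)} = m(x) \ov{m(y)} k(x,y)$, while conversely $C=1$ equality is just the kernel-function form of $M_m M_m^* \le I_{\H(K)}$ with equality. Hence $C=1$ equality says exactly that $M_m$ is onto with $M_m^*$ isometric, and this upgrades to ``$M_m$ is an onto isometry'' precisely when $M_m$ is injective as well --- which holds in all the uses of the lemma in this paper, where the relevant multiplier is zero-free so that $\ov{\mr{span}}\{ \ov{m(x)} k_x : x \in X \} = \H(k)$ and therefore $M_m^* M_m = I_{\H(k)}$. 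Throughout, the only step needing real care is the translation in the middle paragraph: one must phrase boundedness via $M_m M_m^*$ (not $M_m^* M_m$) from the outset, since it is $M_m^*$ that acts transparently on kernel vectors, and then verify that the finite-rank positivity condition on $\mr{span}\{K_x\}$ genuinely captures $M_m M_m^* \le C\, I_{\H(K)}$; the two reproducing-property computations are otherwise routine.
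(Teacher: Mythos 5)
Your handling of the first assertion and your computation translating $\|M_m^*\xi\|^2 \le C\|\xi\|^2$ on $\mr{span}\{K_x\}$ into the positive-kernel inequality are correct, and your caveat on the final assertion is well taken: equality with $C=1$ gives only $M_mM_m^*=I_{\H(K)}$, so one needs $\ker{M_m}=\ran{M_m^*}^\perp=\{0\}$ (automatic when $m$ is zero-free, as in every application in this paper) to upgrade the coisometry to an onto isometry. The paper offers no proof of this lemma — it is cited as an elementary fact from the references — so there is no in-paper argument to compare against; your route is the standard one.

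There is, however, one genuine gap in the middle step. Your chain of equivalences — $\|M_m\|^2\le C$ iff $M_mM_m^*\le C\,I_{\H(K)}$ iff the finite-rank positivity condition — presupposes at every stage that $M_m$ already exists as a bounded operator from $\H(k)$ into $\H(K)$, \emph{i.e.} that $m$ is already known to be a multiplier (your closed-graph observation gives boundedness for free once $mh\in\H(K)$ for all $h$, but not before). What you have therefore proved is: if $m\in\mr{Mult}(\H(k),\H(K))$ then the kernel inequality holds with $C=\|M_m\|^2$, and, \emph{given} that $m$ is a multiplier, the inequality with constant $C$ bounds $\|M_m\|^2$ by $C$. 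The implication actually asserted in the lemma — that the kernel inequality by itself forces $mh\in\H(K)$ for every $h$ — is not covered. The repair is standard but should be run through the adjoint from the outset: the inequality $\sum_{i,j}\ov{c_i}c_j\,m(x_i)\ov{m(x_j)}k(x_i,x_j)\le C\sum_{i,j}\ov{c_i}c_jK(x_i,x_j)$ shows that $A:\sum c_jK_{x_j}\mapsto\sum c_j\ov{m(x_j)}k_{x_j}$ is well defined (take the left side equal to $0$ when $\sum c_jK_{x_j}=0$) and bounded by $\sqrt{C}$ on the dense span of the $K_x$, hence extends to a bounded operator $A:\H(K)\rightarrow\H(k)$; then $(A^*h)(x)=\ip{K_x}{A^*h}=\ip{AK_x}{h}=m(x)h(x)$ exhibits $A^*=M_m$ as a bounded multiplication operator, so $m$ is a multiplier. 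With that insertion your argument is complete. (One typographical note: the inequality in the lemma statement is written with $M(x)$ where $m(x)$ is intended; you have read it correctly.)
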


Consider the case where $X \subseteq \C $, and assume that $\H (k) , \H (K)$ are such that $k_z , K_z \neq 0$ for any $z \in X$. Further suppose that there are linear transformations
$Z_k \in \mc{S}  (\H (k) ), \  Z_K \in \mc{S} (\H (K))$ which act as multiplication by the independent variable $z$. As in the case of bounded multipliers, it is easy to check that one always has
\be \ker{ Z_k ^* - \ov{z} } = \bigvee k _z; \quad \quad z \in X. \label{pevaleigen} \ee

\begin{lemma} \label{funmodelmult}
    Let $\H (k), \H (K)$ be RKHS on $X \subseteq \C$ with symmetric multiplication operators $Z_k ,Z_K \in \mc{S} (\H (k) ), \mc{S} (\H (K) )$ as above. A bounded linear map, $M : \H(k) \rightarrow \H (K)$, is a multiplication map if and only if $M Z_k \subset Z_K M$, \emph{i.e.}
if and only if $M \dom{Z_k} \subset \dom{Z_K}$ and $ M Z_k h = Z_K M h$ for all $h\in \dom{Z_k}$. An onto isometry $M : \H (k) \rightarrow \H (K)$ is a multiplier if and only if $M Z_k M^* = Z_K$.
\end{lemma}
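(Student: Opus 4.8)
The plan is to deduce this from Lemma~\ref{funmodelmult} combined with the general multiplier criterion of Lemma~\ref{multcri} and the identification \eqref{pevaleigen}. First I would handle the intertwining characterization. Suppose $M$ is a multiplication map, $M = M_m$. For $h \in \dom{Z_k}$ we have $(Z_k h)(z) = z\, h(z)$, so $m(z)(Z_k h)(z) = z\, m(z) h(z) = z\, (Mh)(z)$; thus $Mh$ is, pointwise, the function $z \mapsto z (Mh)(z)$, and the only thing to verify is that this function actually lies in $\dom{Z_K} \subset \H(K)$ — i.e. that $Mh \in \dom{Z_K}$. Here I would use Lemma~\ref{multcri}: $M^* K_z = \ov{m(z)} k_z$, together with \eqref{pevaleigen}, which says $\ker{Z_k^* - \ov z} = \bigvee k_z$ and $\ker{Z_K^* - \ov z} = \bigvee K_z$. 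The characterization of the domain of $Z_K$ (a closed symmetric operator) in terms of the deficiency spaces, or more directly the fact that $g \in \dom{Z_K}$ iff $z \mapsto z g(z)$ defines an element of $\H(K)$, lets one conclude; alternatively one checks $Z_K^* M = M Z_k^{**}$-type adjoint relations. Conversely, if $M Z_k \subset Z_K M$, I would show $M$ is a multiplier by computing $M^* K_z$: for $h \in \dom{Z_k}$, $\ip{h}{M^* Z_K K_z} = \ip{M Z_k h}{K_z} = \ip{Z_K M h}{K_z}$, and using $Z_K^* K_z = \ov z K_z$ (from \eqref{pevaleigen}) together with $M Z_k \subset Z_K M$, one finds $Z_k^* M^* K_z = \ov z M^* M^* K_z$... more carefully, one shows $M^* K_z \in \ker{Z_k^* - \ov z} = \bigvee k_z$, hence $M^* K_z = \ov{m(z)} k_z$ for some scalar function $m$, and then Lemma~\ref{multcri} immediately gives that $M = M_m$ is a multiplication map. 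The point where care is needed is that $Z_k, Z_K$ are unbounded, so "$M Z_k \subset Z_K M$" must be read with the domain inclusion built in, and the passage from the intertwining relation to "$M^* K_z$ is an eigenvector of $Z_k^*$" requires knowing $\dom{Z_k}$ is dense (true since $Z_k \in \mc{S}(\H(k))$ is densely defined by our conventions) so that $h$ ranges over a dense set.

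For the second assertion, assume $M : \H(k) \to \H(K)$ is an onto isometry. If $M$ is a multiplier, then by the first part $M Z_k \subset Z_K M$, so on $\dom{Z_k}$ we have $M Z_k M^* M = Z_K M M^* M = Z_K M$ (using $M^* M = I$ since $M$ is an isometry); but I want $M Z_k M^* = Z_K$ as operators, including domains. Because $M$ is unitary, $M \dom{Z_k}$ is exactly $\dom{M Z_k M^*}$, and the inclusion $M Z_k \subset Z_K M$ rewrites as $M Z_k M^* \subset Z_K$. For the reverse inclusion of domains I would use that $Z_k$ and $Z_K$ are both in $\mc{S}(\cdot)$, hence both closed symmetric with indices $(1,1)$; since $M Z_k M^*$ is unitarily equivalent to $Z_k$ it is also closed symmetric with indices $(1,1)$, and a closed symmetric operator with indices $(1,1)$ has no proper closed symmetric extension with the same indices except along the one-parameter family of self-adjoint extensions — in particular $M Z_k M^* \subset Z_K$ with both of these symmetric and non-self-adjoint forces $M Z_k M^* = Z_K$. (Alternatively, and more simply: apply the first part to $M^*$, which is also an onto isometry, to get $M^* Z_K \subset Z_k M^*$, i.e. $Z_K M \subset M Z_k$ after multiplying by $M$ on the left and $M^* M = I$; combined with $M Z_k \subset Z_K M$ this yields equality $M Z_k M^* = Z_K$.) The converse direction is immediate: if $M Z_k M^* = Z_K$ then in particular $M Z_k \subset Z_K M$, so the first part shows $M$ is a multiplier.

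I expect the main obstacle to be the careful bookkeeping of domains in the unbounded intertwining relation — specifically, verifying that the pointwise identity "$m(z) \cdot (z h(z)) = z \cdot (m(z) h(z))$" genuinely lands $M h$ in $\dom{Z_K}$ rather than merely in $\H(K)$, and dually, extracting from $M Z_k \subset Z_K M$ that $M^* K_z$ is a genuine eigenvector of the adjoint $Z_k^*$ (not just a formal one). The cleanest route around this is to establish first, for any RKHS $\H(K)$ carrying such a multiplication operator $Z_K$, the identities $\dom{Z_K} = \{ g \in \H(K) : z g(z) \in \H(K) \}$ and $\ran{Z_K \pm i}^\perp = \bigvee K_{\mp i}$; these follow from \eqref{pevaleigen} and the closedness of $Z_K$, and reduce everything else to the bounded-multiplier lemma (Lemma~\ref{multcri}) applied verbatim. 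The symmetric-operator rigidity fact used in the second part (indices $(1,1)$, no intermediate symmetric extensions off the self-adjoint family) is standard and can be invoked from the references already cited, e.g. \cite{Glazman,RS}.
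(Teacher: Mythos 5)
The paper does not actually prove this lemma: it is presented, together with Lemmas \ref{multcri} and \ref{multext}, as an ``elementary fact'' cited from \cite{GMR,AMR,Paulsen-rkhs}, so there is no in-paper argument to compare against. Your proposal is essentially the standard proof and is correct in outline. Three points to tighten. First, in the converse of the intertwining claim your displayed adjoint manipulation is garbled ($M^* Z_K K_z$ presumes $K_z\in\dom{Z_K}$, and ``$Z_k^* M^* K_z = \ov z M^* M^* K_z$'' is not what you mean); the clean computation is $\ip{M^*K_z}{(Z_k-z)h}=\ip{K_z}{(Z_K-z)Mh}=0$ for all $h\in\dom{Z_k}$, so $M^*K_z\in\ran{Z_k-z}^\perp=\ker{Z_k^*-\ov z}=\bigvee k_z$ by (\ref{pevaleigen}), and then Lemma \ref{multcri} finishes; note that what is really needed here is (\ref{pevaleigen}) itself rather than density of $\dom{Z_k}$. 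Second, you correctly isolate the one genuine hypothesis hiding in the forward direction, namely that $\dom{Z_K}$ is the \emph{maximal} domain $\{g: zg\in\H(K)\}$; this is how every such operator in the paper is actually defined ($Z^\Phi$, $T^\Ga$, $M^T$), so it is best treated as part of the standing assumptions rather than something to be derived from (\ref{pevaleigen}) and closedness alone. Third, in your ``simpler'' route to $MZ_kM^*=Z_K$ you apply the first part to $M^*$ without checking that $M^*$ is a multiplication map; this does follow (from $|m(x)|^2k(x,x)=K(x,x)>0$ one gets $m$ nowhere vanishing, so $1/m$ is the multiplier implementing $M^*$), but as written it is a small circularity. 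Your primary argument via deficiency-index rigidity (a proper closed symmetric extension of an indices-$(1,1)$ operator is self-adjoint, which $Z_K$ is not) is self-contained and suffices.
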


\begin{lemma} \label{multext}
An onto isometry $M : \H (k) \rightarrow \H (K)$ is a multiplier if and only if given any self-adjoint extension
$Z ' $ of $Z_k$, $M Z ' M^*$ is a self-adjoint extension of $Z_K$.
\end{lemma}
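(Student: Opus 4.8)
\textbf{Proof plan for Lemma \ref{multext}.}

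The plan is to reduce this statement to the preceding Lemma \ref{funmodelmult}, which already characterizes onto-isometric multipliers $M$ by the intertwining relation $M Z_k M^* = Z_K$. One direction is nearly immediate: if $M$ is a multiplier, then by Lemma \ref{funmodelmult} we have $Z_K = M Z_k M^*$, and for any self-adjoint extension $Z'$ of $Z_k$ the operator $M Z' M^*$ is clearly self-adjoint (as $M$ is a unitary and $Z'$ is self-adjoint, $M Z' M^*$ is unitarily equivalent to $Z'$ via $M$), so in particular it is densely defined and symmetric; it remains to observe that it extends $Z_K$. This follows since $Z_k \subset Z'$ implies $\dom{Z_k}\subset\dom{Z'}$ and hence $M\dom{Z_k}\subset M\dom{Z'}$, while for $h\in\dom{Z_k}$ one has $M Z' M^* (Mh) = M Z' h = M Z_k h = Z_K (Mh)$ using $M Z_k M^* = Z_K$ on $M\dom{Z_k}\subseteq\dom{Z_K}$. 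So $Z_K \subset M Z' M^*$, and since both are self-adjoint this is the assertion.

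For the converse, suppose $M$ is an onto isometry such that $M Z' M^*$ is a self-adjoint extension of $Z_K$ for \emph{every} self-adjoint extension $Z'$ of $Z_k$. By Lemma \ref{funmodelmult} it suffices to show $M Z_k M^* = Z_K$, or equivalently (conjugating by the unitary $M^*$) that $Z_k = M^* Z_K M$. The idea is to exploit the structure of the family of self-adjoint extensions recalled in Subsection \ref{saextsect}: $Z_k$ has a one-parameter family $\{Z'_\theta\}_{\theta\in[0,1)}$ (indexed by $\T$) of self-adjoint extensions, and likewise $Z_K$; since $\ker{Z_k^*-\bar z}=\bigvee k_z$ is one-dimensional on $X$ (and similarly for $Z_K$), the common part of all self-adjoint extensions of $Z_k$ is exactly $Z_k$ itself, i.e. $\bigcap_\theta Z'_\theta = Z_k$ as graphs (the intersection of the domains of all self-adjoint extensions is $\dom{Z_k}$). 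Under the hypothesis, each $M Z'_\theta M^*$ is a self-adjoint extension of $Z_K$, hence is one of the $Z_K$'s self-adjoint extensions; intersecting over $\theta$, $\bigcap_\theta M Z'_\theta M^* = M\big(\bigcap_\theta Z'_\theta\big) M^* = M Z_k M^*$ on the one hand, and contains (in fact equals, since the map $Z'\mapsto M Z' M^*$ is a bijection between self-adjoint extensions once we know it lands among them and is invertible with inverse $Z''\mapsto M^* Z'' M$) $\bigcap M Z'_\theta M^* = Z_K$ on the other. This forces $M Z_k M^* = Z_K$, and Lemma \ref{funmodelmult} finishes the proof.

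The step I expect to need the most care is the claim that $\bigcap_{\theta} Z'_\theta = Z_k$, i.e. that the intersection of all self-adjoint extensions recovers the symmetric operator. This is a standard fact for symmetric operators with deficiency indices $(1,1)$ — it follows from von Neumann's parametrization, since a vector lying in the domain of every self-adjoint extension must have vanishing "boundary" component in $\ker{T^*-i}\oplus\ker{T^*+i}$, which is only $2$-dimensional here — but one should state it carefully, perhaps via the Cayley transform picture of Subsection \ref{saextsect}: the unitary extensions $U(\alpha)=V+\frac{\alpha}{\|\phi_+\|^2}\ip{\cdot}{\phi_+}\phi_-$ all agree with $V$ precisely on $\ran{V^*V}=\ker{V}^\perp$, so $\bigcap_\alpha U(\alpha)$ (as partial isometries restricted to where they coincide) is $V$, and applying $b^{-1}$ gives $\bigcap_\alpha T(\alpha)=T=b^{-1}(V)$. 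One must also be slightly careful that $1$ may be an eigenvalue of exactly one $U(\alpha)$, so that one self-adjoint "extension" in the family is actually a non-densely-defined self-adjoint linear relation; this does not affect the intersection argument, and one can either include linear relations or simply omit that single parameter value without changing $\bigcap_\theta Z'_\theta$. The remaining manipulations — that conjugation by the unitary $M$ commutes with intersection of graphs and sends self-adjoint operators to self-adjoint operators — are routine.
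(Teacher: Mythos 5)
The paper itself gives no proof of this lemma (it is quoted as an elementary fact with a pointer to the references), so your argument has to stand on its own, and in substance it does: the reduction to Lemma \ref{funmodelmult} via $M Z_k M^* = Z_K$ is the natural route, and the forward direction is complete as written. In the converse, the one step you must rewrite is the parenthetical claim that $Z' \mapsto M Z' M^*$ is a bijection onto the set of \emph{all} self-adjoint extensions of $Z_K$. The hypothesis only tells you this map lands in that set; injectivity together with the formula $Z'' \mapsto M^* Z'' M$ for the inverse on the image does not give surjectivity, so you cannot identify $\bigcap_\theta M Z'_\theta M^*$ with the intersection of the full family of self-adjoint extensions of $Z_K$. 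Fortunately surjectivity is not needed, and there are two quick repairs. (i) Since conjugation by the unitary $M$ is injective, the family $\{ M Z'_\theta M^* \}$ contains at least two \emph{distinct} self-adjoint extensions of $Z_K$; for a closed symmetric linear transformation with deficiency indices $(1,1)$ the only closed symmetric extensions are the transformation itself and its self-adjoint extensions (the von Neumann parametrization leaves no intermediate ones), and the graph intersection of two distinct self-adjoint extensions is a closed symmetric extension contained in both, hence must be $Z_K$ itself. (ii) Alternatively, avoid intersecting the image at all: from $Z_K \subseteq M Z'_\theta M^*$ for every $\theta$ and your (correct) identity $\bigcap_\theta Z'_\theta = Z_k$ you get $Z_K \subseteq M Z_k M^*$; but $M Z_k M^*$ is closed, symmetric, and has indices $(1,1)$, hence is not self-adjoint, whereas every \emph{proper} closed symmetric extension of $Z_K$ is self-adjoint, forcing $Z_K = M Z_k M^*$. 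Your supporting points --- that graph intersection commutes with conjugation by the unitary $M$, that $\bigcap_\alpha T(\alpha) = T$ via the Cayley transform picture of Subsection \ref{saextsect}, and the caveat about the single parameter value at which the extension fails to be a densely defined operator --- are all correct and suffice once either repair above is made.
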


\begin{defn} \label{funmodel}
Throughout this paper, a \emph{model} for $T \in \mc{S}$ is a pair
$(\hat{T} , \H )$, with $\H$ a separable (or finite dimensional) Hilbert space and $\hat{T} \in \mc{S} (\H )$, so that
$T$ is unitariliy equivalent to $\hat{T}$.

A model $(\hat{T} , \H )$ will be called a \emph{functional model} for $T \in \mc{S}$ if $\H = \H (k)$
is a reproducing kernel Hilbert space of functions on $X \subseteq \C$ with $\R \subseteq X$ and $\hat{T} = Z_k \in \mc{S} (\H (k) )$ acts as multiplication by the independent variable $z$ on its domain.
\end{defn}

The above lemmas imply:

\begin{cor} \label{multexist}
If $(Z_k , \H (k) )$, and $(Z_K , \H (K) )$ are two functional models for  $T \in \mc{S}$, then they are equivalent: there is a unitary multiplier $M: \H (k) \rightarrow \H (K)$ intertwining $Z_k$ and $Z_K$. Conversely, if $(Z_k , \H (k))$ is a functional model for $T \in \mc{S}$ and $M : \H (k) \rightarrow \H (K)$ is a unitary multiplier, then $(Z _K := M Z_k M^* , \H (K) )$ is a functional model for $T$.
\end{cor}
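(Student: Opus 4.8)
The plan is to derive both directions directly from Lemma~\ref{funmodelmult}, which identifies the unitary multipliers $M : \H(k) \to \H(K)$ with exactly the onto isometries satisfying $M Z_k M^* = Z_K$, supplemented by Lemma~\ref{multcri} to control the inverse map. I do not expect a genuine obstacle: the statement is essentially a bookkeeping consequence of the preceding lemmas, so the work lies in tracking hypotheses carefully rather than in any hard estimate.

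For the forward implication I would unwind Definition~\ref{funmodel}. Since $(Z_k,\H(k))$ and $(Z_K,\H(K))$ are both models for $T$ acting on some separable $\H$, there are unitaries $U : \H \to \H(k)$ and $W : \H \to \H(K)$ with $U T U^* = Z_k$ and $W T W^* = Z_K$. Setting $M := W U^* : \H(k) \to \H(K)$ gives a unitary with $M Z_k M^* = W(U^* Z_k U) W^* = W T W^* = Z_K$. Thus $M$ is an onto isometry obeying $M Z_k M^* = Z_K$, so Lemma~\ref{funmodelmult} shows that $M$ is a multiplication map; being unitary, it is a unitary multiplier intertwining $Z_k$ and $Z_K$.

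For the converse, suppose $(Z_k,\H(k))$ is a functional model for $T$ and $M : \H(k)\to\H(K)$ is a unitary multiplier, say $M = M_m$ for a scalar function $m : X \to \C$. I would first record two preliminary observations. First, since $M$ is a multiplier, $\H(K)$ is an RKHS of functions on the same set $X \subseteq \C$ as $\H(k)$, and $\R\subseteq X$ because $(Z_k,\H(k))$ is a functional model. Second, $M^* = M^{-1}$ is again a multiplier, equal to $M_{1/m}$: applying $M$ to the identity $M^* K_x = \ov{m(x)}\,k_x$ of Lemma~\ref{multcri} gives $K_x = \ov{m(x)}\, M k_x$, and $m(x)\neq 0$ for every $x \in X$ since $\|M^* K_x\| = \|K_x\| \neq 0$, whence $M k_x = \ov{1/m(x)}\, K_x$, and Lemma~\ref{multcri} applied to $M^*$ identifies it as $M_{1/m}$. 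Now set $Z_K := M Z_k M^*$. Conjugation by the unitary $M$ preserves closedness, symmetry, simplicity and the deficiency indices, so $Z_K \in \mc{S}(\H(K))$, and $Z_K$ is unitarily equivalent to $T$ since $Z_k$ is. It remains to verify that $Z_K$ acts as multiplication by the independent variable on $\dom{Z_K} = M\dom{Z_k}$: for $g = Mh$ with $h\in\dom{Z_k}$ we have $M^* g = h$, hence at any $z \in X$
\[ (Z_K g)(z) = (M_m\, Z_k h)(z) = m(z)\, z\, h(z) = z\, m(z)\, h(z) = z\, g(z), \]
using that $(Z_k,\H(k))$ is a functional model so $(Z_k h)(z) = z\,h(z)$. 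Therefore $(Z_K,\H(K))$ meets every requirement of Definition~\ref{funmodel} and is a functional model for $T$.

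The only points that deserve care — the closest thing here to a subtle step — are the two preliminary observations in the converse: that the set underlying $\H(K)$ is forced to coincide with $X$ (so that $\R \subseteq X$ is inherited), and that the inverse of an isometric multiplier is again a multiplier. With those in place the displayed coordinate computation closes the argument, since multiplication by the fixed scalar $m$ commutes with multiplication by the independent variable.
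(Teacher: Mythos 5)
Your proposal is correct and follows exactly the route the paper intends: the paper offers no written proof beyond ``the above lemmas imply,'' and your argument is precisely the fleshed-out version --- composing the two model unitaries and invoking Lemma~\ref{funmodelmult} for the forward direction, and checking directly (with Lemma~\ref{multcri} controlling $M^{*}=M_{1/m}$) that unitary conjugation by a multiplier preserves membership in $\mc{S}$ and the multiplication-by-$z$ action for the converse. The two preliminary observations you flag are the right points to be careful about, and your handling of them is sound.
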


\begin{cor} \label{Kramer}
    Let $(Z_k , \H(k))$ be a functional model for $T \in \mc{S}$ where $\H(k)$ is a RKHS on $X \supseteq \R$.
If one of the self-adjoint extensions, $Z_k ' $ of $Z_k$ has spectrum equal to the closure of a discrete set of eigenvalues of multiplicity one, $\sigma (Z_k ' ) = \ov{ \{ t_n \} }$, then $\{ k_{t_n} \}$ is a total orthogonal set in $\H (k)$, and any $h \in \H (k)$ obeys the sampling formula:
$$ h(z ) := \sum _{n} f(t_n) \frac{ k (z ,t_n)}{k(t_n , t_n )}. $$
\end{cor}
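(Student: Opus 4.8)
The plan is to exploit the functional-model hypothesis together with the spectral structure of the distinguished self-adjoint extension $Z_k'$. First I would observe that since $Z_k \in \mc{S}(\H(k))$ has indices $(1,1)$ and $Z_k'$ is one of its self-adjoint extensions whose spectrum is the closure of a multiplicity-one discrete set $\{t_n\}$, the spectral theorem gives an orthogonal decomposition $\H(k) = \bigoplus_n \C e_n$, where $e_n$ is a unit eigenvector of $Z_k'$ with eigenvalue $t_n$. Because each $t_n \in \R \subseteq X$ and $Z_k$ acts as multiplication by the independent variable, I would next identify $e_n$ (up to a unimodular scalar) with the normalized point-evaluation vector $k_{t_n}/\|k_{t_n}\|$. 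The key computation here is that $Z_k^* k_z = \ov{z}\, k_z$ by \eqref{pevaleigen}, so $k_{t_n}$ is an eigenvector of $Z_k^*$ at the real point $t_n$; since $Z_k \subset Z_k' \subset Z_k^*$ and $t_n$ is a genuine eigenvalue of the self-adjoint $Z_k'$, the eigenvector $e_n$ of $Z_k'$ must satisfy $Z_k^* e_n = t_n e_n$, hence $e_n \in \ker{Z_k^* - t_n}= \bigvee k_{t_n}$. This forces $e_n$ to be a scalar multiple of $k_{t_n}$, and in particular $k_{t_n}\neq 0$ (which also follows from the standing assumption $k_z\neq 0$), so $\{k_{t_n}\}$ is a total orthogonal set in $\H(k)$.

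With orthogonality and totality in hand, the sampling formula is then just the abstract Fourier expansion in this orthogonal basis. For $h \in \H(k)$ I would write
$$ h = \sum_n \frac{\ip{k_{t_n}}{h}}{\ip{k_{t_n}}{k_{t_n}}}\, k_{t_n}, $$
and evaluate both sides at a point $z \in X$. Using the reproducing property $\ip{k_{t_n}}{h} = h(t_n)$ and $\ip{k_{t_n}}{k_{t_n}} = k(t_n,t_n)$, together with the fact that point evaluation at $z$ is continuous (so it commutes with the norm-convergent sum), one gets
$$ h(z) = \sum_n h(t_n)\, \frac{k(z,t_n)}{k(t_n,t_n)}, $$
which is the claimed identity (the $f$ in the statement is the same function $h$).

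The main obstacle is the middle step: rigorously pinning down that the eigenvectors of $Z_k'$ are exactly the point-evaluation vectors $k_{t_n}$, rather than merely asserting it. The subtlety is that $Z_k'$ is an extension of $Z_k$, not of $Z_k^*$, so one must argue that an eigenvector $e_n$ of $Z_k'$ — which lies in $\dom{Z_k'} \subseteq \dom{Z_k^*}$ and satisfies $Z_k' e_n = t_n e_n$, hence $Z_k^* e_n = t_n e_n$ since $Z_k'\subseteq Z_k^*$ — actually lands in the one-dimensional space $\ker{Z_k^*-t_n}$. That one-dimensionality is where the deficiency-index hypothesis $(1,1)$ is used, via the fact that for a simple symmetric operator with indices $(1,1)$ the deficiency space at any \emph{real} regular point is at most one-dimensional; combined with \eqref{pevaleigen} this deficiency space is spanned by $k_{t_n}$. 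Once that dimension count is secured, orthogonality of the $e_n$ (spectral theorem, distinct eigenvalues) transfers verbatim to the $k_{t_n}$, and totality follows because $\{e_n\}$ spans $\H(k)$ as $\sigma(Z_k')$ is purely the closure of these eigenvalues, so the distinguished extension has no continuous part.
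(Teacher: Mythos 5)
Your proof is correct and is exactly the argument the paper leaves implicit when it presents this as a corollary of the preceding setup: you use \eqref{pevaleigen} to identify each eigenvector of the self-adjoint extension $Z_k'\subseteq Z_k^*$ with the point-evaluation vector $k_{t_n}$ spanning the one-dimensional space $\ker{Z_k^*-t_n}$, and then the sampling formula is the orthogonal expansion combined with the reproducing property and continuity of point evaluation. No gaps; the only cosmetic remark is that the $f$ in the displayed formula of the statement is a typo for $h$, which you correctly note.
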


This paper focuses on the representation theory of the class $\mc{S} ^R$ of regular, symmetric linear transformations with deficiency indices $(1,1)$. As we will see in the upcoming section, any symmetric $T \in \mc{S} ^R$ admits a natural functional model, $(M^T , \K (T) =\H (K^T))$ and the spectra of every self-adjoint extension, $T _\theta $ of $T$ is a discrete sequence of simple eigenvalues with no finite accumulation point.  The above corollary then implies that $\K (T)$ obeys a one-parameter family of Shannon-type sampling formulas (see Theorem \ref{tvbspace}).

\section{Abstract Functional Analysis model} \label{FAmodel}

In this section we present an abstract functional analytic approach to spaces of functions obeying a time-varying bandlimit as developed in \cite{Kempf2004,Hao2011}. The local bandlimit spaces, $\K (T); \ T \in \mc{S} ^R$, constructed in this section will be central to our approach and definition of time-varying bandwidth.

To construct a space of functions obeying a time-varying bandlimit, the input data is two sequences $\mbf{t} = (t_n )$ and
$\mbf{t} ' = (t_n ')$ obeying the following properties:

\begin{defn} \label{TVpairdef}
Let $\mathbb{F}$ be a subset of the integers $\Z$ of the
form $\mathbb{F} = \N \cup \{ 0 \} , - \N \cup \{ 0 \} , \Z$ or $\{ 0,1 , ... , N \}$.
Let $\mbf{t} =(t_n) \subset \R$ and $\mbf{t} ' = (t_n ') \subset [ 0 , \infty )$ be two sequences indexed by $\mathbb{F} \subset \Z$ with the following
properties:
\bn
    \item $ \mbf{t}$ is a strictly increasing sequence, $  t_n < t_{n+1},$ with no finite accumulation point. \\
    \item The two sequences $\mbf{t}, \mbf{t} '$ are compatible in the sense that:
    \be \sum \frac{t_n '}{1+t_n ^2 } < \infty. \label{TVpair} \ee
\en
Any such pair $(\mbf{t}, \mbf{t} ' )$ is called \emph{admissible}.
We say that an admissible pair $(\mbf{t} , \mbf{t} ' )$ is a \emph{time-varying bandlimit pair} or more simply, a \emph{bandlimit pair} if $\mbf{t} '  \subset (0, \infty )$, \emph{i.e.} $t_n '> 0 $ for all $n$. A bandlimit pair is called \emph{finite} if $\sum t_n ' < \infty$. Otherwise, if a bandlimit pair obeys $ \sum t_n ' = + \infty$ it is called \emph{infinite}.
\end{defn}

As proven in \cite{Kempf2004,Hao2011} (see also \cite[Theorem 2]{Martin-sym}) one has

\begin{lemma} \label{Rcover}
    Given any $T \in \mc{S} ^R$, fix a pair of equal norm deficiency vectors $\phi _\pm$ (and hence a parametrization, $T_\theta; \ \theta \in [0,1)$ of the self-adjoint extensions of $T$) the spectrum $\sigma (T _\theta )$ of each
self-adjoint extension $T _\theta$, $\theta \in [0,1)$ is
$$ \sigma (T _\theta ) := ( t_n (\theta ) ) =: \mbf{t} _\theta, \quad \mbox{where} \quad t_n (\theta ) = \sigma (T _\theta ) \cap [t_n , t_{n+1} ).$$ For each $\theta \in [0 , 1 )$, $\mbf{t} _\theta$ is a strictly increasing sequence
of eigenvalues of multiplicity one with no finite accumulation point, $\sigma (T _\theta ) \cap \sigma (T _\beta) = \emptyset$ for $\theta \neq \beta$ and
$$ \bigcup _{\theta \in [0 , 1 ) } \sigma (T _\theta ) = \R. $$ That is, the spectra of all self-adjoint extensions cover the real line exactly once.
\end{lemma}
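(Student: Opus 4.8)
The plan is to work through the Cayley transform and exploit the explicit parametrization of the unitary extensions $U_\theta = U(e^{i2\pi\theta})$ of $V = b(T)$ given in \eqref{uniextensions}. The key structural fact is that for $T \in \mc{S}^R$, each self-adjoint extension $T_\theta$ has spectrum consisting of simple eigenvalues with no finite accumulation point; this is where regularity is used. Indeed, $T-t$ being bounded below for every real $t$ forces $(T_\theta - t)^{-1}$ to be compact for one (hence every) $\theta$, or more directly, forces the point evaluation structure of a functional model to be ``local''; so the first step is to invoke (or re-derive from the definition of $\mc{S}^R$) that $\sigma(T_\theta)$ is a discrete real sequence of eigenvalues of multiplicity one. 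Since $T_\theta$ is self-adjoint and $T \subset T_\theta$ has deficiency indices $(1,1)$, the eigenvalues are automatically simple, and discreteness with no finite accumulation point is exactly the regularity hypothesis.

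Next I would establish the interlacing and covering properties simultaneously using a monotonicity argument. The standard device is the Weyl $m$-function or, equivalently here, the Liv\v{s}ic characteristic function: one shows that $t \in \sigma(T_\theta)$ if and only if $\Theta_T(t) = e^{i2\pi\theta}$ (this is Corollary \ref{charfunspec}, which I may assume, but for a self-contained argument I would instead use that $t$ is an eigenvalue of $T_\theta$ iff the deficiency vector at $t$ lies in the appropriate one-dimensional subspace, which happens iff a certain real-analytic, strictly monotone phase function passes through $2\pi\theta \pmod{2\pi}$). Concretely, for each real $t$ in the resolvent set of $T$ there is a well-defined eigenvalue equation; between consecutive points $t_n, t_{n+1}$ of $\sigma(T_0)$ the relevant phase increases by exactly $2\pi$ (it cannot increase by more, since that would produce a second eigenvalue of $T_\theta$ in $[t_n, t_{n+1})$ and hence an eigenvalue of multiplicity $\geq 2$ somewhere or violate simplicity of the deficiency spaces; it cannot increase by less, since $t_{n+1} \in \sigma(T_0)$ means the phase returns to a multiple of $2\pi$). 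This yields exactly one solution $t_n(\theta)$ of the eigenvalue equation in each interval $[t_n, t_{n+1})$ for each $\theta \in [0,1)$, giving both $t_n(\theta) = \sigma(T_\theta) \cap [t_n, t_{n+1})$ and the disjointness $\sigma(T_\theta) \cap \sigma(T_\beta) = \emptyset$ for $\theta \neq \beta$ (since the phase is strictly monotone, it hits each value once per period). Strict monotonicity in $t$ of the phase also gives that $\mbf{t}_\theta$ is strictly increasing, and that it inherits the no-finite-accumulation-point property from $\mbf{t}$.

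For the covering statement $\bigcup_{\theta \in [0,1)} \sigma(T_\theta) = \R$: every real $\lambda$ either lies in $\sigma(T_0) = \mbf{t}$, in which case $\lambda = t_n = t_n(0)$, or lies strictly between some $t_n$ and $t_{n+1}$, in which case the phase function at $\lambda$ takes some value strictly between $2\pi n$ and $2\pi(n+1)$, i.e. equals $2\pi(n+\theta)$ for a unique $\theta \in (0,1)$, so $\lambda \in \sigma(T_\theta)$. Exactness of the cover (each $\lambda$ covered once) is precisely the disjointness already shown. The main obstacle I anticipate is making the ``phase increases by exactly $2\pi$ between consecutive eigenvalues of $T_0$'' claim rigorous without circularity: one must be careful that the phase function is genuinely continuous and strictly increasing across all of $\R$ (not just on resolvent intervals), and that it has no jumps at the eigenvalues themselves — this is where one uses that $V$ is completely non-unitary (so $1$ is an eigenvalue of at most one $U(\alpha)$, ensuring the parametrization $\theta \mapsto T_\theta$ genuinely sweeps out all self-adjoint extensions and the phase is a bona fide bijection $\R \to \R$ after unwrapping). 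The cleanest route is probably to cite the cited references \cite{Kempf2004,Hao2011,Martin-sym} for the monotonicity of the relevant $m$-function or characteristic function and then deduce the interlacing/covering combinatorially as above.
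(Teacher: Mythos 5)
Your route is the one the paper itself points to: the paper gives no proof of Lemma \ref{Rcover}, citing \cite{Kempf2004,Hao2011} and \cite[Theorem 2]{Martin-sym}, and the remark immediately following it says the interlacing is a consequence of Kre\v{\i}n's alternating eigenvalue theorem or of Theorem \ref{phasefun} (the phase of the meromorphic inner Liv\v{s}ic function is continuous, real-analytic and strictly increasing on $\R$), combined with the spectral description $t\in\sigma(T_\theta)\iff\Theta_T(t)=e^{i2\pi\theta}$ of Theorem \ref{extspec}/Corollary \ref{charfunspec}. Your sketch of discreteness and simplicity of $\sigma(T_\theta)$ (regularity kills the essential spectrum; indices $(1,1)$ plus simplicity of $T$ force multiplicity one) is sound, and the covering/disjointness argument via strict monotonicity of the unwrapped phase is exactly right. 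One micro-step is justified incorrectly: the reason the phase cannot increase by \emph{more} than $2\pi$ between $t_n$ and $t_{n+1}$ is not that a second eigenvalue of $T_\theta$ in that interval would violate simplicity (two distinct simple eigenvalues of $T_\theta$ in one interval contradict nothing about multiplicities); it is simply that an increase exceeding $2\pi$ would force the phase through another multiple of $2\pi$ strictly between $t_n$ and $t_{n+1}$, producing an eigenvalue of $T_0$ there and contradicting the assumption that $t_n,t_{n+1}$ are \emph{consecutive} points of $\sigma(T_0)$. With that repair, and with the forward dependence on Theorem \ref{phasefun} acknowledged (as the paper itself does), the argument is complete.
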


\begin{remark}
    In the above references this theorem is proven assuming $T$ is densely defined. Working with the unitary extensions $U(\alpha) $ of $V = b(T)$, it is not
difficult to prove the obvious analogue of this result holds in general. Namely if $V$ is any c.n.u. (completely non-unitary) partial isometry with defect indices $(1,1)$, its inverse Cayley transform $T:= b^{-1} (V)$ is always a simple symmetric linear transformation with domain $\dom{T} = (I-V) \ker{V} ^\perp$, and it is not difficult to show that $T$ is densely defined if and only if $1$ is not an eigenvalue of any unitary extension $U (\alpha )$ of $V$, see \emph{e.g.} \cite[Theorem 3.0.9, Appendix I]{Glazman} or \cite[Theorem 3.1.2]{Martin-dB}. If $T \in \mc{S} ^R$ is not densely defined then the spectra of each unitary extension $U (\alpha)$ of
$V = b(T)$ can be arranged as a strictly increasing sequence of simple eigenvalues $( \alpha _n (\theta ) ) \subset \T$ (increasing in angle), where $\T$ denotes the unit circle. There is no overlap between the spectra of different extensions, and the spectra of all extensions cover the unit circle exactly once. It follows that $1$ is an eigenvalue of exactly one unitary extension $U$ of $V$, and one cannot take the inverse Cayley transform of this particular $U$ to obtain a densely defined self-adjoint extension of $T$. In the case where $T$ is densely defined $1$ is not an eigenvalue of any unitary extension of $V$ and the spectra of all unitary extensions cover $\T \sm \{ 1 \}$ exactly once.

This technical issue of when $T$ is or is not densely defined does not complicate the analysis or affect proofs in any significant way \cite{Martin-ext,Habock2001} and we will typically work under the assumption that $T$ is densely defined.
\end{remark}

\begin{remark}
The fact that $T_\theta$ has exactly one eigenvalue between any two eigenvalues of $T_0 = T(1)$ can be proven using Kre\v{\i}n's alternating eigenvalue theorem or using the theory of meromorphic model subspaces of Hardy space (this fact will also be a direct consequence of Theorem \ref{phasefun}) \cite{GG,Martin-dB}. The resulting ordered real \emph{sampling sequence} $\mbf{t} _\theta := ( t_n (\theta ) ); \ \theta \in [0,1)$ is then strictly increasing with no finite accumulation point.
\end{remark}

\begin{defn} \label{specfundef}
    Given $T \in \mc{S} ^R $ and a fixed choice of equal norm deficiency vectors $\phi _\pm \in \ker{T^* \mp i }$, the \emph{spectral function} of $T$,
$t : [a, b) \rightarrow \R$ is the strictly increasing bijection defined by
    \be t (n+ \theta) := t_n (\theta), \ee where
    \be a :=  \inf \{ n | \ t_n \in \sigma (T_0 ) \}, \quad \mbox{and} \quad   b := 1 + \sup \{ n | \ t_n \in \sigma (T_0 ) \}.\ee
\end{defn}

\begin{thm}{ (\cite[Corollary 5.1.7]{Martin-dB}, \cite{Martin-sym})}
    $t : [a,b) \rightarrow \R$ is a strictly increasing smooth bijection (onto $\R$) such that $t ' (s) > 0$. Moreover $t$ has a locally analytic extension about any point $s \in (a,b)$.
\end{thm}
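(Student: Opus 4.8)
The plan is to identify the spectral function $t$ with (a normalization of) the compositional inverse of the phase function of the Liv\v{s}ic characteristic function $\Theta _T$ of $T$, and then to read off its regularity from standard facts about meromorphic inner functions.

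First I would recall that, since $T \in \mc{S} ^R$, its characteristic function $\Theta _T$ is a meromorphic inner function: the deficiency indices $(1,1)$ make it scalar-valued, while regularity of $T$ is precisely the condition forcing $\Theta _T$ to extend analytically across $\R$ (see \cite{Martin-dB,Martin-sym} and Section \ref{Hardy}). Hence $\Theta _T$ is analytic on a neighbourhood of $\R$ with $|\Theta _T (x)| = 1$ on $\R$, so we may write $\Theta _T (x) = e^{i\varphi (x)}$ there for a real-analytic function $\varphi$, unique up to an additive constant which I fix using the chosen deficiency vectors $\phi _\pm$. Next I would invoke the eigenvalue description of Corollary \ref{charfunspec} (which may also be extracted directly from the Cayley picture of Subsection \ref{saextsect} together with the point-evaluation identity \eqref{pevaleigen}): $x \in \R$ is an eigenvalue of $T _\theta$ if and only if $\Theta _T (x) = e^{i 2\pi \theta }$, \emph{i.e.} $\varphi (x) \in 2\pi \theta + 2\pi \Z$. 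Ordering these solutions and matching them to the labelling $t_n (\theta ) = \sigma (T_\theta ) \cap [t_n , t_{n+1} )$ of Lemma \ref{Rcover}, and absorbing the additive constant into the branch of $\varphi$, gives $\varphi (t(s)) = 2\pi s$ for every $s$ in the index interval; in particular $\varphi$ maps $\R$ onto $2\pi [a,b)$.

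The crux is to prove that $\varphi ' > 0$ on all of $\R$. For this I would use the canonical factorization $\Theta _T (z) = e^{i\tau z} \prod _k b _{w_k} (z)$, with exponential type $\tau \geq 0$, elementary Blaschke factors $b_w (z) = (z - w)(z - \ov{w}) ^{-1}$, and zeros $w_k \in \C ^+$ having no finite accumulation point (the convergence of this product and of the series just below is the analytic content of the admissibility condition \eqref{TVpair}). Differentiating $\log \Theta _T$ along $\R$ then gives
\be \varphi ' (x) = \tau + \sum _k \frac{2 \, \im{w_k}}{|x - w_k | ^2 } , \ee
a locally uniformly convergent series of non-negative terms, which is strictly positive because $\Theta _T$ is non-constant. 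Therefore $\varphi$ is real-analytic and strictly increasing on $\R$, hence a real-analytic bijection of $\R$ onto the open interval $2\pi (a,b)$.

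It then remains to assemble the statement. Since $t = \left( \tfrac{1}{2\pi}\varphi \right) ^{-1}$, the real-analytic inverse function theorem shows that $t$ is a strictly increasing real-analytic bijection of $(a,b)$ onto $\R$ with $t ' (s) = 2\pi / \varphi ' (t(s)) > 0$, and combining this with Lemma \ref{Rcover} produces the smooth strictly increasing bijection onto $\R$ asserted in the theorem, analytic about every interior point $s \in (a,b)$. The essential point is that no regularity is lost at the integers $s = n$: since $\varphi$ is real-analytic with $\varphi ' > 0$ in a full neighbourhood of $t_n = t(n)$, the apparent singularity there --- a pole of the associated Herglotz/Weyl function, equivalently a point where $\Theta _T = 1$ --- is invisible to $\varphi$ and hence to $t$. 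I expect the main obstacle to be this crux together with the exact bookkeeping of eigenvalues: checking that regularity genuinely forces $\Theta _T$ to be meromorphic inner with strictly positive phase derivative, and pinning the branch of $\varphi$ so that $\varphi \circ t = 2\pi \, \mathrm{id}$ matches the labelling of Lemma \ref{Rcover}; granting these, the regularity of $t$ is a soft consequence of the analytic inverse function theorem. An equivalent, Hardy-space-free route goes through Kre\v{\i}n's resolvent formula: the resolvents $(T_\theta - z) ^{-1}$ form a rank-one perturbation family, the associated Weyl (Herglotz) function $m$ continues meromorphically across $\R$ with poles exactly on $\sigma (T_0 ) = \{ t_n \}$ --- the series defining $m$ converging precisely by \eqref{TVpair} --- and with $m ' > 0$ between poles; the eigenvalues of $T_\theta$ solve $m(x) = \cot (\pi \theta )$, and analytic perturbation theory yields analyticity and strict monotonicity of $\theta \mapsto t_n (\theta )$ on each interval $(n , n+1)$; reparametrizing by $e^{i 2\pi \theta }$, \emph{i.e.} passing from $m$ to $\Theta _T = (m - i)(m + i) ^{-1}$, removes the apparent singularity at the poles of $m$ and gives analyticity across the integers. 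The two routes coincide, as $\varphi = -i\log \Theta _T$ and $m$ differ by a Cayley transform.
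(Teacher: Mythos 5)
Your proposal is correct and follows essentially the route the paper itself endorses: it defers the proof to the representation of $T\in\mc{S}^R$ as multiplication by $z$ on a meromorphic model space and to Theorem \ref{phasefun}, i.e.\ to the fact that a meromorphic inner $\Theta_T$ has a real-analytic phase with strictly positive derivative (here automatic since $\Theta_T(i)=0$ forces at least one Blaschke factor), after which the analytic inverse function theorem gives the spectral function. Your explicit computation of $\varphi'$ from the Blaschke--exponential factorization, and the Kre\v{\i}n/Weyl-function alternative, merely flesh out what the paper cites from \cite{Martin-dB}; the only cosmetic quibble is the omission of the normalizing unimodular constants in the infinite Blaschke product, which do not affect the phase derivative.
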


Ultimately this follows from the fact that
$$ U (z) := V + \frac{z}{\| \phi _+ \| ^2} \ip{\phi_ +}{\cdot} \phi _-, $$ where $V = b(T)$ defines an entire operator-valued function.  In \cite{Martin-sym}, elementary Banach algebra techniques were used to establish the existence of such a function $t$, however a much simpler proof follows as a consequence of the representation theory of $\mc{S} ^R $ as multiplication by the independent variable in a meromorphic model subspace of Hardy space \cite[Corollary 5.1.7]{Martin-dB}. This will also follow from Theorem \ref{phasefun} of Section \ref{Hardy}.

\begin{lemma}
    Let $\phi _\pm , \psi _\pm$ be any two equal-norm pairs of deficiency vectors for $T \in \mc{S} ^R$. If $t_\phi , t_\psi$ are the corresponding spectral functions of $t$ then there is a $\zeta \in (-1 , 1)$ so that
$t_\phi (s) = t_\psi (s  -\zeta )$.
\end{lemma}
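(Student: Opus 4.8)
The plan is to exploit the fact that the deficiency subspaces $\ker{T^* - i}$ and $\ker{T^* + i}$ are one-dimensional, so that passing from $\phi_\pm$ to $\psi_\pm$ only multiplies each vector by a complex scalar; I will then track that scalar through the construction of the self-adjoint extensions in Subsection \ref{saextsect} and through Definition \ref{specfundef}, and show it produces nothing more than a rigid shift of the argument of the spectral function.

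First I would write $\psi_\pm = \lambda_\pm \phi_\pm$ with $\lambda_\pm \in \C$. The two equal-norm conditions $\|\phi_+\| = \|\phi_-\|$ and $\|\psi_+\| = \|\psi_-\|$ force $|\lambda_+| = |\lambda_-|$, so the ratio $\lambda_-/\lambda_+ =: e^{i\eta}$ is unimodular. Substituting $\psi_\pm$ into the definition of $U(\alpha)$ in \eqref{uniextensions}, the rank-one term picks up exactly the factor $\overline{\lambda_+}\lambda_-/|\lambda_+|^2 = \lambda_-/\lambda_+ = e^{i\eta}$, so the unitary extension family built from $\psi_\pm$ is $U_\psi(\alpha) = U_\phi(e^{i\eta}\alpha)$. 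Taking inverse Cayley transforms and using the $1$-periodicity in $\theta$ of $\theta \mapsto U_\phi(e^{i2\pi\theta})$, the self-adjoint extensions satisfy $(T_\psi)_\theta = (T_\phi)_{\{\theta + \zeta_0\}}$ for all $\theta \in [0,1)$, where $\{\cdot\}$ denotes fractional part and $\zeta_0 := \eta/(2\pi)$, normalized to lie in $(-\tfrac12,\tfrac12]$. In particular $\sigma((T_\psi)_\theta) = \sigma\big((T_\phi)_{\{\theta+\zeta_0\}}\big)$ for every $\theta$.

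The next step converts this relation between the extension families into one between the spectral functions, and here Lemma \ref{Rcover} is the key input. Since the spectra $\{\sigma((T_\phi)_\theta)\}_{\theta\in[0,1)}$ partition $\R$, there is a well-defined map $\R \to [0,1)$ sending $x$ to the unique $\theta$ with $x \in \sigma((T_\phi)_\theta)$; by Definition \ref{specfundef} this map is precisely $x \mapsto \{\tau_\phi(x)\}$, the fractional part of $\tau_\phi := t_\phi^{-1}$ (and similarly for $\psi$). The relation above then says that $\{\tau_\phi(x)\}$ and $\{\tau_\psi(x)\}$ differ by $\zeta_0$ modulo $1$ for every $x$, hence $\tau_\phi(x) - \tau_\psi(x)$ is a continuous function on $\R$ taking values in the discrete set $\zeta_0 + \Z$, so it equals a constant $\zeta$. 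Thus $\tau_\phi = \tau_\psi + \zeta$, which is exactly $t_\phi(s) = t_\psi(s - \zeta)$.

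It remains to check that $\zeta$ can be taken in $(-1,1)$, and this is the only point requiring care, since the argument so far pins $\zeta$ down only modulo $\Z$. With the natural convention for indexing the sequence $\mbf{t}^\psi$ — namely, indexing the eigenvalues of $(T_\psi)_0 = (T_\phi)_{\{\zeta_0\}}$ so that $t_0^\psi$ lies in the window of $\sigma((T_\phi)_0)$ adjacent to $t_0^\phi$ — the interlacing property underlying Lemma \ref{Rcover} (each $\sigma((T_\phi)_\theta)$ has exactly one point strictly between consecutive points of $\sigma((T_\phi)_0)$) forces $\lfloor\zeta\rfloor \in \{-1,0\}$, and in fact $\zeta = \zeta_0 \in (-\tfrac12,\tfrac12]$; when $\zeta_0 = 0$ the two extension families and their parametrizations coincide, so $\zeta = 0$. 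I expect this last bookkeeping — making the integer part of $\zeta$ explicit from the indexing conventions of Definition \ref{specfundef}, rather than merely controlling $\zeta$ modulo $\Z$ — to be the main nuisance; everything else is purely formal.
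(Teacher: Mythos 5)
Your proposal is correct and follows exactly the route the paper indicates (it gives no detailed proof, saying only that the lemma "is easily verified using the formula (\ref{uniextensions})"): the one-dimensionality of the deficiency spaces yields a unimodular factor $e^{i\eta}$ that rotates the parameter $\alpha$, hence shifts the extension family $T_\theta$ by a constant mod $1$, and the disjoint-cover property of Lemma \ref{Rcover} plus continuity of $\tau_\phi-\tau_\psi$ upgrades this to a genuine constant shift of the spectral function. Your closing remark is apt: the only delicate point is the integer part of $\zeta$, which hinges on the (not fully pinned down) indexing convention in Definition \ref{specfundef}, and your interlacing argument handles it adequately.
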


This is easily verified using the formula (\ref{uniextensions}). This defines an equivalence relation on spectral functions and the equivalence classes are a complete unitary invariant for $\mc{S} ^R$ \cite{Martin-sym}.  We will discuss a more useful unitary invariant for $\mc{S}$, the Liv\v{s}ic characteristic function, in Subsection \ref{Lcharfun}.

The following was first developed in \cite{Kempf2004}, see also \cite{Hao2011}:

\begin{thm} \label{FApic}
Let $(\mbf{t}, \mbf{t} ')$ be an admissible pair, and choose a self-adjoint operator $T'$ on a separable Hilbert space $\H$ with simple spectrum $\mbf{t}$ (each element of $\mbf{t}$ is a simple eigenvalue).  Fix a vector $\phi _+ \in \H$ so that if $ \{ \psi _n \}$ is any orthonormal eigenbasis of $T'$, $T' \psi _n = t_n \psi _n$, the coefficients of $\phi _+$ in this basis satisfy:
$$ | \ip{\psi _n}{\phi _+ } | ^2 = \frac{t_n ' }{1 + t_n ^2}. $$  Let $\phi _-$ and $\{ \phi _n \}$ be the unique vector and choice of orthonormal eigenbasis of $T'$ so that \be \phi _{\pm} = \sum \frac{ \sqrt{t_n ' } }{ t_n \mp i } \phi _n \in \H. \label{defectdef} \ee
Then there is a unique symmetric linear transformation $T$, $\dom{T} \subset \H$, with defect indices $(1,1)$ and no essential spectrum so that $\phi _\pm \in \ker{T ^* \mp i}$ are equal norm deficiency vectors for $T$, and with respect to the corresponding parametrization of self-adjoint extensions, $T ' = T_0$. \\
If $t_n ' >0$ for all $n$, \emph{i.e.} if $(\mbf{t} , \mbf{t} ' )$ is a bandlimit pair, then $T \in \mc{S} ^R (\H )$ is simple and regular. $T$ is densely defined if and only if
$(\mbf{t} , \mbf{t} '  )$ is an infinite bandlimit pair, \emph{i.e.} if and only if $\sum t_n ' = + \infty $. \\

Conversely suppose that $T \in \mc{S} ^R$, fix a choice of equal norm deficiency vectors $\phi _\pm \in \ker{T^* \mp i}$ (and hence a spectral function for $T$) and define $\mbf{t} = (t_n (0) ) = ( t ( n) ) $ and
$\mbf{t} '  := (\frac{\| \phi _+ \| ^2}{\pi} t ' (n) ) $. Then the pair of sequences $(\mbf{t} , \mbf{t} ')$ is a bandlimit pair. This bandlimit pair is finite if and only if $T$ is not densely defined.
\end{thm}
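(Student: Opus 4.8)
The plan is to construct $T$ explicitly as the inverse Cayley transform of a concrete partial isometry on $\H$, read every claimed property off that isometry, and then obtain the converse as a consistency statement resting on a single nontrivial identity.

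\emph{Forward direction.} I would set $U_0 := b(T') = (T'-i)(T'+i)^{-1}$, a unitary operator on $\H$ with multiplicity-free spectrum $\sigma(U_0) \subseteq \T \sm \{1\}$ and $1 \notin \sigma _p(U_0)$ (each $b(t_n) \neq 1$). Admissibility gives $\sum \frac{t_n '}{1+t_n ^2} < \infty$, so the prescribed $\phi _+ = \sum _n \ip{\psi_n}{\phi_+}\psi_n$ lies in $\H$. Choosing $\phi _n := \frac{\ip{\psi_n}{\phi_+}(t_n - i)}{\sqrt{t_n '}}\psi_n$ when $t_n ' > 0$ (and any unit vector of $\C\psi_n$ otherwise) yields an orthonormal eigenbasis of $T'$ for which $\phi _+ = \sum \frac{\sqrt{t_n '}}{t_n - i}\phi_n$, and then $\phi _- := \sum \frac{\sqrt{t_n '}}{t_n + i}\phi_n = U_0\phi_+$ is the unique such vector, with $\|\phi_-\| = \|\phi_+\|$. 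Next I let $P_+ := \frac{1}{\|\phi_+\|^2}\ip{\cdot}{\phi_+}\phi_+$ and define $V := U_0(\id - P_+)$, $T := b^{-1}(V)$. Then $V$ is a partial isometry with $\ker{V} = \C\phi_+$ and $\ran{V} = U_0(\phi_+ ^\perp)$, so $\ran{V}^\perp = \C\phi_-$; hence $T$ is a closed symmetric transformation with deficiency indices $(1,1)$ and, by the Cayley correspondence, $\phi _\pm \in \ker{T^* \mp i}$. Since $\frac{1}{\|\phi_+\|^2}\ip{\cdot}{\phi_+}\phi _- = U_0 P_+$, the unitary extension $U(1) = V + \frac{1}{\|\phi_+\|^2}\ip{\cdot}{\phi_+}\phi _-$ equals $U_0$, so $T_0 = b^{-1}(U(1)) = T'$; and since $T'$ has discrete spectrum, $\sigma _{\mathrm{ess}}(T) = \sigma _{\mathrm{ess}}(T_0) = \emptyset$. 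Uniqueness is then immediate: any symmetric transformation with these equal-norm deficiency vectors and $T_0 = T'$ must satisfy $b(T) = U_0 - \frac{1}{\|\phi_+\|^2}\ip{\cdot}{\phi_+}\phi _- = U_0(\id - P_+)$.

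\emph{Simple, regular, densely defined.} Assume now $t_n ' > 0$ for every $n$, so $\ip{\psi_n}{\phi_+} \neq 0$ for all $n$ and $\phi _+$ is cyclic for the multiplicity-free unitary $U_0$. A reducing subspace of $V$ on which $V$ acts unitarily is orthogonal to $\ker{V} = \C\phi_+$, hence reduces $U_0$, hence is trivial; so $V$ is completely non-unitary and $T$ is simple. A simple symmetric transformation has no real eigenvalues, and together with $\sigma _{\mathrm{ess}}(T) = \emptyset$ — equivalently, $T_0 = T'$ having purely discrete spectrum — this forces $T - t$ to be bounded below for every real $t$, i.e.\ $T \in \mc{S} ^R(\H)$. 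For dense definedness, note $V = U_0$ on $\phi _+ ^\perp$, so $\dom{T} = (V - \id)\phi _+ ^\perp = (U_0 - \id)\phi _+ ^\perp$ and therefore $\ov{\dom{T}}^\perp = \{ y : (U_0 ^* - \id)y \in \C\phi_+ \}$; since $1 \notin \sigma _p(U_0 ^*)$, this space is nontrivial precisely when $(U_0 ^* - \id)^{-1}\phi _+ \in \H$, and as $(U_0 ^* - \id)^{-1}\psi_n = \frac{t_n - i}{2i}\psi_n$,
\[ \big\| (U_0 ^* - \id)^{-1}\phi _+ \big\|^2 = \tfrac14 \sum |\ip{\psi_n}{\phi_+}|^2 (1+t_n ^2) = \tfrac14 \sum t_n '. \]
Hence $T$ is densely defined iff $\sum t_n ' = \infty$, i.e.\ iff the bandlimit pair is infinite (this also matches the criterion that $T$ is densely defined iff no unitary extension of $V$ has $1$ as an eigenvalue).

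\emph{Converse direction, and the main obstacle.} Let $T \in \mc{S} ^R$ have equal-norm deficiency vectors $\phi _\pm$ and spectral function $t$, and diagonalize $T_0$ as $T_0\psi_n = t_n\psi_n$. By Lemma \ref{Rcover} and smoothness of $t$, $\mbf{t} = (t(n)) = \sigma(T_0)$ is strictly increasing with no finite accumulation point and $t' > 0$, so condition (1) of Definition \ref{TVpairdef} holds and $\mbf{t}' = (\frac{\|\phi_+\|^2}{\pi}t'(n)) \subset (0,\infty)$. The one substantial ingredient is the coefficient identity
\[ t'(n) = \frac{\pi}{\|\phi_+\|^2}(1+t_n ^2)\,|\ip{\psi_n}{\phi_+}|^2 , \quad \text{equivalently} \quad t_n ' = (1+t_n ^2)|\ip{\psi_n}{\phi_+}|^2 , \]
which says exactly that the weights $\mbf{t}'$ recovered from $T$ are the ones the forward construction feeds back in. Granting it, $\sum \frac{t_n '}{1+t_n ^2} = \sum |\ip{\psi_n}{\phi_+}|^2 = \|\phi_+\|^2 < \infty$, so condition (2) holds and $(\mbf{t},\mbf{t}')$ is a bandlimit pair; moreover $T_0 = T(1)$ forces $b(T) = U_0(\id - P_+)$ with $U_0 = b(T_0)$, so the dichotomy above applies verbatim and gives $\sum t_n ' = 4\|(U_0 ^* - \id)^{-1}\phi_+\|^2 < \infty$ iff $T$ is not densely defined. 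The coefficient identity is the crux, and I expect its derivation to be the main obstacle; I would obtain it by differentiating the eigenvalue equation for the rank-one family $T_\theta$ (supplied by Kre\v{\i}n's resolvent formula) in $\theta$ at $\theta = 0$, relating $\frac{d}{d\theta}t_n(\theta)\big|_{\theta=0} = t'(n)$ to the residue at $z = t_n$ of the Weyl function $m(z) = \ip{\phi_+}{(T_0 - z)^{-1}\phi_+} = \sum_k |\ip{\psi_k}{\phi_+}|^2/(t_k - z)$, namely $|\ip{\psi_n}{\phi_+}|^2$; the constant $\pi/\|\phi_+\|^2$ and the weight $1+t_n ^2$ then come from the normalization in the Cayley transform $b$ together with the convention $t(n+\theta) = t_n(\theta)$.
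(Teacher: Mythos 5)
Your argument is correct and, at bottom, it builds the same operator as the paper: your $V = U_0(\id - P_+)$ is precisely the Cayley transform of the paper's $T = T(1)|_{\dom{T}}$ with $\dom{T} = \{\phi \,:\, \ip{\phi_+}{(T(1)+i)\phi} = 0\}$, since $\ran{T+i} = \phi_+^{\perp}$ there. The difference is the route. The paper works with the domain restriction directly and must then prove separately, in Lemma \ref{bb}, that the deficiency indices are exactly $(1,1)$ (via regularity, the constancy of $\dim{\ker{(T')^*-z}}$, and an explicit contradiction for $n>1$), whereas in your partial-isometry picture the index count is immediate from $\dim{\ker{V}} = \dim{\ran{V}^{\perp}} = 1$, and uniqueness together with the identification $T_0 = T'$ falls straight out of the extension formula (\ref{uniextensions}); this is a genuine streamlining, and one the paper itself endorses in a remark. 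Your dense-definedness criterion is Lemma \ref{dd} transported by $b$ (your vector $(U_0^*-\id)^{-1}\phi_+$ is, up to the factor $\tfrac{1}{2i}$, the paper's $\sum \sqrt{t_n'}\,\psi_n = (T(1)-i)\phi_+$), and your simplicity and regularity arguments match Lemma \ref{reg}. For the converse you have correctly isolated the one substantive ingredient, which is exactly Lemma \ref{defcoeff} at $\theta = 0$; the paper does not prove this either (it cites \cite{Hao2011} and defers the converse to \cite{Kempf2004}), and your proposed derivation via the residue of the Weyl function at $t_n$ is sound --- it is the same computation that, in Section \ref{Measmodel}, identifies the Aleksandrov--Clark weights $w_n$ with $\frac{\|\phi_+\|^2}{\pi}t'(n)$ by comparing the pole of $H = (1+\Theta)/(1-\Theta)$ at $t_n$ with the local expansion of $1-\Theta$ --- though, as you acknowledge, you have only sketched it rather than carried it out.
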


\begin{remark}
\bn
    \item In \cite{Kempf2004}, it was assumed that $(\mbf{t}, \mbf{t} ' )$ is an infinite bandlimit pair. Theorem \ref{FApic} above contains additional new information on how spectral properties of the symmetric linear transformation $T \in \mc{S}$ depend on properties of the admissible pair $(\mbf{t} , \mbf{t} ' )$.
    \item It was further assumed in \cite{Kempf2004} that the symmetric $T$ in the second half of Theorem \ref{FApic} is densely defined. Working in the setting of partial isometries and the unit circle, it is easy to extend the above result to the general case.
\en
\end{remark}

\begin{defn}
Let $(\mbf{t} , \mbf{t} ' )$ be a bandlimit pair, let $\phi _{\pm} \in \ker{T^* \mp i}$ be fixed as in equation (\ref{defectdef}) above, and let $t$ be the spectral function for $T$ fixed by the choice $\phi _\pm$.  For any $\theta \in [0,1 )$ define the pair of real sequences $(\mbf{t} _\theta , \mbf{t} _\theta ' )$ by $\mbf{t} _\theta = (t_n (\theta ) )$, $\mbf{t} _\theta ' = (t_n ' (\theta ))$,  where
$$ t_n (\theta ) := t ( n + \theta ), \quad \quad \mbox{and} \quad \quad t_n ' (\theta) := \frac{\| \phi _+ \| ^2 }{\pi} t' (n+\theta ). $$
By the next corollary each pair $(\mbf{t} _\theta, \mbf{t} _\theta ')$ is a bandlimit pair, and we will call $(\mbf{t} _\theta , \mbf{t} _\theta '); \ \theta \in [0,1 )$ a family of bandlimit pairs.
\end{defn}

With the above notation, $t_n = t_n (0)$, $t_n ' = t_n ' (0)$.

\begin{cor}
    Let $(\mbf{t} , \mbf{t} ' )$, and $\phi _\pm$ be fixed as above. The pair of sequences $(\mbf{t} _\theta , \mbf{t} _\theta ' )$ is a bandlimit pair for every $\theta \in [0,1)$. The symmetric operator $T^{(\theta )} \in \mc{S} ^R$ constructed as in Theorem \ref{FApic} from the data $(\mbf{t} _\theta , \mbf{t} _\theta ' )$ and $\phi _\pm$ is independent of $\theta$, $T^{(\theta)} = T^{(0)} = T$.
\end{cor}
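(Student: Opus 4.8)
The plan is to verify the two assertions of the corollary by tracking how the construction of Theorem~\ref{FApic} transforms under the shift $s \mapsto s + \theta$ in the spectral parameter. First I would check that $(\mbf{t}_\theta, \mbf{t}_\theta')$ is a bandlimit pair: by Lemma~\ref{Rcover}, $\mbf{t}_\theta = (t_n(\theta)) = (t(n+\theta))$ is strictly increasing with no finite accumulation point, so condition~(1) of Definition~\ref{TVpairdef} holds. Since $t'(s) > 0$ on $(a,b)$ (by the theorem following Definition~\ref{specfundef}), we have $t_n'(\theta) = \frac{\|\phi_+\|^2}{\pi} t'(n+\theta) > 0$, giving the bandlimit (rather than merely admissible) property, provided I also confirm the compatibility sum~(\ref{TVpair}). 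For the latter, I would note that $\sum \frac{t_n'(\theta)}{1+t_n(\theta)^2}$ is, up to the constant $\frac{\|\phi_+\|^2}{\pi}$, exactly $\sum \frac{t'(n+\theta)}{1+t(n+\theta)^2}$, and this sum is finite because it equals $\|P_\theta \phi_+\|^2$ in an appropriate sense — more carefully, by the converse half of Theorem~\ref{FApic} applied to $T$ with the extension $T_\theta$ playing the role of $T_0$, the pair obtained from $T$ and $\phi_\pm$ via the spectral function \emph{shifted by $\theta$} is automatically a bandlimit pair. This is the cleanest route and it simultaneously sets up the second assertion.

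For the independence of $\theta$, the key observation is that the spectral function $t^{(\theta)}$ associated to the data $(\mbf{t}_\theta, \mbf{t}_\theta')$ and $\phi_\pm$ is just $t^{(\theta)}(s) = t(s+\theta)$, the shift of the spectral function of $T$. Indeed, the forward direction of Theorem~\ref{FApic} builds $T^{(\theta)}$ as the unique symmetric transformation with deficiency vectors $\phi_\pm$ (in the given basis) having $T_0^{(\theta)} = T_\theta$, where $T_\theta$ is the self-adjoint operator with simple spectrum $\mbf{t}_\theta$. But $T_\theta$ is \emph{already} a self-adjoint extension of our original $T$ — it is precisely the extension $T_\theta$ in the parametrization fixed by $\phi_\pm$, by Lemma~\ref{Rcover} and Definition~\ref{specfundef}, since $\sigma(T_\theta) \cap [t_n, t_{n+1}) = \{t_n(\theta)\} = \{t(n+\theta)\}$. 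Hence $T$ itself is a symmetric transformation with deficiency vectors $\phi_\pm$ in the appropriate eigenbasis of $T_\theta$ and with $(T)_0$, computed with respect to $T_\theta$ as base point, equal to $T_\theta$. By the uniqueness clause of Theorem~\ref{FApic}, $T^{(\theta)} = T$.

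The one point requiring care — and the main obstacle — is the bookkeeping of eigenbases: Theorem~\ref{FApic} demands a specific orthonormal eigenbasis $\{\phi_n\}$ of the base self-adjoint operator in which $\phi_\pm = \sum \frac{\sqrt{t_n'}}{t_n \mp i}\phi_n$, and one must check that when the base point shifts from $T_0$ to $T_\theta$, the vector $\phi_+$ has coordinates of modulus $\sqrt{t_n'(\theta)/(1+t_n(\theta)^2)}$ in an eigenbasis of $T_\theta$, and that $\phi_-$ is recovered with the prescribed phases. This is exactly the content of the converse half of Theorem~\ref{FApic} (which produces, from $T$ and $\phi_\pm$ together with the $\theta$-shifted spectral function, the bandlimit pair $(\mbf{t}_\theta, \mbf{t}_\theta')$ with $t_n'(\theta) = \frac{\|\phi_+\|^2}{\pi} t'(n+\theta)$), so invoking that direction of the theorem both discharges the obstacle and finishes the argument. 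I would therefore organize the proof as: (i) cite the converse of Theorem~\ref{FApic} to get that $(\mbf{t}_\theta,\mbf{t}_\theta')$ is a bandlimit pair and to pin down the coordinates of $\phi_\pm$ relative to $T_\theta$; (ii) feed this into the forward direction to identify $T^{(\theta)}$, using uniqueness to conclude $T^{(\theta)} = T$.
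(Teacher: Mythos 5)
Your proposal is correct and follows essentially the same route as the paper: the paper's (very brief) argument likewise applies the converse half of Theorem \ref{FApic} after re-basing the parametrization of self-adjoint extensions so that $T_\theta$ plays the role of $T_0$ (which it phrases as a unimodular rotation of the deficiency vectors), and then observes that this re-parametrization does not change the underlying symmetric $T$. Your explicit appeal to the uniqueness clause of the forward direction, and to the coefficient identity of Lemma \ref{defcoeff} for $\phi_+$ expanded in an eigenbasis of $T_\theta$, simply fills in details the paper leaves to the reader.
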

 This is not difficult to verify. Starting with the data $(\mbf{t} , \mbf{t} ' )$ and $\phi _\pm$, the first part of Theorem \ref{FApic} guarantees the existence of a unique $T \in \mc{S} ^R$. Replacing $\phi _\pm$ by $\psi _\pm := e^{-i2\pi \theta } \phi _\pm$ and applying the second part of Theorem \ref{FApic} will show that $(\mbf{t} _\theta , \mbf{t} _\theta ' )$ is also a bandlimit pair.  This new choice of equal norm defect vectors amounts to re-parametrizing the self-adjoint extensions of $T$ by a constant shift of the parameter.

We provide the constructive half of the proof below to establish the new statements relating spectral properties of the constructed $T \in \mc{S}$ to properties of the admissible pair $(\mbf{t}, \mbf{t} ' )$:

Let $\mbf{t} = (t_n )$ be any strictly increasing sequence of real numbers with no finite accumulation point. Let $T(1) = T_0$ be a densely defined self-adjoint operator
on $\H$ so that $\sigma (T) = \{ t_n \}$ and each $t_n$ is an eigenvalue of multiplicity one. Let $\{ \psi _n \}$ be an orthonormal basis of eigenvectors corresponding to
the eigenvalues $t_n$. Now choose any sequence $\mbf{t} ' = (t_n ') $ with the same index set so that the pair $(\mbf{t} , \mbf{t} ')$ is admissible, namely
$$ \sum \frac{t_n '}{1+ t_n ^2 } < \infty. $$
This assumption ensures that
$$ \phi _\pm := \sum \frac{\sqrt{t_n '} }{t_n \mp i } \psi _n, $$ defines a pair of vectors of finite and equal norm in $\H$. Let
$$ \dom{T} := \{ \phi \in \dom{T(1)} | \ \ip{\phi _+}{(T(1) +i ) \phi } = 0 \}. $$ In other words, given $\phi = \sum c_n \psi _n \in \dom{T(1)}$ we have
that $\phi \in \dom{T}$ if and only if $$ \sum _n \sqrt{t_n '} c_n = 0. $$
Let $T := T(1) | _{\dom{T}}$. Clearly $T$ is a symmetric linear transformation in $\H$ with domain $\dom{T}$. By construction we have that
$\phi _\pm \perp \ran{T \pm i }$ so that $T$ is symmetric with indices $(n_+, n_-)$ and $n_\pm \geq 1$.

\begin{lemma} \label{bb}
    The symmetric linear transformation $T$ is such that $T-t$ is bounded below on $\ker{T -t} ^\perp \cap \dom{T}$ for any $t \in \R$ and $T$ has deficiency indices $(1,1)$.
\end{lemma}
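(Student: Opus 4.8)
The plan is to exploit that $T$ is the restriction of the self-adjoint operator $T(1)$ cut out by the single linear condition $\ip{\phi_+}{(T(1)+i)\phi}=0$, equivalently $\sum\sqrt{t_n'}\,c_n=0$ for $\phi=\sum c_n\psi_n$, so that everything can be read off from the discrete spectrum $\{t_n\}$ of $T(1)$. For the deficiency indices I would first observe that $\ell(\phi):=\ip{\phi_+}{(T(1)+i)\phi}$ is a bounded functional on $\dom{T(1)}$ in the graph norm which is not identically zero, since $\ell\big((T(1)+i)^{-1}\phi_+\big)=\|\phi_+\|^2>0$; hence $\dom{T}=\ker{\ell}$ has codimension one in $\dom{T(1)}$. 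As $T(1)+i$ maps $\dom{T(1)}$ onto $\H$, this gives $\ran{T+i}=\{\phi_+\}^{\perp}$, a closed subspace of codimension one, so the defect space $\ran{T+i}^\perp$ equals $\C\phi_+$. Substituting $\eta=(T(1)-i)\phi$ and using that $W:=(T(1)+i)(T(1)-i)^{-1}$ is bounded with bounded inverse, the same computation gives $\ran{T-i}=\{W^*\phi_+\}^{\perp}$ with $W^*\phi_+\neq 0$ (in fact $W^*\phi_+=\phi_-$), so $\ran{T-i}^\perp=\C\phi_-$. Together with the relations $\phi_\pm\perp\ran{T\pm i}$ and $\phi_\pm\neq 0$ recorded in the discussion preceding the lemma (which already give $n_\pm\geq 1$), this shows that $T$ has deficiency indices $(1,1)$.

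For the boundedness-below claim I would fix $t\in\R$ and compute in the eigenbasis: a generic $\phi\in\dom{T}$ is $\phi=\sum c_n\psi_n$ with $\sum\sqrt{t_n'}\,c_n=0$, and $\|(T-t)\phi\|^2=\sum|c_n|^2|t_n-t|^2$. There are three cases. If $t\notin\{t_n\}$, then $\delta:=\inf_n|t-t_n|>0$ since $\{t_n\}$ has no finite accumulation point, $\ker{T-t}=\{0\}$, and $\|(T-t)\phi\|^2\ge\delta^2\|\phi\|^2$ for every $\phi\in\dom{T}$. If $t=t_m$ with $t_m'=0$, then $\psi_m\in\dom{T}$, $\ker{T-t_m}=\C\psi_m$, and every $\phi\in(\C\psi_m)^\perp\cap\dom{T}$ has $c_m=0$, whence $\|(T-t_m)\phi\|^2\ge\delta_m^2\|\phi\|^2$ with $\delta_m:=\inf_{n\ne m}|t_n-t_m|>0$. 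The main case is $t=t_m$ with $t_m'>0$: here $\psi_m\notin\dom{T}$, so $\ker{T-t_m}=\{0\}$, and the constraint reads $\sqrt{t_m'}\,c_m=-\sum_{n\ne m}\sqrt{t_n'}\,c_n$; Cauchy--Schwarz then yields
\[ t_m'\,|c_m|^2\ \le\ \Big(\sum_{n\ne m}\frac{t_n'}{|t_n-t_m|^2}\Big)\Big(\sum_{n\ne m}|c_n|^2|t_n-t_m|^2\Big)\ =\ C_m\,\|(T-t_m)\phi\|^2, \]
and, combined with $\sum_{n\ne m}|c_n|^2\le\delta_m^{-2}\|(T-t_m)\phi\|^2$, this gives $\|\phi\|^2\le\big(C_m/t_m'+\delta_m^{-2}\big)\,\|(T-t_m)\phi\|^2$ for all $\phi\in\dom{T}$. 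In every case $T-t$ is bounded below on $\ker{T-t}^\perp\cap\dom{T}$.

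The one point needing an argument --- and the only place the compatibility condition $\sum t_n'/(1+t_n^2)<\infty$ from Definition \ref{TVpairdef} is used --- is the finiteness of $C_m:=\sum_{n\ne m}t_n'/|t_n-t_m|^2$. Since $|t_n-t_m|^2/(1+t_n^2)\to 1$ as $|n|\to\infty$, all but finitely many terms satisfy $t_n'/|t_n-t_m|^2\le 2\,t_n'/(1+t_n^2)$, so that tail is summable by admissibility, while the remaining finitely many terms are each finite because $t_n'<\infty$ and $t_n\ne t_m$. This is the only estimate with any content; the rest is bookkeeping in the orthonormal eigenbasis $\{\psi_n\}$, using nothing beyond the spectral theorem for $T(1)$.
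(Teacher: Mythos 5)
Your proof is correct, but it takes a genuinely different (and in places more careful) route than the paper's. For the lower bound, the paper argues by contradiction: if $T-t$ were not bounded below on $\ker{T-t}^\perp\cap\dom{T}$, then $T(1)-t$ would not be bounded below on $\ker{T(1)-t}^\perp\cap\dom{T(1)}$, forcing $t$ into the essential spectrum of $T(1)$ --- a qualitative argument that quietly elides the delicate case $t=t_m$ with $t_m'>0$, where $\ker{T-t_m}=\{0\}$ but $\ker{T(1)-t_m}=\C\psi_m$, so a witness sequence for $T$ need not be orthogonal to $\psi_m$. Your direct Cauchy--Schwarz estimate, with the weight $|t_n-t_m|$ and the observation that $C_m=\sum_{n\neq m}t_n'/|t_n-t_m|^2<\infty$ follows from the admissibility condition $\sum t_n'/(1+t_n^2)<\infty$, is exactly the content needed to control the $\psi_m$-component, so your version supplies a quantitative bound where the paper's is only sketched. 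For the deficiency indices, the paper splits off the maximal self-adjoint part, invokes regularity and the constancy of $\dim\ker{(T')^*-z}$ over $\C$, and then rules out $n>1$ by a contradiction at a real eigenvalue $t_j$; you instead compute $\ran{T\pm i}$ directly as the hyperplanes $\{\phi_+\}^\perp$ and $\{W^*\phi_+\}^\perp=\{\phi_-\}^\perp$ using the bijectivity of $T(1)\pm i$ and the boundedness of the Cayley transform $W$, which is more elementary and avoids the abstract machinery entirely. The only (shared) caveat is that both arguments implicitly assume $\phi_+\neq 0$, i.e.\ at least one $t_n'>0$; in the degenerate case $\mbf{t}'\equiv 0$ one has $T=T(1)$ with indices $(0,0)$.
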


\begin{proof}
First note that if $\phi \in \ker{T-t}$ then $\phi \in \dom{T} \subset \dom{T(1)}$ so that $\phi = \psi _n$ and $t =t_n$ for some $n$.
If $(T-t)$ is not bounded below on $\ker{T-t} ^\perp \cap \dom{T}$ then it follows that $(T(1) -t)$ is not bounded below on $\ker{T(1)-t} ^\perp \cap \dom{T(1)}$.
It follows that $t$ is in the essential spectrum of $T(1)$ which contradicts our assumption that the spectrum of $T(1)$ is a sequence of eigenvalues
of multiplicity one with no finite accumulation point.

Suppose that the deficiency indices of $T$ are not $(1,1)$. Since the eigenvalues of $T$ are contained in the eigenvalues of $T(1)$ it follows that there is a
maximal subset $(\psi _{n_k} ) \subset (\psi _n)$ so that the $\psi _{n_k} \in \dom{T}$. Removing $$ S := \bigvee \psi _{n_k}, $$ from $\H$ shows that we can write
$$ T = T' \oplus \hat{T} (1) \quad \quad \mbox{on} \quad (\H \ominus S) \oplus S, $$ where $\hat{T} (1) \subset T(1)$ is self-adjoint and $T'$ is simple symmetric. Recall here that $\bigvee$ denotes closed linear span. It further follows
that $T'-t$ is bounded below on $\dom{T'} = \dom{T} \cap (\H \ominus S)$ so that $T '$ is regular. Standard results show that $\dim{\ker{(T') ^* -z}}$ is constant
for $z \in \C$ \cite[Section 78]{Glazman}, \cite{RS}. It follows that the deficiency indices of $T$ are equal to the deficiency indices of $T'$ and these are $n = n_\pm$. Now suppose that $n>1$ and consider $t_j \in \sigma (T(0))$. Then $\ker{T^* -t_j}$ is $n-$dimensional where $n>1$. It follows that there is a non-zero vector $\psi  \in \ran{T -t_j } ^\perp $ such that $\psi  \perp \psi _j$.
Hence, $$ \psi = \sum _{k \neq j} c_k \psi _k, $$ so that $0 = (T^* -t_j ) \psi = \sum _{k \neq j} c_k (t_k -t_j) \psi _k.$ It follows that
$$ \sum _{k \neq j} |c_k| ^2 |t_k -t_j | ^2 =0,$$
which is satisfied if and only if $\psi =0$, a contradiction.
\end{proof}

\begin{lemma} \label{dd}
    $T$ is densely defined if and only if $\sum t_n ' = + \infty$.
\end{lemma}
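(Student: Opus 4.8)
The plan is to reduce density of $\dom{T}$ to the boundedness of a single explicit functional. Recall from the construction above that $\dom{T}=\ker{L}$, where $L$ is the linear functional on $\dom{T(1)}$ given by $L(\phi)=\ip{\phi_+}{(T(1)+i)\phi}$, and that for $\phi=\sum c_n\psi_n\in\dom{T(1)}$ one has $L(\phi)=\sum_n\sqrt{t_n'}\,c_n$. Since we are in the bandlimit case, $t_n'>0$ for every $n$, so $L(\psi_m)=\sqrt{t_m'}\neq 0$, and $L$ is a nonzero functional on the dense subspace $\dom{T(1)}\subset\H$.

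The first step I would establish is the dichotomy: $\dom{T}$ is dense in $\H$ if and only if $L$ is \emph{unbounded} with respect to the $\H$-norm on $\dom{T(1)}$. If $L$ is unbounded, then $\ker{L}$ is already dense in $\dom{T(1)}$ (a linear functional on a normed space has closed kernel exactly when it is continuous, so a discontinuous one has dense kernel), and since $\dom{T(1)}$ is dense in $\H$ it follows that $\dom{T}$ is dense. Conversely, if $L$ is bounded it extends by density to a bounded functional $\ip{\eta}{\cdot}$ on $\H$ with $\eta\neq 0$, and then $\dom{T}=\ker{L}\subseteq\{\eta\}^\perp$ lies in a proper closed subspace, hence is not dense. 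The second step is to determine when $L$ is bounded. Evaluating $L$ on the finite partial sums $v_F:=\sum_{n\in F}\sqrt{t_n'}\,\psi_n\in\dom{T(1)}$, for finite $F\subset\mathbb{F}$, gives $L(v_F)=\sum_{n\in F}t_n'=\|v_F\|^2$, so $\|L\|\geq\|v_F\|=\big(\sum_{n\in F}t_n'\big)^{1/2}$; taking the supremum over finite $F$ shows that $\sum t_n'=+\infty$ forces $L$ to be unbounded. On the other hand, if $\sum t_n'<\infty$ then $\eta:=\sum_n\sqrt{t_n'}\,\psi_n$ is a genuine element of $\H$ and $L=\ip{\eta}{\cdot}$ is bounded. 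Combining the two steps: $\dom{T}$ is dense $\iff$ $L$ is unbounded $\iff$ $\sum t_n'=+\infty$, which is the assertion. (Equivalently, $\eta=(T(1)-i)\phi_+$, so $L$ is bounded precisely when $\phi_+\in\dom{T(1)}$, i.e. when $\sum t_n' t_n^2/(1+t_n^2)<\infty$; since admissibility gives $\sum t_n'/(1+t_n^2)<\infty$, this last condition is equivalent to $\sum t_n'<\infty$.)

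I do not expect a real obstacle here: the only slightly delicate point is the dichotomy of the first step, which rests on the standard fact that a linear functional on a normed space has closed kernel precisely when it is bounded, together with the trivial observation that $\dom{T(1)}\cap\{\eta\}^\perp$ still lies inside the proper closed subspace $\{\eta\}^\perp$ and hence is not dense. Everything else is bookkeeping with the explicit coefficients of $\phi_\pm$.
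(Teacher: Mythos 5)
Your proof is correct, and one half of it coincides with the paper's: when $\sum t_n' < \infty$ both arguments exhibit the vector $\eta = \sum \sqrt{t_n'}\,\psi_n \in \H$ (which is $(T(1)-i)\phi_+$) as a nonzero element orthogonal to $\dom{T}$. Where you genuinely diverge is in the other direction. The paper proves the contrapositive ``$\dom{T}$ not dense $\Rightarrow \sum t_n' < \infty$'' by taking an arbitrary $g \perp \dom{T}$, pulling it back to $g_+ := (T(1)-i)^{-1}g \in \ran{T+i}^\perp$, and invoking Lemma \ref{bb} (deficiency indices $(1,1)$) to conclude $g_+ = c\,\phi_+$ and hence $g = c\sum\sqrt{t_n'}\,\psi_n \in \H$. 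You instead prove ``$\sum t_n' = \infty \Rightarrow \dom{T}$ dense'' directly: you identify $\dom{T}$ as the kernel of the explicit functional $L(\phi) = \sum\sqrt{t_n'}\,c_n$ on $\dom{T(1)}$, show via the partial sums $v_F$ that divergence of $\sum t_n'$ forces $L$ to be unbounded, and then use the standard fact that a discontinuous linear functional on a normed space has dense kernel. Your route is more elementary and self-contained — it does not depend on the prior computation of the deficiency indices — whereas the paper's argument reuses the machinery of Lemma \ref{bb} and in doing so characterizes the entire orthogonal complement of $\dom{T}$ (it is spanned by $(T(1)-i)\phi_+$ when nontrivial), which is slightly more information than the density dichotomy alone. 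Both are complete; the only points worth making explicit in your write-up are that $L$ is well defined on all of $\dom{T(1)}$ (the series $\sum\sqrt{t_n'}\,c_n$ converges absolutely there by Cauchy--Schwarz against the admissibility condition $\sum t_n'/(1+t_n^2)<\infty$) and that the dense-kernel fact applies because $L$ is nonzero, which you already note.
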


\begin{proof}
    First if $\sum t_n ' < \infty $ then $f := \sum \sqrt{t_n '} \psi _n \in \H$ and clearly $f \perp \dom{T}$ so that $\dom{T}$ is not dense.

Conversely suppose that $\dom{T}$ is not dense and $ 0 \neq g := \sum g_n \psi _n \perp \dom{T}$. Then define
$$ g_+ := (T(1) -i) ^{-1} g =  \sum \frac{g_n}{t_n -i} \psi _n, $$ and observe that for any $\psi \in \dom{T}$,
$$ \ip{g_+}{(T+i) \phi} =0, $$ so that $g_+ \in \ran{T+i} ^\perp$. By the last lemma $\ran{T+i} ^\perp = \bigvee \phi _+$ is one dimensional
so that $g_+ = c \phi _+$. It follows that
$$ g = (T(1) -i ) g_+ = c (T(1) -i ) \phi _+ = c \sum \sqrt{t_n '} \psi _n \in \H $$ so that
$$ \sum t_n ' <\infty. $$
\end{proof}

\begin{lemma} \label{reg}
    The symmetric linear transformation $T$ is simple (and regular) if and only if $t_n ' > 0 $ for all $n$, \emph{i.e.} if and only if $(\mbf{t} , \mbf{t} ')$ is
a bandlimit pair of sequences.
\end{lemma}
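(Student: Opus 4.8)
The plan is to first determine precisely which eigenvectors of $T(1)$ remain in $\dom T$, and then to read off the conclusion from the orthogonal decomposition already constructed in the proof of Lemma \ref{bb}. Using $\phi_+ = \sum_m \frac{\sqrt{t_m'}}{t_m-i}\psi_m$, and recalling that the inner product is conjugate linear in its first argument, one computes for each $n$
\be
 \ip{\phi_+}{(T(1)+i)\psi_n} \;=\; (t_n+i)\,\ip{\phi_+}{\psi_n} \;=\; (t_n+i)\cdot\frac{\sqrt{t_n'}}{t_n+i} \;=\; \sqrt{t_n'} ,
\ee
so that, by the definition of $\dom T$, the eigenvector $\psi_n$ of $T(1)$ lies in $\dom T$ if and only if $t_n' = 0$. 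Since $\dom T \subset \dom{T(1)}$ and $T = T(1)|_{\dom T}$, every eigenvector of $T$ is (symmetry forcing the eigenvalue to be real) an eigenvector of $T(1)$, hence a nonzero scalar multiple of some $\psi_n$ lying in $\dom T$; therefore $T$ has a real eigenvalue if and only if $t_n' = 0$ for some $n$, the eigenvalue then being the corresponding $t_n$.

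With this in hand I would invoke the decomposition from the proof of Lemma \ref{bb}: writing $S := \bigvee\{\psi_n : \psi_n \in \dom T\} = \bigvee\{\psi_n : t_n' = 0\}$, one has $T = T' \oplus \hat{T}(1)$ on $(\H \ominus S) \oplus S$, where $\hat{T}(1) = T(1)|_S$ is self-adjoint and $T'$ is a simple, regular symmetric transformation. If $t_n' > 0$ for every $n$, then $S = \{0\}$, so $T = T'$ is simple and regular. Conversely, if $t_{n_0}' = 0$ for some $n_0$, then $\psi_{n_0} \in \dom T$, so $S$ is a nonzero subspace; it is proper, since $S = \H$ would force $T = \hat{T}(1) = T(1)$ to be self-adjoint, contradicting the deficiency indices $(1,1)$ established in Lemma \ref{bb}. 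As $S$ reduces $T$ and $T|_{\dom T \cap S} = \hat{T}(1)$ is self-adjoint, $T$ is not simple; it is also not regular, because $T - t_{n_0}$ annihilates the nonzero vector $\psi_{n_0} \in \dom T$ and hence is not bounded below. This proves that $T$ is simple (and regular) if and only if $t_n' > 0$ for all $n$, i.e.\ if and only if $(\mbf{t}, \mbf{t}')$ is a bandlimit pair; in fact, for this particular $T$, simplicity, regularity and the absence of real eigenvalues are all equivalent.

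I do not anticipate a genuine obstacle here: the whole content is the one-line computation above together with the structural facts already isolated in the proof of Lemma \ref{bb}. The only point that warrants care is that, for an arbitrary symmetric operator, having no real eigenvalues does \emph{not} imply regularity; in the present situation that implication is supplied precisely by Lemma \ref{bb} (equivalently, by the fact that its proof produces a \emph{regular} summand $T'$), since that lemma is what excludes the essential spectrum.
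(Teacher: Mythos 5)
Your proof is correct, and its first half (the computation $\ip{\phi_+}{(T(1)+i)\psi_n}=\sqrt{t_n'}$, hence $\psi_n\in\dom{T}$ if and only if $t_n'=0$, hence non-simplicity whenever some $t_{n_0}'=0$) coincides exactly with the paper's. Where you genuinely diverge is the converse. The paper argues directly: if $T$ is not simple, some real $t$ lies in the point spectrum or the approximate point spectrum of $T$; the approximate-point-spectrum case is excluded because the essential spectrum of the Cayley transform $V=b(T)$ is invariant under the rank-one unitary perturbation taking $V$ to $U(1)=b(T(1))$, and $T(1)$ has no essential spectrum; the remaining eigenvalue case forces $t_k'=0$ by the same computation. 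You instead invoke the decomposition $T=T'\oplus\hat{T}(1)$ on $(\H\ominus S)\oplus S$ from the proof of Lemma \ref{bb} and observe that $S=\bigvee\{\psi_n \mid t_n'=0\}$ is trivial precisely when $(\mbf{t},\mbf{t}')$ is a bandlimit pair; this is shorter and hands you regularity of $T=T'$ for free. The one caveat is that your route leans entirely on the assertion, made without proof inside Lemma \ref{bb}, that the summand $T'$ is simple. That assertion is true (any reducing subspace on which $T$ is self-adjoint is invariant under $(T(1)\pm i)^{-1}$ and hence reduces $T(1)$, so it is spanned by eigenvectors $\psi_n$ lying in $\dom{T}$, all of which are absorbed into $S$), but the paper's essential-spectrum argument in the present lemma is, in effect, the missing justification of it; a referee pressing on why $T'$ is simple would force you to reproduce an argument of that kind. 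Your observation that non-regularity also follows when some $t_{n_0}'=0$ (since $T-t_{n_0}$ annihilates $\psi_{n_0}\in\dom{T}$) is correct and matches the paper's use of Lemma \ref{bb} to pass from simplicity to regularity in the other direction.
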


\begin{proof}
    If $t_k ' =0$ then it is easy to see that $$ \ip{\phi _+}{(t_k +i) \psi _k } =0, $$ which implies that $\psi _k \in \dom{T}$ and $T$ is not simple.

Conversely if $T$ is not simple then there is a point $t \in \R$ such that $t$ is either an eigenvalue of $T$ or in the approximate point
spectrum. If $t$ is an eigenvalue of $T$ then it is an eigenvalue of $T(1) \supset T$ so that $t = t_k$ for some $k$. Since the spectrum of $T(1)$ consists of eigenvalues of multiplicity one, it would follow that $\psi _k \in \dom{T}$ so that as above,
$$ \sqrt{t'_k} = \ip{\phi _+}{(t_k +i) \psi _k} = \ip{\phi _+}{(T+i)\psi _k } = 0.$$

Now suppose that $t$ belongs to the approximate point spectrum of $T$. Then $b(t) \in \T \sm \{ 1 \}$ belongs to the approximate point spectrum and hence the essential
spectrum of the partial isometry $V=b(T)$. The essential spectrum is invariant under compact perturbations and $T$ has deficiency indices $(1,1)$,
 so that $U(1) = b(T(1))$ is a rank-one unitary perturbation of $V$. It follows that $b(t)$ is in the essential spectrum of $b(T(1))$ and so $t$ is in the essential spectrum of $T(1)$.
This contradicts our assumption that the spectrum of $T(1)$ is a sequence of eigenvalues of multiplicity one with no finite accumulation point.

Finally if $T$ is simple then Lemma \ref{bb} implies that $T-t$ is bounded below for all $t \in \R$ so that $T$ is also regular.
\end{proof}

This concludes half of the proof of Theorem \ref{FApic}. Namely we have shown that any bandlimit pair of sequences $(\mbf{t} , \mbf{t} ' )$ can be used to construct a linear transformation $T \in \mc{S} ^R$.  We refer to \cite{Kempf2004} for the converse proof that any $T \in \mc{S} ^R$ defines a bandlimit pair of sequences.

\subsection{Local bandlimit spaces} \label{LBS}

Our goal now is to apply Theorem \ref{FApic} to construct an abstract functional model for any $T \in \mc{S} ^R$. In particular, we will construct a reproducing kernel Hilbert space, $\K (T) ; \ T \in \mc{S} ^R$, which embeds isometrically into $L^2 (\R , d\la)$ for a family of positive measures $\la$ which are equivalent to Lebesgue measure. This space, called a \emph{local bandlimit space}, will be a function space with the same special sampling and reconstruction properties as the Paley-Wiener spaces of bandlimited functions. The Paley-Wiener space of $A$-bandlimited functions will be an example of one such space (Example \ref{PW}).

Let $(\mbf{t} , \mbf{t} ' )$ be a bandlimit pair. As in the previous subsection, let $T_0 = T(1)$ be a self-adjoint operator on $\H$ with orthonormal basis $\{ \psi _n \}$ and spectrum $\sigma (T_0) := \mbf{t}$.
Choose $$ \phi _+ := \sum \frac{\sqrt{t_n '} }{t_n -i } \psi _n \in \H, $$ and construct $T \in \mc{S} ^R (\H )$ with deficiency vectors $\phi _+ \in \ker{T^* -i}$, and $\phi _- := b(T_0) \phi _+ \in \ker{T^* +i}$ as before. Recall that this choice of deficiency vectors fixes a family of self-adjoint extensions $T_\theta$, $\theta \in [0,1)$ of $T$ (see Subsection \ref{saextsect}), as well as the choice of spectral function $t$ of $T$ defined by $ t (n + \theta ) := t_n (\theta)$ where $\sigma ( T _\theta ) = (t_n (\theta ) )$. Recall that we define $a:= \inf \{ n | \ t_n \in \sigma ( T_0 ) \}$ and $b:= 1 + \sup \{ n | \ t_n \in \sigma (T_0 )\}$ so that $[a, b)$ is the domain of the spectral function $t$.

\begin{defn} \label{phasedef}
The \emph{phase function} $\tau : \R \rightarrow [a,b )$ of $T$ is the compositional inverse of the spectral function $t$ (fixed by a choice of equal-norm deficiency vectors).
\end{defn}

It follows that $\tau : \R \rightarrow [a,b)$ is injective,
strictly increasing and obeys $\tau ' > 0$ (since $t$ has these properties). Theorem \ref{phasefun} will imply that $\tau$ has an analytic extension to a neighbourhood of $\R$.

\begin{defn}
Given $T \in \mc{S} _R$ and a fixed deficiency vector pair,$\phi _\pm$, let $\{ \phi _n (\theta ) | \ \theta \in [0,1) \}$ be any fixed family of orthonormal eigenbases of the family $T_\theta$ of self-adjoint extensions of $T$ (the parameter $\theta$ is fixed by the choice of $\phi _\pm$), $T _\theta \phi _n (\theta ) = t_n (\theta ) \phi _n (\theta )$.

For any $t \in \R$ let $\lfloor t \rfloor $ denote the integer part of $t$,  let $[t] := t - \lfloor t \rfloor \in [0,1)$, and define
\be \phi _t := \phi _{\lfloor \tau (t) \rfloor } ( [t ] ) ; \quad \quad [t] = t - \lfloor t \rfloor, \ee where the phase function $\tau$ (and the spectral function) is fixed by the choice $\phi _\pm$.
\end{defn}

Observe that, for any $s \in \R$,
 \ba T^* \phi _s & = & T _{[s]} \phi _{\lfloor \tau (s) \rfloor } ( [\tau (s) ] ) = t _{\lfloor \tau (s) \rfloor } ( [\tau (s) ] ) \phi _{\lfloor \tau (s) \rfloor } ( [\tau (s) ] ) \nn \\
 & = & t (\lfloor \tau (s) \rfloor + [\tau (s) ] ) \phi _s = t (\tau (s) ) \phi _s \nn \\
 & = & s \phi _s. \nn \ea

\begin{prop} \label{kernelprop}
For any $T \in \mc{S} ^R$, and fixed equal-norm deficiency vectors $\phi _{\pm} \in \ker{T^* \mp i}$,
there is a choice of orthonormal eigenbases $\{ \phi _n (\theta ) | \ \theta \in [ 0 ,1 ) \}$ of eigenvectors for $T_\theta$ so that if $\phi _t := \phi _{\lfloor \tau (t) \rfloor} ([\tau (t) ])$, then
\ba  K ^T (t,s) & := &\ip{\phi _t}{ \phi _s} ;  \nn \\
& = & f(t) (-1) ^{\lfloor \tau (t) \rfloor} \left( \sum _k \frac{t' (k)}{(t-t_k)(s-t_k)} \right) (-1) ^{\lfloor \tau (s) \rfloor} f(s); \quad \quad s,t \in \R, \label{kernelformula}\ea is a smooth, real-valued, positive kernel function on $\R \times \R$ where
\be f(t) := \left( \sum _n \frac{t' (n)}{(t-t_n) ^2 } \right) ^{- \frac{1}{2}}. \ee
\end{prop}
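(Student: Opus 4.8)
The plan is to carry out the computation of $\ip{\phi _t}{\phi _s}$ directly in the explicit model supplied by Theorem \ref{FApic}. I will assume $T$ is densely defined, the general case being handled identically via the partial isometry $b(T)$ as in the preceding remarks; so $T = T_0 |_{\dom{T}}$ with $T_0$ self-adjoint on $\H$, $\{ \psi _m \}$ an orthonormal eigenbasis, $T_0 \psi _m = t_m \psi _m$, $\dom{T} = \{ \sum d_m \psi _m \in \dom{T_0} \mid \sum \sqrt{t_m '}\, d_m = 0 \}$, and $\phi _+ = \sum \frac{\sqrt{t_m '}}{t_m - i}\psi _m$; here $t_m ' > 0$ for every $m$ since $(\mbf{t}, \mbf{t} ' )$ is a bandlimit pair (Lemma \ref{reg}). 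The first step is to identify the one-dimensional deficiency subspaces of $T$ along $\R$. For real $t \notin \{ t_m \}$ set $w_t := \sum _m \frac{\sqrt{t_m '}}{t - t_m}\psi _m$; admissibility gives $\sum _m \frac{t_m '}{(t - t_m)^2} < \infty$, so $w_t \in \H$ and $w_t \neq 0$, and a one-line computation shows $\ip{w_t}{(T - t)\phi} = - \sum _m \sqrt{t_m '}\, d_m = 0$ for every $\phi = \sum d_m \psi _m \in \dom{T}$, i.e.\ $w_t \perp \ran{T - t}$, whence $w_t \in \ker{T^* - t}$. Since $T$ is regular with indices $(1,1)$, $\ker{T^* - t}$ is at most one-dimensional, so $\ker{T^* - t} = \C w_t$; likewise $\psi _n \in \ker{T^* - t_n}$ forces $\ker{T^* - t_n} = \C \psi _n$.

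Next I would fix the eigenbases and describe $\phi _t$. Define $\phi _n (\theta) := (-1)^n w_{t_n (\theta)} / \| w_{t_n (\theta)} \|$ for $\theta \in (0,1)$ and $\phi _n (0) := (-1)^n \psi _n$. Since $t_n (\theta)$ is a simple eigenvalue of the self-adjoint extension $T _\theta$ and $\ker{T _\theta - t_n (\theta)} \subseteq \ker{T^* - t_n (\theta)}$ is one-dimensional, each $\phi _n (\theta)$ is a unit eigenvector of $T _\theta$ for $t_n(\theta)$; eigenvectors for the distinct eigenvalues $t_n(\theta)$ are orthogonal and by Lemma \ref{Rcover} they span $\H$, so $\{ \phi _n (\theta)\} _n$ is an orthonormal eigenbasis of $T _\theta$ — exactly the family the proposition asks for. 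Using $t _{\lfloor \tau (t) \rfloor}([\tau (t)]) = t(\tau (t)) = t$, this yields $\phi _t = \phi _{\lfloor \tau (t) \rfloor}([\tau (t)]) = (-1)^{\lfloor \tau (t) \rfloor}\, w_t / \| w_t \|$ for $t \notin \{ t_m \}$ and $\phi _{t_n} = (-1)^n \psi _n$.

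The genuinely delicate point, and the one I expect to be the main obstacle, is the smoothness of $t \mapsto \phi _t$ across the $t_n$, where $w_t / \| w_t \|$ by itself jumps from $-\psi _n$ to $+\psi _n$. On $\R \setminus \{ t_m \}$ the series for $w_t$ converges in $\H$ together with all its termwise derivatives, locally uniformly (again by admissibility), so $t \mapsto w_t$ is real-analytic there and $\phi _t$ is smooth. Near $t_n$ I would split $w_t = \frac{\sqrt{t_n '}}{t - t_n}\psi _n + r_t$ with $r_t := \sum _{m \neq n}\frac{\sqrt{t_m '}}{t - t_m}\psi _m$; then $r_t$ is smooth near $t_n$ and $r_t \perp \psi _n$, so $\| w_t \| ^2 = \frac{t_n '}{(t - t_n)^2} + \| r_t \| ^2$ and $(t - t_n)^2 \| w_t \| ^2 = t_n ' + (t - t_n)^2 \| r_t \| ^2$ is smooth and bounded below by $t_n ' > 0$. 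The key observation is that $(-1)^{\lfloor \tau (t) \rfloor}\, \mathrm{sgn}(t - t_n) \equiv (-1)^n$ for $t$ near $t_n$ — because $\tau$ is a strictly increasing bijection with $\tau (t_n) = n$, so $\lfloor \tau \rfloor$ increases by exactly one as $t$ crosses $t_n$ — which turns the formula for $\phi _t$ into $\bigl( (-1)^n \sqrt{t_n '}\, \psi _n + (-1)^n (t - t_n)\, r_t \bigr) \big/ \sqrt{ t_n ' + (t - t_n)^2 \| r_t \| ^2 }$ on a neighbourhood of $t_n$; this is manifestly smooth across $t_n$ with value $(-1)^n \psi _n$ there, matching $\phi _{t_n}$. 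Hence $t \mapsto \phi _t$ is smooth on all of $\R$, and therefore $K^T (t,s) = \ip{\phi _t}{\phi _s}$ is smooth on $\R \times \R$.

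The remaining assertions follow at once. Each $\phi _t$ has real coefficients in $\{ \psi _m \}$, so $K^T (t,s) = \ip{\phi _t}{\phi _s}$ is real-valued; and for any finite $\{ x_j \} \subset \R$ and scalars $c_j$, $\sum _{j,k} \ov{c_j} c_k \, K^T (x_j , x_k) = \| \sum _k c_k \phi _{x_k}\| ^2 \geq 0$, so $K^T$ is a positive kernel function. Finally, for $s, t \notin \{ t_m\}$, $\ip{w_t}{w_s} = \sum _m \frac{t_m '}{(t - t_m)(s - t_m)}$ and $\| w_t \| ^2 = \sum _m \frac{t_m '}{(t - t_m)^2}$, so $K^T (t,s) = (-1)^{\lfloor \tau (t)\rfloor}(-1)^{\lfloor \tau (s)\rfloor}\, \ip{w_t}{w_s}\big/(\| w_t\|\, \| w_s\|)$; since $t_m ' = \frac{\| \phi _+ \| ^2}{\pi}\, t'(m)$ by Theorem \ref{FApic}, and the right side of (\ref{kernelformula}) is unchanged under replacing $t'(\cdot)$ by a positive constant multiple of itself, this is precisely (\ref{kernelformula}). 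Both sides being smooth on $\R \times \R$ (the apparent singularities of the right side at the $t_m$ are removable, as the bounded product $f(t)\sum _k \cdots$ shows), the identity extends to all $s,t \in \R$.
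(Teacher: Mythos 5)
Your proof is correct, but it takes a genuinely different route from the paper's. The paper never writes down the real deficiency subspaces explicitly: it computes the overlaps $\ip{\phi _n (\theta)}{\phi _m (\beta)}$ between eigenvectors of two \emph{different} self-adjoint extensions by evaluating $\ip{\phi _n (\theta)}{(U_\beta - U_\theta)\phi _m (\beta)}$ in two ways (using that $U_\beta - U_\theta$ is rank one), inserts the moduli $|c_n (\theta)|^2 = \frac{\| \phi _+ \| ^2}{\pi} t'(n+\theta)$ from Lemma \ref{defcoeff} for \emph{every} $\theta$, fixes the phases by hand, and then obtains (\ref{kernelformula}) by expanding in a third eigenbasis $\{ \phi _k (\alpha ) \}$; smoothness is deduced because the resulting expression (\ref{form3}) holds for every $\alpha$ and the singular sets $\{ t_k (\alpha ) \}$ are disjoint as $\alpha$ varies. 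You instead exhibit the generator $w_t = \sum _m \sqrt{t_m '}\,(t-t_m)^{-1}\psi _m$ of $\ker{T^* - t}$ in the concrete model of Theorem \ref{FApic}, after which the kernel formula is a direct Cauchy--Schwarz-convergent computation of $\ip{w_t}{w_s}/(\| w_t \| \, \| w_s \|)$, and smoothness across each $t_n$ is handled by an explicit local factorization showing that the sign $(-1)^{\lfloor \tau (t) \rfloor}$ exactly compensates the sign flip of $w_t / \| w_t \|$. Your route is more elementary and needs Lemma \ref{defcoeff} only at $\theta = 0$, where it reduces to the converse half of Theorem \ref{FApic} ($t_n ' = \frac{\| \phi _+ \| ^2}{\pi} t'(n)$), so it sidesteps the continuity-of-coefficients issue flagged in Remark \ref{contcoeff}; what you lose are the byproducts of the paper's argument, namely the identity (\ref{form2}) for $t'(n+\theta)$ and the version of the kernel formula written in terms of an arbitrary sampling sequence $\mbf{t} _\alpha$, both of which are reused later (e.g.\ in Subsection \ref{specodie}). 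The only loose end, which your write-up shares with the paper's, is the bookkeeping of $\lfloor \tau \rfloor$ near the endpoints when the index set $\F$ is a proper subset of $\Z$.
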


\begin{lemma}{ (\cite[Theorem 8, Chapter 3]{Hao2011})} \label{defcoeff}
Let $(\mbf{t}, \mbf{t} ' )$ be a bandlimit pair. Let $T_0$ be the corresponding self-adjoint operator acting in $\H$ with $T_0 \supset T \in \mc{S} ^R (\H)$ constructed as in Theorem \ref{FApic}. For each $\theta \in [0,1)$, let
$\{ \psi _n (\theta ) \}$ be an arbitrary orthonormal basis of eigenvectors to $T_\theta$ with eigenvalues $t_n (\theta ) = t ( n + \theta )$.
Expand the deficiency vector $\phi _+$ in the basis $\{ \psi _n (\theta ) \}$ as
$$ \phi _+ = \sum \frac{c_n (\theta )}{t_n (\theta ) -i} \psi _n (\theta ). $$ \\
Then the coefficients $c_n (\theta )$ obey
$$ | c_n (\theta ) | ^2 = \frac{\| \phi _+ \| ^2}{\pi} \left. \frac{d t_n (\beta )}{d\beta} \right| _{\beta = \theta}.$$
\end{lemma}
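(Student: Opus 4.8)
First I would reduce the claim to a statement about spectral weights. Write $c:=\|\phi_+\|^2$, let $P_n(\theta)$ be the orthogonal projection onto the one‑dimensional eigenspace $\ker{T_\theta-t_n(\theta)}=\C\,\psi_n(\theta)$, and set $w_n(\theta):=\|P_n(\theta)\phi_+\|^2$, the mass assigned to $t_n(\theta)$ by the scalar spectral measure of $\phi_+$ for $T_\theta$. Since $\{\psi_n(\theta)\}$ is an orthonormal basis, the modulus of the $\psi_n(\theta)$‑Fourier coefficient of $\phi_+$ equals $\sqrt{w_n(\theta)}$, so comparing with the expansion in the statement gives $|c_n(\theta)|^2=(1+t_n(\theta)^2)\,w_n(\theta)$; it therefore suffices to prove
$$ w_n(\theta)=\frac{c}{\pi}\cdot\frac{t_n'(\theta)}{1+t_n(\theta)^2},\qquad t_n'(\theta):=\left.\tfrac{d}{d\beta}\,t(n+\beta)\right|_{\beta=\theta}. $$
The plan is to pass from the self‑adjoint extensions $T_\theta$, whose domains move with $\theta$ so that they do not form an analytic family, to the unitary extensions $U_\theta:=b(T_\theta)$ of $V=b(T)$, which do. By the functional calculus $U_\theta\psi_n(\theta)=\alpha_n(\theta)\psi_n(\theta)$ with $\alpha_n(\theta):=b(t_n(\theta))\in\T$ a simple eigenvalue (Lemma \ref{Rcover}). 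From (\ref{uniextensions}) and $\phi_-=b(T_0)\phi_+=U_0\phi_+$ one checks that $U_\theta$ agrees with $U_0$ on $\{\phi_+\}^\perp$ while $U_\theta\phi_+=e^{i2\pi\theta}\phi_-$, i.e.\ $U_\theta=U_0\bigl(I+(e^{i2\pi\theta}-1)P_+\bigr)$ with $P_+$ the orthogonal projection onto $\phi_+$; thus $\theta\mapsto U_\theta$ is operator‑norm analytic with $U_\theta'=i2\pi e^{i2\pi\theta}U_0P_+$.

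The core of the argument is a Hellmann--Feynman computation. For fixed $n$ the simple eigenvalue $\alpha_n(\theta)$ is isolated uniformly on compact $\theta$‑intervals (the neighbours of $t_n(\theta)$ in $\sigma(T_\theta)$ are $t_{n\pm1}(\theta)=t(n\pm1+\theta)$, and $t$ is strictly increasing and continuous), so the Riesz projection $P_n(\theta)=\frac{1}{2\pi i}\oint(z-U_\theta)^{-1}\,dz$ around $\alpha_n(\theta)$ is real‑analytic in $\theta$; consequently $\alpha_n(\theta)$ and a normalized eigenvector $\psi_n(\theta)$ may be taken real‑analytic, and $w_n(\theta)=\|P_n(\theta)\phi_+\|^2$ does not depend on this choice. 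Differentiating $\alpha_n(\theta)=\ip{\psi_n(\theta)}{U_\theta\psi_n(\theta)}$ and using $U_\theta^*\psi_n(\theta)=\overline{\alpha_n(\theta)}\psi_n(\theta)$, the two terms involving $\psi_n'(\theta)$ add up to $\alpha_n(\theta)\frac{d}{d\theta}\|\psi_n(\theta)\|^2=0$, leaving $\alpha_n'(\theta)=\ip{\psi_n(\theta)}{U_\theta'\psi_n(\theta)}$. Substituting $U_\theta'=i2\pi e^{i2\pi\theta}U_0P_+$, then $P_+\psi_n(\theta)=c^{-1}\ip{\phi_+}{\psi_n(\theta)}\phi_+$ and $U_0\phi_+=\phi_-$, and finally $\phi_-=e^{-i2\pi\theta}U_\theta\phi_+$ (from $U_0=U_\theta(I+(e^{-i2\pi\theta}-1)P_+)$, which yields $\ip{\psi_n(\theta)}{\phi_-}=e^{-i2\pi\theta}\alpha_n(\theta)\ip{\psi_n(\theta)}{\phi_+}$), the exponential factors cancel and, since $\ip{\phi_+}{\psi_n(\theta)}\,\ip{\psi_n(\theta)}{\phi_+}=|\ip{\psi_n(\theta)}{\phi_+}|^2=w_n(\theta)$, one is left with
$$ \alpha_n'(\theta)=\frac{i2\pi}{c}\,\alpha_n(\theta)\,w_n(\theta),\qquad\text{i.e.}\qquad w_n(\theta)=\frac{c}{2\pi i}\cdot\frac{\alpha_n'(\theta)}{\alpha_n(\theta)}. $$

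To finish I would undo the Cayley transform: $\frac{d}{dt}\log b(t)=\frac{1}{t-i}-\frac{1}{t+i}=\frac{2i}{1+t^2}$, so by the chain rule $\alpha_n'(\theta)/\alpha_n(\theta)=\frac{d}{d\theta}\log b\bigl(t_n(\theta)\bigr)=\frac{2i}{1+t_n(\theta)^2}\,t_n'(\theta)$; substituting into the second identity above gives $w_n(\theta)=\frac{c}{\pi}\,t_n'(\theta)/(1+t_n(\theta)^2)$, and multiplying through by $1+t_n(\theta)^2$ yields $|c_n(\theta)|^2=\frac{\|\phi_+\|^2}{\pi}\,t_n'(\theta)$, as claimed. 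The one point that needs genuine care is the real‑analyticity of the branch $\theta\mapsto(\alpha_n(\theta),P_n(\theta))$ underpinning the Hellmann--Feynman step — that is, the uniform isolatedness of the eigenvalue together with analytic perturbation theory; everything else is bookkeeping with (\ref{uniextensions}) and the identities $U_0\phi_+=\phi_-$, $U_\theta\phi_+=e^{i2\pi\theta}\phi_-$. Alternatively, one can avoid perturbation theory entirely by computing the Cauchy transform $m_\theta(z):=\ip{\phi_+}{(U_\theta-z)^{-1}\phi_+}$ from (\ref{uniextensions}) via the Sherman--Morrison formula, exhibiting it as a M\"obius function of $e^{i2\pi\theta}$ with coefficients built from $m_0$, and reading off the residue $-w_n(\theta)$ of $m_\theta$ at its pole $z=\alpha_n(\theta)$ after differentiating the pole equation in $\theta$ — the same identity $w_n(\theta)=\frac{c}{2\pi i}\alpha_n'(\theta)/\alpha_n(\theta)$ drops out.
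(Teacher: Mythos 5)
Your proof is correct, and it takes a genuinely different route from the one the paper relies on. The paper does not prove Lemma \ref{defcoeff} internally: it defers to \cite{Hao2011}, whose argument computes the overlaps $\ip{\psi _n (\theta )}{\psi _m (\beta )}$ from the rank-one difference $U_\beta - U_\theta$ (exactly the computation reproduced in the proof of Proposition \ref{kernelprop}) and then extracts $|c_n (\theta )|^2$ by letting $\beta \to \theta$ with $m=n$, using $\lim (\beta - \theta )/(t_n(\beta ) - t_n (\theta )) = 1/t_n '(\theta )$; that limit needs the coefficients $c_n (\cdot )$ to be continuous in $\theta$, a gap the paper patches separately via the Hardy-space model (Remark \ref{contcoeff}, representing $\phi _+$ as $-C_\Phi k_i ^\Phi$). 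You instead exhibit $U_\theta = U_0 \bigl( I + (e^{i2\pi \theta } - 1) P_+ \bigr)$ as a norm-analytic family and run a Hellmann--Feynman computation on the simple isolated eigenvalue $\alpha _n (\theta ) = b(t_n (\theta ))$; the bookkeeping checks out ($U_\theta ' = i2\pi e^{i2\pi \theta } U_0 P_+$, the $\psi _n '$ terms cancel by unitarity, $\ip{\psi _n (\theta )}{\phi _-} = e^{-i2\pi \theta } \alpha _n (\theta ) \ip{\psi _n (\theta )}{\phi _+}$ kills the phases, and $\tfrac{d}{dt} \log b(t) = 2i/(1+t^2)$ converts $\alpha _n ' / \alpha _n$ into $2i\, t_n '(\theta ) / (1+t_n (\theta )^2)$), and the reduction $|c_n(\theta )|^2 = (1+t_n(\theta )^2) w_n (\theta )$ is the right one. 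What your approach buys is self-containedness within the abstract functional-analytic model: the only regularity input is analytic perturbation theory for a simple isolated eigenvalue of an analytic unitary family (your Riesz-projection remark), so no appeal to Hardy-space theory is needed, and as a byproduct you reprove smoothness of $\theta \mapsto t_n (\theta )$ for each fixed $n$. What the paper's route buys is economy: the overlap formula is needed anyway for Proposition \ref{kernelprop}, so once continuity is secured the lemma is a one-line limit. Your intermediate identity $w_n (\theta ) = \tfrac{\| \phi _+ \| ^2}{2\pi i} \alpha _n '(\theta ) / \alpha _n (\theta )$ also cleanly anticipates the identification of the Aleksandrov--Clark weights with $t'(n+\theta )$ carried out in Section \ref{Measmodel}.
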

    The proof of this lemma in \cite{Hao2011} assumes that the bases $\{ \psi _n (\theta) \}$
can be chosen so that the coefficients $c_n (\theta )$ are continuous functions of $\theta \in [0,1)$.
Although this fact is not immediately obvious in the current setup, it will follow easily from Hardy space theory, see Remark \ref{contcoeff}.

\begin{remark} \label{normpair}
If the bandlimit pair $(\mbf{t} , \mbf{t}')$ is normalized so that
$$ \sum \frac{t_n'}{1+t_n ^2} = \pi, $$ (we can always rescale the sequence $\mbf{t} '$ so that this is the case and this does not change the symmetric operator $T$) then $\| \phi _+ \| ^2 = \pi$ and we obtain $$ t_n ' (\theta ) = t ' ( n +\theta ); \quad \theta \in [0,1). $$ In particular,
it follows that if $(\mbf{t} , \mbf{t} ' )$ is a normalized bandlimit pair, then every bandlimit pair $(\mbf{t} _\theta , \mbf{t} _\theta ' )$ is normalized.
\end{remark}

\begin{proof}{ (of Proposition \ref{kernelprop})}
For simplicity we will simply write $K(t,s) :=  K ^T (t,s) = \ip{\phi _t }{\phi _s}$. Given
any choice of orthonormal bases $\{ \phi _n (\theta) | \ \theta \in [ 0 ,1) \}$, it is clear that $K(t,s)$
will be a positive kernel function on $\R \times \R$. It remains to show that these bases can be chosen so that $K$ is smooth and given by the formula (\ref{kernelformula}).

Given the fixed bandlimit pair $(\mbf{t} , \mbf{t} ' )$, we can, as in the proof of Theorem \ref{FApic}, choose a self-adjoint operator $T_0$ with discrete spectrum $\mbf{t} = (t_n )$
consisting of eigenvalues of multipicity one with normalized eigenvectors $\{ \psi _n \} $.

Also as before we can define
$$ \phi _{\pm} := \sum \frac{\sqrt{t_n '}}{t_n \mp i } \psi _n, $$ and construct a symmetric $T \in \mc{S} ^R$ so that $\phi _\pm \in \ker{T^* \mp i }$ are equal-norm deficiency vectors for $T$. For each $\theta \in [0,1 )$,
let $\{ \phi _n (\theta ) \}$ be an orthonormal basis of eigenvectors for the self-adjoint extension $T_\theta$ with corresponding eigenvalues $t_n (\theta )$.  It will be convenient to choose $\phi _n (0) =: \phi _n = (-1) ^n \psi _n$, so that we can expand $\phi _+$ in each basis as
$$ \phi _+ := \sum c_n (\theta ) \frac{ e^{i2\pi \theta }}{t_n (\theta ) -i } \phi _n (\theta ), $$
for some coefficients $c_n (\theta )$, where $c_n (0) = (-1) ^n \sqrt{t_n ' }$.

For any $\theta \in [0 , 1)$, the unitary extension $U_\theta$ of $V=b(T)$ is
$$ U_\theta = U (e ^{i2\pi \theta} ) = V + \frac{e^{i2\pi \theta}}{\| \phi _+ \| ^2 } \ip{ \phi _+ }{\cdot } \phi _-. $$ Since $U_\theta = b (T_\theta)$,
$$ U _\theta \phi _n (\theta ) = \frac{ t_n (\theta ) - i}{t_n (\theta ) +i } \phi _n (\theta ), $$ and it follows that
$$ \phi _- = e^{-i2\pi \theta} U_\theta \phi _+ = \sum _n  \frac{c_n (\theta )}{t_n (\theta ) + i } \phi _n (\theta ); \quad \quad \theta \in [0,1).$$

In order to compute $\ip{ \phi _t }{\phi _s }$ for any $s,t \in \R$, we need to evaluate
$\ip{ \phi _n (\theta ) }{ \phi _m (\beta )}$ for any $\theta, \beta \in [0,1)$ and any $n, m$ in the index set $\F \subseteq \Z$. First consider
\ba \ip{\phi _n (\theta )}{ (U_\beta - U_\theta ) \phi _m (\beta )} & = & \left( \frac{t_m (\beta ) -i}{t_m (\beta ) +i } - \frac{t_n (\theta ) - i}{t_n (\theta ) +i} \right) \ip{\phi _n (\theta)}{\phi _m (\beta ) } \nn \\
& =& \frac{2i ( t_m (\beta ) - t_n (\theta ) )}{(t_m (\beta ) +i)(t_n (\theta ) + i )} \ip{\phi _n (\theta )}{\phi _m (\beta )}. \nn \ea
Using that
$$ U_\beta - U_\theta = \frac{( e^{i2\pi \beta} - e^{i2\pi \theta} )}{\| \phi _+ \| ^2 }  \ip{\phi _+  }{\cdot} \phi _-, $$ this same expression can be evaluated differently:
$$ \ip{\phi _n (\theta )}{ (U_\beta - U_\theta ) \phi _m (\beta )}  =
\frac{( e^{i2\pi \beta} - e^{i2\pi \theta} )}{\| \phi _+ \| ^2 }
\frac{e^{-i2\pi \beta} \ov{c_m (\beta )} c_n (\theta ) }{(t_m (\beta ) +i )(t_n (\theta) +i )}. $$

Equating these two expressions yields
\ba \ip{\phi _n (\theta )}{\phi _m (\beta )} & = & \frac{1-e^{i2\pi (\theta - \beta)} }{2i(t_m (\beta ) - t_n (\theta))} \frac{\ov{c_m (\beta)} c_n (\theta )}{\| \phi _+ \| ^2 }. \nn \\
& = & \frac{ e^{i\pi (\theta -\beta ) }}{ \| \phi _+ \| ^2 } \frac{ \sin \left( \pi (\beta - \theta) \right)}{t_m (\beta) - t_n (\theta ) } \ov{c_m (\beta )} c_n (\theta ). \nn \ea

By the previous lemma, there are phases $w_n (\theta ) \in \R$ so that
$$ c_n (\theta ) = \frac{ \| \phi _+ \| }{\sqrt{\pi}} \sqrt{t ' (n +\theta ) } e^{iw_n (\theta )}.$$
For $\theta \in (0,1)$ we are free to choose the numbers $w_n (\theta )$ arbitrarily, since the normalized basis vectors $\phi _n (\theta )$ can be re-defined to absorb the unimodular constants $e^{iw_n (\theta )} \in \T$. For any $\theta \in [0 ,1)$ we choose
$ w_n (\theta ) := -\pi (\theta +n ),$ so that
$$ e^{i w_n (\theta )} = e^{-i\pi \theta } (-1) ^n. $$ This fixes the orthonormal bases $\{ \phi _n (\theta ) \} $ uniquely (since the choice of $\phi _n (0)$ has already been fixed), and we obtain that
\be \ip{ \phi _n (\theta ) }{\phi _m (\beta )} = \frac{(-1) ^{n+m} \sin \left( \pi (\beta - \theta )\right)}{t_m (\beta ) - t_n (\theta )} \frac{ \sqrt{ t' (n +\theta ) t' (m + \beta ) }}{\pi}. \label{form1} \ee

Fix any $\alpha \in [ 0 ,1)$ so that $\theta \neq \alpha$. Then, expanding in the orthonormal basis $\{ \phi _n (\alpha ) \}$ yields
\ba 1 & = & \ip{ \phi _n (\theta ) }{\phi _n (\theta ) } \nn \\
& = & \sum _k \frac{ \sin ^2 \left( \pi (\alpha -\theta ) \right)}{\left( t_k (\alpha ) - t_n (\theta ) \right)^2 } \frac{ t' (n +\theta ) t' (k +\alpha ) }{\pi ^2}. \nn \ea Solving for $t' (n +\theta )$ yields the
formula
\be t' (n +\theta ) = \frac{\pi^2}{\sin ^2 \left( \pi (\alpha - \theta ) \right) } f_\alpha (t_n (\theta ) ) ^2 , \label{form2} \ee
where
$$ f_\alpha (t) := \left( \sum _k \frac{ t' (k+\alpha )}{ (t - t_k (\alpha )) ^2 } \right) ^{-\frac{1}{2}}.$$
Expanding $ \ip{ \phi _n (\theta ) }{\phi _m (\beta )}$ in the $\{ \phi _k (\alpha ) \}$ basis and subsituting in the formulas (\ref{form1}) and (\ref{form2}) then yields the formula
\ba & & \ip{ \phi _n (\theta ) }{\phi _m (\beta )}   =  \sum _k \ip{ \phi _n (\theta ) }{\phi _k (\alpha ) } \ip{ \phi _k (\alpha ) }{\phi _m (\beta )} \nn \\
& = & \sum _k \frac{(-1) ^{n+m}}{\pi ^2 } \frac{t' (k+\alpha )}{ \left( t_n (\theta ) - t_k (\alpha )\right) \left( t_m (\beta ) - t_k (\alpha ) \right)} \sin \left( \pi (\alpha - \theta )\right) \sin \left( \pi (\alpha -\beta ) \right) \sqrt{ t' (n +\theta ) t' (m +\beta )} \nn \\
& = & \sum _k (-1) ^{n+m} \frac{ t' (k+\alpha )}{  \left( t (n + \theta ) - t (k +\alpha )\right) \left( t ( m+ \beta ) - t (k+ \alpha ) \right)} f_\alpha (t ( n+ \theta ) ) f_\alpha (t (m +\beta )). \nn \ea
By definition $\phi _t = \phi _{\lfloor \tau (t) \rfloor} ([ \tau (t) ] )$, for any $t \in \R$, so that
\ba K (t, s) & = & \ip{ \phi _{\lfloor \tau (t) \rfloor} ( \tau (t) - \lfloor \tau (t) \rfloor ) }{\phi _{\lfloor \tau (s) \rfloor} ( \tau (s) - \lfloor \tau (s) \rfloor) ) } \nn \\
& = & (-1) ^{\lfloor \tau (t) \rfloor } f _\alpha (t) \left( \sum _k  \frac{ t' (k +\alpha ) }{\left( t - t_k (\alpha ) \right) \left( s - t_k (\alpha ) \right) } \right) f _\alpha (s ) (-1) ^{\lfloor \tau (s) \rfloor}, \label{form3} \ea for any $\alpha \in [ 0 ,1)$. In particular, choosing $\alpha =0$ gives the claimed formula from the proposition statement. The function
$f_\alpha$ and the formula (\ref{form3}) for $K(t,s)$ are clearly infinitely differentiable for $t,s \notin \{ t_k (\alpha ) \}$. Since $\alpha \in [0, 1)$ is arbitrary, and the $ (t_k (\alpha ))$ cover the real line exactly once, we conclude that $K(t,s)$ is smooth, \emph{i.e.} infinitely differentiable in both arguments.
\end{proof}

\begin{defn} \label{smoothpara}
Let $\mu : [0 ,1 ] \rightarrow [0,1]$ be a strictly increasing bijection which is infinitely differentiable on $(0,1)$.
We will say that $\mu$ is a \emph{smooth parametrization} of $[0,1]$ provided that the extended bijection $\mu _e : \R \rightarrow \R$
defined by $\mu _e (t) := \lfloor t \rfloor + \mu ([t] )$ is smooth (infinitely differentiable) on $\R$ and $\mu ' (t) > 0$ is strictly positive.  We will simply write $\mu = \mu _e$ for this extension.
\end{defn}
\begin{defn} \label{TVkerneldefs}
Define the rescaled positive kernel function
$$ K^{(T; \mu )} (t,s) := \sqrt{(\mu \circ \tau )' (t)} K ^T (t,s) \sqrt{(\mu \circ \tau ) ' (s)}; \quad \quad t,s \in \R. $$
Let $(\mbf{t} , \mbf{t} ' )$ be a bandlimit pair of real sequences, and $T \in \mc{S} ^R$ the symmetric regular linear transformation corresponding to $(\mbf{t} , \mbf{t} ' )$ (and a choice of equal-norm deficiency vectors) by Theorem \ref{FApic}.

The \emph{sampling space} or \emph{local bandlimit space} of time-varying bandlimited functions, $\K (T)$, is the reproducing kernel Hilbert space (RKHS) of functions on $\R$ with reproducing kernel $K ^T$, $ \K (T) := \H (K ^T )$, where $K _T$ is as given in Proposition \ref{kernelprop}.
We will sometimes use the alternate notation $\K (T) = \K (\mbf{t} , \mbf{t} ')$. Similarly, given any smooth parametrization $\mu$ on $[0,1]$, the \emph{$L^2$ sampling subspace} or \emph{$L^2 $ local bandlimit subspace}, $\K  (T ; \mu) = \K _\mu (\mbf{t} , \mbf{t} ' ; \mu )$ is the RKHS of functions on $\R$ with reproducing kernel $K^{(T; \mu )}$.
\end{defn}

\begin{thm} \label{thm:mult}
    Let $(\mbf{t}, \mbf{t} ' )$ be a bandlimit pair of sequences and let $T \in \mc{S} ^R (\H)$ be the corresponding symmetric linear transformation (constructed as in Theorem \ref{FApic}).
The map $U ^T : \H \rightarrow \K (T)$ defined by
$$ U ^T \phi _t := K ^T _t ; \quad \quad t \in \R, $$  is an onto isometry obeying
$$ h^T (t) := (U^T h) (t) = \ip{\phi _t}{h}; \quad \quad h \in \H. $$
The image $U^T T (U^T)^* =: M ^T \in \mc{S} ^R (\K (T) )$ acts as multiplication by the independent variable $t$ on its domain $\dom{M^T} = U^T \dom{T}$.
\end{thm}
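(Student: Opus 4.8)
The plan is to derive everything from the reproducing‑kernel identity
$$ \ip{\phi _t}{\phi _s} = K^T (t,s) = \ip{K^T _t}{K^T _s} _{\K (T)}, $$
which holds by the definition of $K^T$ in Proposition \ref{kernelprop} together with the defining property of the point evaluation vectors $K^T _t \in \K (T)$ recalled in Subsection \ref{RKHStheory}. First I would record two totality statements. The family $\{ \phi _t \mid t \in \R \}$ is total in $\H$: for $\theta = 0$ one has $\phi _{t(n)} = \phi _n (0)$, and $\{ \phi _n (0) \}$ is by construction an orthonormal eigenbasis of the self‑adjoint operator $T_0$, hence an orthonormal basis of $\H$. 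Dually, $\{ K^T _t \mid t \in \R \}$ is total in $\K (T)$ by the Aronszajn--Moore theory. Now for any finite linear combination,
$$ \Big\| \sum _i c_i K^T _{t_i} \Big\| ^2 _{\K (T)} = \sum _{i,j} \ov{c_i} c_j K^T (t_i , t_j) = \sum _{i,j} \ov{c_i} c_j \ip{\phi _{t_i}}{\phi _{t_j}} = \Big\| \sum _i c_i \phi _{t_i} \Big\| ^2 _\H , $$
so the assignment $\phi _t \mapsto K^T _t$ is well defined and isometric on the dense linear span of $\{ \phi _t \}$, and extends to an isometry $U^T : \H \to \K (T)$. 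Its range is closed and contains every $K^T _t$, hence contains the total set $\{ K^T _t \}$, so $U^T$ is onto; thus $U^T$ is unitary and $(U^T)^* K^T _t = \phi _t$.

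To identify the action of $U^T$, I would use the reproducing property in $\K (T)$: for $h \in \H$ and $t \in \R$,
$$ (U^T h)(t) = \ip{K^T _t}{U^T h} _{\K (T)} = \ip{(U^T)^* K^T _t}{h} _\H = \ip{\phi _t}{h} _\H, $$
which is the asserted formula $h^T (t) = \ip{\phi _t}{h}$.

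Finally I would set $M^T := U^T T (U^T)^*$. Since $U^T$ is unitary and $T \in \mc{S} ^R (\H)$, unitary equivalence gives at once $M^T \in \mc{S} ^R (\K (T))$ with $\dom{M^T} = U^T \dom{T}$, so it only remains to see that $M^T$ is multiplication by the independent variable. Given $g = U^T h$ with $h \in \dom{T}$, we have $M^T g = U^T T h$, and using the formula above together with the identity $T^* \phi _t = t \phi _t$ from the displayed computation preceding Proposition \ref{kernelprop} (and $t \in \R$),
$$ (M^T g)(t) = (U^T T h)(t) = \ip{\phi _t}{T h} _\H = \ip{T^* \phi _t}{h} _\H = t \, \ip{\phi _t}{h} _\H = t \, g(t) . $$

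The argument is largely routine; the points deserving attention are the two totality claims that make $U^T$ a well‑defined onto isometry (handled above), and the adjoint identity $\ip{\phi _t}{Th} = \ip{T^* \phi _t}{h}$, which rests on $\phi _t \in \dom{T^*}$ --- valid because each $\phi _t$ is an eigenvector of a self‑adjoint extension $T_\theta \subseteq T^*$ and already recorded in the computation $T^* \phi _s = s \phi _s$ before Proposition \ref{kernelprop} --- with the non‑densely‑defined case handled by passing to the c.n.u. partial isometry $V = b(T)$ and its unitary extensions as in the Remark following Lemma \ref{Rcover}.
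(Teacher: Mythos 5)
Your proposal is correct and follows essentially the same route as the paper's proof: the isometry comes from the kernel identity $\ip{\phi_t}{\phi_s}=K^T(t,s)$, surjectivity from density of the point evaluation vectors, the formula $(U^Th)(t)=\ip{\phi_t}{h}$ from the reproducing property, and the multiplication identification from $T^*\phi_t=t\phi_t$. You supply a few details the paper leaves implicit (well-definedness on finite linear combinations, totality of $\{\phi_t\}$, and the justification of $\ip{\phi_t}{Th}=t\ip{\phi_t}{h}$ via $\phi_t\in\dom{T^*}$), but the argument is the same.
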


This theorem shows that the pair $(M^T , \K (T) )$ is a functional model for $T \in \mc{S} ^R$ in the sense of Definition \ref{funmodel}. Recall that the point evaluation vectors $K ^T _t$ for $t \in \R$ are as defined in Subsection \ref{RKHStheory}. Namely, recall that $K ^T _t \in \K (T)$ is the unique vector which obeys $$ \ip{K _t }{F} = F(t); \quad \quad \forall F \in \K  (T).$$

\begin{proof}
Since the set $\{ \phi _t \} _{t \in \R }$ contains orthonormal bases, $U^T$ is densely defined.
By the construction of $K^T$ in Proposition \ref{kernelprop},
\ba \ip{U^T \phi _t}{ U^T \phi _s} _{\K (T) } & = & \ip{K ^T _t}{K ^T _s} _{\K(T)} \nn \\
& =& K ^T (t,s) \nn \\
& =& \ip{\phi _t}{\phi _s} _\H. \nn \ea $U^T$ is onto since the point evaluation vectors $K^T _t$, for $t\in \R$ are dense in the RKHS $\K (T)$. Observe that for any $h\in \H$,
$$ (U^T h) (t) = \ip{K ^T _t}{U^T h} _{\K (T)} = \ip{\phi _t}{h} _\H. $$

The final assertion is similarly easy to check: For any $h \in \dom{T}$,
\ba (M ^T U^T h ) (t) & = & (U^T T h) (t) \nn \\
& =& \ip{K _t ^T }{ U^T T h} _{\K (T)} \nn \\
& = & \ip{ \phi _t }{ Th} _{\H} \nn  \\
& = & t \ip{\phi _t }{h} _\H = t (U^T h) (t). \nn \ea
\end{proof}

\begin{thm} \label{tvbspace}
    Let $\mu$ be any smooth parametrization of $[0,1)$. The sampling space $\K (T)$ is a reproducing kernel Hilbert subspace of $L^2 (\R , (\mu \circ \tau ) ' (t) dt )$ and the $L^2$ sampling subspace $\K (T;\mu)$ is a reproducing kernel subspace of $L^2 (\R )$. The map $U ^{(T; \mu )} : \H \rightarrow \K (T; \mu ) \subset L^2 (\R )$ defined by $$ (U^{(T; \mu)} h) (t) := \sqrt{(\mu \circ\tau) ' (t)} (U ^T h ) (t) =
\sqrt{\mu ' (\tau (t) ) \tau ' (t)} \ip{\phi _t}{h} _\H, $$ is an onto isometry.
For any $\theta \in [0 ,1 )$, $\{ K ^{(T;\mu)} _{t (n +\theta )} \} \subset \K (T; \mu)$ is an orthonormal basis of point evaluation vectors and these are
eigenvectors of  $M^{(T;\mu)} _\theta := U^{(T;\mu)} T_\theta (U^{(T;\mu)}) ^*$ to the eigenvalues $t_n (\theta) = t(n +\theta )$. This yields the sampling formulas:
$$ f(t) = \sum f(t_n (\theta ) ) K ^{(T;\mu)} (t , t_n (\theta ) ); \quad \quad f \in \K (T; \mu). $$ The symmetric linear transformation $M^{(T;\mu)} := U^{(T;\mu )} T (U^{(T;\mu)} ) ^*$ acts as multiplication by the independent variable on its domain in $\K (T ;\mu)$.
\end{thm}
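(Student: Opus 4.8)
The plan is to build everything on top of Theorem \ref{thm:mult}, which already provides the onto isometry $U^T : \H \to \K(T)$ with $U^T \phi_t = K^T_t$, together with the multiplier lemmas of Subsection \ref{multiplier}. The rescaled kernel $K^{(T;\mu)}(t,s) = \sqrt{(\mu\circ\tau)'(t)}\,K^T(t,s)\,\sqrt{(\mu\circ\tau)'(s)}$ is positive (it is obtained from $K^T$ by a diagonal congruence with the strictly positive smooth weight $(\mu\circ\tau)'$), so by Aronszajn--Moore it is the reproducing kernel of a unique RKHS $\K(T;\mu)$. First I would invoke Lemma \ref{multcri} with $m(t) := \sqrt{(\mu\circ\tau)'(t)}$: the identity $m(t)K^T(t,s)\overline{m(s)} = K^{(T;\mu)}(t,s)$ says exactly that $m$ is an onto isometric multiplier from $\K(T)$ onto $\K(T;\mu)$, so $M_m : \K(T) \to \K(T;\mu)$, $(M_m F)(t) = \sqrt{(\mu\circ\tau)'(t)}\,F(t)$, is a unitary. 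Composing, $U^{(T;\mu)} := M_m \circ U^T : \H \to \K(T;\mu)$ is an onto isometry, and unwinding the definitions gives $(U^{(T;\mu)}h)(t) = \sqrt{(\mu\circ\tau)'(t)}\,\ip{\phi_t}{h}_\H = \sqrt{\mu'(\tau(t))\tau'(t)}\,\ip{\phi_t}{h}_\H$, as claimed.

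Next I would identify $\K(T;\mu)$ as a subspace of $L^2(\R)$ (and $\K(T)$ as a subspace of $L^2(\R,(\mu\circ\tau)'(t)\,dt)$). For a fixed $\alpha$, Lemma \ref{Rcover} says the sampling sequence $\mbf{t}_\alpha = (t_n(\alpha))$ covers $\R$ exactly once, and from the proof of Proposition \ref{kernelprop} (formula (\ref{form1}) at $\theta=\beta=\alpha$) the vectors $\{\phi_{t_n(\alpha)}\}$ are an orthonormal basis of $\H$; applying $U^{(T;\mu)}$, the functions $\{K^{(T;\mu)}_{t_n(\alpha)}\}$ form an orthonormal basis of $\K(T;\mu)$. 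Writing a generic $F = U^{(T;\mu)}h$ as $F = \sum_n \ip{K^{(T;\mu)}_{t_n(\alpha)}}{F}\,K^{(T;\mu)}_{t_n(\alpha)}$ and using $F(t_m(\alpha)) = \ip{K^{(T;\mu)}_{t_m(\alpha)}}{F}$, one checks that the map $F \mapsto F$ as an element of $L^2(\R)$ is isometric by computing $\|F\|_{L^2(\R)}^2$ via a change of variables $t = t(s)$, $dt = t'(s)\,ds$, against the kernel values $K^{(T;\mu)}(t,t) = (\mu\circ\tau)'(t)K^T(t,t)$ and $K^T(t_n(\alpha),t_n(\alpha)) = \sum_k t'(k+\alpha)/(t_n(\alpha)-t_k(\alpha))^2$ restricted appropriately — this is the step that needs the explicit kernel formula and the partial-fraction identity (\ref{form2}). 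Since $\alpha\in[0,1)$ is arbitrary and the $(t_n(\alpha))$ sweep out $\R$, the $L^2(\R)$ norm of $F$ is recovered, so $\K(T;\mu) \hookrightarrow L^2(\R)$ isometrically; pulling back through $M_m^{-1}$ gives $\K(T) \hookrightarrow L^2(\R,(\mu\circ\tau)'(t)\,dt)$.

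Finally I would transport the spectral data through $U^{(T;\mu)}$. By Lemma \ref{multext}, since $M_m$ is a unitary multiplier it carries self-adjoint extensions of $M^T$ to self-adjoint extensions of $M^{(T;\mu)}$; together with Theorem \ref{thm:mult} (which gives $M^T = U^T T (U^T)^*$ acting as multiplication by the independent variable) and Lemma \ref{funmodelmult}, the operator $M^{(T;\mu)} := U^{(T;\mu)}T(U^{(T;\mu)})^*$ acts as multiplication by $t$ on its domain, and $M^{(T;\mu)}_\theta := U^{(T;\mu)}T_\theta(U^{(T;\mu)})^*$ is the self-adjoint extension corresponding to $\theta$. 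Since $T_\theta\phi_n(\theta) = t_n(\theta)\phi_n(\theta)$ and $U^{(T;\mu)}\phi_{t_n(\theta)} = m(t_n(\theta))^{-1}$-times nothing problematic — more precisely $U^T\phi_{t_n(\theta)} = K^T_{t_n(\theta)}$ and $M_m K^T_{t_n(\theta)} = $ (a nonzero scalar times) $K^{(T;\mu)}_{t_n(\theta)}$ since isometric multipliers map point evaluation vectors to scalar multiples of point evaluation vectors by Lemma \ref{multcri} — each $K^{(T;\mu)}_{t_n(\theta)}$ is an eigenvector of $M^{(T;\mu)}_\theta$ with eigenvalue $t_n(\theta) = t(n+\theta)$. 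For fixed $\theta$ these form an orthonormal basis of $\K(T;\mu)$ exactly as in the second step, so Corollary \ref{Kramer} (applied to the functional model $(M^{(T;\mu)},\K(T;\mu))$ whose self-adjoint extension $M^{(T;\mu)}_\theta$ has spectrum $\overline{\{t_n(\theta)\}}$) yields the sampling formula $f(t) = \sum_n f(t_n(\theta))K^{(T;\mu)}(t,t_n(\theta))$ for all $f\in\K(T;\mu)$, noting $K^{(T;\mu)}(t_n(\theta),t_n(\theta)) = 1$ because the point evaluation vectors are already unit vectors.

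The main obstacle I anticipate is the second step — verifying the genuine $L^2(\R)$ isometry rather than just the abstract RKHS norm identity. One must be careful that the RKHS $\K(T;\mu)$, a priori only defined by its kernel, really embeds \emph{isometrically} (not merely contractively, and with the same norm) into $L^2(\R)$; this forces one to use the orthonormal sampling basis for one value of $\alpha$ to express the norm, then a change of variables $t=t(s)$ under the spectral function together with the identity $f_\alpha(t_n(\alpha))^{-2} = \sum_k t'(k+\alpha)/(t_n(\alpha)-t_k(\alpha))^2$ from (\ref{form2}) and the fact (Lemma \ref{Rcover}, Definition \ref{specfundef}) that $t'$ interpolates the gap data, to see that $\sum_n |f(t_n(\alpha))|^2 = \int_\R |f(t)|^2(\mu\circ\tau)'(t)\,dt$ in the $\K(T)$ picture and $\sum_n|F(t_n(\alpha))|^2 = \int_\R|F(t)|^2\,dt$ in the $\K(T;\mu)$ picture. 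Everything else is bookkeeping with the multiplier lemmas.
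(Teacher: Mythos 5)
Your first and third steps (the multiplier $M_m$ with $m=\sqrt{(\mu\circ\tau)'}$, and the transport of the eigenbases and sampling formula through $U^{(T;\mu)}$) are correct and consistent with what the paper treats as routine. The genuine gap is in your second step, the isometric embedding into $L^2$, which is the only part the paper actually proves in detail. You propose to fix a single $\alpha$, expand $F$ in the sampling basis $\{K^{(T;\mu)}_{t_n(\alpha)}\}$, and then obtain $\|F\|^2_{L^2(\R)}$ ``via a change of variables $t=t(s)$ against the kernel values,'' invoking the explicit kernel formula and identity (\ref{form2}). This does not work as described: a change of variables cannot convert the discrete Parseval identity $\|F\|^2=\sum_n|F(t_n(\alpha))|^2$ (for one $\alpha$) into an integral over $\R$, and your closing remark that ``$\alpha$ is arbitrary and the $(t_n(\alpha))$ sweep out $\R$'' names the right ingredients without supplying the mechanism. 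In particular your argument never explains where the weight $(\mu\circ\tau)'(t)\,dt$ comes from --- it cannot, since nothing in your step 2 depends on $\mu$.

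The missing idea is to \emph{integrate the one-parameter family of Parseval identities over} $\theta$. Since $\|U^Tf\|^2=\sum_n|f^T(t(n+\theta))|^2$ holds for every $\theta\in[0,1)$ and $\int_0^1\mu'(\theta)\,d\theta=1$, one writes
$$ \|U^Tf\|^2=\int_0^1\sum_n|f^T(t(n+\theta))|^2\,\mu'(\theta)\,d\theta=\int_a^b|f^T(t(s))|^2\,\mu'(s)\,ds, $$
using the periodic extension of $\mu$ and the reindexing $s=n+\theta$, and only then substitutes $s=\tau(t)$ to get $\int_\R|f^T(t)|^2(\mu\circ\tau)'(t)\,dt$. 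This makes the appearance of the weight $(\mu\circ\tau)'$ transparent, and it requires neither the explicit kernel formula nor (\ref{form2}), contrary to what you anticipate. (A separate small point: your assertion that $K^{(T;\mu)}(t_n(\theta),t_n(\theta))=1$ is not what your own multiplier computation gives --- $\|K^{(T;\mu)}_{t_n(\theta)}\|^2=(\mu\circ\tau)'(t_n(\theta))$, so the normalization of the sampling basis deserves more care than you give it.)
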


\begin{remark}
Similar sampling formulas hold, of course, for the non-scaled sampling spaces $\K (T)$.
\end{remark}

\begin{proof}
    We know that $\{ \phi _n (\theta)  = \phi _{t ( n+\theta )} \}$ is an orthonormal basis of eigenvectors to $T_\theta$ with eigenvalues $t_n (\theta ) = t ( n +\theta  )$,
$\theta \in [0 ,1 )$.
It follows that for any $f \in \H$ and $\theta \in [0,1)$,
\ba (U^T T f) & = & \sum \ip{\phi _{t_n (\theta )}}{f} U ^T \phi _{t_n (\theta)} \nn \\
& = & \sum f (t_n (\theta) ) K ^T _{t_n (\theta)}. \nn \ea
Let $f^T := U^T f$. Using that the $\{ \phi _n (\theta ) \}$ and hence the $\{ K ^T _{t_n (\theta) } \}$ are an orthonormal basis we have that
\ba  \| U ^T f \| ^2 & =& \int _0 ^1  \| U^T f \| ^2 \mu ' (\theta ) d\theta \nn \\
& =& \int _0 ^1 \sum | f^T (t (n +\theta) | ^2 \mu ' (\theta ) d\theta \nn \\
& = & \int _a ^b |f^T (t (s)) | ^2 \mu ' (s) ds, \nn \ea where we have extended $\mu$ periodically to $[a,b)$ as in Definition \ref{smoothpara}. Change variables by setting $s = \tau (t)$ to obtain
$$ \| f^T \| ^2 = \intfty | f^T (t) ) | ^2 \mu ' (\tau (t) ) \tau ' (t) dt. $$
The rest of the claim is straightforward.
\end{proof}

\subsection{A time-varying low-pass filter}
The above theorem shows that any $L^2 $ sampling subspace $K (T ; \mu) \subset L^2 (\R )$ has similar sampling and reconstruction properties to the Paley-Wiener spaces of bandlimited functions.  Namely, any $L^2$ sampling space has a one-parameter family of total orthogonal sets of point evaluation vectors $\{ K ^{(T; \mu ) } _{t_n (\theta )} | \ \theta \in [0,1) \}$ where the discrete sets of sample points $\{ t_n (\theta ) \}$ cover the real line exactly once and have no finite accumulation point. Moreover, these spaces have several useful properties that make them practical for signal processing applications. First, the reproducing kernel, and hence the point evaluation vectors $K^{(T; \mu)} _t$ are all real-valued, so that their Fourier transforms are centred in frequency space, and this is a natural property one would like locally bandlimited functions to have. Secondly, since $\K (T ; \mu)$ is a subspace of $L^2 (\R)$, the best approximation in $\K (T ; \mu)$ to any raw signal $f_{raw} \in L^2 (\R)$ is simply the image of $f_{raw}$ under orthogonal projection onto $\K (T ;\mu)$. In classical signal processing, a \emph{low-pass filter} is a device or process that removes all frequencies from a raw signal greater than a fixed cutoff value, $A>0$. That is, the low-pass filter implements the orthogonal projection of $f_{raw}$ onto the Paley-Wiener space $B(A)$. By Theorem \ref{tvbspace} the projector onto the $L^2$ sampling space $\K (T ;\mu ) \subseteq L^2 (\R )$ can be expressed as either an integral or as a countable summation:
$$ P _ {\K (T ; \mu) } = \intfty K ^{(T;\mu)} _t \ip{ K^{(T; \mu) } _t}{\cdot} dt = \sum \frac{K^{(T;\mu) } _{t_n (\theta)}}{K^{(T;\mu)} (t_n (\theta ) , t_n (\theta) )} \ip{K^{(T; \mu)} _{t_n (\theta )}}{\cdot} ; \quad \theta \in [0,1). $$

\begin{defn} \label{tvlowpass}
    Given any $L^2$-local bandlimit space $\K (T ; \mu )$, the \emph{$(T;\mu)$ time-varying (low-pass) filter} is the orthogonal projection of $L^2 (\R)$ onto $\K (T ; \mu )$.
\end{defn}

Finally, as described in the introduction, the local bandlimit space $\K (T) = \K (\mbf{t} , \mbf{t} ' ;\mu )$ is completely determined by the bandlimit pair of real sequences $(\mbf{t}, \mbf{t} ' )$ (and the choice of parametrization $\mu$), and these sequences can be tailored to match the local frequency behaviour of any given set of raw signals.

We conclude this section by showing that the classical Paley-Wiener spaces of $A-$bandlimited functions, are, in fact, a special case of sampling spaces.

\begin{eg} \label{PW}
Consider the bandlimit pair $(\mbf{t}, \mbf{t} ' )$ where
$$ \mbf{t} = \left( \frac{n\pi}{A} \right) _{n \in \Z}, \quad \quad \mbox{and}, \quad \quad \mbf{t} ' = \left( \tanh (A) \frac{\pi}{A} \right) _{n \in \Z}. $$

Applying the identity
$$ \sum _{n \in \Z } \frac{1}{n^2 + t^2} = \frac{\pi}{t} \coth (\pi t ), $$ it is easy to verify that this pair is normalized as in Remark \ref{normpair}.

Rewrite the expression
\ba  \sum _k \frac{ t' (k) }{ (t- t_k ) (s- t_k) } & = & \frac{\tanh A}{s-t} \sum _k \left( \frac{1}{ \frac{At}{\pi} -k } - \frac{1}{\frac{As}{\pi} -k } \right) \nn \\
& = & \pi \tanh (A) \frac{ \cot (At) - \cot (As) }{s-t}, \nn \ea where we have applied the series identity
$$ \sum _k \left( \frac{1}{t-k} - \frac{1}{s-k} \right) = \pi \cot (\pi t) - \pi \cot (\pi s).$$

Also recall the function $f(t)$ has the form
$$ f(t) ^{-2}  =  \sum _n \frac{t_n '}{(t-t_n) ^2 } = A \pi \tanh (A) \csc ^2 (At), $$ where we have again applied a standard trigonometric series formula. It follows that
\ba (-1) ^{\lfloor \tau (t) \rfloor} f(t) & = & (-1) ^{\lfloor \tau (t) \rfloor} \frac{1}{\sqrt{A\pi \tanh (A) } } | \sin (At ) | \nn \\
& = & (-1) ^{\lfloor \tau (t) \rfloor + \lfloor \frac{At}{\pi} \rfloor} \frac{1}{\sqrt{A\pi \tanh (A) } }  \sin (At ) \nn \\
& = & \frac{1}{\sqrt{A\pi \tanh (A) } } \sin (At ). \nn \ea The last line follows
since  $t_n = \frac{n\pi}{A} \leq t < t_{n+1} = \frac{(n+1) \pi}{A}$ implies that
$$ n = \lfloor \tau (t) \rfloor = \lfloor \frac{At}{\pi} \rfloor. $$ If $K$ is the positive kernel function corresponding to the pair $(\mbf{t} , \mbf{t} ' )$, then,
\ba K (t ,s ) &  = & f(t) (-1) ^{\lfloor \tau (t) \rfloor} \left( \sum _k \frac{t' (k)}{(t-t_k)(s-t_k)} \right) (-1) ^{\lfloor \tau (s) \rfloor} f(s) \nn \\
& = &   \frac{\sin (At)}{\sqrt{A\pi \tanh (A)}} \left( \frac{\pi \tanh (A) ( \cot (At) - \cot (As) }{s-t} \right) \frac{\sin (As)}{ \sqrt{A\pi \tanh (A)}} \nn \\
& = & \frac{ \sin \left( A (t-s) \right)}{A (t-s)}. \nn \ea

Using Fourier theory, it is easy to check that the reproducing kernel for $B(A)$ is
$$ k^A (t,s) := \frac{A}{\pi} \frac{ \sin \left( A (t-s) \right)}{A (t-s)}, $$ a constant multiple of the kernel for $\K (\mbf{t} , \mbf{t} ')$.
Lemma \ref{multcri} then implies that multiplication by the positive constant $\sqrt{ A / \pi }$ is a unitary operator from $\K (\mbf{t} , \mbf{t} ' )$ onto $B(A)$.
\end{eg}

\section{Hardy space model} \label{Hardy}

The local bandlimit spaces $\K (T)$ provide a functional model for any $T \in \mc{S} ^R$.  An equivalent functional model can be constructed using the theory of meromorphic model spaces of the Hardy space of the upper half-plane. This classical theory will provide a valuable perspective on the local bandlimit spaces, and, in particular, will motivate a precise definition of time-varying bandwidth.

Let $H^2 := H^2 (\C ^+ )$ be the Hardy space of analytic functions in the upper half-plane. Recall that $H^2$ can be defined as the space of all analytic functions $h \in \C ^+$
so that the norm
$$ \| h \| ^2 := \sup _{y>0} \intfty |h(t+iy) | ^2 dt < \infty, $$ is finite. The classical theory of Hardy spaces shows that any $h \in H ^2$ has non-tangential boundary values on $\R$ which exist almost everywhere with respect to Lebesgue measure, and that the identification of $H^2$ functions with their non-tangential limits defines an isometric inclusion of $H^2$ in $L^2 (\R)$ \cite{Hoff}.  Equivalently,
$H^2 (\C ^+ ) = \H (k)$ is the unique RKHS corresponding to the sesqui-analytic Szeg\"{o} kernel:
$$ k(z,w) := \frac{i}{2\pi} \frac{1}{z-\ov{w} } ; \quad \quad z,w \in \C ^+. $$

Similarly one can define $H^\infty = H ^\infty (\C ^+)$ as the Banach space of all analytic functions which are bounded in $\C ^+$. As before non-tangential boundary values define an isometric embedding of $H^\infty$ into $L^\infty$.

Recall that any $\Theta \in [H^\infty ] _1$, the closed unit ball of $H^\infty$, is called \emph{inner} if $$ |\Theta (t ) | = 1; \quad \quad \mbox{a.e.} \ t \in \R, $$
\emph{i.e.}, if $\Theta$ has unimodular non-tangential boundary values almost everywhere with respect to Lebesgue measure on the real line. Let $S$ denote the operator of multiplication by $$ b(t) := \frac{t-i}{t+i}, $$ restricted to $H^2 (\C ^+ )$. It is easy to check that $S$ is an isometry on $H^2$, called the \emph{shift}. Under the canonical unitary transformation of $H^2 (\C ^+)$ onto the Hardy space of the disk $H^2 (\D)$, $S$, is conjugate to the operator of multiplication by $z$, the shift on $H^2 (\D)$. The shift operator plays a central role in the study of Hardy spaces \cite{Hoff,Nik2012,Ross2006CT,Sarason-dB}.

A classical theorem of Beurling-Lax shows that a subspace $M \subset H^2$ is invariant for $S$ if and only if
$$ M = \Theta H^2, $$ for some inner function $\Theta$ \cite{Hoff}.  The corresponding \emph{model space} $K(\Theta ) := H^2 \ominus \Theta H^2$ is then invariant for the backward shift, $S^*$, and cyclic for $S$ \cite{Ross2006CT,Nik2012}. Any model space $K(\Theta )$ is a RKHS of analytic functions on $\C ^+$ with reproducing kernel
 \be k^\theta (z,w) := \frac{i}{2\pi} \frac{1 - \Theta (z) \ov{\Theta (w)}}{z-\ov{w}}; \quad \quad z,w \in \C ^+. \label{modkernform} \ee  We will be primarily interested in the case where $\Theta$ is a \emph{meromorphic inner function}, \emph{i.e.} an inner function which has a meromorphic extension to the entire complex plane $\C$.

\subsection{An analytic functional model}

As shown in \cite{Martin-dB, AMR}, given any inner function $\Phi \in H^\infty$, one can define
$$ \dom{Z ^\Phi} := \{ f \in K(\Phi ) | \ zf(z) \in K (\Phi ) \}. $$ Let $M$ denote the self-adjoint operator of multiplication by $t$ in $L^2 (\R)$.
Then for any inner function $\Phi$, $$ Z ^\Phi := M | _{\dom{Z ^\Phi}} \in \mc{S} (K (\Phi )), $$ is a simple symmetric linear transformation in $K (\Phi )$ with deficiency indices $(1,1)$ \cite{Martin-dB,AMR}. The domain $\dom{Z ^\Phi }$ is not necessarily dense, in particular it is not dense if $K (\Phi )$ is finite dimensional, which occurs, for example, if $\Phi $ is a finite Blaschke product. For necessary and sufficient conditions for $Z^\Phi$ to be densely defined  see for example \cite{Livsic1946}, \cite[Corollary 3.1.3, Corollary 3.1.4, Theorem 5.0.9]{Martin-dB} or \cite[Appendix 1, Theorem 5.6]{Glazman}.

It is easy to check (as in Subsection \ref{RKHStheory}) that the $k _z ^\Phi$, are eigenvectors for $(Z ^\Phi) ^*$,
\be \ker{ (Z ^\Phi) ^* - \ov{z} } = \bigvee k_z ^\Phi ; \quad \quad z \in \C ^+. \label{defspaces} \ee

For the remainder of this section we assume that $\Phi$ is a meromorphic inner function, \emph{i.e.}, $\Phi$ has a meromorphic extension to $\C$. Since
$\Phi$ has unimodular non-tangential boundary values on $\R$ almost everywhere with respect to Lebesgue measure, it follows that $| \Phi (t) | = 1$ for all $t \in \R$ and that $\Phi$ is analytic in an open neighbourhood of $\R$. In particular, $K (\Phi )$ can be viewed as a RKHS on an open neighbourhood of $\ov{\C ^+ }$, so that the reproducing kernel formula (\ref{modkernform}) holds for all $z,w \in \ov{\C ^+} $ and the above formula (\ref{defspaces}) extends to all $t \in \R$.
As established in \cite{Martin-dB}, a symmetric linear transformation $S$ is regular and simple with deficiency indices $(1,1)$, \emph{i.e.} $S \in \mc{S} ^R $ if and only if it is unitarily equivalent to some $Z ^\Phi$ acting on $K (\Phi )$, where $\Phi$ is a meromorphic inner function (equivalently $(Z ^\Phi , K (\Phi ) )$ is a functional model for $T$).

\begin{thm}
    Any inner function $\Phi \in H^\infty (\C ^+ )$ which has a meromorphic extension to $\C$  has the form
\be \Phi (z) := \gamma e^{iaz} \prod  \frac{\ov{z _n}}{z_n} \frac{z-z_n}{z-\ov{z_n}} ; \quad \quad (z_n) \subset \C ^+, \label{meroinner} \ee where $a\geq 0$, $\ga \in \T$,  the $(z_n )$ have no finite accumulation point and obey the Blaschke condition
$$ \sum \frac{\im{z_n}}{|z_n| ^2} < \infty. $$
The symmetric linear transformation $Z ^\Phi \in \mc{S} ^R $ is densely defined if and only if either $a >0$ or $\sum \im{z_n} = + \infty$.
\end{thm}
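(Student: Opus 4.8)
The plan is to establish the two assertions separately: the canonical product representation (\ref{meroinner}), and the criterion for $Z^\Phi$ to be densely defined.

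For the representation, I would begin from the classical canonical factorization of a function in the closed unit ball of $H^\infty (\C ^+)$: every inner $\Phi$ factors as $\Phi = \gamma\, e^{iaz}\, B_{(z_n)}\, S_\mu$, where $\gamma \in \T$, $a \geq 0$ (nonnegativity of $a$ being forced by boundedness of $e^{iaz}$ on $\C ^+$), $B_{(z_n)}$ is the Blaschke product over a zero sequence $(z_n) \subset \C ^+$ with normalizing unimodular factors $\ov{z_n}/z_n$, and $S_\mu$ is the singular inner factor attached to a positive measure $\mu$ on $\R$ that is singular with respect to Lebesgue measure. Convergence of the Blaschke product is equivalent to $\sum \im{z_n}/|z_n| ^2 < \infty$, which (since $\Phi \neq 0$) is subsumed in $\Phi$ being inner. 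The crux is then that a meromorphic inner function is analytic across $\R$ (indeed $|\Phi| = 1$ on $\R$), and this forces $\mu = 0$ and forces $(z_n)$ to have no finite accumulation point: an atom $\mu(\{t_0\}) = c > 0$ contributes a factor of the form $e^{ict_0}\exp\!\left(\tfrac{-ic(1+t_0^2)}{t_0 - z}\right)$, which has an essential singularity at $t_0$; a nonzero continuous singular part obstructs analytic continuation across its nonempty closed support; and a finite accumulation point of $(z_n)$ is a non-isolated singularity of $B_{(z_n)}$. Conversely, if $\mu = 0$ and $(z_n)$ has no finite accumulation point, reflecting across $\R$ (via $\ov{\Phi(\ov z)} = \Phi(z)^{-1}$) exhibits $\gamma e^{iaz} B_{(z_n)}$ as meromorphic on $\C$, with poles exactly at the $\ov{z_n}$, and it is inner because $e^{iaz}$ ($a \geq 0$) and $B_{(z_n)}$ are. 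This yields (\ref{meroinner}).

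For dense definedness, I would invoke — this is where one leans on \cite{Livsic1946}, \cite[Cor.\ 3.1.3, 3.1.4, Thm.\ 5.0.9]{Martin-dB} (cf.\ \cite[App.\ I, Thm.\ 5.6]{Glazman}) — the characterization that $Z^\Phi$ is densely defined if and only if $\Phi$ has no finite angular derivative at the boundary point $\infty$, equivalently
$$ \liminf _{y \to +\infty} y \bigl( 1 - |\Phi (iy)| \bigr) = +\infty. $$
(Conceptually, $Z^\Phi$ fails to be densely defined exactly when the evaluation functional ``at $\infty$'' is bounded on $K(\Phi)$, i.e.\ when the reproducing kernel at $\infty$ exists, which by the Julia--Carath\'eodory theorem is the finiteness of the angular derivative; equivalently one may route this through Theorem \ref{FApic}, since $(Z^\Phi, K(\Phi))$ is a functional model for some $T \in \mc{S} ^R$, so $Z^\Phi$ is densely defined iff the associated bandlimit pair is infinite.) It then remains to compute. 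Writing $z_n = x_n + i y_n$ with $y_n = \im{z_n} > 0$, one has $|\Phi(iy)| = e^{-ay}|B_{(z_n)}(iy)|$ and
$$ \log \frac{1}{|B_{(z_n)}(iy)|} \;=\; -\frac12 \sum _n \log \!\left( 1 - \frac{4\, y\, y_n}{x_n ^2 + (y + y_n) ^2} \right). $$
If $a > 0$ then $|\Phi(iy)| \leq e^{-ay} \to 0$, so the $\liminf$ is $+\infty$ and $Z^\Phi$ is densely defined. If $a = 0$, then the terms $\tfrac{4 y y_n}{x_n^2 + (y+y_n)^2}$ are bounded by $1$ and, for $y$ large, are uniformly small (since $y_n \to 0$), so that $\log\tfrac{1}{|B_{(z_n)}(iy)|} \asymp \sum_n \tfrac{y y_n}{x_n^2 + (y+y_n)^2}$, hence $1 - |B_{(z_n)}(iy)| \asymp \min\bigl(1,\ \sum_n \tfrac{y y_n}{x_n^2+(y+y_n)^2}\bigr)$ and $y\bigl(1 - |B_{(z_n)}(iy)|\bigr) \asymp \min\bigl(y,\ \sum_n \tfrac{y^2 y_n}{x_n^2+(y+y_n)^2}\bigr)$. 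Since $y \mapsto \tfrac{y^2}{x_n^2 + (y+y_n)^2}$ increases to $1$, monotone convergence gives $\sum_n \tfrac{y^2 y_n}{x_n^2 + (y+y_n)^2} \uparrow \sum_n y_n$ as $y \to \infty$; therefore the $\liminf$ above is $+\infty$ precisely when $\sum_n y_n = \sum_n \im{z_n} = +\infty$ (when this sum is finite it bounds $\sum_n \tfrac{y^2 y_n}{x_n^2+(y+y_n)^2}$ for every $y$, so the $\liminf$ is finite; when it is infinite both arguments of the $\min$ tend to $+\infty$). Combining the two cases gives the asserted equivalence.

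The principal obstacle is the dense-definedness half, and it has two components. First, one must state cleanly (or cite precisely) the link between dense definedness of $Z^\Phi$ and the growth of $\Phi$ at $\infty$, taking care that $\infty$ is the correct boundary point and that the normalization matches; an equivalent but less transparent route is through Theorem \ref{FApic} together with the total mass of the purely atomic Aleksandrov--Clark measure of $\Phi$. Second, one must handle the asymptotics of $|B_{(z_n)}(iy)|$ carefully: interchanging the sum and the limit requires the monotone convergence argument above, and one must accommodate the case where $|B_{(z_n)}(iy)|$ does not tend to $1$ — when the zeros are sufficiently dense, $|B_{(z_n)}(iy)|$ may tend to a constant in $(0,1)$, which is still consistent with (indeed forces) dense definedness. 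The canonical form, by contrast, is routine once one observes that meromorphicity forces analyticity across the real axis.
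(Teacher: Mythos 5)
Your proof is correct and follows the same route the paper itself indicates --- the Blaschke--singular canonical factorization for the representation formula, and Liv\v{s}ic's dense-definedness criterion (the angular-derivative condition at $\infty$) for the second assertion --- except that the paper merely cites these facts (to \cite{Hoff} and \cite[Theorem 5.0.9]{Martin-dB}), whereas you actually carry out the reduction of the criterion to ``$a>0$ or $\sum \im{z_n} = +\infty$,'' which is the only nontrivial content.

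One slip worth repairing in the case $a=0$: the parenthetical justification ``the terms are uniformly small (since $y_n \to 0$)'' is false for a general meromorphic Blaschke sequence (take $z_n = n+i$, or zeros on the imaginary axis, where $|B_{(z_n)}(iy)|$ even vanishes at certain $y$). However, $y_n \to 0$ does hold precisely when $\sum y_n < \infty$, and that is the only case in which you need the upper bound $1-|B_{(z_n)}(iy)| \lesssim \sum_n u_n(y)$, where $u_n(y) := 4yy_n/(x_n^2+(y+y_n)^2)$; in the divergent case only the lower bound $1-|B_{(z_n)}(iy)| \gtrsim \min\bigl(1, \sum_n u_n(y)\bigr)$ is used, which requires no uniformity, together with your monotone-convergence identity $\sum_n y^2 y_n/(x_n^2+(y+y_n)^2) \uparrow \sum_n y_n$. (Alternatively, the two-sided bound $1-|B_{(z_n)}(iy)| \asymp \min\bigl(1,\sum_n u_n(y)\bigr)$ holds unconditionally: if $\sum_n u_n \geq 1/2$ the right-hand side is comparable to $1$, while if $\sum_n u_n < 1/2$ then every $u_n < 1/2$ and $-\log(1-u_n) \leq 2u_n$.) With that one-line repair the argument is complete, and you have in effect supplied the verification that the paper delegates to the literature.
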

This factorization formula follows easily from the Blaschke-singular factorization of inner functions \cite{Hoff}. The necessary and sufficient condition on meromorphic inner $\Phi$ so that $Z^\Phi \in \mc{S} ^R$ is densely defined is the special case of Liv\v{s}ic's criterion applied to $T \in \mc{S} ^R$, see \emph{e.g.} \cite[Theorem 5.0.9]{Martin-dB}. The Blaschke condition (the necessary and sufficient condition on the $\{ z_n \}$ so that the above product converges) combined with the assumption that the $z_n$ have no finite accumulation point is equivalent to the convergence of $\sum \left| \im{\frac{1}{z_n}} \right| = \sum \frac{\im{z_n}}{|z_n | ^2}.$

\begin{remark}
Meromorphic inner functions are related to deBranges functions (also called Hermite-Biehler functions). An entire function $E$ is called a deBranges function if $| E (z)  | > |E (\ov{z} ) | $ for all $z \in \C ^+$, and $\Phi$ is a meromorphic inner function if and only if $\Phi = E ^\dag / E$ for some deBranges function $E$ where $E^{\dag} (z) := \ov{E(\ov{z} )}$ \cite[Section 2.3]{Hav2003}, \cite[pg. 317-318]{Levin1964}. The theory of deBranges spaces of entire functions provides another equivalent functional model for $\mc{S} ^R$ \cite{dB,Martin-dB}.
\end{remark}

There a natural conjugation $C_\Phi$ on $K (\Phi)$ which commutes with $Z ^\Phi$ (see, for example
\cite{Martin-dB}, \cite{Sarason2007}, or \cite[Section 7.6]{GG}):

\begin{lemma}
    For any inner $\Phi$, define the anti-linear map $C_\Phi : K (\Phi ) \rightarrow K (\Phi )$ by
\be (C_\Phi k ^\Phi _w) (z) = \frac{1}{2\pi i} \frac{\Phi (z) - \Phi (w)}{z-w}. \label{conjker} \ee
This map extends to an anti-linear, idempotent surjective isometry (a conjugation) so that $Z ^\Phi C_\Theta \subset C_\Phi Z ^\Phi$:
$$ C _\Phi \dom{Z^\Phi } \subset \dom{Z  ^\Phi }, \quad \quad \mbox{and} \quad \quad C_\Phi Z ^\Phi \dom{Z^\Phi } = Z^\Phi C_\Phi \dom{Z ^\Phi}.$$
\end{lemma}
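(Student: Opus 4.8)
The plan is to realize $C_\Phi$ concretely as the restriction to $K(\Phi)$ of a conjugation $J_\Phi$ of all of $L^2(\R)$, and then to extract both the kernel formula~(\ref{conjker}) and the commutation with $Z^\Phi$ from that picture. Define $J_\Phi$ on non-tangential boundary values by $J_\Phi g := \Phi\,\ov{g}$. Since $\Phi$ is inner, $|\Phi(t)| = 1$ for a.e.\ $t\in\R$, so $J_\Phi$ is anti-linear, $\|J_\Phi g\| = \|g\|$, and $J_\Phi^2 g = \Phi\,\ov{\Phi\,\ov g} = |\Phi|^2 g = g$; thus $J_\Phi$ is a conjugation of $L^2(\R)$. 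Being a bijective anti-linear isometry, $J_\Phi$ satisfies $\ip{J_\Phi x}{J_\Phi y} = \ov{\ip{x}{y}}$ and therefore carries any orthogonal decomposition of $L^2(\R)$ to another one.

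The crux is the invariance of $K(\Phi)$ under $J_\Phi$. Starting from the standard Hardy space splitting $L^2(\R) = H^2 \oplus \ov{H^2}$ (with $\ov{H^2}$ the conjugates of boundary values of $H^2$ functions) and refining it to the orthogonal decomposition $L^2(\R) = \Phi H^2 \oplus K(\Phi) \oplus \ov{H^2}$, one uses $|\Phi|^2 = 1$ a.e.\ to get $J_\Phi(\Phi H^2) = \Phi\,\ov{\Phi}\,\ov{H^2} = \ov{H^2}$ and $J_\Phi(\ov{H^2}) = \Phi H^2$. Applying $J_\Phi$ to the refined decomposition and comparing with the original forces $J_\Phi(K(\Phi)) = K(\Phi)$, so $C_\Phi := J_\Phi|_{K(\Phi)}$ is an anti-linear, surjective, isometric involution of $K(\Phi)$, i.e.\ a conjugation. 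To match it with the statement, compute on reproducing kernels using $\ov{\Phi(t)} = \Phi(t)^{-1}$ a.e.\ and~(\ref{modkernform}):
$$ (C_\Phi k^\Phi_w)(t) = \Phi(t)\,\ov{k^\Phi_w(t)} = \Phi(t)\cdot\frac{-i}{2\pi}\cdot\frac{1 - \ov{\Phi(t)}\,\Phi(w)}{t-w} = \frac{1}{2\pi i}\cdot\frac{\Phi(t) - \Phi(w)}{t-w}, \qquad \text{a.e.\ } t\in\R. $$
The right-hand side is the boundary function of $z\mapsto \tfrac{1}{2\pi i}\tfrac{\Phi(z)-\Phi(w)}{z-w}$, which is analytic on $\C^+$ (the singularity at $z=w$ is removable) and readily seen to lie in $H^2$; since $C_\Phi k^\Phi_w\in K(\Phi)\subset H^2$ has the same boundary values, the two coincide on $\C^+$, which is precisely~(\ref{conjker}). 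As the kernels span $K(\Phi)$, formula~(\ref{conjker}) together with anti-linearity and isometry determines $C_\Phi$ uniquely, so the map in the statement is exactly $J_\Phi|_{K(\Phi)}$.

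Finally, $J_\Phi$ commutes with the operator $M$ of multiplication by the independent variable on $L^2(\R)$, since $(J_\Phi M f)(t) = \Phi(t)\,\ov{t f(t)} = t\,\Phi(t)\,\ov{f(t)} = (M J_\Phi f)(t)$. Hence if $f\in\dom{Z^\Phi}$, then $f$ and $M f$ lie in $K(\Phi)$, so by invariance $C_\Phi f$ and $C_\Phi M f = M C_\Phi f$ lie in $K(\Phi)$ as well; this means $C_\Phi f\in\dom{Z^\Phi}$ and $Z^\Phi C_\Phi f = M C_\Phi f = C_\Phi M f = C_\Phi Z^\Phi f$. Thus $C_\Phi\dom{Z^\Phi}\subseteq\dom{Z^\Phi}$ and $Z^\Phi C_\Phi\subseteq C_\Phi Z^\Phi$; applying the involution $C_\Phi$ a second time upgrades the inclusion of domains to $C_\Phi\dom{Z^\Phi} = \dom{Z^\Phi}$, whence $C_\Phi Z^\Phi\dom{Z^\Phi} = Z^\Phi C_\Phi\dom{Z^\Phi} = Z^\Phi\dom{Z^\Phi}$. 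The step that genuinely needs care is the invariance of $K(\Phi)$: one must remember that an \emph{anti}-linear isometry still respects orthogonality, and handle the identifications $J_\Phi(\Phi H^2) = \ov{H^2}$ and $J_\Phi(\ov{H^2}) = \Phi H^2$ (both resting on $\Phi\,\ov{\Phi} = 1$ a.e.\ on $\R$) correctly; everything else is routine bookkeeping with boundary values.
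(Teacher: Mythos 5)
Your proof is correct. Note that the paper does not actually prove this lemma --- it simply cites \cite{Martin-dB}, \cite{Sarason2007} and \cite[Section 7.6]{GG} --- so there is no internal argument to compare against; what you have written is essentially the standard Sarason/Garcia--Putinar construction of the model-space conjugation, transplanted from the disk to $\C^+$ (where, conveniently, $L^2(\R) = H^2 \oplus \ov{H^2}$ with no extra $\bar z$ factor, so $J_\Phi g = \Phi\,\ov{g}$ works as written). The decomposition argument for $J_\Phi(K(\Phi)) = K(\Phi)$, the kernel computation using $\Phi(t)\ov{\Phi(t)}=1$ a.e., and the commutation with $M$ are all sound. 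The one point you wave at rather than prove is the repeated passage from an a.e.\ identity of boundary functions back to an identity of analytic functions on $\C^+$: e.g.\ to conclude that $z\,(C_\Phi f)(z)$ is the $K(\Phi)$-element whose boundary values are $t\,\Phi(t)\ov{f(t)} = (J_\Phi Mf)(t)$, or that $\tfrac{1}{2\pi i}\tfrac{\Phi(z)-\Phi(w)}{z-w}$ is the $H^2$ function with the computed boundary data, one needs that the analytic candidate lies in the Smirnov class $N^+$ (so that $L^2$ boundary values force membership in $H^2$ and boundary values determine the function). This is routine and you correctly flag it as bookkeeping, but it is the hypothesis doing the work in those identifications, so it deserves a sentence rather than silence.
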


\begin{remark}
This lemma immediately implies that
$C_\Phi$ is an isometry from $\ker{ (Z ^\Phi ) ^* -\ov{z} }$ onto $\ker{(Z^\Phi ) ^* - z}$ so that
$$ \ker{ (Z^\Phi ) ^* - z} = \bigvee C_\Phi k_z ^\Phi; \quad \quad z \in \C ^+. $$

It follows that one can choose equal norm deficiency vectors $\phi _\pm \in \ker{ (Z^\Phi )^* \mp i }$ by
 \be \phi _+  = \wt{k} ^\Phi _i := - C_\Phi k_i ^\Phi, \quad \quad \mbox{and} \quad \quad  \phi _- =  -k ^\Phi _i = C_\Phi \phi _+. \label{defchoice} \ee
We will refer to this choice of deficiency vectors as the canonical choice.
\end{remark}

\begin{remark} \label{contcoeff}
    Any non-zero deficiency vector $\psi _+ \in \ker{(Z^\Phi) ^* -i}$ is a constant multiple of $\phi _+ = \wt{k} ^\Phi _i = - C_\Phi k_i ^\Phi$.
It follows easily from the formula (\ref{conjker}) for $\phi _+$ that if $$ a_n (\theta) := \frac{1}{\| k ^\Phi _{t_n (\theta )} \|} \ip{\psi _+}{k ^\Phi _{t_n (\theta)}}$$ are the coefficients
of $\psi _+$ in the total orthonormal basis of normalized point evaluation eigenvectors to $Z^\Phi _\theta$, that $a_n (\theta )$ is continuous as a
function of $\theta \in [0,1)$. This fact is used in the proof of Lemma \ref{defcoeff}.
\end{remark}

The above canonical choice (\ref{defchoice}) of deficiency vectors fixes the family of self-adjoint extensions $Z^\Phi (\alpha)$, $\alpha \in \T$. With this choice one can prove, \cite[Section 4.1]{Martin-dB}:

\begin{thm} \label{extspec}
Let $\Phi$ be a meromorphic inner function. Fix a family of
self-adjoint extensions $Z^\Phi (\alpha )$ of $Z^\Phi \in \mc{S} ^R$ by the above choice of deficiency vectors $\phi _+ = \wt{k} ^\Phi _i = -C_\Theta k_i ^\Phi$ and $\phi _- = - k_i ^\Phi$. Then the spectrum of $Z^\Theta (\alpha )$, $\alpha \in \T$ is
\be \sigma (Z^\Phi (\alpha) ) = \left\{ t \in \R \left| \ \Phi (t) = \frac{ \alpha + \Phi (i) }{1 + \alpha \ov{\Phi (i)}} \right. \right\}. \ee
\end{thm}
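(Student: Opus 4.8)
First I would translate the statement into an eigenvalue problem for the unitary extensions $U(\alpha) = b(Z^\Phi(\alpha))$, reduce it to a single scalar equation, and then upgrade an inclusion to an equality by a counting argument.

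Since $Z^\Phi \in \mathcal{S}^R$, Lemma \ref{Rcover} guarantees that every self-adjoint extension $Z^\Phi(\alpha)$ has pure point spectrum consisting of simple eigenvalues with no finite accumulation, and that $\{\sigma(Z^\Phi(\alpha))\}_{\alpha \in \T}$ partitions $\R$; so it suffices to describe the eigenvalues. Because $b$ is a homeomorphism of $\R \cup \{\infty\}$ onto $\T$, a real $t$ lies in $\sigma(Z^\Phi(\alpha))$ iff $b(t) \in \T \setminus \{1\}$ is an eigenvalue of $U(\alpha)$; and since $Z^\Phi \subseteq Z^\Phi(\alpha) \subseteq (Z^\Phi)^*$, any corresponding eigenvector $g$ satisfies $(Z^\Phi)^* g = tg$, so by (\ref{defspaces}) — valid for $t \in \R$ because $\Phi$ is meromorphic inner and $k^\Phi_t \neq 0$ — it is a nonzero multiple of $k^\Phi_t$. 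Hence $t \in \sigma(Z^\Phi(\alpha))$ if and only if $U(\alpha) k^\Phi_t = b(t) k^\Phi_t$.

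Next I would expand this equation using (\ref{uniextensions}) with the canonical deficiency vectors $\phi_+ = \wt{k}^\Phi_i = -C_\Phi k^\Phi_i$ and $\phi_- = -k^\Phi_i$ of (\ref{defchoice}):
\[ U(\alpha) k^\Phi_t = V k^\Phi_t + \frac{\alpha}{\|\phi_+\|^2}\, \ip{\phi_+}{k^\Phi_t}\, \phi_- . \]
Pairing the eigenvector equation with $\phi_-$ annihilates the term $V k^\Phi_t$, since $V k^\Phi_t \in \ran{V}$ and $\phi_- \perp \ran{V}$, and because $\|\phi_-\| = \|\phi_+\|$ what remains is the scalar identity $\alpha\, \ip{\phi_+}{k^\Phi_t} = b(t)\, \ip{\phi_-}{k^\Phi_t}$. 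By the reproducing property $\ip{\phi_\pm}{k^\Phi_t} = \ov{\phi_\pm(t)}$, and the closed forms for $\phi_\pm$ coming from (\ref{conjker}) and (\ref{modkernform}) express $\phi_+(t)$ and $\phi_-(t)$ explicitly in terms of $\Phi(t)$, $\Phi(i)$ and $t$. Substituting these, together with $b(t) = (t-i)/(t+i)$, and simplifying — the factors $t \pm i$ cancel — collapses the identity to $\Phi(t) = \frac{\alpha + \Phi(i)}{1 + \alpha\,\ov{\Phi(i)}}$. Writing $\beta(\zeta) := \frac{\zeta + \Phi(i)}{1 + \ov{\Phi(i)}\,\zeta}$, this shows $\sigma(Z^\Phi(\alpha)) \subseteq S_\alpha := \{ t \in \R : \Phi(t) = \beta(\alpha) \}$.

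To promote this inclusion to an equality I would argue by counting rather than by reversing the computation. The map $\beta$ is a disk automorphism (here $|\Phi(i)| < 1$, as $\Phi$ is non-constant inner), hence a bijection of $\T$, and $\Phi(t) \in \T$ for every real $t$, so $\{S_\alpha\}_{\alpha \in \T}$ is itself a partition of $\R$. Since $\sigma(Z^\Phi(\alpha)) \subseteq S_\alpha$ and both families partition the same set $\R$, any $t \in S_\alpha$ lies in some $\sigma(Z^\Phi(\alpha')) \subseteq S_{\alpha'}$, forcing $\alpha' = \alpha$ by disjointness of the $S_\beta$'s, so $t \in \sigma(Z^\Phi(\alpha))$; hence $S_\alpha \subseteq \sigma(Z^\Phi(\alpha))$, and with the reverse inclusion, $\sigma(Z^\Phi(\alpha)) = S_\alpha$. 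The delicate part of this plan is the middle step: keeping every normalization straight — the action of $V$ on $k^\Phi_t$, the rank-one term in (\ref{uniextensions}), and the exact form of the canonical vectors $\phi_\pm$ — so that the Möbius transformation that emerges is precisely $\beta$ and not a sign- or rotation-twisted variant; a secondary point is making the reduction in the first two paragraphs airtight in the non-densely-defined case (when exactly one $U(\alpha)$ has $1$ as an eigenvalue), where one argues throughout with the partial isometry $V$ and its unitary extensions instead of with $(Z^\Phi)^*$.
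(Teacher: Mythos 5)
Your proof is correct, and the computation at its core --- evaluating $\ip{\phi _\pm}{k^\Phi _t}$ via the reproducing property and the explicit formulas (\ref{conjker}), (\ref{modkernform}), then simplifying to the M\"obius relation $\Phi (t) = \frac{\alpha + \Phi (i)}{1+\alpha \ov{\Phi (i)}}$ --- is the same one the paper has in mind; I checked that the factors of $t\pm i$ and the signs do cancel as you assert, so the ``delicate part'' you flag goes through. Where you genuinely diverge is in how the two inclusions are obtained. The paper's (one-sentence) proof tests directly when $k^\Phi _t$ lies in $\dom{Z^\Phi (\alpha )} = \dom{Z^\Phi} \bigvee \{ \phi _+ - \alpha \phi _- \}$, which yields an equivalence in one stroke at each fixed $t$. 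You instead extract only one scalar component of the eigenvalue equation for $U(\alpha )$ (by pairing with $\phi _-$), which gives just the inclusion $\sigma (Z^\Phi (\alpha )) \subseteq S_\alpha$, and then recover equality from global structure: Lemma \ref{Rcover} says the spectra partition $\R$, your sets $S_\alpha$ partition $\R$ because $\alpha \mapsto \frac{\alpha + \Phi (i)}{1+\alpha \ov{\Phi (i)}}$ is a bijection of $\T$ and $|\Phi | \equiv 1$ on $\R$, and two nested partitions indexed by the same parameter set must coincide. This buys you freedom from having to verify that the scalar condition is also sufficient for the full vector equation $U(\alpha ) k^\Phi _t = b(t) k^\Phi _t$, at the price of importing Lemma \ref{Rcover}; since the paper establishes that lemma independently (via Kre\v{\i}n's alternating eigenvalue theorem and the cited references), there is no circularity, but it is worth saying so explicitly, because the ``cover $\R$ exactly once'' statement is often itself deduced from the present theorem together with Theorem \ref{phasefun}. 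Your closing caveats --- that $k^\Phi _t \neq 0$ for real $t$ when $\Phi$ is meromorphic inner, and that the non-densely-defined case should be run through $V$ and its unitary extensions --- are exactly the right ones.
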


In particular, if $\Theta$ is a meromorphic inner function such that $\Theta (i) = 0$, then
$$ \sigma (Z^{\Theta} (\alpha ) ) = \{ t \in \R | \ \Theta (t) = \alpha \}. $$  Theorem \ref{extspec} is easily proven by applying the formula
$$ \dom{ Z^\Phi (\alpha )} = \dom{Z^\Phi} \bigvee \{  \phi _+ - \alpha \phi _- \}, $$ to determine when $k_t ^\Phi$ belons to the domain of $Z^\Phi (\alpha)$. (This formula follows as $Z^\Phi (\alpha )$ is the inverse Cayley transform of a unitary extension $U^\Phi (\alpha)$ of $b (Z^\Phi )$).

\begin{thm}{ (\cite[Theorem 5.13]{Martin-dB}, \cite[Problem 48]{dB})} \label{phasefun}
    If $\Phi \in H^\infty$ is inner and meromorphic then there is a strictly increasing function $\ga$ on $\R$ so that
$\Phi (t) = e^{i2\pi \ga (t)}$, $\ga ' (t) >0$ for all $t \in \R$ and $\ga$ has an analytic extension to a neighbourhood of $\R$.
\end{thm}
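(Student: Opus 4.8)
The plan is to construct $\ga$ explicitly as a primitive of the (normalized) logarithmic derivative of $\Phi$, and then to read off all the asserted properties. We may assume $\Phi$ is non-constant, since otherwise no such $\ga$ exists. As recalled above, $\Phi$ is analytic in an open neighbourhood of $\R$ and, because $|\Phi(t)| = 1$ for $t \in \R$, has no zeros on $\R$; so, shrinking the neighbourhood, I fix an open set $N \subseteq \C$ with $\R \subseteq N$ on which $\Phi$ is analytic and zero-free, and set $g := \frac{1}{2\pi i}\,\Phi'/\Phi$, analytic on $N$. Differentiating $|\Phi(t)|^2 \equiv 1$ along $\R$ gives $\Phi'(t)\ov{\Phi(t)} + \Phi(t)\ov{\Phi'(t)} = 0$, so $\Phi'(t)/\Phi(t)$ is purely imaginary and $g$ is real-valued on $\R$.

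The crucial point is that $g > 0$ on $\R$, and for this I would invoke the factorization (\ref{meroinner}), written $\Phi(z) = c\, e^{iaz}\prod_n b_n(z)$ with $c \in \T$, $a \geq 0$, $b_n(z) = \tfrac{\ov{z_n}}{z_n}\tfrac{z-z_n}{z-\ov{z_n}}$, and $(z_n) = (x_n + i y_n) \subset \C^+$ having no finite accumulation point and satisfying $\sum y_n/|z_n|^2 < \infty$. The Blaschke product and the series $\sum_n b_n'/b_n$ of logarithmic derivatives converge locally uniformly on $N$ (on a compact $K \subset N$, all but finitely many $z_n$ obey $|z - z_n|, |z - \ov{z_n}| \geq \tfrac12|z_n|$ for $z \in K$, giving $|b_n'/b_n| \leq 8\, y_n/|z_n|^2$, a summable bound). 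Term-by-term logarithmic differentiation is thus legitimate, and since on $\R$ one has $(e^{iat})'/(i e^{iat}) = a$ and $b_n'(t)/(i\,b_n(t)) = 2 y_n/|t - z_n|^2$, I obtain
\be g(t) = \frac{1}{2\pi}\left( a + \sum_n \frac{2 y_n}{|t - z_n|^2}\right); \qquad t \in \R. \ee
Every summand is nonnegative, and the expression is strictly positive because $\Phi$ is non-constant (so either $a > 0$, or there is at least one factor $b_n$).

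Finally I would fix $t_0 \in \R$ and $\ga_0 \in \R$ with $e^{i 2\pi \ga_0} = \Phi(t_0)$, and define $\ga(t) := \ga_0 + \int_{t_0}^{t} g(s)\, ds$. Then $\ga$ is real-valued, $\ga'(t) = g(t) > 0$ everywhere, so $\ga$ is strictly increasing; and since $i2\pi\ga'(t)\,\Phi(t) = (\Phi'(t)/\Phi(t))\,\Phi(t) = \Phi'(t)$, the function $e^{-i2\pi\ga(t)}\Phi(t)$ has vanishing derivative on $\R$, hence is constant, and it equals $1$ at $t_0$; thus $\Phi(t) = e^{i2\pi\ga(t)}$ on $\R$. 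For the analytic extension: around any $t_1 \in \R$ choose a disk $D \subset N$, let $G$ be an analytic antiderivative of $g$ on $D$; then $\ga - G$ is constant on $D \cap \R$, so $\ga$ extends analytically to $D$, and by the identity theorem these extensions agree on overlaps and piece together to an analytic extension of $\ga$ to the open neighbourhood $\bigcup_{t_1 \in \R} D$ of $\R$.

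The step I expect to be the main obstacle is the one in the second paragraph: justifying the locally uniform convergence and term-by-term logarithmic differentiation of the Blaschke product \emph{on a full complex neighbourhood of $\R$} --- not just on $\C^+$, where the classical theory operates. This is exactly where the hypothesis that $(z_n)$ has no finite accumulation point (a genuine strengthening of the bare Blaschke condition) is needed, both to produce the zero-free neighbourhood $N$ and to bound the tails of $\sum_n b_n'/b_n$. Everything downstream of the formula for $g$ is routine one-variable calculus.
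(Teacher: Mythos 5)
Your proof is correct and complete. The paper does not prove Theorem \ref{phasefun} itself --- it quotes it from \cite[Theorem 5.13]{Martin-dB} and \cite[Problem 48]{dB} --- but your argument is essentially the standard one underlying those references: differentiate the factorization (\ref{meroinner}) logarithmically to get $\frac{1}{2\pi i}\frac{\Phi'}{\Phi}(t) = \frac{1}{2\pi}\bigl( a + \sum_n \frac{2\,\im{z_n}}{|t-z_n|^2}\bigr) > 0$ on $\R$, and integrate. You correctly identify and handle the only delicate point, namely that the hypothesis that $(z_n)$ has no finite accumulation point (so $|z_n|\to\infty$) is what yields both the zero- and pole-free neighbourhood $N$ of $\R$ and the summable tail bound $|b_n'/b_n|\leq 8\,\im{z_n}/|z_n|^2$ justifying term-by-term differentiation off of $\C^+$; the remark that the constant case must be excluded is also apt, since a continuous $\ga$ with $e^{i2\pi\ga}$ constant would have to be constant.
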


\begin{defn}
    Let $\Phi$ be a meromorphic inner function. The unique function $\ga$ such that $\Phi (t) = e^{i2\pi \ga (t) }$ as above and $\ga (0) \in [0,1)$ is called the \emph{phase function} for $\Phi$. The compositional inverse, $x$, of $\ga$, is called the \emph{spectral function} of $\Phi$.
\end{defn}

This next corollary follows readily from Theorem \ref{extspec}:
\begin{cor}
    If $\Theta$ is a meromorphic inner function obeying $\Theta (i) =0$, then the spectral and phase functions of $Z^\Theta \in \mc{S} ^R (K (\Theta ))$ fixed by the choice $\phi _+ = \wt{k} _i ^\Theta, \ \phi _- = -k_i ^\Theta$ are the spectral and phase functions for $\Theta$.
\end{cor}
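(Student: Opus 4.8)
The plan is to read off both sides directly from Theorem \ref{extspec}. Since $\Theta$ is meromorphic and inner, $Z^\Theta \in \mc{S}^R(K(\Theta))$, and by (\ref{defchoice}) the canonical deficiency vectors $\phi_+ = \wt{k}^\Theta_i$, $\phi_- = -k^\Theta_i$ fix a parametrization $Z^\Theta_\theta := Z^\Theta(e^{i2\pi\theta})$, $\theta \in [0,1)$, of the self-adjoint extensions of $Z^\Theta$ and, through Definitions \ref{specfundef} and \ref{phasedef}, the spectral function $t$ and the phase function $\tau = t^{-1}$ of $Z^\Theta$. Because $\Theta(i) = 0$, Theorem \ref{extspec} collapses to
\be \sigma(Z^\Theta_\theta) = \{\, t \in \R : \Theta(t) = e^{i2\pi\theta} \,\}, \qquad \theta \in [0,1). \ee

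Next I would invoke the phase function $\gamma$ of $\Theta$ from Theorem \ref{phasefun} and the definition following it: $\Theta(t) = e^{i2\pi\gamma(t)}$ with $\gamma$ strictly increasing, $\gamma' > 0$, analytic near $\R$, normalized by $\gamma(0) \in [0,1)$, and with compositional inverse $x := \gamma^{-1}$ the spectral function of $\Theta$. Then $\Theta(t) = e^{i2\pi\theta}$ holds exactly when $\gamma(t) \in \theta + \Z$, so the displayed set equals $\{\, x(n+\theta) : n \in \Z,\ n+\theta \in \ran{\gamma} \,\}$, a strictly increasing sequence with no finite accumulation point; in passing this re-derives, without Kre\v{\i}n's alternating eigenvalue theorem, the claim after Lemma \ref{Rcover} that exactly one eigenvalue of $Z^\Theta_\theta$ lies between consecutive eigenvalues of $Z^\Theta_0$. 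On the other hand, Definition \ref{specfundef} together with Lemma \ref{Rcover} gives $\sigma(Z^\Theta_\theta) = \{\, t(n+\theta) : n \,\}$, where $t(n+\theta)$ is the unique point of $\sigma(Z^\Theta_\theta)$ in $[\,t(n),t(n+1)\,)$. Matching these two strictly increasing enumerations of the same set forces $x(s) = t(s + c_{[s]})$ for every $s$ in the domain $\ran{\gamma}$ of $x$, with $[s] := s - \lfloor s\rfloor$ and $c_{[s]} \in \Z$. Consequently the function $s \mapsto t^{-1}(x(s)) - s$, which equals $c_{[s]}$, is $\Z$-valued and continuous on the interval $\ran{\gamma}$ — here using that $t : [a,b) \to \R$ is a continuous bijection and $x$ is continuous — hence constant; call it $c$. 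Thus $\tau = \gamma + c$ for a fixed integer $c$, and it remains only to show $c = 0$.

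This last step is the one I expect to require genuine care, since it is purely a matter of reconciling the two normalizations rather than of analysis. The phase function of $\Theta$ is singled out among its integer translates by $\gamma(0) \in [0,1)$; the spectral and phase functions of $Z^\Theta$ attached to the canonical deficiency vectors (for which $\theta = 0$ corresponds to the extension parameter $\alpha = 1$) are normalized by the analogous condition $\tau(0) \in [0,1)$ — equivalently, by the convention used to index the eigenvalues $t(n) = t_n$ of $Z^\Theta_0 = Z^\Theta(1)$ in Definition \ref{specfundef}. Since $\tau = \gamma + c$ with both $\tau(0)$ and $\gamma(0)$ in $[0,1)$, necessarily $c = 0$, so $\tau = \gamma$ and $t = x$, which is the assertion. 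Finally, the finite-dimensional case — where $\ran{\gamma}$ is a bounded interval and $Z^\Theta$ is not densely defined — requires no separate treatment: as noted in the remark following Lemma \ref{Rcover}, the interlacing and covering facts used above hold verbatim at the level of the unitary extensions of $b(Z^\Theta)$, and the identities above are between analytic functions on their common domain.
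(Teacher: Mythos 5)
Your proposal is correct and follows the same route the paper intends: the paper offers no proof beyond the remark that the corollary "follows readily from Theorem \ref{extspec}," and your argument is exactly that deduction carried out in detail, with the spectra of the $Z^\Theta_\theta$ identified as the level sets $\{\Theta = e^{i2\pi\theta}\}$ and the two enumerations matched up to an integer shift that the normalizations $\gamma(0),\tau(0)\in[0,1)$ then kill. The one point you rightly flag as delicate — that the paper's indexing convention for $\sigma(Z^\Theta_0)$ pins down $\tau$ only up to the placement of the index $0$, so that $c=0$ rests on reading $t_0\le 0<t_1$ into Definition \ref{specfundef} — is handled as well as the paper's conventions allow.
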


\begin{remark} \label{specfunction}
Since $\tau$ is strictly increasing and smooth, $\ran{\tau} = (a,b) \subset \R$. We have defined $\tau$ so that $\tau (t_0 ) = \tau (t(0)) = 0$. It follows that if the strictly increasing spectral sequence $\mbf{t} = (t_n )$ of $Z ^\Theta _0$ is indexed by $\{ 0, 1, ... , N\}$, $\{ 0 \} \cup \N$, $\{-N, ... , 0 \}$, $-\N \cup \{ 0 \}$ or $\Z$ that the range of $\tau $ is $[0 , N+1 )$,$[ 0 , \infty)$, $(-N+1 , 0], (-\infty , 0]$ or $\R$ respectively (and then $\F = \Z \cap \ran{\tau}$). Since the set of all meromorphic inner $\Theta$ form a multiplicative semigroup (with unit $\Theta \equiv 1$), it follows that the set of all phase functions is an additive semigroup.
\end{remark}

\subsection{The Liv\v{s}ic characteristic function} \label{Lcharfun}

Given any $T \in \mc{S} $, one can define a contractive analytic function $\Theta _T$ on $\C ^+$, which is a complete unitary invariant for $T$, called the \emph{Liv\v{s}ic characteristic function}
of $T$  \cite{Livsic1946,Livsic1950}. We will prove in Corollary \ref{ACinner}, that given any family of bandlimit pairs $(\mbf{t} _\theta , \mbf{t} _\theta ')$, and corresponding symmetric operator $T \in \mc{S} ^R$ (fixed by a choice of equal-norm defect vectors $\phi _\pm$ as in Theorem \ref{FApic}), that the Liv\v{s}ic characteristic function $\Theta _T$ can be expressed solely in terms of any bandlimit pair $(\mbf{t} _\theta , \mbf{t} _\theta ' )$. This will yield new formulas for computing any pair $(\mbf{t} _\theta , \mbf{t} _\theta ' )$ from the knowledge of an initial pair $(\mbf{t} , \mbf{t} ' )$.

The Liv\v{s}ic characteristic function is defined as follows:
Let $\phi _{\pm } \in \ker{T^* \mp i}$ be fixed deficiency vectors of equal norm and let $\phi _z \in \ker{T^* -z}$, $z \in \C ^+$, be an arbitrary non-zero vector. Then
\be \label{Livdef} \Theta _T (z) := \frac{z-i}{z+i} \frac{\ip{\phi _z}{\phi _{+}}}{\ip{\phi _z}{\phi _-}}; \quad \quad z \in \C ^+.  \ee The characteristic function always vanishes at $z=i$, $\Theta _T (i) =0$. Two contractive analytic functions $\Theta _1, \Theta _2$ on $\C ^+$ are said to coincide if there is a unimodular constant $\alpha \in \T$ so that
$$ \Theta _1 = \alpha \Theta _2.$$ This defines an equivalence relation and the Liv\v{s}ic function is unique up to this notion of equivalence. A unique representative $\Theta _T$ in a given coincidence class is fixed by a unique choice of the deficiency vectors $\phi _\pm$. A different choice
of deficiency vectors $\psi _\pm = \alpha _\pm \phi _\pm$ where $\alpha _\pm \in \T$ yields a new Liv\v{s}ic function $\Theta ' _T = \alpha _- \ov{\alpha _+} \Theta _T$ which coincides with the first.

\begin{thm}{ (Liv\v{s}ic, \cite{Livsic1946})}
    Given any two $T_1, T_2 \in \mc{S} $, $T_1 \simeq T_2$ if and only if their characteristic functions coincide.

    The map from $T \mapsto \Theta _T$ is a bijection from unitary equivalence classes of $\mc{S}$ onto coincidence classes of contractive analytic functions on $\C ^+$ which vanish at $z=i$.
\end{thm}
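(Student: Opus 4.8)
The plan is to establish the bijection by attaching to each $T \in \mc{S}$ a canonical functional model built from its Liv\v{s}ic function $\Theta _T$, namely the de Branges--Rovnyak space $\H (\Theta _T)$ carrying multiplication by the independent variable, and then computing the characteristic functions of these model operators. This is the general-Schur analogue, valid on all of $\mc{S}$, of the meromorphic model space picture for $\mc{S} ^R$ in Section~\ref{Hardy}: when $T \in \mc{S} ^R$ the function $\Theta _T$ is inner and $\H (\Theta _T) = K(\Theta _T)$ isometrically, recovering that picture.

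\emph{Necessity} is routine. If $T_2 = W T_1 W^*$ for a unitary $W : \H _1 \to \H _2$, then $W$ carries $\ker{T_1 ^* - z}$ onto $\ker{T_2 ^* - z}$ for every $z$, so it sends a choice of equal-norm deficiency vectors $\phi _\pm$ for $T_1$ to one for $T_2$; since the defining formula (\ref{Livdef}) involves only inner products, which $W$ preserves, $\Theta _{T_1}$ and $\Theta _{T_2}$ coincide.

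\emph{The model, and sufficiency.} Fix $T$, equal-norm deficiency vectors $\phi _\pm$, and write $\Theta := \Theta _T$. The kernel $k^\Theta (z,w) = \tfrac{i}{2\pi}\tfrac{1 - \Theta (z)\ov{\Theta (w)}}{z - \ov{w}}$ (cf.\ (\ref{modkernform})) is positive because $\Theta$ is contractive and analytic, so it is the reproducing kernel of an RKHS $\H (\Theta )$ of analytic functions on $\C ^+$, the de Branges--Rovnyak space. On $\H (\Theta )$ put $Z^\Theta := M|_{\{ f \in \H (\Theta ) \,:\, zf \in \H (\Theta )\}}$, multiplication by the independent variable; arguing as in the inner case treated in \cite{Martin-dB,AMR,Sarason-dB} one checks that $Z^\Theta$ is a closed simple symmetric linear transformation with deficiency indices $(1,1)$ whose deficiency subspaces are spanned by the (conjugate) reproducing kernels of $\H (\Theta )$, and, by a short computation from (\ref{Livdef}) using $\Theta (i) = 0$ --- which forces $k_i ^\Theta (z) = \tfrac{i}{2\pi (z+i)}$, the Szeg\"{o} kernel at $i$ --- that $\Theta _{Z^\Theta} = \Theta$. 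Now choose an analytic family $z \mapsto \phi _z \in \ker{T^* - z}$ on $\C ^+$ with $\phi _i = \phi _+$; a resolvent-identity (Hilbert-identity) computation shows that, after rescaling each $\phi _z$ by a suitable zero-free analytic factor, the Gram matrix satisfies $\ip{\phi _w}{\phi _z} = k^\Theta (z,w)$. Because $T$ is simple the rescaled $\{ \phi _z : z \in \C ^+\}$ span $\H$, so matching each $\phi _z$ with the corresponding reproducing-kernel eigenvector of $(Z^\Theta )^*$ extends to an onto isometry $U : \H \to \H (\Theta )$; since $U$ carries the one-dimensional eigenspaces of $T^*$ onto those of $(Z^\Theta )^*$ and $T^* \phi _z = z \phi _z$, standard arguments (e.g.\ via the Cayley transforms $b(T)$ and $b(Z^\Theta )$) show that $U$ intertwines $T$ with $Z^\Theta$. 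Thus every $T \in \mc{S}$ is unitarily equivalent to $Z^{\Theta _T}$. Since $\H (\alpha \Theta ) = \H (\Theta )$ and hence $Z^{\alpha \Theta } = Z^\Theta$ for every $\alpha \in \T$, coinciding Liv\v{s}ic functions give literally the same model operator, so if $\Theta _{T_1}$ and $\Theta _{T_2}$ coincide then $T_1 \simeq Z^{\Theta _{T_1}} = Z^{\Theta _{T_2}} \simeq T_2$.

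\emph{Surjectivity and the main obstacle.} Formula (\ref{Livdef}) always gives $\Theta _T (i) = 0$, and conversely the construction above applied to an arbitrary $\Theta \in [H^\infty (\C ^+ )]_1$ with $\Theta (i) = 0$ produces $Z^\Theta \in \mc{S}$ with $\Theta _{Z^\Theta} = \Theta$; hence the image of $T \mapsto \Theta _T$ is exactly the coincidence classes of contractive analytic functions on $\C ^+$ vanishing at $i$, and by the previous two paragraphs this map descends to a bijection from unitary equivalence classes of $\mc{S}$ onto those coincidence classes. The only nonroutine step --- the main obstacle --- is the Gram-matrix identity $\ip{\phi _w}{\phi _z} = k^\Theta (z,w)$ together with the structural facts that multiplication by $z$ on $\H (\Theta )$ is symmetric, simple, of deficiency indices $(1,1)$, and has characteristic function $\Theta$ in the general, not necessarily inner, Schur case. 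For inner $\Theta$ these are exactly the statements recorded in \cite{Martin-dB,AMR} and used in Section~\ref{Hardy}; in general one can either repeat those reproducing-kernel arguments verbatim or transfer them from the Sz.-Nagy--Foias model of the completely non-unitary partial isometry $V = b(T)$, whose one-dimensional defect spaces are $\bigvee \phi _+$ and $\bigvee \phi _-$ and whose characteristic function, transported to $\C ^+$ via $b$, agrees with $\Theta _T$ up to a unimodular constant. The passage between linear transformations and operators, and the case in which $T$ is not densely defined, are handled --- here as elsewhere in the paper --- by working with $V$ in place of $T$.
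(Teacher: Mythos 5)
First, a framing remark: the paper does not prove this theorem --- it is quoted from \cite{Livsic1946} --- so there is no internal proof to compare against; what follows concerns the correctness of your argument on its own terms. Your necessity half is fine, but the model you build for sufficiency and surjectivity has a genuine gap, and it sits exactly at the step you flag as the main obstacle. Simplicity of $T$ gives $\bigvee _{z \in \C \sm \R} \ker{T^* - z} = \H$, but \emph{not} the one-sided statement $\bigvee _{z \in \C ^+} \ker{T^* - z} = \H$ that your Gram-matrix identification with $\H (\Theta )$ requires. Concretely, let $S$ be the unilateral shift and $V := S \oplus S^*$ on $\ell ^2 \oplus \ell ^2$: this is a completely non-unitary partial isometry with defect indices $(1,1)$, so $T := b^{-1}(V) \in \mc{S}$; solving the equation $V^* V \psi = b(z) V^* \psi$ that characterizes $\ker{T^* - z}$ gives $\ker{T^* - z} = \bigvee \, \bigl( 0, (1, b(z), b(z)^2, \dots ) \bigr)$ for $z \in \C ^+$, so the upper half-plane deficiency spaces span only $0 \oplus \ell ^2$. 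The characteristic function of this $T$ is the zero function (its Sz.-Nagy--Foias function is $\theta \equiv 0$; note that the ratio in (\ref{Livdef}) actually degenerates here because $\ip{\phi _z}{\phi _-} \equiv 0$, another symptom of the same one-sidedness). On the other side of your construction, $\H (0) = H^2 (\C ^+ )$, and multiplication by $z$ there has deficiency indices $(0,1)$, not $(1,1)$: for any $g \in H^2$ the function $f = g/(z+i)$ lies in the domain and $(z+i)f = g$, so $\ran{Z+i} = H^2$. The same failure occurs for every non-extreme $\Theta$, e.g.\ $\Theta = \tfrac{1}{2} b$, for which $\H (\Theta ) = H^2$ with an equivalent norm. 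Hence the claim ``$Z^\Theta \in \mc{S}$ with $\Theta _{Z^\Theta} = \Theta$ for arbitrary Schur $\Theta$ vanishing at $i$'' is false, and both your sufficiency step ($T \simeq Z^{\Theta _T}$) and your surjectivity step break down for such $\Theta$.

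The reason the identical argument works in Section \ref{Hardy} is that for $T \in \mc{S} ^R$ the characteristic function is \emph{inner}, and for inner $\Theta$ the one-component space $K(\Theta ) = \H (\Theta )$ really is the full model: the conjugation $C_\Theta$ supplies the second deficiency vector $C_\Theta k_i ^\Theta = \frac{1}{2\pi i} \frac{\Theta (z)}{z-i}$, which is a nonzero element of $K(\Theta )$ precisely because $\Theta$ is inner and which vanishes identically when $\Theta \equiv 0$. For general $T \in \mc{S}$ one must use deficiency vectors from both half-planes, equivalently pass to the two-component Sz.-Nagy--Foias model $\bigl( H^2 \oplus \ov{\Delta L^2} \bigr) \ominus \{ \Theta h \oplus \Delta h \, : \, h \in H^2 \}$ with $\Delta := (1 - |\Theta |^2 )^{1/2}$, or follow Liv\v{s}ic's original resolvent computation (or the measure-theoretic route of \cite{AMR}). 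With the model so enlarged, the remainder of your outline --- the Gram-matrix identification, invariance under $\Theta \mapsto \alpha \Theta$, and descent to coincidence classes --- does go through.
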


It is straightforward to calculate that if $\Phi$ is inner then the Liv\v{s}ic characteristic function of $Z^\Phi \in \mc{S}$ (fixed by the choice of deficiency vectors $\phi _+ := \wt{k} _i ^\Phi$ and $\phi _- = -k _i ^\Phi$) is \cite{Martin-dB}:
$$ \Theta _{Z^\Phi} = \frac{\Phi - \Phi (i) }{1 - \Phi \ov{\Phi (i)}} =: F_{\Phi (i)} (\Phi ). $$ In particular, if $\Phi (i) = 0$ then $\Phi = \Theta$. Recall here that for any fixed $w \in \D$, the M\"{o}bius transformation,
\be F_w (z) := \frac{z-w}{1-z\ov{w}} \label{Mobius}, \ee is an analytic automorphism of the unit disk with compositional inverse $F _{-w}$. That is, $F_w$ is a bijection of $\ov{\D}$ onto itself which maps the circle $\T$ onto itself, and which is analytic in an open neighbourhood of $\ov{\D}$. The composition $F_w \circ \Phi$ for any inner $\Phi$ is called a \emph{Frostman shift} of $\Phi$ \cite{Frostman,Crofoot}. The above calculation and theorem of Liv\v{s}ic then imply \cite[Theorem 5.0.7]{Martin-dB}:

\begin{thm} \label{Hardymodelrep}
    A symmetric linear transformation $T \in \mc{S} $ belongs to $\mc{S} ^R $ if and only if $\Theta _T$ is a meromorphic inner function.
Given any $w \in \D$, $T$ is unitarily equivalent to $Z ^{F_w \circ \Theta _T }$ acting in the meromorphic model space $K (F_w \circ \Theta _T)$.
\end{thm}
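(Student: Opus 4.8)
The plan is to derive everything from Liv\v{s}ic's completeness theorem together with the explicit formula $\Theta_{Z^\Phi} = F_{\Phi(i)}(\Phi)$ recorded above; the only genuinely new ingredient is the fact that a Frostman shift of a meromorphic inner function is again meromorphic inner. So the first thing I would establish is exactly that: given a meromorphic inner $\Phi$ and $w \in \D$, write $F_w \circ \Phi = (\Phi - w)/(1 - \overline{w}\,\Phi)$. This is manifestly meromorphic on $\C$ since $\Phi$ is, and its denominator has no zeros on $\overline{\C^+}$ because $|\overline{w}\,\Phi| \le |w| < 1$ throughout $\C^+$ while $|\Phi| = 1$ but $|w| < 1$ on $\R$. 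Since $F_w$ maps $\D$ into $\D$ and $\T$ onto $\T$, the composition $F_w \circ \Phi$ lies in $[H^\infty]_1$ and has unimodular non-tangential boundary values a.e.\ on $\R$; hence it is meromorphic inner, and it vanishes at $i$ precisely when $\Phi(i) = w$.

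For the forward implication of the equivalence, I would invoke the characterization recalled above from \cite{Martin-dB}: if $T \in \mc{S}^R$, then $T$ is unitarily equivalent to $Z^\Phi$ for some meromorphic inner $\Phi$, so by Liv\v{s}ic's theorem $\Theta_T$ coincides with $\Theta_{Z^\Phi} = F_{\Phi(i)}(\Phi)$. By the previous step this is meromorphic inner, and a unimodular multiple of a meromorphic inner function is still meromorphic inner; hence $\Theta_T$ is meromorphic inner.

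For the converse and the model identification I would argue both at once. Suppose $\Theta_T$ is meromorphic inner and fix $w \in \D$. By the first step, $\Phi := F_w \circ \Theta_T$ is meromorphic inner, so $Z^\Phi$ is defined on $K(\Phi)$ and belongs to $\mc{S}^R$ by \cite{Martin-dB}. Its Liv\v{s}ic function is $F_{\Phi(i)}(\Phi)$, and since $\Theta_T(i) = 0$ we have $\Phi(i) = F_w(0) = -w$; using that $F_{-w}$ is the compositional inverse of $F_w$,
$$ \Theta_{Z^\Phi} = F_{-w}\big(F_w \circ \Theta_T\big) = (F_{-w} \circ F_w) \circ \Theta_T = \Theta_T. $$
Thus $Z^{F_w \circ \Theta_T}$ and $T$ have coinciding Liv\v{s}ic functions, so by Liv\v{s}ic's theorem $T \simeq Z^{F_w \circ \Theta_T}$, and this holds for every $w \in \D$. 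Specializing to $w = 0$ (or indeed any $w$) shows $T \in \mc{S}^R$, which closes the equivalence.

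I do not expect a serious obstacle: the whole argument is bookkeeping built on Liv\v{s}ic's theorem and the known formula for $\Theta_{Z^\Phi}$. The one point that needs a little care is the very first step — checking that $F_w \circ \Phi$ stays bounded, inner, and meromorphic and locating its zero at $i$ — and even that reduces to writing $F_w \circ \Phi$ as the rational expression $(\Phi - w)/(1 - \overline{w}\,\Phi)$ and using $|w| < 1$.
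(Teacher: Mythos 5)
Your proposal is correct and follows exactly the route the paper intends: the paper offers no written proof beyond asserting that the computation $\Theta_{Z^\Phi}=F_{\Phi(i)}(\Phi)$, the characterization of $\mc{S}^R$ via the spaces $K(\Phi)$ from \cite{Martin-dB}, and Liv\v{s}ic's theorem together imply the result, and your argument is precisely that bookkeeping carried out in full. The one detail you add explicitly --- that a Frostman shift of a meromorphic inner function is again meromorphic inner, with $\Phi(i)=F_w(0)=-w$ so that $F_{-w}\circ F_w\circ\Theta_T=\Theta_T$ --- is exactly the verification the paper leaves implicit, and it is done correctly.
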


\begin{remark}
    By the above theorem, given any $T \in \mc{S} ^R$ and $w \in \D$, $( Z ^{F_w \circ \Theta _T} , K (F_w \circ \Theta _T )$ is a functional model for $T$
(Definition \ref{funmodel}). By Lemma \ref{funmodelmult} for any $w \in \D$ there is an isometric multiplier from $K (F_w \circ \Theta _T )$ onto $K (\Theta _T )$.  This multiplier is referred to in the literature as a \emph{Crofoot transform} \cite{GR-model,Crofoot}. By comparing the kernel functions for $K(\Theta _T )$ and $K (F_w \circ \Theta _T )$, it is easy to check that this multiplier is given by the formula: $$ M_w (z) := \sqrt{ 1 - |w| ^2} (1 - \Phi (z) \ov{w} ) ^{-1}. $$

Lemma \ref{funmodelmult} of Subsection \ref{multiplier} also implies that for any $T \in \mc{S} ^R$, there is a unitary multiplier between the local bandlimit space $\K (T)$ and the meromorphic model space $K (\Theta _T )$. This multiplier will enable us to move freely between these two functional models for $T \in \mc{S} ^R$. The characteristic function $\Theta _T$ of $T$ will be calculated in the next section (see Corollary \ref{ACinner}), and we will compute this multiplier in Subsection \ref{Kmult}.
\end{remark}

\subsection{Analytic parametrizations}

Any analytic automorphism of the unit disk, $F_w$, for $w \in \D$ provides a smooth re-parametrization, $\mu _w$, of the unit interval $[0,1]$ in the sense of Definition \ref{smoothpara}. 

\begin{defn}
    For any $w \in \D$, the $w$-\emph{analytic parametrization} of $[0,1]$ is the function $\mu _w : [0,1] \rightarrow [0,1]$
defined by $$ e^{i 2\pi \mu _w (\theta )} := F_{w} (e^{i2\pi \theta} ) \ov{F_{w} (1)} = \frac{e^{i2\pi \theta} - w }{1 - \ov{w} e^{i2\pi \theta} } \frac{1 - \ov{w}}{1-w}, \quad \quad \mbox{and} \quad \mu _w (0) = 0. $$
Also define $\la _w : \R \rightarrow \R$ as the unique solution to: 
$$ e^{i2\pi \la _w (t)} := F_w (e^{i2\pi t } ); \ t \in \R \quad \mbox{and}  \quad \la _w (0) \in [0, 1). $$ 
\end{defn}

Several easy observations can be made: Since $F_{-w}$ maps the unit circle $\T$ onto itself and is analytic in an open neighbourhood of $\ov{\D}$, it follows that $\mu _{w}$ is a smooth bijection of $[0,1)$ onto itself and $\mu _w (0) =0$. It is then straightforward to calculate that
$$ \mu _w ' (\theta ) = \la _w ' (\theta ) =  \frac{1 -|w| ^2}{ | e^{i2\pi \theta} -w | ^2 } > 0, $$ so that $\mu _w$ is a smooth re-parametrization of $[0,1]$ in the sense of Definition \ref{smoothpara}, and $\la _w$ is a strictly increasing smooth bijection of $\R$ onto $\R$.
If $\theta _w := \la _w (0) \in [0,1)$ then (identifying $\mu _w$ with its periodic extension):
\be \mu _w (t ) = \la _w (t ) - \theta _w, \quad  \mbox{and} \quad  \mu _w ' ( t ) = \la _w ' ( t ); \ t \in \R. \label{argshift} \ee
It is also easy to see that if $n = \lfloor t \rfloor$, $[t] = t - \lfloor t \rfloor$, then
\be \la _w (t ) = \la _w ( [t ] ) + n. \label{periodic} \ee  Finally, since the compositional inverse of $F_w$ is $F_{-w}$, the compositional inverse of $\la _w$ is $\la _{-w}$.

\begin{cor} \label{charfunspec}
Given any meromorphic inner function $\Phi$, with $w:= \Phi (i)$, let $\Theta := F_w \circ \Phi$ so that $\Theta (i) =0$.
Fix the family $Z ^\Phi _\theta$, $\theta \in [0,1)$ of self-adjoint extensions of $Z^\Phi$ by the canonical choice of deficiency vectors (Equation \ref{defchoice}), and let $\ga , x$, and  $\tau , t$ be the phase and spectral functions for $\Phi$ and $\Theta$, respectively. Then, 
$$ \sigma (Z^\Phi _\theta )  =  \{ t ( \theta +n ) |   \ n \in \F = \Z \cap \ran{\tau} \},$$ and the corresponding set of eigenvectors
$$ \{ k_t ^\Phi | \ t \in \sigma (Z^\Phi _\theta )  \}, $$ is a total orthogonal set in $K (\Phi )$.
\end{cor}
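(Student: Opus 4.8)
The plan is to determine $\sigma(Z^\Phi_\theta)$ directly from Theorem \ref{extspec}, rewrite the resulting membership condition as an equation for $\Theta$, enumerate its solutions using the phase function of $\Theta$, and finally obtain totality and orthogonality of the point evaluation vectors from Corollary \ref{Kramer}.

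By hypothesis the family $Z^\Phi_\theta = Z^\Phi(e^{i2\pi\theta})$, $\theta\in[0,1)$, consists of the self-adjoint extensions of $Z^\Phi\in\mc{S}^R(K(\Phi))$ fixed by the canonical choice of deficiency vectors $\phi_+ = \wt{k}^\Phi_i$, $\phi_- = -k^\Phi_i$, so Theorem \ref{extspec} applies verbatim and gives
$$ \sigma(Z^\Phi_\theta) = \left\{\, t\in\R\ \middle|\ \Phi(t) = \frac{e^{i2\pi\theta}+\Phi(i)}{1+e^{i2\pi\theta}\,\ov{\Phi(i)}} \,\right\}. $$
Put $w := \Phi(i)$; since $\Phi$ is a nonconstant inner function, $|w|<1$ by the maximum modulus principle, so $F_w$ from (\ref{Mobius}) is a genuine disk automorphism with compositional inverse $F_{-w}(z)=\tfrac{z+w}{1+z\ov{w}}$. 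The value on the right-hand side above is precisely $F_{-w}(e^{i2\pi\theta})$, hence the defining condition is equivalent to $F_w(\Phi(t))=e^{i2\pi\theta}$, i.e. to $\Theta(t)=e^{i2\pi\theta}$ since $\Theta=F_w\circ\Phi$. Therefore $\sigma(Z^\Phi_\theta)=\{\,t\in\R\mid\Theta(t)=e^{i2\pi\theta}\,\}$.

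Now use that $\Theta$ is meromorphic inner with $\Theta(i)=0$: Theorem \ref{phasefun} gives $\Theta(t)=e^{i2\pi\tau(t)}$ on $\R$, where $\tau$ is the phase function of $\Theta$ (strictly increasing, $\tau'>0$, analytic near $\R$) and $t=\tau^{-1}$ is the spectral function. For $\theta\in[0,1)$ the equation $\Theta(t)=e^{i2\pi\theta}$ is then equivalent to $\tau(t)\in\theta+\Z$, and since $\tau$ is a bijection of $\R$ onto $\ran{\tau}$ its solutions are precisely the points $t(\theta+n)$ for $n\in\Z$ with $\theta+n\in\ran{\tau}$. Identifying this index set with $\F=\Z\cap\ran{\tau}$, as set up in Remark \ref{specfunction}, gives the asserted description of $\sigma(Z^\Phi_\theta)$.

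For the second assertion, $(Z^\Phi,K(\Phi))$ is a functional model for $Z^\Phi\in\mc{S}^R$ with $K(\Phi)$ a RKHS on a neighbourhood of $\R$ (since $\Phi$, being meromorphic inner, is analytic and unimodular on $\R$), and $Z^\Phi_\theta$ is a self-adjoint extension of $Z^\Phi$ whose spectrum, by Lemma \ref{Rcover}, is a discrete sequence of simple eigenvalues with no finite accumulation point. Corollary \ref{Kramer} then applies and shows that $\{\,k^\Phi_t\mid t\in\sigma(Z^\Phi_\theta)\,\}$ is a total orthogonal set in $K(\Phi)$; indeed an eigenvector of $Z^\Phi_\theta$ at an eigenvalue $t\in\R$ lies in $\ker{(Z^\Phi)^*-t}=\bigvee k^\Phi_t$ by (\ref{defspaces}), as $Z^\Phi\subset Z^\Phi_\theta\subset(Z^\Phi)^*$ and this space is one-dimensional by regularity. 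I expect the only step needing genuine care to be the index-set bookkeeping above — matching $\{\,n\in\Z:\theta+n\in\ran{\tau}\,\}$ with $\F$ and keeping the normalization of $\tau$ compatible with the labelling of $\sigma(Z^\Phi_0)$ from Remark \ref{specfunction}; the remainder is a direct chaining of Theorems \ref{extspec} and \ref{phasefun}, Corollary \ref{Kramer}, and the elementary M\"obius identity $\tfrac{\alpha+w}{1+\alpha\ov{w}}=F_{-w}(\alpha)$.
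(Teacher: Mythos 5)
Your proposal is correct and follows essentially the same route as the paper: Theorem \ref{extspec} combined with the M\"obius identity $F_w(\Phi(t))=e^{i2\pi\theta}\Leftrightarrow\Theta(t)=e^{i2\pi\theta}$ and the phase-function representation $\Theta(t)=e^{i2\pi\tau(t)}$, then Corollary \ref{Kramer} for the total orthogonal set of kernel vectors. The paper records this in one sentence (phrasing the phase function of $\Theta$ as $\lambda_w\circ\gamma$ rather than invoking Theorem \ref{phasefun} directly for $\Theta$); you have simply supplied the same argument in full detail.
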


The above corollary is an immediate consequence of Theorem \ref{extspec}, and the observation that $\Theta$ is chosen so that $\Theta (t) = e^{i2\pi \tau (t)}$,
and $\tau := \la _w \circ \ga$ so that $t = \tau ^{-1} = x \circ \la _{-w}$. As in Theorem \ref{tvbspace}, Corollary \ref{Kramer} implies that the meromorphic model spaces $K (\Phi )$ obey a one-parameter family of sampling formulas.

\begin{eg}{ (Paley-Wiener spaces)}

Recall that as shown in Example \ref{PW}, any Paley-Wiener space $B(A)$ is (up to rescaling) the local bandlimit space $\K ( \mbf{t} , \mbf{t} ' )$ with
$\mbf{t} = \left( \frac{n\pi}{A} \right)$ and $\mbf{t} ' = \left( \frac{\pi}{A} \tanh{A}  \right).$ Also by Theorem \ref{tvbspace}, if $\mu$ is any smooth parametrization of $[0,1]$, $\K (\mbf{t} , \mbf{t} ' )$ embeds isometrically as a subspace of $L^2 (\R , (\mu \circ \tau ) ' (t) dt )$.  Since the Paley-Wiener space $B(A)$ is a subspace of $L^2 (\R)$, it seems reasonable to expect that there should be a choice of smooth re-parametrization, $\mu$ of $[0,1]$ so that $(\mu \circ \tau) ' $ is a constant.

It is straightforward to check that $B(A) := e^{-iAz} K (e ^{i2Az} )$ is the image of a meromorphic model space under a unitary multiplier, and that this amounts to a shift in frequency space. It follows that there is a $Z \in \mc{S} ^R ( B(A) )$ which acts as multiplication by the independent variable, $z$, and that the Liv\v{s}ic characteristic function of $Z$ is (up to a unimodular constant)
$$ \Theta (z) = \frac{e^{i2Az} - e^{-2A}}{1- e^{-2A} e^{i2Az}} = F_{e^{-2A}} (e^{i2Az} ). $$ Equivalently $\Phi (z) := e^{i2Az} = F_{-e^{-2A}} (\Theta (z))$ with phase function $\ga (t) = \frac{A}{\pi} t$. 
If $\tau$ is the phase function of $\Theta$, it then follows that $\tau = \la _w \circ \ga $ with $w = e^{-2A}$, so that
$$ \frac{A}{\pi} = \ga ' (t) = \mu  _{-w} ' (\tau (t) ) \tau ' (t), $$ (recall $\mu _w ' = \la _w '$) and, upon rescaling by the constant $\sqrt{ A / \pi }$, $\K (\mbf{t} , \mbf{t} ' )$ embeds isometrically in $L^2 (\R )$, as expected.
\end{eg}

\subsection{Time-varying bandlimit} \label{TVbandsection}

The classical notion of bandlimit for any Paley-Wiener space $B(A)$ can be interpreted as a measure of the density of any of the Nyquist sampling lattices:
$x _n (\vartheta ) = (n + \vartheta ) \frac{\pi}{A}$, and $x _{n+1} (\vartheta ) - x_n (\vartheta ) = \frac{\pi}{A}$, $\vartheta \in [0,1)$. Recall, as in the previous example, $B(A) = e^{-iAz} K (e^{i2Az} )$. The phase function, $\ga$, of $\Phi (t) = e^{i2At} = e^{i2\pi \ga (t) }$ is simply $\ga (t) = \frac{A}{\pi} t$, and this is the compositional inverse of the function $x (n+\vartheta) := x_n (\vartheta ); \ \vartheta \in [0,1)$ (the spectral function of $\Phi$). It follows that the bandlimit is
$$ A = \pi \ga ' (t), $$ where $\ga$ is the phase function of the meromorphic inner function $\Phi (z) = e^{i2Az}$.

Working in analogy with the classical Paley-Wiener spaces of $A$-bandlimited functions, we can construct a precise and meaningful definition of time-varying bandwidth for any local bandlimit space $\K (\mbf{t} , \mbf{t} ' )$.  Let $\Phi$ be any meromorphic inner function obeying $\Phi (i) = w \in \D$ and let
$\Theta :=  F_{w} \circ \Phi $, the Liv\v{s}ic characteristic function of $Z^\Phi$ (up to a unimodular constant). Let $\ga , \tau$ be phase functions for $\Phi$, and $\Theta$, respectively with spectral functions (compositional inverses) $x,t$.
It follows that $\tau = \la _w \circ \ga$ and $\ga = \la _{-w} \circ \tau$, and by Corollary \ref{charfunspec} and Equation (\ref{periodic}),
if we define the new parameter $\vartheta := \mu _w (\theta ) = \la _w (\theta ) - \theta _w; \ \theta \in [0,1)$, then
$$ \sigma \left( Z^\Phi _{[ \vartheta + \theta _w ]} \right) = \{ t (\vartheta + \theta _w + n) | \ n \in \F = \Z \cap \ran{\tau}  \}; \quad \quad t = x\circ \la _{-w},$$ where $[s] := s - \lfloor s \rfloor$, as before.

This shows that the rate of increase of $\ga = \la _{-w} \circ \tau$ provides a measure of the local density of the sampling sequences $(t (\vartheta +\theta _w +n ) )$ with respect to the new parameter $\vartheta = \mu _w (\theta ) \in [0,1)$: 
$$  \ga ' ( t (\vartheta +\theta _w +n ) ) =  \frac{\mu _{-w} ' (\vartheta +\theta _w +n )}{ t' (\vartheta + \theta _w +n )}. $$ Namely, the size of $\ga ' (x (\theta +n )) >0$ determines how quickly the phase of $\Phi (x (\theta +n))$ is rotating, and hence measures the local density of the sampling sequences with terms $x ( \theta + n) = (t \circ \la _w ) (\theta +n ) = t (\vartheta +\theta _w +n )$.

It is, therefore, natural to extend the notion of bandlimit to the time-varying setting by
defining the time-varying bandlimit of an arbitrary local sampling space $\K (\mbf{t} , \mbf{t} ' )$ to be the function $\om : \R \rightarrow (0,\infty)$:
$$ \om (t) := \pi (\mu _w \circ \tau ) ' (t) = \pi  \tau ' (t)  \frac{1 -|w| ^2}{ | e^{i2\pi \tau(t)} -w | ^2 } > 0, $$ for some fixed choice of $w \in \D$.  As described above this will be a measure of the local density of the sampling sequences.  While it is not obvious whether there is a canonical choice of $w \in \D$, we can motivate a particular choice of $w$ that recovers the classical definition of bandlimit in the case where $K (\mbf{t} , \mbf{t} ' ) = B(A)$.

\begin{defn} \label{TVbanddefn}
     Let $(\mbf{t} , \mbf{t} ' )$ be any normalized bandlimit pair (as in Remark \ref{normpair}) with corresponding $T \in \mc{S} ^R$. Let $t, \tau$ be a fixed choice of spectral and phase functions for $T$ (fixed by a choice of equal-norm deficiency vectors $\phi _\pm$). Set $g(t) := t \coth (t); \ t\geq 0$ and let $f := g^{-1} : [1, \infty ) \rightarrow [0 , \infty ) $ be the compositional inverse of $g$.  The \emph{time-varying bandlimit} is the strictly positive function $\om : \R \rightarrow (0 , \infty )$ defined by
$$ \om (t) := \pi \ga ' (t), $$ where $\ga$ is the phase function of $F _{-w} \circ  \Theta _T$, and $w \in (0,1)$ is
$$ w := \frac{ \pi \tau' (0) - f(\pi \tau ' (0) ) }{ \pi \tau ' (0) + f(\pi \tau ' (0) ) }.$$
\end{defn}

\begin{prop}
    Let $\K (\mbf{t} , \mbf{t} ' ) = B(A)$ be the local sampling space defined by $\mbf{t} = \left( \frac{n\pi}{A} \right) _{n\in \Z}$ and $\mbf{t} ' = \left( \frac{\pi}{A} \tanh (A) \right)$. The time-varying bandlimit of $\K (\mbf{t} , \mbf{t} ')$ is the classical constant bandlimit, $\om (t) = A$. For any normalized bandlimit pair $(\mbf{t} , \mbf{t} ')$ and fixed choice of equal-norm deficiency vectors, the number $w$ in Definition \ref{TVbanddefn} belongs to $[0, 1)$.
\end{prop}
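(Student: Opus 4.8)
The plan is to handle the two assertions separately. For the second one I will reduce the containment $w\in[0,1)$ to the single scalar estimate $\pi\tau'(0)>1$, which then pins down $w$ through elementary properties of $g(s)=s\coth s$. Observe at the outset that $\tau'(0)$ does not depend on the choice of equal-norm deficiency vectors: two such choices give phase functions differing by a constant shift (the lemma on change of deficiency vectors), hence with identical derivative. So $w$ is well defined, independently of the choice, and the containment $w\in[0,1)$ is precisely what makes $F_{-w}$ an automorphism of $\D$, so that $\ga$ (the phase function of $F_{-w}\circ\Theta_T$), and with it $\om(t)=\pi\ga'(t)>0$, are defined at all --- strict positivity of $\om$ then being immediate from Theorem \ref{phasefun}, as $F_{-w}\circ\Theta_T$ is meromorphic inner.

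\emph{The Paley--Wiener case.} Everything here is explicit from Example \ref{PW} and the computations following Corollary \ref{charfunspec}. For $\mbf{t}=(n\pi/A)$, $\mbf{t}'=(\tfrac\pi A\tanh A)$ the Liv\v{s}ic characteristic function of the corresponding $T$ is $\Theta_T=F_{e^{-2A}}(e^{i2Az})$, which satisfies $\Theta_T(i)=0$, and $\Phi(z):=e^{i2Az}=F_{-e^{-2A}}\circ\Theta_T$ has phase function $\ga_\Phi(t)=\tfrac A\pi t$. Since $\tau=\la_{e^{-2A}}\circ\ga_\Phi$ and $\la_w'(0)=\tfrac{1-w^2}{(1-w)^2}=\tfrac{1+w}{1-w}$, which equals $\coth A$ at $w=e^{-2A}$, one finds
$$ \pi\tau'(0)=\pi\,\la_{e^{-2A}}'(0)\,\ga_\Phi'(0)=A\coth A=g(A); $$
hence $f(\pi\tau'(0))=f(g(A))=A$ and
$$ w=\frac{A\coth A-A}{A\coth A+A}=\frac{\coth A-1}{\coth A+1}=e^{-2A}, $$
which is exactly the Frostman parameter $\Phi(i)=e^{-2A}$ of $\Theta_T$. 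Therefore $F_{-w}\circ\Theta_T=F_{-e^{-2A}}\circ F_{e^{-2A}}\circ\Phi=\Phi=e^{i2Az}$, whose phase function is $\ga_\Phi$, so $\om(t)=\pi\ga_\Phi'(t)=A$.

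\emph{The estimate $\pi\tau'(0)>1$.} Pass to the Hardy model $T\simeq Z^{\Theta_T}$ with $\Theta_T(i)=0$ (Theorem \ref{Hardymodelrep}); with the canonical deficiency vectors the phase function of $Z^{\Theta_T}$ is the phase function of $\Theta_T$, so $\Theta_T(t)=e^{i2\pi\tau(t)}$ for $t\in\R$. Differentiating and using $|\Theta_T(t)|=1$ gives $\tau'(t)=|\Theta_T'(t)|/2\pi$, whence $\pi\tau'(0)=\tfrac12|\Theta_T'(0)|$. Now use $\Theta_T(i)=0$: by the Blaschke--singular factorization \eqref{meroinner}, putting $b(z)=\tfrac{z-i}{z+i}$, the quotient $\Psi:=\Theta_T/b$ is again a meromorphic inner function, with phase function $\ga_\Psi$ satisfying $\ga_\Psi'>0$ everywhere unless $\Psi$ is a unimodular constant (Theorem \ref{phasefun}). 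Differentiating $\Theta_T=b\Psi$ at $z=0$, with $b(0)=-1$, $b'(0)=-2i$, and $\Psi'(0)=i2\pi\ga_\Psi'(0)\Psi(0)$, yields $\Theta_T'(0)=-i\,\Psi(0)\bigl(2+2\pi\ga_\Psi'(0)\bigr)$, so that
$$ \pi\tau'(0)=\tfrac12|\Theta_T'(0)|=1+\pi\ga_\Psi'(0)\;\geq\;1, $$
with equality exactly when $\Psi$ is constant, i.e.\ when $\dim{K(\Theta_T)}=\dim{\K(T)}=1$. This one-dimensional case is degenerate and is excluded by our standing hypothesis (it is the case where $T$ is not densely defined, coming from a bandlimit pair indexed by a single integer); so $x:=\pi\tau'(0)>1$.

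\emph{Conclusion.} The map $g(s)=s\coth s$ extends continuously by $g(0)=1$, is strictly increasing on $[0,\infty)$ (because $\tfrac12\sinh 2s>s$ for $s>0$), and obeys $g(s)>s$ for $s>0$ since $\coth s>1$; hence $f=g^{-1}:[1,\infty)\to[0,\infty)$ satisfies $0<f(x)<x$ for every $x>1$. Applying this with $x=\pi\tau'(0)>1$,
$$ 0<w=\frac{x-f(x)}{x+f(x)}<1, $$
which proves $w\in[0,1)$. The only genuine obstacle is the estimate $\pi\tau'(0)>1$; the crux is the identity $\pi\tau'(0)=\tfrac12|\Theta_T'(0)|$ together with the observation that the Blaschke factor $b$ at $i$ --- forced on $\Theta_T$ by $\Theta_T(i)=0$ --- contributes precisely the constant $1$ in $\pi\tau'(0)=1+\pi\ga_\Psi'(0)$, while the complementary inner factor $\Psi$ supplies the strictly positive remainder $\pi\ga_\Psi'(0)$.
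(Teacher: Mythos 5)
Your proof is correct, and the Paley--Wiener half is essentially the paper's own computation: $\pi\tau'(0)=A\coth A=g(A)$, hence $f(\pi\tau'(0))=A$ and $w=e^{-2A}=\Phi(i)$, so that $F_{-w}\circ\Theta_T=e^{i2Az}$ and $\om\equiv A$. Where you genuinely diverge is in the key estimate $\pi\tau'(0)\geq 1$. The paper obtains it directly from the normalization of the bandlimit pair: since $\sum_n \frac{t_n'(\theta)}{1+t_n(\theta)^2}=\pi$ for every $\theta$, each single term satisfies $\frac{t'(s)}{1+t(s)^2}\leq\pi$, whence $\tau'(t)(1+t^2)\geq \pi^{-1}$ and in particular $\pi\tau'(0)\geq 1$. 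You instead argue function-theoretically: writing $\pi\tau'(0)=\tfrac12|\Theta_T'(0)|$ and factoring out the Blaschke factor $b(z)=\frac{z-i}{z+i}$ forced by $\Theta_T(i)=0$, the product rule gives $\pi\tau'(0)=1+\pi\ga_\Psi'(0)$ with $\ga_\Psi'\geq 0$ for the complementary inner factor $\Psi$. Both arguments are valid (your derivative computation checks out, since $\pi\beta'(0)=1$ for the phase function $\beta$ of $b$, consistent with the additivity of phase functions under products of inner functions). The paper's route is more elementary and stays entirely within the sequence data; yours is more structural and has the advantage of identifying the equality case exactly, namely $\Psi$ constant, i.e.\ $\dim{K(\Theta_T)}=1$. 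Note that both proofs share the same degenerate edge case: for a bandlimit pair indexed by a single integer one gets $\pi\tau'(0)=1$, $f(1)=0$ and $w=1\notin[0,1)$, so the stated conclusion really requires excluding the one-dimensional space --- the paper glosses over this, whereas you flag and exclude it explicitly, which is the more careful reading. Your opening observation that $\tau'(0)$ is independent of the choice of equal-norm deficiency vectors (phase functions for different choices differ by a constant shift) is also a worthwhile point that the paper leaves implicit.
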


This motivates the alternate notation: If $(\mbf{t}, \mbf{t} ' )$ is a normalized bandlimit pair (in the sense of Remark \ref{normpair}), $B(\om (t) ) := \K (\mbf{t} , \mbf{t} ' ) = \K (T)$, the local bandlimit space of \emph{$\om (t)$-bandlimited functions}.

\begin{proof}
By Example \ref{PW}, $\K (\mbf{t} , \mbf{t} ' ) = B(A)$ (up to a constant rescaling), and also as before $B(A) = e^{-iAz} K (e^{i2Az} )$, let $\Phi (z) := e^{i2Az}$, a meromorphic (in fact, entire) inner function. Let $\ga$ be the phase function of $\Phi$ so that $\pi \ga (t) = A t$ and $A = \pi \ga ' (t)$.

Let $w=\Phi (i) = e^{-2A}$, and let $\tau$ be the phase function of $\Theta =  F_{w} \circ \Phi $. It follows that $\tau = \la _w \circ \ga$.
and we need to prove that $w = e^{-2A} \in (0, 1)$ is equal to the number of Definition \ref{TVbanddefn}. First calculate
$$ \tau ' (t) = \frac{A}{\pi} \frac{ 1 - e^{-4A}}{|\Phi (t) - e^{-2A} | ^2 }, $$ and, in particular
$$ \tau ' (0) = \frac{A}{\pi} \coth (A). $$
Let $g(t) := t \coth (t)$, this is smooth, strictly increasing and obeys $g ' (t) > 0$ for $t >0$. If $f$ is the compositional inverse of $g$, then we obtain
$$ A = f (\pi \tau ' (0) ). $$ It follows that
$$ w = e^{-2A} =  \frac{ \pi \tau' (0) - f(\pi \tau ' (0) ) }{ \pi \tau ' (0) + f(\pi \tau ' (0) ) }.$$

In general $g(t) > t$ for $t >0$ implies that $f(t) < t$, so that replacing $\tau$ by the phase function of an arbitrary $T \in \mc{S} ^R$ in the above definition of $w$ will always yield $w \in (0,1)$. Of course, in order that the number $w$ be well-defined, one must check that 
$\pi \tau ' (0) \geq 1$ so that $\pi \tau ' (0) \in \dom{f} = \ran{g} = [1, \infty )$. It is not difficult to check that this is always the case if $(\mbf{t} , \mbf{t} ')$ is normalized as in Remark \ref{normpair} so that 
$$ \sum \frac{t_k ' (\theta )}{1+t_k (\theta ) ^2 } = \pi; \quad \quad \theta \in [0,1 ). $$ Indeed, for any $s = \lfloor s \rfloor + [s] =: k + \theta \in \R$, we have that 
\ba \frac{t'(s)}{1+t(s) ^2} & = & \frac{t' _k (\theta )}{1+t_k(\theta ) ^2} \nn \\
&\leq & \sum \frac{t' _n (\theta )}{1+t_n (\theta ) ^2} = \pi. \nn \ea Hence, 
\ba \tau' (t) (1 +t ^2) & = & \tau ' (t(s)) (1 + t(s) ^2 ) \nn \\
& = & \frac{1 + t(s) ^2}{t'(s)} \geq \frac{1}{\pi}, \nn \ea so that $\tau ' (0) \geq \pi ^{-1}$ for any normalized bandlimit pair.
\end{proof}

\section{Measure theoretic model} \label{Measmodel}

In this section we develop a third class of models for elements $\mc{S} ^R $ using measure theory and basic spectral theory.  This connection will again provide new tools for studying local bandlimit spaces.  In particular, we will compute the unitary multiplier between any local bandlimit space $\K (T) = B( \om (t))$, and the meromorphic model space $K (\Theta _T )$ in Theorem \ref{TVmultiplier}, and if $T$ is constructed from a bandlimit pair $(\mbf{t} , \mbf{t} ')$ as in Theorem \ref{FApic}, we will provide concrete formulas expressing $\Theta _T$ in terms of any of the bandlimit pairs $(\mbf{t} _\theta , \mbf{t} _\theta ' ); \ \theta \in [0,1)$ in Corollary \ref{ACinner}. This will lead to new formulas for computing any bandlimit pair $(\mbf{t} _\theta , \mbf{t} _\theta ' )$ from the knowledge of the initial pair $(\mbf{t} , \mbf{t} ' )$. Knowledge of the sampling sequences $(\mbf{t} _\theta , \mbf{t} _\theta ')$ is, of course, necessary in order to sample and reconstruct any $f \in \K (T)$ from its samples taken on these sequences using the sampling formulas of Theorem \ref{tvbspace}.

A \emph{Herglotz} (or Nevanlinna-Herglotz) function $H$ on $\C ^+$ is an analytic function with non-negative real part. A function $H$ is a Herglotz function if and only if
there is a positive Borel measure $\Ga$ on $\R$ obeying the \emph{Herglotz condition},
\be \intfty \frac{1}{1+t^2} \Ga (dt ) < \infty, \label{Hergcondform} \ee an imaginary constant $C$, and a positive constant $D$ such that
$$ H(z) = C -iDz + \intfty \frac{zt+1}{i(t-z)} \frac{1}{1+t^2} \Ga (dt), $$ see \emph{e.g.} \cite[Section 59]{Glazman}.

There is a bijection between Herglotz functions on $\C ^+$ and the closed unit ball of (non-constant elements of) $H^\infty (\C ^+)$ given by
$$ \Theta \mapsto H_\Theta := \frac{1+\Theta}{1-\Theta},$$ with compositional inverse
$$ H \mapsto \Theta _H := \frac{H-1}{H+1}. $$

Given any such measure $\Ga$, one can consider the Hilbert space $L^2 (\Ga )$ on $\R$, and the operator $M^\Ga$ of multiplication by $t$ is a densely defined self-adjoint operator in $L^2 (\Ga )$. Define
$$ \dom{T ^\Ga } := \left\{ f \in \dom{M ^\Ga } | \ \intfty f(t) \Ga (dt ) = 0 \right\}. $$
Then by \cite[Section 3.5]{AMR}
$$ T^\Ga : = M^\Ga | _{\dom{T^\Ga }} \in \mc{S} (L^2 (\Ga )), $$ is a simple symmetric linear transformation with deficiency indices $(1,1)$ which is densely defined if and only if
$$ \Ga (\R ) = + \infty. $$

The results of \cite{AMR} further imply:
\begin{thm} \label{measuremodel}
    A linear transformation $T$ belongs to $\mc{S}$ if and only if $T \simeq T^\Ga$ for some measure $\Ga$ obeying the Herglotz condition (\ref{Hergcondform}).
\end{thm}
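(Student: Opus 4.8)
The forward direction is immediate: if $T \simeq T^\Ga$ for a positive measure $\Ga$ obeying the Herglotz condition (\ref{Hergcondform}), then $T^\Ga \in \mc{S}(L^2(\Ga))$ by \cite[Section 3.5]{AMR}, and since $\mc{S}$ is closed under unitary equivalence, $T \in \mc{S}$. So the content is the converse: for an arbitrary $T \in \mc{S}$ I must produce a Herglotz measure $\Ga$ with $T^\Ga \simeq T$. Rather than routing through the Liv\v{s}ic characteristic function, I would build $\Ga$ directly by applying the spectral theorem to a self-adjoint extension of $T$.

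So let $T \in \mc{S}(\H)$, and assume first that $T$ is densely defined (the general case reduces to this by working with the unitary extensions of the partial isometry $b(T)$ on the circle, as in the remark following Lemma \ref{Rcover}, or by quoting \cite{AMR}). Fix equal-norm deficiency vectors $\phi _\pm \in \ker{T^* \mp i}$, let $T_0 := T(1)$ be the self-adjoint extension they determine (so $\phi _- = b(T_0)\phi _+$), and note that $\phi _+$ is a cyclic vector for $T_0$: a deficiency vector of a simple symmetric operator with indices $(1,1)$ is cyclic for each of its self-adjoint extensions, which follows from the characterization of simplicity as $\bigvee \{ \ker{T^* - z} : z \in \C ^+ \} = \H$ together with Krein's resolvent formula expressing $\phi _z$ through $(T_0 - z)^{-1}\phi _+$. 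The cyclic-vector spectral theorem then supplies a unitary $W_0 : \H \to L^2(\rho)$ carrying $T_0$ to multiplication by the independent variable with $W_0 \phi _+$ the constant function $1$, where $\rho(\cdot) = \| E_{T_0}(\cdot)\phi _+\| ^2$ and hence $\rho(\R) = \|\phi _+\| ^2 < \infty$.

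Now set $d\Ga(t) := (1 + t^2)\, d\rho(t)$. Then $\intfty (1+t^2)^{-1}\, d\Ga(t) = \rho(\R) = \|\phi _+\| ^2 < \infty$, so $\Ga$ obeys the Herglotz condition automatically. Multiplication by $m(t) := (t - i)^{-1}$ defines a unitary $M_m : L^2(\rho) \to L^2(\Ga)$, since $|t-i|^{-2}(1+t^2) \equiv 1$, and it commutes with multiplication by $t$; so $W := M_m W_0 : \H \to L^2(\Ga)$ is unitary, intertwines $T_0$ with $M^\Ga$, and sends $\phi _+ \mapsto (t-i)^{-1}$ and $\phi _- = b(T_0)\phi _+ \mapsto (t+i)^{-1}$ — the canonical deficiency vectors of $T^\Ga$. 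Finally, since $\ran{T+i} ^\perp = \bigvee \phi _+$ and $\dom{T}$ has codimension one in $\dom{T_0}$, one has the algebraic description $\dom{T} = \{ f \in \dom{T_0} : \ip{\phi _+}{(T_0+i)f} = 0 \}$, exactly as in the construction in the proof of Theorem \ref{FApic}; transporting this through $W$ gives
$$ \ip{\phi _+}{(T_0+i)f} = \ip{W\phi _+}{(M^\Ga + i)Wf} _{L^2(\Ga)} = \intfty \frac{1}{t+i}\,(t+i)\,(Wf)(t)\, d\Ga(t) = \intfty (Wf)(t)\, d\Ga(t), $$
so $W\dom{T} = \{ g \in \dom{M^\Ga} : \int g\, d\Ga = 0 \} = \dom{T^\Ga}$ and $WTW^* = T^\Ga$, i.e. $T \simeq T^\Ga$.

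A second route — the one implicit in \cite{AMR} — is to compute $\Theta _{T^\Ga}$ directly from the deficiency vectors $(t\mp i)^{-1}$ and $\phi _z = (t-z)^{-1}$ in $L^2(\Ga)$, recognize it as a Cayley/Frostman transform of the Herglotz transform of $\Ga$, and deduce surjectivity of $\Ga \mapsto [\Theta _{T^\Ga}]$ onto all coincidence classes of contractive analytic functions vanishing at $i$ from the Herglotz representation theorem and the bijection $\Theta \mapsto H_\Theta$, concluding with Liv\v{s}ic's theorem. Either way, the only steps that are not routine bookkeeping are (i) the cyclicity of a deficiency vector for the self-adjoint extension — the crux, resting on the span-of-deficiency-subspaces criterion for simplicity — and (ii) the reduction of the non-densely-defined case, which is exactly why it is cleaner to argue with the unitary extensions of $b(T)$ on the unit circle than with self-adjoint extensions of $T$ on the line.
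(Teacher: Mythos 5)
Your proof is correct, and it is genuinely more self-contained than what the paper does: the paper offers no proof of Theorem \ref{measuremodel} at all, merely citing \cite{AMR}, where the argument runs through the Liv\v{s}ic characteristic function, the Herglotz representation theorem, and Cauchy transforms (essentially your ``second route''). Your direct construction --- take a self-adjoint extension $T_0$, note that the deficiency vector $\phi_+$ is cyclic for it, apply the cyclic spectral theorem to get $L^2(\rho)$, and then reweight by $1+t^2$ and twist by $(t-i)^{-1}$ so that $\phi_\pm$ land on $b_{\mp i}$ and the domain condition $\ip{\phi_+}{(T_0+i)f}=0$ becomes $\int g\,d\Ga = 0$ --- is clean and has the added virtue of producing exactly the canonical deficiency vectors the paper fixes in Section \ref{Measmodel}, so the measure $\Ga$ you build is the Herglotz/Clark measure used there. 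Three small points deserve a word more care than you give them: (i) for the cyclicity argument the simplicity criterion should be stated as $\bigvee_{z\in\C\setminus\R}\ker{T^*-z}=\H$ (both half-planes), with Krein's formula $\phi_z = [I+(z-i)(T_0-z)^{-1}]\phi_+$ valid for all non-real $z$; (ii) for $\int f\,d\Ga$ to be defined on $\dom{M^\Ga}$ one needs $\dom{M^\Ga}\subset L^1(\Ga)$, which follows from Cauchy--Schwarz against $(1+t^2)^{-1}\,d\Ga$ using the Herglotz condition; and (iii) the non-densely-defined case is only gestured at, though it does go through since at most one unitary extension of $b(T)$ has $1$ as an eigenvalue, so a densely defined self-adjoint extension $T_0$ always exists and your domain identity $\dom{T}=\{f\in\dom{T_0}:\ip{\phi_+}{(T_0+i)f}=0\}$ never used density of $\dom{T}$. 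None of these is a gap in substance.
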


Given a \emph{Herglotz measure} $\Ga$ (a measure obeying the Herglotz condition), let
    $$ H (z) := \intfty \frac{zt+1}{i(t-z)} \frac{1}{1+t^2} \Ga (dt).$$

    For any $w \in \C \sm \R$, define $b_w \in L^2 (\Ga)$ via
$$ b_w (t) :=  \frac{1}{t-\ov{w}}. $$ It is easy to check that
$$ \ran{T^\Ga -z} ^\perp = \ker{(T^\Ga) ^* -\ov{z}} = \bigvee b_z. $$

In this section we fix the choice of equal-norm deficiency vectors of $T^\Ga$ to be $\phi _\pm := b _{\pm i}$. With this canonical choice, the inner meromorphic Liv\v{s}ic function of $T^\Ga$ is \cite[Section 5.4]{AMR}:
\be \Theta (z) := \frac{z-i}{z+i} \frac{\ip{b_z}{b_{+i}} _\Ga}{\ip{b_z}{b_{-i}}_\Ga } = \frac{z-i}{z+i} \frac{\intfty \frac{1}{t-z} \frac{1}{t-i} \Ga (dt)}{\intfty \frac{1}{t-z} \frac{1}{t+i} \Ga (dt)}. \label{Lcharfunfix} \ee

Let $$ \Phi := \frac{H -1}{H+1}, $$ equivalently, $H = H_\Phi = \frac{ 1 + \Phi}{1-\Phi}$. If $\Phi (i) \neq 0$ then the characteristic function
$\Theta$ of $T^\Ga$ is (up to a unimodular constant) the Frostman shift, $F_{\Phi (i) \circ \Phi}$ of $\Phi$:

\begin{lemma} \label{Frostshift}
    Let $\Ga$ be a Herglotz measure on $\R$.  If  $\Phi := \frac{H-1}{H+1}$ as above, then the Liv\v{s}ic characteristic function, $\Theta$, of $T^\Ga$ fixed by the choice $\phi _\pm = b_{\mp i}$ as in Equation (\ref{Lcharfunfix}) is:
$$ \Theta  = \left(\frac{1 - \ov{\Phi (i)}}{1 - \Phi (i)} \right) F_{\Phi (i)} \circ \Phi.$$
\end{lemma}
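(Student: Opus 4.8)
The plan is to evaluate the two integrals occurring in the explicit formula (\ref{Lcharfunfix}) for the characteristic function of $T^\Ga$ by reducing them to the integral that defines $H$, and then to translate the resulting M\"obius relation between $\Theta$ and $H$ into the asserted relation between $\Theta$ and $\Phi$. First I would record the trivial but useful identity
$$ H(i) = \intfty \frac{1}{1+t^2}\,\Ga(dt) =: m, $$
which is immediate from $it+1 = i(t-i)$; note that $m = \|b_{\pm i}\|_\Ga^2 \in (0,\infty)$. Next, rewriting $\frac{1}{(t-z)(t-i)} = \frac{t+i}{(t-z)(1+t^2)}$ (and similarly $\frac{1}{(t-z)(t+i)} = \frac{t-i}{(t-z)(1+t^2)}$), so that every integrand is $O(t^{-2})$ and hence absolutely integrable against the Herglotz measure $\Ga$, a short partial-fraction computation — writing both $\frac{t\pm i}{t-z}$ and $\frac{zt+1}{t-z}$ as a constant plus a constant multiple of $(t-z)^{-1}$ — yields
$$ \frac{1}{(t-z)(t-i)} = \frac{1}{z-i}\cdot\frac{zt+1}{(t-z)(1+t^2)} \;-\; \frac{i}{z-i}\cdot\frac{1}{1+t^2}, $$
together with the same identity with $(i,\,z-i)$ replaced by $(-i,\,z+i)$ and the sign of the last term reversed. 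Integrating against $\Ga$ and using the definitions of $H$ and of $m$ then gives
$$ \intfty \frac{\Ga(dt)}{(t-z)(t-i)} = \frac{i\,(H(z)-m)}{z-i}, \qquad \intfty \frac{\Ga(dt)}{(t-z)(t+i)} = \frac{i\,(H(z)+m)}{z+i}. $$

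\textbf{From $H$ to $\Phi$.} Substituting these two evaluations into (\ref{Lcharfunfix}) makes the outer factor $\frac{z-i}{z+i}$ cancel the factors $(z\mp i)^{-1}$ and the imaginary units cancel, leaving the clean expression $\Theta = \dfrac{H-m}{H+m}$ with $m = H(i)$. Now I would pass to $\Phi$ via $H = H_\Phi = \frac{1+\Phi}{1-\Phi}$, whence also $m = H(i) = \frac{1+\Phi(i)}{1-\Phi(i)}$. Plugging both expressions into $\frac{H-m}{H+m}$, the common factor $(1-\Phi)(1-\Phi(i))$ drops out of numerator and denominator, and a one-line expansion leaves $2(\Phi-\Phi(i))$ on top and $2(1-\Phi(i)\Phi)$ below, i.e.
$$ \Theta = \frac{\Phi - \Phi(i)}{1 - \Phi(i)\,\Phi}. $$
Finally, since $m$ is real and positive, $\Phi(i) = \frac{m-1}{m+1}$ is real, so $\ov{\Phi(i)} = \Phi(i)$; therefore $\frac{\Phi-\Phi(i)}{1-\Phi(i)\Phi} = \frac{\Phi-\Phi(i)}{1-\ov{\Phi(i)}\,\Phi} = F_{\Phi(i)}\circ\Phi$ by the definition (\ref{Mobius}) of the disk automorphism $F_w$, and the unimodular prefactor $\frac{1-\ov{\Phi(i)}}{1-\Phi(i)}$ in the statement equals $1$. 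This is precisely the claimed identity $\Theta = \big(\tfrac{1-\ov{\Phi(i)}}{1-\Phi(i)}\big)\,F_{\Phi(i)}\circ\Phi$.

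\textbf{Main obstacle.} I expect the only mildly delicate point to be the first step: organizing the partial-fraction bookkeeping while keeping every integrand $O(t^{-2})$, so that the Herglotz condition $\intfty(1+t^2)^{-1}\Ga(dt)<\infty$ guarantees absolute convergence throughout (this is exactly why I would clear the factor $1+t^2$ at the outset rather than naively split $(t-z)^{-1}$ off $(t-i)^{-1}$, each of which integrates to a divergent quantity). Everything downstream of the cancellation in (\ref{Lcharfunfix}) is a routine automorphism-of-the-disk computation, and the coincidence-class ambiguity in the Liv\v{s}ic function causes no trouble because (\ref{Lcharfunfix}) already fixes the representative attached to the deficiency vectors $b_{\mp i}$.
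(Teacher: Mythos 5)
Your proof is correct and follows essentially the same route as the paper's: both reduce the two integrals in (\ref{Lcharfunfix}) to $H$ and $H(i)$ via partial fractions (the paper's displayed identity for $\Phi(z)-\Phi(i)$ is exactly your evaluation of $\intfty (t-z)^{-1}(t-i)^{-1}\,\Ga(dt)$ combined with $\Phi-\Phi(i)=2(H-H(i))/((H+1)(H(i)+1))$), and then finish with M\"obius algebra. Your extra observation that $H(i)=\intfty (1+t^2)^{-1}\,\Ga(dt)$ is real and positive, so that $\ov{\Phi(i)}=\Phi(i)$ and the unimodular prefactor equals $1$, is valid for the $H$ defined in this section and correctly yields the stated formula.
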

The proof is straightforward algebra, see \emph{e.g.} \cite[Lemma 4.4]{Martin-ext}. For example direct computation shows
$$ \Phi (z) - \Phi (i) = \frac{2}{i(H(z) +1 )(H(i) +1)} (z-i) \intfty \frac{1}{t-z}\frac{1}{t-i} \Ga (dt). $$

\begin{remark} \label{normalize}
Note that $\Phi$ is a meromorphic inner function if and only if $\Theta$ is, and that $\Theta = \Phi$ if and only if $\Phi (i) =0$ which happens if and only if $H(i) =1$, if and only if the Herglotz measure $\Ga$ is \emph{normalized} so that $$ H(i) = \intfty \frac{1}{1+t^2} \Ga (dt) = \| b_{\pm i} \| ^2 = 1.$$
\end{remark}

For the remainder of the section, we assume that the positive Herglotz measure $\Ga$ is a purely discrete sum of weighted Dirac delta masses $\delta _{t_n}$, where $(t_n ) $ is a purely discrete, strictly increasing sequence with no finite accumulation point. Namely,
$$ \Ga := \sum _n w_n \delta _{t_n}, $$ where $(w_n )$ is a sequence of strictly positive weights and
$$ \delta _{t_n} (\Om ) = \left\{ \begin{array}{cc} 1  & t_n \in \Om \\ 0 & t_n \notin \Om \end{array} \right. $$ 
We can assume without loss of generality that the $w_n >0$ for all $n$, and it follows that the sequences $\mbf{t} := (t_n)$, $\mbf{w} :=(w_n)$ obey the conditions:
\bn
    \item $\mbf{t}$ is strictly increasing with no finite accumulation point.
    \item $\mbf{w} \subset (0 , \infty )$.
    \item $\mbf{t}, \mbf{w}$ are compatible in the sense that $\sum _n \frac{w_n}{1+t_n ^2 } = A < \infty$, \emph{i.e.} $\Ga$ is a  Herglotz measure.
\en
Observe that $(\mbf{t}, \mbf{w})$ is a bandlimit pair, in the sense of Definition \ref{TVpairdef}. Indeed, we will shortly prove that up to a constant, $C$, independent of $n$, $w_n =C t_n ' = t' (n), $ where $t$ is the spectral function of $T^\Ga$ fixed by the canonical choice of deficiency vectors $\phi _{\pm} = b_{\mp i}$.

In this context Theorem \ref{measuremodel} becomes:

\begin{thm} \label{Hergmod}
    A linear transformation $T$ belongs to $\mc{S} ^R$ if and only if
$T \simeq T^\Ga$, where $\Ga = \sum _n w_n \delta _{t_n}, $ is a purely atomic Herglotz measure whose atoms have no finite accumulation point. $T^\Ga$ is densely defined if and only if
$$ \sum _{n} w_n = + \infty. $$
\end{thm}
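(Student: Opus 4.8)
The plan is to recognize that a purely atomic Herglotz measure is merely a concrete realization of a bandlimit pair of sequences, so that the theorem reduces to Theorem \ref{FApic} once the abstract construction made there is transported into the measure space $L^2(\Ga)$. The assertion that $T^\Ga$ is densely defined exactly when $\sum_n w_n = +\infty$ needs no fresh argument: by the discussion preceding Theorem \ref{measuremodel}, $T^\Ga = M^\Ga|_{\dom{T^\Ga}}$ is a simple symmetric transformation with indices $(1,1)$ that is densely defined if and only if $\Ga(\R) = +\infty$, and for $\Ga = \sum_n w_n\delta_{t_n}$ one has $\Ga(\R) = \sum_n w_n$. So the content is the characterization of $\mc{S} ^R$.

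The heart of the matter is an explicit unitary tying $T^\Ga$ to an operator of Theorem \ref{FApic}; the same unitary serves both implications. Suppose $\Ga = \sum_n w_n\delta_{t_n}$ with $(t_n)$ strictly increasing with no finite accumulation point, $w_n > 0$ for all $n$, and $\sum_n w_n/(1+t_n^2) < \infty$; these are precisely the conditions making $(\mbf{t},\mbf{w})$ --- with $\mbf{t} = (t_n)$, $\mbf{w} = (w_n)$ --- a bandlimit pair in the sense of Definition \ref{TVpairdef}. Let $T\in\mc{S} ^R$ be the simple regular symmetric transformation produced from $(\mbf{t},\mbf{w})$ by the first part of Theorem \ref{FApic}: it is the restriction of the self-adjoint operator $T_0 = T(1)$, which has an orthonormal eigenbasis $\{\phi_n\}$ with $T_0\phi_n = t_n\phi_n$, to the domain $\dom{T} = \{\phi\in\dom{T_0} : \sum_n\sqrt{w_n}\,\ip{\phi_n}{\phi} = 0\}$, with canonical equal-norm deficiency vectors $\phi_\pm = \sum_n\frac{\sqrt{w_n}}{t_n\mp i}\phi_n$ as in (\ref{defectdef}). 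On the measure side, the normalized atoms $e_n\in L^2(\Ga)$ --- supported on $\{t_n\}$ with $e_n(t_n) = w_n^{-1/2}$ --- form an orthonormal basis; $M^\Ga$ is diagonal, $M^\Ga e_n = t_n e_n$; the canonical deficiency vectors $\phi_\pm^\Ga = b_{\mp i}$ of $T^\Ga$ fixed just before Lemma \ref{Frostshift} expand as $\phi_\pm^\Ga = \sum_n\frac{\sqrt{w_n}}{t_n\mp i}e_n$; and $\dom{T^\Ga} = \{f\in\dom{M^\Ga} : \int f\,d\Ga = 0\}$ equals $\{f\in\dom{M^\Ga} : \sum_n\sqrt{w_n}\,\ip{e_n}{f} = 0\}$ since $\int f\,d\Ga = \sum_n\sqrt{w_n}\,\ip{e_n}{f}$. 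Hence the unitary $W : L^2(\Ga)\to\H$ determined by $We_n := \phi_n$ intertwines $M^\Ga$ with $T_0$, carries $\phi_\pm^\Ga$ to $\phi_\pm$, and maps $\dom{T^\Ga}$ onto $\dom{T}$, so that $WT^\Ga W^* = T$. Thus $T^\Ga\simeq T\in\mc{S} ^R$, and, regularity, simplicity and the deficiency indices being unitary invariants, $T^\Ga\in\mc{S} ^R$; this settles the ``if'' direction. For ``only if'', take $T\in\mc{S} ^R$ with a fixed pair of equal-norm deficiency vectors; the converse part of Theorem \ref{FApic} supplies a bandlimit pair $(\mbf{t},\mbf{t}')$ in which $\mbf{t} = (t(n))$ is the strictly increasing sequence (no finite accumulation point) of eigenvalues of $T_0 = T(1)$, $t_n' > 0$, and $\sum_n t_n'/(1+t_n^2) < \infty$. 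Then $\Ga := \sum_n t_n'\,\delta_{t_n}$ is a purely atomic positive measure whose atoms have no finite accumulation point, the last convergence being exactly the Herglotz condition (\ref{Hergcondform}); running the unitary $W$ above with $w_n = t_n'$ gives $T\simeq T^\Ga$, as required.

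The step I expect to cost the most care is the elementary but slightly fiddly bookkeeping behind ``$W$ maps $\dom{T^\Ga}$ onto $\dom{T}$'': one must check that $\int f\,d\Ga$ is an absolutely convergent functional on $\dom{M^\Ga}$ equal to $\sum_n\sqrt{w_n}\ip{e_n}{f}$ (Cauchy--Schwarz together with the Herglotz condition), that $\ip{\phi_+}{(T_0+i)\phi} = \sum_n\sqrt{w_n}\ip{\phi_n}{\phi}$ so that the two descriptions of $\dom{T}$ agree (this is the identity already carried out in the construction of Section \ref{FAmodel}), and --- if one prefers to invoke the uniqueness clause of Theorem \ref{FApic} rather than match domains by hand --- that $\phi_+^\Ga - \phi_-^\Ga$ agrees in $L^2(\Ga)$ with $2i/(1+t^2)\in\dom{M^\Ga}\setminus\dom{T^\Ga}$, so that $M^\Ga$ is the extension $T^\Ga(1)$. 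A route that sidesteps this altogether runs through the Liv\v{s}ic characteristic function: for purely atomic $\Ga$ with atoms having no finite accumulation point the associated Herglotz function $H$ is meromorphic on $\C$ with vanishing real part off the atoms, so $\Phi = (H-1)/(H+1)$ is a meromorphic inner function, whence by Lemma \ref{Frostshift} and Theorem \ref{Hardymodelrep} the characteristic function $\Theta_{T^\Ga}$ is meromorphic inner and $T^\Ga\in\mc{S} ^R$; conversely, for $T\in\mc{S} ^R$ the function $H := (1+\Theta_T)/(1-\Theta_T)$ is a meromorphic Herglotz function whose poles, where $\Theta_T = 1$, form a discrete set by the phase-function description of $\Theta_T$ in Theorem \ref{phasefun}, so its representing measure is purely atomic with atoms having no finite accumulation point, and since $(H-1)/(H+1) = \Theta_T$ with $\Theta_T(i) = 0$, Lemma \ref{Frostshift} gives $\Theta_{T^\Ga} = \Theta_T$, hence $T\simeq T^\Ga$ by Liv\v{s}ic's theorem (Subsection \ref{Lcharfun}).
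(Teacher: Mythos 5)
Your proof is correct, but it is considerably more explicit than what the paper does. The paper offers essentially no argument for Theorem \ref{Hergmod}: it presents the statement as the specialization of Theorem \ref{measuremodel} (imported from \cite{AMR}) to purely atomic measures, prefaced only by the observation that $(\mbf{t},\mbf{w})$ is a bandlimit pair and that the dense-definedness criterion $\Ga (\R ) = +\infty$ becomes $\sum_n w_n = +\infty$; the identification of ``regular'' with ``purely atomic, atoms with no finite accumulation point'' is left to the cited reference and to Proposition \ref{normweight}. You instead give a self-contained proof internal to the paper: the unitary $W e_n = \phi_n$ matching the normalized atoms of $L^2(\Ga )$ with the eigenbasis of $T_0$ from Theorem \ref{FApic}, together with the verification that $\int f\, d\Ga = \sum_n \sqrt{w_n}\, \ip{e_n}{f}$ converges absolutely on $\dom{M^\Ga}$ and that the two domain conditions correspond, reduces both implications to Theorem \ref{FApic} and makes the correspondence $w_n \leftrightarrow t_n'$ transparent (it is exactly the content the paper later extracts in Proposition \ref{normweight}). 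What your route buys is independence from the external citation for Theorem \ref{measuremodel}; what the paper's route buys is brevity and consistency with its general philosophy of deriving everything from the Liv\v{s}ic/Herglotz machinery. Your alternative sketch via characteristic functions is essentially the \cite{AMR} route the paper has in mind; the only point there that deserves a word of care is ruling out a linear term $-iDz$ (point mass at infinity) in the Herglotz representation of $(1+\Theta_T)/(1-\Theta_T)$ before concluding that the representing measure consists solely of the point masses at the solutions of $\Theta_T = 1$, but since this is your backup argument and your primary argument is complete, nothing is missing.
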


For an atomic Herglotz measure of this type, the formula (\ref{Lcharfunfix}) for the characteristic function $\Theta$ of $M _\Ga$ (fixed uniquely by canonical deficiency vector choice) becomes:
\ba  \Theta (z) & = & \frac{z-i}{z+i} \frac{\sum \frac{1}{t_n-z} \frac{1}{t_n-i} w_n }{\sum \frac{1}{t_n -z} \frac{1}{t_n +i} w_n} \nn \\
& = & \frac{\sum w_n \left( \frac{1}{t_n -z} - \frac{1}{t_n -i} \right) }{\sum w_n \left( \frac{1}{t_n -z} - \frac{1}{t_n +i} \right) }. \nn \ea
It is also easy to check that
$$ \sigma (M ^\Ga ) = \{ t_n \} = \{ t \in \R | \ \Theta (t) = 1 \}, $$ so that if $t$ is the spectral function of $T^\Ga$ fixed by the canonical choice of deficiency vectors, $M^\Ga = T^\Ga _0 = T^\Ga (1)$, and $\sigma (M ^\Ga ) = \{ t_n \} = \{ t(n) \}$.

\begin{prop} \label{normweight}
    The weights $w_n$ of the purely discrete measure $\Ga$ obey $w_n = \frac{\| \phi _+ \| ^2}{\pi} t_n ' (0) = \frac{\| \phi _+ \| ^2}{\pi} t' (n)$.
\end{prop}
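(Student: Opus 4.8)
The plan is to identify the weights $w_n$ with the coefficients $c_n(0)$ appearing in Lemma \ref{defcoeff}, applied to the measure-theoretic model $T^\Ga$ with its canonical deficiency vectors $\phi_\pm = b_{\mp i}$, and then quote that lemma. Concretely, I would first expand the deficiency vector $\phi_+ = b_{-i}$ in the orthonormal eigenbasis of $M^\Ga = T^\Ga_0$. The eigenvectors of $M^\Ga$ in $L^2(\Ga)$ associated to the eigenvalue $t_n$ are, up to normalization, the indicator functions $\chi_{\{t_n\}}$; since $\Ga(\{t_n\}) = w_n$, the normalized eigenvector is $\psi_n := w_n^{-1/2}\chi_{\{t_n\}}$. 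Then a direct computation gives
$$ \ip{\psi_n}{\phi_+}_\Ga = \ip{\psi_n}{b_{-i}}_\Ga = w_n^{-1/2}\,\ov{b_{-i}(t_n)}\,w_n = w_n^{1/2}\,\frac{1}{t_n - i}, $$
using $b_{-i}(t) = (t+i)^{-1}$ and $\ov{(t_n+i)^{-1}} = (t_n - i)^{-1}$. Hence if we write $\phi_+ = \sum_n \frac{c_n(0)}{t_n - i}\psi_n$ in the form prescribed by Lemma \ref{defcoeff} (with $\theta = 0$, so $t_n(0) = t_n$), we read off $|c_n(0)|^2 = w_n$.

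Next I would invoke Lemma \ref{defcoeff} directly: it states that $|c_n(\theta)|^2 = \frac{\|\phi_+\|^2}{\pi}\,\frac{d}{d\beta}t_n(\beta)\big|_{\beta=\theta}$. Setting $\theta = 0$ and using the relation $t_n(\theta) = t(n+\theta)$ from Lemma \ref{Rcover}/Definition \ref{specfundef}, we get $\frac{d}{d\beta}t_n(\beta)\big|_{\beta=0} = t'(n) = t_n'(0)$. Combining with the identification $|c_n(0)|^2 = w_n$ from the previous step yields exactly $w_n = \frac{\|\phi_+\|^2}{\pi}\,t_n'(0) = \frac{\|\phi_+\|^2}{\pi}\,t'(n)$, which is the claim. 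One should also note $\|\phi_+\|^2 = \|b_{-i}\|^2_\Ga = \intfty \frac{1}{1+t^2}\,\Ga(dt) = \sum_n \frac{w_n}{1+t_n^2} = A$, so the formula can equivalently be written $w_n = \frac{A}{\pi}t'(n)$, consistent with Remark \ref{normalize} (if $\Ga$ is normalized so $A=1$, then $\|\phi_+\|^2 = \pi$... — more precisely the normalization $H(i)=1$ means $A=1$, and then $w_n = \pi^{-1}t'(n)$, while in general one rescales).

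The only genuine point of care — and the main (minor) obstacle — is verifying that the hypotheses of Lemma \ref{defcoeff} are actually met here: namely that $(\mbf{t},\mbf{w})$ is a bandlimit pair (which was already observed in the text, using $\sum \frac{w_n}{1+t_n^2} < \infty$ together with $w_n > 0$), that the symmetric operator built from $(\mbf{t},\mbf{w})$ via Theorem \ref{FApic} is unitarily equivalent to $T^\Ga$ with matching deficiency vectors, and that the canonical choice $\phi_\pm = b_{\mp i}$ corresponds under this equivalence to the choice $\phi_\pm = \sum \frac{\sqrt{t_n'}}{t_n \mp i}\psi_n$ of equation (\ref{defectdef}) — which is precisely what the computation $\ip{\psi_n}{b_{-i}}_\Ga = w_n^{1/2}(t_n-i)^{-1}$ confirms, once we set $t_n' := w_n$. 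Since the eigenbasis $\{\psi_n\}$ of $M^\Ga$ is explicit and the deficiency vectors are explicit, this matching is immediate and no continuity-of-coefficients subtlety (Remark \ref{contcoeff}) is needed at $\theta = 0$. Thus the proposition follows essentially by unwinding definitions and applying Lemma \ref{defcoeff}.
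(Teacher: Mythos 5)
Your proposal is correct and follows essentially the same route as the paper: expand $\phi_+ = b_{-i}$ in the normalized eigenbasis $\psi_n = w_n^{-1/2}\chi_{\{t_n\}}$ of $M^\Ga$ to read off $|c_n(0)|^2 = w_n$, then apply Lemma \ref{defcoeff} at $\theta = 0$. (One cosmetic slip: by the paper's definition $b_w(t) = (t-\ov{w})^{-1}$, so $b_{-i}(t) = (t-i)^{-1}$ and no conjugation is needed; your writing $b_{-i}(t)=(t+i)^{-1}$ together with the extra conjugate cancels out to the same correct coefficient $\sqrt{w_n}/(t_n-i)$.)
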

    If the atomic Herglotz measure $\Ga$ is \emph{normalized} so that $\| \phi _+ \| ^2 _\Ga =1$ as in Remark \ref{normalize} then $\pi w_n = t ' (n)$. In this case, setting $\mbf{t} ' = \pi \mbf{w} = ( \pi w_n )$, the pair $(\mbf{t} , \mbf{t} ' )$ is a normalized bandlimit pair, as defined in Remark \ref{normpair}. 

\begin{proof}
    An orthonormal basis of eigenvectors for $M^\Ga _0 = M ^\Ga$ is $\{ \phi _n \}$ where
\be \phi _n (t) := w_n ^{-1/2} \delta _{t_n ,t}. \label{ebasis} \ee Expanding the deficiency vector $\phi _+ = b_{-i}$ in this basis gives
$$ \phi _+  =\sum \frac{\sqrt{w_n}}{t_n -i} \phi _n. $$ The claim now follows from Proposition \ref{defcoeff}.
\end{proof}

It follows that we can re-express the characteristic function $\Theta$ of $T^\Ga$ in terms of the bandlimit pair
$(\mbf{t} , \mbf{t} ' )$: $\mbf{t} = (t_n )$, $\mbf{t} ' = (t _n ')$ where $t_n ' := t_n ' (0) = t ' ( n)$, $t_n = t(n)$ and
$t$ is the spectral function of $M _\Ga$ fixed by the choice of deficiency vectors $\phi _\pm = b _{\mp i}$:

\ba  \Theta (z) & = & \frac{z-i}{z+i} \frac{ \sum \frac{1}{t_n -z}\frac{1}{t_n -i} t_n '} {\sum \frac{1}{t_n -z} \frac{1}{t_n +i} t_n '} \nn \\
& = & \frac{ \sum t_n ' \left( \frac{1}{t_n -z} - \frac{1}{t_n -i} \right) }{ \sum t_n ' \left( \frac{1}{t_n -z} - \frac{1}{t_n +i} \right) }. \label{TVcharfun} \ea

This yields a representation formula for meromorphic inner functions:
\begin{cor} \label{merorep}
    A function $\Theta$ on $\C ^+$ is an inner function with meromorphic extension to $\C$ (obeying $\Theta (i) = 0$) if and only if
there is a bandlimit pair of sequences $\mbf{t} = (t_n )$, $\mbf{t} ' = (t_n ')$ so that $\Theta$ is given by the formula (\ref{TVcharfun}).

If $\Theta$ is a meromorphic inner function vanishing at $i$ then there is a phase function $\tau$ for $\Theta$, $\Theta (t) = e^{i2\pi \tau (t)}$; $t \in \R$, so that $t_n = t(n)$ and
$t ' _n = t' (n)$, where $t = \tau ^{-1}$, a spectral function for $\Theta$.
\end{cor}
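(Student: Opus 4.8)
The plan is to obtain this as a corollary of the atomic measure model. Theorem \ref{Hergmod} identifies $\mc{S}^R$ with the transformations $T^\Gamma$ attached to purely atomic Herglotz measures $\Gamma$ whose atoms have no finite accumulation point, and formula (\ref{TVcharfun}) already exhibits the Liv\v{s}ic characteristic function of such a $T^\Gamma$, computed with respect to the canonical deficiency vectors $\phi_\pm = b_{\mp i}$, directly in terms of the atoms and weights of $\Gamma$. So the two halves of the statement reduce to describing which functions arise as these characteristic functions, together with some bookkeeping about phase functions.

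For the ``if'' direction I would take a bandlimit pair $(\mbf{t},\mbf{t}')$ and set $\Gamma := \sum_n t_n' \delta_{t_n}$. The two conditions in Definition \ref{TVpairdef} say exactly that $\Gamma$ is a positive purely atomic measure whose atoms are strictly increasing with no finite accumulation point and which obeys the Herglotz condition $\intfty (1+t^2)^{-1}\Gamma(dt) = \sum_n t_n'/(1+t_n^2) < \infty$. By Theorem \ref{Hergmod}, $T^\Gamma \in \mc{S}^R$, so by Theorem \ref{Hardymodelrep} its characteristic function is a meromorphic inner function, and it vanishes at $i$ because every characteristic function does. Specializing (\ref{Lcharfunfix}) to this atomic $\Gamma$ and simplifying as in the derivation of (\ref{TVcharfun}), and noting that the expression is unchanged if all weights are rescaled by one positive constant, identifies this characteristic function with the right-hand side of (\ref{TVcharfun}) for the given pair; hence (\ref{TVcharfun}) does define a meromorphic inner function vanishing at $i$.

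For the converse, suppose $\Theta$ is meromorphic inner with $\Theta(i) = 0$. I would pass to the Herglotz function $H_\Theta := (1+\Theta)/(1-\Theta)$. Since $\Theta$ is meromorphic, unimodular on $\R$, and analytic on a neighbourhood of $\R$, $H_\Theta$ is meromorphic with purely imaginary boundary values off the zero set of $1-\Theta$; hence its Herglotz representing measure $\Gamma$ has no absolutely continuous or singular continuous part and is carried by $\{t \in \R : \Theta(t) = 1\}$, which by Theorem \ref{phasefun} is discrete with no finite accumulation point. Thus $\Gamma = \sum_n w_n \delta_{t_n}$ with $w_n > 0$ and $(t_n)$ of the required type, and $(\mbf{t},\mbf{t}') := ((t_n),(\pi w_n))$ is a bandlimit pair by the ``if'' argument. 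The hypothesis $\Theta(i) = 0$ forces $H_\Theta(i) = 1$, which is precisely the normalization of Remark \ref{normalize}: $\Gamma$ carries no mass at infinity, $H_\Theta$ is recovered from $\Gamma$ alone, and the inner function reconstructed from $\Gamma$ via $H \mapsto (H-1)/(H+1)$ is $\Theta$ itself (equivalently, the Frostman-shift factor in Lemma \ref{Frostshift} is trivial). Reading off (\ref{TVcharfun}) for $(\mbf{t},\mbf{t}')$ then returns $\Theta$ exactly. Finally, writing $\Theta(t) = e^{i2\pi\tau(t)}$ with the phase function $\tau$ of Theorem \ref{phasefun}, the atoms $t_n$ are precisely the points with $\tau(t_n) \in \Z$; after indexing so that $\tau(t_n) = n$ we get $t_n = t(n)$ with $t := \tau^{-1}$, and Proposition \ref{normweight}, applied to $T^\Gamma$ (whose base extension $T^\Gamma_0 = M^\Gamma$ has spectrum $\{t_n\}$, so its spectral function is $t$), gives $t_n' = \pi w_n = t'(n)$.

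The step I expect to be the crux is the middle of the converse: verifying that the Herglotz representing measure of $H_\Theta$ is purely atomic with atoms having no finite accumulation point, and --- more delicately --- that the normalization forced by $\Theta(i) = 0$, i.e.\ the absence of a linear term in the Herglotz representation of $H_\Theta$, upgrades the coincidence of characteristic functions (all that Liv\v{s}ic's theorem supplies) to honest equality in (\ref{TVcharfun}). Keeping track of the unimodular factor through $H \mapsto (H-1)/(H+1)$ and Remark \ref{normalize} is exactly what makes the two sides agree on the nose.
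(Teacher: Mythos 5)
Your argument is the one the paper intends: the forward direction is exactly the chain Definition \ref{TVpairdef} $\rightarrow$ atomic Herglotz measure $\rightarrow$ Theorem \ref{Hergmod} $\rightarrow$ formula (\ref{TVcharfun}), and your converse, which extracts the representing measure of $H_\Theta=(1+\Theta)/(1-\Theta)$ directly and identifies its weights via Proposition \ref{normweight}, is just a hands-on version of invoking Theorem \ref{Hergmod} plus Remark \ref{normalize}; the observation that (\ref{TVcharfun}) is homogeneous of degree zero in the weights, so that normalization costs nothing in the "if" direction, is a good catch. The bookkeeping $\tau(t_n)\in\Z$, $t_n=t(n)$, $t_n'=\pi w_n=t'(n)$ is also right.

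There is, however, one step in your converse that is false as stated: "$\Theta(i)=0$ forces $H_\Theta(i)=1$, which is precisely the normalization of Remark \ref{normalize}: $\Gamma$ carries no mass at infinity." Evaluating the Herglotz representation $H(z)=C-iDz+\int\frac{zt+1}{i(t-z)}\frac{\Gamma(dt)}{1+t^2}$ at $z=i$ gives $C=0$ and $D+\int\frac{\Gamma(dt)}{1+t^2}=1$; it does \emph{not} give $D=0$. Concretely, $\Theta(z)=\frac{z-i}{z+i}$ is meromorphic inner with $\Theta(i)=0$, yet $H_\Theta(z)=-iz$, so $D=1$ and $\Gamma\equiv 0$: the set $\{t\in\R:\Theta(t)=1\}$ is empty and no bandlimit pair reproduces this $\Theta$ via (\ref{TVcharfun}). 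The obstruction is exactly that $\lim_{y\to\infty}\Theta(iy)=1$, i.e.\ that $1$ is an eigenvalue of the unitary extension $U(1)$ of $b(Z^\Theta)$ (the "eigenvalue at infinity" discussed in the remark following Lemma \ref{Rcover}); this can only happen for finite Blaschke products, and is excluded whenever $\Theta$ has infinitely many zeros or a nontrivial singular factor $e^{iaz}$, since then $\Theta(iy)\to 0$ and $D=0$ follows. So your proof (and, to be fair, the corollary as literally stated) needs the standing hypothesis that $\sigma(Z^\Theta_0)$ is the full zero set of $1-\Theta$ on $\R\cup\{\infty\}$ --- equivalently $D=0$ --- which you should either assume or verify rather than deduce from $\Theta(i)=0$ alone.
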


\subsection{The multiplier between the local bandlimit and meromorphic model spaces} \label{Kmult}

In the construction of Section \ref{FAmodel}, one begins with a bandlimit pair $(\mbf{t}, \mbf{t} ')$ and considers a self-adjoint operator $T$
on some separable Hilbert space $\H$ so that the spectrum of $T$ consists of simple eigenvalues at the points of $\mbf{t}$. Without loss of generality, assume that the bandlimit pair is normalized so that
$$ \sum \frac{t_n ' }{1 + t_n ^2} = \pi. $$ We are also free to choose, for example, $\H = L^2 (\Ga )$ and $T = T^\Ga$ where $\Ga$ is the purely atomic Herglotz measure:
$$ \Ga = \frac{1}{\pi}  \sum t_n ' \delta _{t_n}.$$ Our normalization assumption ensures that this Herglotz measure $\Ga$ is normalized as in Remark \ref{normalize}, $\| \phi _+ \| _\Ga = \| b_{-i} \| _\Ga = 1$ so that
by Proposition \ref{normweight}, $$ t_n ' = t' (n),$$ where, as before, $t$ is the spectral function of $T=T^\Ga$ fixed by the choice $\phi _\pm = b _{\mp i}$.  It follows that the Liv\v{s}ic characteristic function of $T$ is given by equation (\ref{TVcharfun}) above. By Corollary \ref{multexist}, we know that there is a unitary multiplier from the local bandlimit space $\K (T)$ onto the meromorphic model space $K (\Theta _T )$, and we can compute this multiplier by comparing the reproducing kernels of these two spaces.

\begin{thm} \label{TVmultiplier}
    The multiplier from the meromorphic model space $K (\Theta _T)$ onto the local bandlimit space $\K (T)$ is
$$ M (t) := 2\pi ( 1 - \Theta _T (t) ) ^{-1} (-1) ^{\lfloor \tau (t)  \rfloor}  \left( \frac{t' (n)}{(t-t(n)) ^2 }\right) ^{-\frac{1}{2}}. $$
\end{thm}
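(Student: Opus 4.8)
The plan is to invoke the multiplier criterion, Lemma \ref{multcri}. By Theorem \ref{Hardymodelrep} (with $w=0$, so that $F_0\circ\Theta_T=\Theta_T$) the pair $(Z^{\Theta_T},K(\Theta_T))$ is a functional model for $T$, and $(M^T,\K(T))$ is another by Theorem \ref{thm:mult}, so Corollary \ref{multexist} already guarantees that there \emph{is} a unitary multiplier from $K(\Theta_T)$ onto $\K(T)$; it therefore suffices to verify that the displayed $M$ is an onto isometric multiplier from $\H(k^{\Theta_T})=K(\Theta_T)$ onto $\H(K^T)=\K(T)$. By Lemma \ref{multcri} this is the single identity
$$ M(t)\,k^{\Theta_T}(t,s)\,\overline{M(s)}=K^T(t,s),\qquad t,s\in\R, $$
where $k^{\Theta_T}$ is the model-space kernel $(\ref{modkernform})$ and $K^T$ the kernel of Proposition \ref{kernelprop}; note that the last factor of $M(t)$ is exactly the function $f(t)$ appearing there. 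Substituting, and using that $\Theta_T$ is inner and hence unimodular on $\R$ (so $k^{\Theta_T}(t,s)=\frac{i}{2\pi}\frac{1-\Theta_T(t)\overline{\Theta_T(s)}}{t-s}$), the common factors $f(t)f(s)$ and $(-1)^{\lfloor\tau(t)\rfloor+\lfloor\tau(s)\rfloor}$ cancel against those in $K^T$, and the claim reduces to the scalar identity
$$ 2\pi i\,\frac{1-\Theta_T(t)\overline{\Theta_T(s)}}{(1-\Theta_T(t))(1-\overline{\Theta_T(s)})\,(t-s)}=\sum_k\frac{t'(k)}{(t-t_k)(s-t_k)},\qquad t,s\in\R\setminus\{t_n\}. $$

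To simplify the left side I would again use $\overline{\Theta_T(s)}=\Theta_T(s)^{-1}$, which rewrites the $\Theta$-quotient as $\frac{\Theta_T(t)-\Theta_T(s)}{(1-\Theta_T(t))(1-\Theta_T(s))}$, and then pass to the Herglotz function $H:=\frac{1+\Theta_T}{1-\Theta_T}$. From $1-\Theta_T=\frac{2}{H+1}$ and $\Theta_T(t)-\Theta_T(s)=\frac{2\,(H(t)-H(s))}{(H(t)+1)(H(s)+1)}$ this quotient collapses to $\frac12(H(t)-H(s))$, so the left side becomes $\pi i\,\frac{H(t)-H(s)}{t-s}$.

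It remains to compute $H$. As set up at the start of this subsection we may take $T=T^\Ga$ with the normalized purely atomic Herglotz measure $\Ga=\frac1\pi\sum_n t_n'\,\delta_{t_n}$, where $t_n=t(n)$ and $t_n'=t'(n)$ by Theorem \ref{Hergmod} and Proposition \ref{normweight}; by Lemma \ref{Frostshift} and the normalization (Remark \ref{normalize}) the $H$ above is precisely the Herglotz transform $H(z)=\intfty\frac{zt+1}{i(t-z)}\frac{\Ga(dt)}{1+t^2}$ of $\Ga$. Writing the integrand as $\frac{zt+1}{i(t-z)(1+t^2)}=\frac{1}{i(t-z)}+\frac{it}{1+t^2}$, whose second term is independent of $z$ and cancels in the difference, one obtains
$$ H(t)-H(s)=\intfty\Big(\frac{1}{i(t'-t)}-\frac{1}{i(t'-s)}\Big)\Ga(dt')=\frac{t-s}{i\pi}\sum_n\frac{t_n'}{(t_n-t)(t_n-s)}, $$
so that $\pi i\,\frac{H(t)-H(s)}{t-s}=\sum_n\frac{t_n'}{(t-t_n)(s-t_n)}=\sum_n\frac{t'(n)}{(t-t(n))(s-t(n))}$, which is exactly the right side of the scalar identity. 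Every series converges by the Herglotz condition $\sum t_n'/(1+t_n^2)<\infty$, already used in Proposition \ref{kernelprop} and in the Herglotz representation.

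I expect the only real obstacle to be the bookkeeping — keeping the constant $2\pi$ and the signs $(-1)^{\lfloor\tau(t)\rfloor}$ straight — together with the behaviour at the points $t_n\in\sigma(T_0)$, where $(1-\Theta_T)^{-1}$ has a simple pole, $f$ a simple zero, and $M$ is consequently bounded but only piecewise continuous. Since the kernel identity has been verified on the dense open complement of $(\{t_n\}\times\R)\cup(\R\times\{t_n\})$ and the point-evaluation vectors of both reproducing kernel Hilbert spaces depend continuously on the base point, this causes no trouble; the structural content is entirely in the two short reductions above, namely unimodularity of $\Theta_T$ on $\R$ and the Herglotz transform of an atomic measure.
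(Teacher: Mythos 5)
Your proposal is correct and follows essentially the same route as the paper: both reduce the claim to the kernel identity $M(t)\,k^{\Theta_T}(t,s)\,\overline{M(s)}=K^T(t,s)$ via Lemma \ref{multcri} and verify it by passing to the Herglotz function $H=\frac{1+\Theta_T}{1-\Theta_T}$ of the normalized atomic measure $\frac{1}{\pi}\sum t_n'\delta_{t_n}$, extending to the exceptional points $t_n$ by continuity. The only cosmetic difference is that the paper routes the algebra through the intermediate Herglotz-space kernel $K^\Theta(t,s)=i\frac{H(t)+\overline{H(s)}}{t-s}$, while you carry out the equivalent simplification of the $\Theta$-quotient directly.
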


\begin{proof}
   By our earlier results the kernel for the sampling or local bandlimit space is
$$ K ^T (t,s) = h(t) \sum \frac{t_k '}{ (t-t_k)(s-t_k)} \ov{h (s)}, $$ where
$$ h(t) := (-1) ^{\lfloor \tau (t)  \rfloor} f(t) = (-1) ^{\lfloor \tau (t)  \rfloor} \left( \frac{t' (n)}{(t-t(n)) ^2 }\right) ^{-\frac{1}{2}}  . $$
Suppose that $t,s \notin \{ t_n \}$ so that $H (t), H(s)$ where $H$ is the Herglotz function corresponding to $\Theta = \Theta _T$, $H = \frac{1+ \Theta }{1-\Theta }$, is well-defined. Recall here
that $\Theta (t_n ) = 1$ so that $1 - \Theta (t_n) = 0$.
Also $$ H (t) = \frac{1}{\pi} \sum \frac{t_n '}{1+t_n ^2} \frac{1 + t t_n}{i (t_n -t)}. $$ Using this one can check that
$$ \sum \frac{t_n ' }{(t-t_n) (s-t_n)} = i \pi \frac{H(t) + \ov{H (s)}}{t-s} =: \pi K^\Theta (t,s), $$ so that
$$ K ^T (t,s) = \pi h(t) K ^\Theta (t,s) \ov{h(s)}. $$ The kernel function $K^\Theta (t,s)$
can be expressed in terms of the positive kernel $k^\Theta$ for the model space $K (\Theta _T )$ as
$$ \pi K^\Theta = 4 \pi ^2 (1 - \Theta (t) ) ^{-1} k^\Theta (t,s) (1- \ov{\Theta (s)} ) ^{-1}, $$
so that $$M(t) := 2 \pi  (1 -\Theta (t) ) ^{-1} h(t), $$ is the isometric multiplier of $K (\Theta _T)$ onto $\K (T)$. The full formula for all $t,s \in \R$ follows by continuity.
\end{proof}

\subsection{Aleksandrov-Clark theory} \label{ACss}

It will be useful to re-express the characteristic function $\Theta$ of $T^\Ga$ in terms of any member of the family of bandlimit pairs $(\mbf{t} _\theta , \mbf{t} _\theta ')$, $\theta \in [0,1)$. This will provide formulas for the computation of any given sampling sequence $\mbf{t} _\theta$ for a local bandlimit space $\K (T) = \K (\mbf{t}, \mbf{t}' )$ in terms of the initial bandlimit pair $(\mbf{t}, \mbf{t} ' )$. Here, as in the previous section, consider a bandlimit pair $(\mbf{t} , \mbf{t} ')$ which is normalized so that
$$ \sum \frac{t_n ' }{1+t_n ^2} = \pi, $$ and define the normalized Herglotz measure
$$ \Ga  = \frac{1}{\pi} \sum t_n ' \delta _{t_n}. $$ If $t$ is the spectral function of $T^\Ga \in \mc{S} ^R (L^2 (\Ga ) )$ uniquely fixed by the choice $\phi _{\pm} = b_{\mp i}$ it follows as before that $t_n = t(n)$ and $t' _n = t' (n)$.

In particular, we will show that the sequences $\mbf{t} _\theta ' = (t_n ' (\theta ))$ are the weights of the purely atomic Herglotz measure $\Ga _\theta$ corresponding to the Herglotz function:
$$ H ^\theta := \frac{1 +\Theta e^{-i2\pi \theta}}{1 - \Theta e^{-i2\pi \theta}}.$$ At this point it will be clear to experts that we are simply computing the Aleksandrov-Clark measures and working out Clark's theory of unitary perturbations of the restricted backward shift in the special case of meromorphic inner functions and meromorphic model spaces on the upper half-plane \cite{Clark1972,Aleks1996}. 

Given a (not necessarily normalized) bandlimit pair $(\mbf{t} , \mbf{t} ')$, let $\Ga$ be the corresponding Herglotz measure
$$\Ga := \frac{1}{\pi} \sum t_n ' \delta _{t_n}, $$ let $H$ be the corresponding Herglotz function
and $\Phi$ the meromorphic inner function so that $H = H _\Phi = \frac{1+\Phi}{1-\Phi}$,
$$ H_\Phi (z) = \frac{ 1+ \Phi (z)}{1-\Phi (z) }= \intfty \frac{zt+1}{i(t-z)} \frac{1}{1+t^2} \Ga (dt). $$ We call $\Ga _0 := \Ga$ the \emph{Herglotz measure} of $\Phi$.
Similarly one can define a one-parameter family of positive Borel measures on $\R$ naturally associated to $\Phi$. Let $\Ga _\theta$, $\theta \in [0,1)$
be the Herglotz measure associated to the inner function $\Phi e^{-i2\pi \theta}$:
$$ \frac{ 1+ \Phi (z) e^{-i2\pi \theta} }{1-\Phi (z) e^{-i2\pi \theta} } = \intfty \frac{zt+1}{i(t-z)} \frac{1}{1+t^2} \Ga _\theta (dt). $$ The measures $\Ga _\theta$
are also called the family of \emph{Aleksandrov-Clark measures} associated to $\Phi$.
Observe that if we define the re-scaled measure
$$\ga _0 := \frac{1}{\pi} \sum \frac{t_n '}{1 + t_n ^2} \delta _{t_n}, $$ that
$$ \ga _0 (\Om ) =  \ip{\phi _+}{P^\Ga _0 (\Om ) \phi _+}; \quad \quad \Om \in \mr{Bor} (\R ),$$ where $\phi _+ = b_{-i}$ is the choice of deficiency vector from equation (\ref{defchoice}), $\mr{Bor} (\R )$ denotes the Borel $\sigma$-algebra, and
$$ P ^\Ga _0 (\Om) := \sum _{t_n \in \Om } \ip{\cdot}{\phi _n}{\phi _n}. $$ Here, the $\{ \phi _n \}$ are the orthonormal eigenbasis
of $M^\Ga =T^\Ga _0$ from equation (\ref{ebasis}), $\phi _n (t) = (w_n) ^{-1/2} \delta _{t_n, t}$, so that $P ^\Ga _0$ is the projection-valued measure of the self-adjoint operator $T^\Ga _0 = M ^\Ga$.
It follows that
$$ H_\Phi (z) = \intfty \frac{zt+1}{i(t-z)} \ga _0 (dt). $$
Similarly, define the re-scaled AC measures $\ga _\theta$ for $\theta \in [0,1)$.

Let $H ^\theta := H _{\Phi e^{-i2\pi \theta}}$, $\theta \in [0,1)$. We will write $H = H^0 = H_\Phi$. Define a positive kernel function, the \emph{Herglotz kernel}, on $\C \sm \R$ by
$$ K ^\Phi (z,w) := i  \frac{H(z) +\ov{H(w)}}{z-\ov{w}}. $$ Define $\L (\Phi ) := \H (K ^\Phi )$, the \emph{Herglotz space} of $\Phi$, a RKHS on $\C \sm \R$.
Recall that we have
$$ \ker{M _\Gamma ^* - \ov{z} } = \ran{T^\Ga -z } ^\perp = \bigvee b_z (t); \quad \quad z \in \C \sm \R, $$ where
$ b_z (t) = \frac{1}{t-\ov{z}}.$
A bit of algebra shows:
\begin{lemma}
The \emph{Cauchy transform}, $\mc{C} : L^2 (\Ga ) \rightarrow \L (\Phi )$, defined by 
$$ (\mc{C} f) (z):= \ip{b_z}{f} _\Ga ; \quad \quad z \in \C \sm \R, $$ is an onto isometry.
\end{lemma}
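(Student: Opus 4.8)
The plan is to treat this as an instance of the standard identification of a Hilbert space carrying a spanning family of vectors with the reproducing kernel Hilbert space those vectors generate --- exactly the mechanism already used in the proof of Theorem \ref{thm:mult} (with $b_w$ playing the role of $\phi _t$ and $K^\Phi$ that of $K^T$). Two facts are needed: (i) the kernel identity $\ip{b_z}{b_w} _\Ga = K^\Phi(z,w)$ for all $z,w\in\C\sm\R$, which says precisely that $\mc C b_w = K^\Phi_w$, the point-evaluation vector of $\L(\Phi)=\H(K^\Phi)$ at $w$; and (ii) density of $\mr{span}\{b_w \mid w\in\C\sm\R\}$ in $L^2(\Ga)$. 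Granting both, $\mc C$ restricted to this span is, by sesquilinearity and (i), an isometry onto the dense subspace $\mr{span}\{K^\Phi_w\}$ of $\L(\Phi)$, so it extends to an onto isometry; and this extension still obeys the stated formula because $F\mapsto F(z)$ is bounded on the RKHS $\L(\Phi)$ while $g\mapsto \ip{b_z}{g} _\Ga$ is bounded on $L^2(\Ga)$, so the two functionals agree on the dense span and hence everywhere.

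For (i), since $\ov{b_z(t)}=(t-z)^{-1}$ and $b_w(t)=(t-\ov w)^{-1}$ for $t\in\R$, one has $\ip{b_z}{b_w} _\Ga=\intfty \frac{\Ga(dt)}{(t-z)(t-\ov w)}$, and the algebraic identity
$$ \frac{1}{(t-z)(t-\ov w)} = \frac{1}{z-\ov w}\left( \frac{1+tz}{(t-z)(1+t^2)} - \frac{1+t\ov w}{(t-\ov w)(1+t^2)} \right), $$
obtained by clearing denominators, lets me split the integral. Each of the two resulting integrands decays like $t^{-2}$ at infinity and so is $\Ga$-integrable by the Herglotz condition (\ref{Hergcondform}); comparing with $H(z)=\intfty\frac{zt+1}{i(t-z)}\frac{1}{1+t^2}\Ga(dt)$ one recognizes the first piece as $iH(z)$ and the second as $-i\ov{H(w)}$. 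This gives $\ip{b_z}{b_w} _\Ga = i\,\frac{H(z)+\ov{H(w)}}{z-\ov w}=K^\Phi(z,w)$.

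Statement (ii) is the one point requiring care, since $\Ga$ may have infinite total mass (it is finite exactly when $T^\Ga$ is not densely defined), so one cannot simply invoke cyclicity of the constant function. Suppose $f\in L^2(\Ga)$ is orthogonal to every $b_w$; since $b_w\in L^2(\Ga)$, the function $F(w):=\ip{b_w}{f} _\Ga=\int \frac{f(t)}{t-w}\,\Ga(dt)$ is a well-defined absolutely convergent integral, and it vanishes for all $w\in\C\sm\R$. Taking the difference quotient of $F$ at the points $w=z$ and $w=i$,
$$ 0=\frac{F(z)-F(i)}{z-i}=\intfty\frac{f(t)}{(t-z)(t-i)}\,\Ga(dt)=\intfty\frac{d\mu(t)}{t-z}, \qquad \mu(dt):=\frac{f(t)}{t-i}\,\Ga(dt), $$
where $\mu$ is a finite complex Borel measure on $\R$ because $\int\frac{|f(t)|}{|t-i|}\,\Ga(dt)\le \|f\| _{L^2(\Ga)}\bigl(\intfty\frac{\Ga(dt)}{1+t^2}\bigr)^{1/2}<\infty$. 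The Cauchy transform of $\mu$ thus vanishes identically on $\C\sm\R$, which forces $\mu=0$ (Stieltjes inversion: the Poisson extension $\tfrac1\pi\,\im \widehat\mu(\,\cdot\,+i\eta)\to\mu$ weakly as $\eta\to0^+$), hence $f(t)/(t-i)=0$ and so $f=0$ $\Ga$-a.e. I expect exactly this density step --- the passage to the finite auxiliary measure $\mu$ so that injectivity of the Cauchy transform applies --- to be the only genuinely delicate part; everything else reduces to the abstract RKHS bookkeeping already carried out for the other functional models in the paper.
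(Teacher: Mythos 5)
Your proof is correct. The paper itself gives no argument beyond the phrase ``a bit of algebra shows,'' and that algebra is precisely your computation of $\ip{b_z}{b_w} _\Ga = i\,\frac{H(z)+\ov{H(w)}}{z-\ov{w}} = K^\Phi (z,w)$ via the partial-fraction identity (your identity checks out: $\frac{1}{t-z} - \frac{1+tz}{(t-z)(1+t^2)} = \frac{t}{1+t^2}$, and the $\frac{t}{1+t^2}$ terms cancel in the difference); the identification $\mc{C} b_w = K^\Phi _w$ and the ensuing RKHS bookkeeping then run exactly as in Theorem \ref{thm:mult}. The one ingredient you supply that the paper handles differently is the density of $\mr{span} \{ b_w \}$ in $L^2 (\Ga )$: the lemma immediately following this one in the paper proves the stronger statement $\bigvee _{z \in \C ^+} b_z = L^2 (\Ga )$, using only upper half-plane points, by invoking that $\Phi$ is inner (hence an extreme point of the unit ball of $H^\infty$) together with the Szeg\"{o} theorem. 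Your route --- passing to the finite auxiliary measure $\mu (dt) = f(t)(t-i)^{-1} \Ga (dt)$ and applying Stieltjes inversion to conclude $\mu = 0$ --- is more elementary, uses points from both half-planes (which is all this lemma needs, since $\L (\Phi )$ is an RKHS on $\C \sm \R$), and works for an arbitrary Herglotz measure with no innerness hypothesis. Both are valid; yours makes the lemma self-contained, while the paper's Szeg\"{o}-theorem argument is needed anyway for the sharper upper half-plane density used to relate $\L (\Phi )$ to $K(\Phi )$.
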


Let $\mc{Z} _0 ^\Phi := \mc{C} M^\Ga \mc{C} ^* = \mc{C} T^\Ga _0 \mc{C} ^*$ and $\mc{Z} ^\Phi := \mc{C} T^\Ga \mc{C} ^*$. It can be shown that $\mc{Z} ^\Phi \in \mc{S} ^R (\L (\Phi ))$ acts as multiplication
by $z$ on its maximal domain in $\L (\Phi )$ \cite[Lemma 4.3]{Martin-ext},\cite{AMR}. Comparing the kernels $K^\Phi$ for $\L (\Phi )$ and $k^\Phi$ for $K (\Phi)$ shows that
$$ K^\Phi (z) = 4 \pi (1 -\Phi (z) ) ^{-1} k^\Phi (z,w) (1 - \ov{\Phi (w)} ) ^{-1}; \quad \quad z, w \in \C ^+. $$ By Lemma \ref{multiplier},
\be m (z) := \frac{1}{2\sqrt{\pi}} (1 - \Phi (z) ), \label{HergdBmult} \ee is an onto isometric multiplier:
$$ m: \bigvee _{z \in \C ^+} K^\Phi _z \rightarrow K (\Phi ).$$

\begin{lemma}
If $\Phi$ is meromorphic and inner, $\L (\Phi ) = \bigvee _{z \in \C ^+} K_z ^\Phi$ (and $L^2 (\Ga ) = \bigvee _{z \in \C ^+} b_z$) so that $m : \L (\Phi ) \rightarrow K (\Phi )$ is a unitary multiplier.
\end{lemma}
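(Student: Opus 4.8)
The plan is to isolate the single substantive point --- that $\L(\Phi) = \bigvee_{z \in \C^+} K_z^\Phi$ --- and then let everything else follow from the multiplier machinery already in place. Recall from the construction just above (see \eqref{HergdBmult} and Lemma~\ref{multcri}) that $m(z) = \frac{1}{2\sqrt{\pi}}(1 - \Phi(z))$ is an onto isometric multiplier from $\bigvee_{z \in \C^+} K_z^\Phi$ onto $K(\Phi)$, obtained by comparing the Herglotz kernel $K^\Phi$ with the model-space kernel $k^\Phi$. Consequently, once the closed span $\bigvee_{z \in \C^+} K_z^\Phi$ is shown to be all of $\L(\Phi)$, the map $m$ is automatically a unitary multiplier of $\L(\Phi)$ onto $K(\Phi)$; and the companion statement $L^2(\Ga) = \bigvee_{z \in \C^+} b_z$ transfers across the unitary Cauchy transform $\mc{C}$, using $\mc{C} b_z = K_z^\Phi$ (equivalently $\mc{C}^* K_z^\Phi = b_z$, which one reads off from $\ip{\mc{C}^* K_z^\Phi}{f}_\Ga = (\mc{C} f)(z) = \ip{b_z}{f}_\Ga$).

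So I would work directly in $L^2(\Ga)$ and show $L^2(\Ga) = \bigvee_{z \in \C^+} b_z$. Let $g \in L^2(\Ga)$ be orthogonal to $b_z$ for every $z \in \C^+$. Then $(\mc{C} g)(z) = \ip{b_z}{g}_\Ga = 0$ for all $z \in \C^+$, and since $\Ga = \sum_n w_n \delta_{t_n}$ we have $(\mc{C} g)(z) = \sum_n \frac{g(t_n) w_n}{t_n - z}$. A Cauchy--Schwarz estimate $\sum_n \frac{|g(t_n)|\, w_n}{|t_n - z|} \le \bigl(\sum_n |g(t_n)|^2 w_n\bigr)^{1/2}\bigl(\sum_n \frac{w_n}{|t_n - z|^2}\bigr)^{1/2}$, together with the Herglotz condition $\sum_n \frac{w_n}{1+t_n^2} < \infty$ and the fact that $(t_n)$ has no finite accumulation point, shows that the series defining $\mc{C} g$ converges uniformly on compact subsets of $\C \setminus \{t_n\}$. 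Hence $\mc{C} g$ extends to a holomorphic function on the open \emph{connected} set $\C \setminus \{t_n\}$, which contains $\C^+$ since $\{t_n\} \subset \R$. As $\mc{C} g$ vanishes on $\C^+$, the identity theorem forces $\mc{C} g \equiv 0$ on $\C \setminus \{t_n\}$; but if $g(t_m) \neq 0$ for some $m$ then $\mc{C} g$ would have a simple pole at $t_m$, a contradiction. Therefore $g(t_n) = 0$ for all $n$, i.e. $g = 0$ in $L^2(\Ga)$. This proves $\bigvee_{z \in \C^+} b_z = L^2(\Ga)$, equivalently $\bigvee_{z \in \C^+} K_z^\Phi = \L(\Phi)$, and the first paragraph then completes the proof.

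The main obstacle --- really the only one --- is the holomorphic continuation of $\mc{C} g$ across $\R \setminus \{t_n\}$ and the subsequent use of the identity theorem on the connected (but not simply connected) domain $\C \setminus \{t_n\}$; this is exactly the place where the hypothesis that $\Phi$ is meromorphic inner is used, since it forces the representing measure $\Ga$ to be purely atomic with atoms having no finite accumulation point. (The identity $\bigvee_{z\in\C^+}K_z^\Phi = \L(\Phi)$ can alternatively be deduced from general facts about the deficiency subspaces of a simple symmetric linear transformation with indices $(1,1)$ applied to $\mc{Z}^\Phi \in \mc{S}^R(\L(\Phi))$ --- namely $\H = \bigvee_{z\in\C^+}\ker{T^* - \ov{z}}$ combined with $\ker{(\mc{Z}^\Phi)^* - \ov{z}} = \bigvee K_z^\Phi$ --- but the computation above is self-contained.)
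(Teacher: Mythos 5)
Your proof is correct, but it takes a genuinely different route from the paper. The paper disposes of the density claim $\bigvee_{z\in\C^+} b_z = L^2(\Gamma)$ by citation: since $\Phi$ is inner it is an extreme point of the unit ball of $H^\infty$, and computing the Radon--Nikodym derivative of the Herglotz measure against Lebesgue measure and invoking Szeg\H{o}'s theorem (Hoffman, Ch.~4 and Ch.~9) gives the completeness of the Cauchy kernels; this argument works for an arbitrary inner function, at the cost of importing two nontrivial classical results. You instead exploit the standing assumption that $\Phi$ is \emph{meromorphic} inner, so that $\Gamma$ is purely atomic with atoms $\{t_n\}$ having no finite accumulation point, and give a self-contained argument: a vector $g\perp b_z$ for all $z\in\C^+$ has Cauchy transform extending holomorphically to the connected open set $\C\setminus\{t_n\}$ (your locally uniform convergence estimate is right), the identity theorem forces $\mc{C}g\equiv 0$ there, and a nonzero sample $g(t_m)$ would produce a simple pole with residue $-w_m g(t_m)$ at $t_m$, a contradiction. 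This buys elementariness and makes visible exactly where meromorphy enters, at the price of not covering general inner functions --- which the lemma does not require. Your reduction of the rest of the statement to the already-established isometric multiplier $m$ and the unitarity of the Cauchy transform (with $\mc{C}b_z = K_z^\Phi$) matches the paper, and your parenthetical remark that the density could alternatively be read off from the simplicity of $\mc{Z}^\Phi$ together with $\ker{(\mc{Z}^\Phi)^*-\ov{z}}=\bigvee K_z^\Phi$ is a third valid route.
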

\begin{proof}
This follows from well-known facts: Since $\Phi$ is inner, it is an extreme point of the unit ball of $H^\infty$ \cite[Chapter 9]{Hoff}. Computing the Radon-Nikodym derivative of the Herglotz measure of $\Phi$ with respect to Lebesgue measure and applying the Szeg\"{o} theorem shows that
$$ \bigvee _{z \in \C ^+} b_z = L^2 (\Ga ),$$  \cite[Chapter 4]{Hoff}. Applying the Cauchy transform isometry we obtain that $\L (\Phi ) = \bigvee _{z \in \C ^+} K_z ^\Phi$. This proves that $m$
defines an isometric multiplier of $\L (\Phi )$ onto $K (\Phi )$.
\end{proof}

In particular, it follows that
\be M_\alpha (z) := \frac{1-\Phi (z)}{1-\Phi (z) \ov{\alpha}} \label{ACmult} \ee is an isometric multiplier of $\L (\Phi )$ onto $\L (\Phi \ov{\alpha} )$ for any $\alpha \in \T$.

By Subsection \ref{multiplier}, $$ M _\alpha \mc{Z} ^\Phi M_\alpha ^* = \mc{Z} ^{\Phi \ov{\alpha}}. $$
Since $M_\alpha$ intertwines $\mc{Z} ^\Phi $ and $\mc{Z} ^{\Phi \alpha}$, it follows easily from this that if $\mc{Z} ' $ is any self-adjoint extension
of $\mc{Z} ^\Phi$ that $M \mc{Z} ' M ^*$ is a self-adjoint extension of $\mc{Z} ^{\Phi \ov{\alpha}}$ (this is Lemma \ref{multext}).

\begin{prop} \label{Cperturb}
    Let $\Phi$ be a meromorphic inner function and $\alpha , \beta \in \T$. Then $M _\alpha \mc{Z} ^\Phi (\beta)  M_\alpha ^* = \mc{Z} ^{\Phi \ov{\alpha}} (\beta \gamma ( \alpha) )$ where
$$  \ga (\alpha )  := \frac{ \ov{M_\alpha (-i)} }{\ov{M_\alpha (i)}}
= \left( \frac{1 - \Phi (i)}{1- \ov{\Phi (i)} } \right) \left( \frac{1 -\ov{\Phi (i)} \alpha}{\alpha - \Phi (i)} \right) \in \T. $$ In particular $\mc{Z} ^\Phi (\beta )$
is conjugate to $\mc{Z} ^{\Phi \ov{\delta} }$ via the unitary multiplier $M_\delta$ (of Equation \ref{ACmult}), where
$$ \delta := \frac{ \beta + \Phi (i) \frac{1 -\ov{\Phi (i)}}{1 - \Phi (i)} }{\beta \ov{\Phi (i)}
+ \frac{1 -\ov{\Phi (i)}}{1 - \Phi (i)}} \in \T.$$ If $\Phi (i) = 0$ then
$\mc{Z} ^\Phi (\beta )$ is unitarily equivalent to $ \mc{Z} ^{\Phi \ov{\beta}}$ under the unitary transformation $M_\beta$.
\end{prop}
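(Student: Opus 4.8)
The plan is to transport the identity to the Cayley transforms, where both sides become rank-one perturbations of partial isometries. By Lemma \ref{multext} we already know that $M_\alpha \mc{Z}^\Phi(\beta) M_\alpha^*$ is \emph{some} self-adjoint extension of $M_\alpha \mc{Z}^\Phi M_\alpha^* = \mc{Z}^{\Phi\ov\alpha}$; the content of the first assertion is to pin it down. Recall that in the Herglotz space $\L(\Phi)$ the canonical equal-norm deficiency vectors of $\mc{Z}^\Phi$ are the Cauchy-transform images of $b_{\mp i}$, i.e. $\phi_+^{\L} = K^\Phi_{-i}$ and $\phi_-^{\L} = K^\Phi_{i}$, of common squared norm $h := K^\Phi(i,i) = \re{H_\Phi(i)}$; similarly $\mc{Z}^{\Phi\ov\alpha}$ has canonical deficiency vectors $\psi_+ = K^{\Phi\ov\alpha}_{-i}$, $\psi_- = K^{\Phi\ov\alpha}_{i}$ of common squared norm $h_\alpha := \re{H_{\Phi\ov\alpha}(i)}$, and $\mc{Z}^\Phi(\beta) = b^{-1}(U^\Phi(\beta))$ where $U^\Phi(\beta) = V^\Phi + \tfrac{\beta}{h}\ip{\phi_+^{\L}}{\cdot}\phi_-^{\L}$, $V^\Phi := b(\mc{Z}^\Phi)$, as in (\ref{uniextensions}). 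Since $M_\alpha$ intertwines $\mc{Z}^\Phi$ and $\mc{Z}^{\Phi\ov\alpha}$ it intertwines $V^\Phi$ and $V^{\Phi\ov\alpha}$, and inverse Cayley transforms commute with unitary conjugation, so it suffices to prove $M_\alpha U^\Phi(\beta) M_\alpha^* = U^{\Phi\ov\alpha}(\beta\gamma(\alpha))$ and then apply $b^{-1}$.

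First I would conjugate:
\[ M_\alpha U^\Phi(\beta) M_\alpha^* = V^{\Phi\ov\alpha} + \frac{\beta}{h}\,\ip{M_\alpha\phi_+^{\L}}{\cdot}\,M_\alpha\phi_-^{\L}. \]
By Lemma \ref{multcri}, $M_\alpha^* K^{\Phi\ov\alpha}_w = \ov{M_\alpha(w)}K^\Phi_w$, and since $|\Phi(i)|<1$ and (by the reflection symmetry $\Phi(-i) = 1/\ov{\Phi(i)}$, which follows from the factorization (\ref{meroinner})) $M_\alpha(\pm i)\neq 0$, this gives $M_\alpha\phi_\pm^{\L} = \tfrac{1}{\ov{M_\alpha(\mp i)}}\psi_\pm$. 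Substituting and comparing with $U^{\Phi\ov\alpha}(\gamma) = V^{\Phi\ov\alpha} + \tfrac{\gamma}{h_\alpha}\ip{\psi_+}{\cdot}\psi_-$ gives $M_\alpha U^\Phi(\beta) M_\alpha^* = U^{\Phi\ov\alpha}(\gamma)$ with $\gamma = \beta\,\dfrac{h_\alpha}{h}\,\dfrac{1}{M_\alpha(-i)\ov{M_\alpha(i)}}$.

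The remaining, and crucial, step is the normalization identity $h_\alpha/h = |M_\alpha(-i)|^2$. For a meromorphic inner $\Psi$, expanding $H_\Psi = (1+\Psi)/(1-\Psi)$ gives $K^\Psi(i,i) = \re{H_\Psi(i)} = (1-|\Psi(i)|^2)/|1-\Psi(i)|^2$; applying this to $\Psi = \Phi$ and $\Psi = \Phi\ov\alpha$ yields $h_\alpha/h = |1-\Phi(i)|^2/|1-\Phi(i)\ov\alpha|^2$, while the reflection symmetry gives $M_\alpha(-i) = (\ov{\Phi(i)}-1)/(\ov{\Phi(i)}-\ov\alpha)$, so $|M_\alpha(-i)|^2 = |1-\Phi(i)|^2/|1-\Phi(i)\ov\alpha|^2 = h_\alpha/h$. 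Inserting this collapses $\gamma$ to $\beta\,\ov{M_\alpha(-i)}/\ov{M_\alpha(i)} = \beta\gamma(\alpha)$, and substituting the explicit values of $M_\alpha(\pm i)$ gives the closed form $\gamma(\alpha) = \big(\tfrac{1-\Phi(i)}{1-\ov{\Phi(i)}}\big)\big(\tfrac{1-\ov{\Phi(i)}\alpha}{\alpha-\Phi(i)}\big)$; the same computation shows $|M_\alpha(i)| = |M_\alpha(-i)|$, so $\gamma(\alpha)\in\T$. For the ``in particular'' part I would take $\alpha = \delta$ with $\delta$ the solution of $\beta\gamma(\delta) = 1$, i.e. $\gamma(\delta) = \ov\beta$: this is a linear equation in $\delta$ whose solution is exactly the stated M\"obius expression, which is of Blaschke form $\tfrac{u + \Phi(i)}{1 + \ov{\Phi(i)}u}$ with $|u| = 1$, hence lies in $\T$. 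Then $M_\delta \mc{Z}^\Phi(\beta) M_\delta^* = \mc{Z}^{\Phi\ov\delta}(1) = \mc{Z}^{\Phi\ov\delta}_0$, so $M_\delta$ simultaneously carries $\mc{Z}^\Phi$ to $\mc{Z}^{\Phi\ov\delta}$ and the extension $\mc{Z}^\Phi(\beta)$ to the base extension; when $\Phi(i) = 0$ this forces $\delta = \beta$, and since $\Phi$ then has a pole at $-i$ one gets $M_\beta(-i) = \beta$, $M_\beta(i) = 1$, so $\gamma(\beta) = \ov\beta$ --- the last sentence.

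I expect the only real obstacle to be bookkeeping: tracking the conjugate-linearity conventions in the rank-one term, and, above all, noticing that the two canonical deficiency-vector pairs have \emph{different} norms unless $\alpha = 1$, so the factor $h_\alpha/h$ genuinely appears --- it is precisely this factor, evaluated via the reflection symmetry of meromorphic inner functions, that produces the nontrivial multiplier $\gamma(\alpha)$ and makes the formula come out as stated rather than simply $\beta$.
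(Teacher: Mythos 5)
Your proof is correct, and it lives in the same world as the paper's: both pass to the Cayley transforms, both rest on the multiplier relation $M_\alpha^* K^{\Phi\ov{\alpha}}_{w} = \ov{M_\alpha(w)}\, K^\Phi_w$ evaluated at $w=\pm i$, and both reduce the problem to identifying which unitary extension of $V^{\Phi\ov{\alpha}}$ the conjugated operator is. The mechanism for extracting the parameter is genuinely different, though. The paper never touches the rank-one formula: it uses the defining property $U^\Phi(\beta)K^\Phi_{-i} = \beta K^\Phi_i$ of the parametrization, applies $M_\alpha$ to that single vector identity, and reads off $\ga = \beta\,\ov{M_\alpha(-i)}/\ov{M_\alpha(i)}$ pointwise; the equal-norm normalization is then invisible because it is already built into the statement "$U(\ga)\psi_+ = \ga\psi_-$ with $\|\psi_+\|=\|\psi_-\|$ determines $\ga\in\T$". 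You instead conjugate the full perturbation $V^\Phi + \tfrac{\beta}{h}\ip{\phi_+^{\mathscr{L}}}{\cdot}\phi_-^{\mathscr{L}}$ and match coefficients, which forces you to prove the extra identity $h_\alpha/h = |M_\alpha(-i)|^2$ via $\re{H_\Psi(i)} = (1-|\Psi(i)|^2)/|1-\Psi(i)|^2$ and the reflection $\Phi(-i)=1/\ov{\Phi(i)}$. That costs a computation the paper avoids, but it buys something too: it makes transparent both why $\ga(\alpha)$ is unimodular (the norm ratio cancels the modulus of $M_\alpha(-i)\ov{M_\alpha(i)}$) and where the nontrivial twist comes from, namely the mismatch of the canonical deficiency-vector norms between $\L(\Phi)$ and $\L(\Phi\ov{\alpha})$. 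Your handling of the endgame (solving $\beta\ga(\delta)=1$, recognizing $\delta$ as a M\"obius image of a unimodular point, and the degenerate evaluation $M_\beta(-i)=\beta$ at the pole when $\Phi(i)=0$) matches the paper's. One small caveat carried over from the paper rather than introduced by you: the conclusion "$\mc{Z}^\Phi(\beta)$ is conjugate to $\mc{Z}^{\Phi\ov{\delta}}$" should be read, as you do, as conjugacy to the base extension $\mc{Z}^{\Phi\ov{\delta}}(1)=\mc{Z}^{\Phi\ov{\delta}}_0$, not to the symmetric operator itself.
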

\begin{proof}
    By Subsection \ref{multiplier}, $M _\alpha \mc{Z} ^\Phi (\beta)  M_\alpha ^* = \mc{Z} ^\alpha (\ga )$ for some $\ga \in \T$, we just need to compute $\ga$.
Equal norm deficiency vectors for $M _\Ga$ are $\phi _\pm := b _{\mp i}$, and the Cauchy transform isometry maps these onto the point evaluation
vectors $K ^\Phi _{\mp i}$ in the Herglotz space.

Let $U ^\Ga (\alpha ) = b (M^\Ga (\alpha))$ denote the unitary extensions of the Cayley transform $b ( M _\Ga )$. We know that $U ^\Ga (\alpha ) \phi _+ = \alpha \phi _-$
by construction. It follows that if $U ^\Phi (\alpha )$ are the corresponding Cayley transforms of the $\mc{Z} ^\Phi (\alpha)$ that
$U ^\Phi (\alpha) K^\Phi _{-i} = \alpha K ^\Phi _i$. It follows that there is a $\ga \in \T$ so that $$ M_\alpha U ^\Phi (\beta ) M _\alpha ^* K _{-i} ^{\Phi \ov{\alpha}} = \ga K_i ^{\Phi \ov{\alpha}}, $$ and that $M_\alpha U ^\Phi (\beta ) M_\alpha ^* = U ^{\Phi \ov{\alpha}} (\ga )$. Then,
\ba  M_\alpha U ^\Phi (\beta ) M _\alpha ^* K_{-i} ^{\Phi \ov{\alpha}} & =& \ov{M _\alpha (-i)} M_\alpha U^\Phi (\beta) K _{-i} ^\Phi \nn \\
& = & \ov{M_\alpha (-i)} \beta M _\alpha K_i ^\Phi . \nn \ea
Evaluating this at a point $z$ shows that
\ba (M_\alpha U ^\Phi (\beta ) M _\alpha ^* K_{-i} ^{\Phi \ov{\alpha}} ) (z) & = &   \beta M _\alpha (z) K_i ^\Phi (z) \ov{M_\alpha (-i)} \nn \\
& = & \beta M_\alpha (z) \frac{1}{ M_{\alpha} (z) } K_i ^{\Phi \ov{\alpha}} (z) \frac{1}{\ov{M_\alpha (i)}} \ov{M_\alpha (-i) } \nn \\
& =& \beta \frac{\ov{M_\alpha (-i)}}{\ov{M_\alpha (i)}} K_i ^{\Phi \ov{\alpha}} (z). \nn \ea
This shows that
$$ \beta \ga = \beta \ga (\alpha ) = \beta \frac{\ov{M_\alpha (-i)}}{\ov{M_\alpha (i)}}. $$
Recall here that any Herglotz function $H$ is extended to an analytic function on $\C \sm \R$ using the definition $ \ov{H(\ov{z} )} := - H (z)$, and with this definition
any inner function $\Phi$ can be extended to a meromorphic function $\C \sm \R$ by $ \ov{\Phi (\ov{z} ) } := (\Phi (z) ) ^{-1}$. It follows that
\ba \ga (\alpha ) & = & \frac{\ov{M_\alpha (-i)}}{\ov{M_\alpha (i)}} \nn \\
& = & \left( \frac{1 - \ov{\Phi (-i)}}{1- \ov{\Phi (-i)} \alpha}\right) \left( \frac{1 - \ov{\Phi (i)} \alpha}{1 - \ov{\Phi (i)}} \right) \nn \\
& = &  \left( \frac{\Phi (i) -1}{1 - \ov{\Phi (i)}} \right) \left( \frac{1 -\ov{\Phi (i)} \alpha}{\Phi (i) - \alpha} \right). \nn \ea
Setting $\beta \ga (\delta ) = 1$ and solving for $\delta$ yields the second claim. The final assertion is easy to verify.
\end{proof}

\subsection{Formulas for the characteristic function}

For the remainder of this subsection we choose our bandlimit pair $(\mbf{t}, \mbf{t} ')$ to be normalized so that atomic Herglotz measure $\Ga$ is normalized and the rescaled Herglotz measure $\ga$ is a probability measure, \emph{i.e.}
$$ \sum \frac{t_n '}{1+t_n^2} = \pi, $$
$$ \Ga = \frac{1}{\pi} \sum t_n ' \delta _{t_n}, \quad \quad \mbox{and} \quad \quad \ga = \frac{1}{\pi} \sum \frac{t_n '}{1 + t_n ^2} \delta _{t_n}. $$ Then as we have seen, $t_n ' =  t_n ' (0) =t' (n)$, where the spectral function $t$ of $T^\Ga$ is fixed by the canonical choice $\phi _\pm = b_{\mp i}$. By Remark \ref{normalize}, since the Herglotz function $H = H_\Phi$ is normalized, $\Phi (i) =0$ and $\Phi = \Theta$, the Liv\v{s}ic characteristic function of $Z^\Theta$ (fixed as always by the choice $\phi _\pm = b_{\mp i}$). It follows that all of the re-scaled Aleksandrov-Clark measures $\ga _\theta$, $\theta \in [0, 1)$ corresponding to the Herglotz functions,
$$ H ^\theta := \frac{1 + \Theta e^{-i2\pi \theta}}{1- \Theta e^{-i2\pi \theta}}, $$ are probability measures.

\begin{cor} \label{ACinner}
Let $\Ga$ be the purely atomic Herglotz measure associated to a normalized bandlimit pair of sequences $(\mbf{t}, \mbf{t} ')$ and let $T = T^\Ga \in \mc{S} ^R$. Fix the family of normalized bandlimit pairs $(\mbf{t} _\theta , \mbf{t} ' _\theta )$ by the canonical choice of equal-norm deficiency vectors $\phi _\pm = b_{\pm i}$. For any $\theta \in [0, 1)$, the Herglotz (AC) measure of $\Theta _T e^{-i2\pi \theta}$ is $\Ga _\theta = \frac{1}{\pi} \sum t' _n (\theta) \delta _{t_n (\theta)}$ so that
$$ \Theta _T (z)  =  e^{i2\pi \theta} \frac{z-i}{z+i} \frac{ \sum \frac{1}{t_n (\theta ) -z} \frac{1}{t_n (\theta) -i } t_n ' (\theta) }{  \sum \frac{1}{t_n (\theta ) -z} \frac{1}{t_n (\theta) +i } t_n ' (\theta )}  = e^{i2\pi \theta} \frac{ \sum \left( \frac{1}{t_n (\theta ) -z} -  \frac{1}{t_n (\theta) -i } \right) t_n ' (\theta) }{  \sum \left( \frac{1}{t_n (\theta ) -z} - \frac{1}{t_n (\theta) +i } \right) t_n' (\theta) }, $$ where $t_n (\theta ) = t( n+\theta)$ and $t$ is the spectral
function of $T$ (fixed by the choice $\phi _\pm = b_{\mp i}$). 
\end{cor}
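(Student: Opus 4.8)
The plan is to deduce this corollary from the measure-theoretic representation of meromorphic inner functions already obtained in Corollary \ref{merorep} and formula (\ref{TVcharfun}), applied not to $\Theta _T$ itself but to each rotated inner function $\Theta _T e^{-i2\pi \theta}$; the entire content then reduces to computing the Herglotz (Aleksandrov--Clark) measure $\Gamma _\theta$ of $\Theta _T e^{-i2\pi \theta}$ explicitly. First I would record the normalizations: since $(\mbf{t} , \mbf{t} ')$ is normalized, the Herglotz function $H = H_\Phi$ of $\Gamma$ is normalized, so $\Phi (i) = 0$ and $\Phi = \Theta _T$ by Remark \ref{normalize}; consequently $\Theta _T (t) = e^{i2\pi \tau (t)}$ where $\tau$ is the phase function of $T$ (as in Corollary \ref{merorep}), and each $\Theta _T e^{-i2\pi \theta}$ is again a meromorphic inner function which still vanishes at $i$, hence has a normalized Herglotz measure $\Gamma _\theta$ (indeed $H^\theta (i) = \frac{1 + \Theta _T (i) e^{-i2\pi \theta}}{1 - \Theta _T (i) e^{-i2\pi \theta}} = 1$, so $\| b_{\pm i} \| ^2 _{\Gamma _\theta} = 1$).

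Second, I would identify the support of $\Gamma _\theta$. Proposition \ref{Cperturb}, in the case $\Phi (i) = 0$, identifies the self-adjoint extension $\mc{Z} ^{\Theta _T} (e^{i2\pi \theta}) = \mc{Z} ^{\Theta _T} _\theta$, via the multiplier $M_{e^{i2\pi \theta}}$, with the distinguished extension $\mc{Z} ^{\Theta _T e^{-i2\pi \theta}} _0 = \mc{C} M^{\Gamma _\theta} \mc{C} ^*$, i.e. with multiplication by $t$ in $L^2 (\Gamma _\theta )$; hence $\mathrm{supp}\, \Gamma _\theta = \sigma ( \mc{Z} ^{\Theta _T} _\theta )$. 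On the other hand Corollary \ref{charfunspec} (equivalently Theorem \ref{extspec}) gives $\sigma ( \mc{Z} ^{\Theta _T} _\theta ) = \{\, t \in \R \mid \Theta _T (t) = e^{i2\pi \theta} \,\} = \{\, t(n+\theta) \mid n \in \F \,\} = \{ t_n (\theta ) \}$. Therefore $\Gamma _\theta$ is purely atomic with atoms exactly $\{ t_n (\theta ) \}$, which have no finite accumulation point; write $\Gamma _\theta = \sum _n w_n (\theta ) \delta _{t_n (\theta )}$. In particular $\Gamma _\theta$ is a normalized purely atomic Herglotz measure of the type covered by Theorem \ref{Hergmod}, so Proposition \ref{normweight} applies to it.

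Third, I would compute the weights $w_n (\theta )$ by applying Proposition \ref{normweight} to $\Gamma _\theta$. Since $\Theta _T e^{-i2\pi \theta}$ vanishes at $i$, it is (Remark \ref{normalize}) the Liv\v{s}ic characteristic function of $T^{\Gamma _\theta}$ for the canonical choice $\phi _\pm = b_{\mp i}$, so the spectral function of $T^{\Gamma _\theta}$ is the spectral function of $\Theta _T e^{-i2\pi \theta}$. Now the phase function of $\Theta _T e^{-i2\pi \theta}$ is $\tau - \theta$ up to the normalizing additive integer (which only re-indexes the atoms), so its compositional inverse --- the spectral function --- is $s \mapsto t(s + \theta )$; evaluating at an integer $n$ gives the value $t(n+\theta ) = t_n (\theta )$ and derivative $t'(n+\theta ) = t_n ' (\theta )$, the last equality because the pair is normalized (Remark \ref{normpair}). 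Proposition \ref{normweight} then gives $w_n (\theta ) = \frac{\| b_{\mp i} \| ^2 _{\Gamma _\theta}}{\pi} t'(n+\theta ) = \frac{1}{\pi} t_n ' (\theta )$, i.e. $\Gamma _\theta = \frac{1}{\pi} \sum _n t_n ' (\theta ) \delta _{t_n (\theta )}$, which is the first assertion.

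Finally, substituting $\Gamma _\theta$ into (\ref{TVcharfun}) (equivalently Corollary \ref{merorep}) for the meromorphic inner function $\Theta _T e^{-i2\pi \theta}$, which vanishes at $i$ and has bandlimit pair $(\mbf{t} _\theta , \mbf{t} _\theta ')$, yields
$$ \Theta _T (z) e^{-i2\pi \theta} = \frac{z-i}{z+i}\, \frac{\sum _n \frac{1}{t_n (\theta ) - z}\, \frac{1}{t_n (\theta ) - i}\, t_n ' (\theta )}{\sum _n \frac{1}{t_n (\theta ) - z}\, \frac{1}{t_n (\theta ) + i}\, t_n ' (\theta )}; $$
multiplying by $e^{i2\pi \theta}$ gives the first displayed formula of the corollary, and the second follows from the same elementary partial-fraction rearrangement used to pass between the two forms in (\ref{TVcharfun}). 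I expect the only genuine obstacle to be the bookkeeping in the second and third steps --- matching the $\theta$-shift of the phase and spectral functions with $t_n (\theta )$ and $t_n ' (\theta )$, and tracking the normalization so that the constant comes out exactly $1/\pi$ --- together with the (routine but essential) verification, supplied by Proposition \ref{Cperturb}, that $\Gamma _\theta$ really is a purely atomic Herglotz measure to which Proposition \ref{normweight} and Corollary \ref{merorep} may legitimately be applied.
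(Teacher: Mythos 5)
Your argument is correct and rests on the same structural fact as the paper's own proof --- Proposition \ref{Cperturb} in the case $\Phi (i)=0$, i.e.\ the Clark-type unitary equivalence of $\mc{Z} ^{\Theta _T} _\theta$ with multiplication by $t$ in $L^2 (\Ga _\theta )$ --- but you organize the computation of the weights differently. The paper reads off the rescaled measure $\ga _\theta$ directly as the spectral measure of the deficiency vector with respect to the perturbed extension, $\ga _\theta (\Om ) = \ip{b_{-i}}{P_{T^\Ga (\alpha )} (\Om ) b_{-i}} _\Ga$, and then expands $b_{-i}$ in a $T^\Ga _\theta$-eigenbasis, invoking Lemma \ref{defcoeff} at parameter $\theta$ to obtain $|c_n (\theta )|^2 = \frac{1}{\pi} t' (n+\theta )$. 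You instead use the unitary equivalence only to pin down the support of $\Ga _\theta$ (via Theorem \ref{extspec}), and then treat $\Ga _\theta$ as a fresh normalized purely atomic Herglotz measure to which the $\theta = 0$ machinery (Proposition \ref{normweight} and formula (\ref{TVcharfun})) applies verbatim; the price is the extra bookkeeping step identifying the spectral function of $T^{\Ga _\theta}$ with $s \mapsto t(s+\theta )$ up to an integer re-indexing of the atoms, which you handle correctly. Since Proposition \ref{normweight} is itself Lemma \ref{defcoeff} evaluated at parameter $0$, the two routes ultimately invoke the same underlying fact, just at different points of the one-parameter family: your version makes it transparent that the corollary is literally ``Corollary \ref{merorep} applied to $\Theta _T e^{-i2\pi \theta}$,'' while the paper's version works entirely inside the single space $L^2 (\Ga )$ and so never has to track how the phase and spectral functions transform under rotation of the inner function. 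I see no genuine gap in either step; the only points deserving a sentence of justification in a written-up version are that a positive measure whose topological support is a discrete set with no finite accumulation point is automatically purely atomic with atoms exactly the support points, and that the spectral function of $T^{\Ga _\theta}$ (for the canonical choice $b_{\mp i}$ in $L^2 (\Ga _\theta )$) is indeed the spectral function of its Liv\v{s}ic function $\Theta _T e^{-i2\pi \theta}$ --- the measure-model analogue of the corollary following Theorem \ref{phasefun}.
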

By taking Frostman shifts and applying Theorem \ref{Hardymodelrep}, this provides a representation formula for arbitrary meromorphic inner functions. Using the above formula, it is easy to check that $\Theta _T (t_n (\theta ) ) = e^{i2\pi \theta}$, in agreement with Theorem \ref{extspec}.
\begin{proof}
Consider the unitary multiplier $M _\alpha : \L (\Theta ) \rightarrow \L (\Theta \ov{\alpha} )$ of Equation \ref{ACmult}. Since $\Phi = \Theta$ and $\Theta (i) =0$,
\ba  M _\alpha (-i) & = & \frac{1 - \Theta (-i) }{1-\Theta (-i) \ov{\alpha} } \nn \\
& = &  \frac{\ov{\Theta (i)} -1}{\ov{\Theta (i)} -\ov{\alpha}} \quad \quad \mbox{using that $\ov{\Theta (\ov{z} )} = \Theta (z ) ^{-1}$} \nn \\
 & = & \alpha. \nn \ea Since $M _\alpha$ is a multiplier,
\ba  M _\alpha ^* K _{-i} ^{\Theta \ov{\alpha} } & = & \ov{M_\alpha (i)} K ^\Theta _{-i} \nn \\
& =& \alpha K^\Theta _{-i}. \nn \ea
Hence setting $\alpha = e^{i2\pi \theta} \in \T$ and $\ga ^\alpha := \ga _\theta$, the re-scaled Herglotz (probability) measure of $\Theta \ov{\alpha}$,
$$ \ga ^\alpha (\Om )  = \ip{b_{-i}}{ P _\alpha (\Om ) b_{-i}} _{\Ga ^\alpha}, $$ where
$ P_\alpha$ is the projection-valued measure (PVM) of the self-adjoint operator $M^{\Ga ^\alpha} = T^{\Ga ^\alpha} _0$, and $\Ga ^\alpha = \Ga _\theta$ is the Herglotz measure of $\Theta \ov{\alpha}
= \Theta e^{-i2\pi \theta}$.
Using the unitary Cauchy transforms of $L^2 (\Ga ^\alpha)$ onto the Herglotz spaces $\L (\Phi \ov{\alpha} )$,
the fact that $\Theta (i) = 0$ and Proposition \ref{Cperturb}, this is
\ba \ga ^\alpha (\Om ) & = & \ip{ K_{-i} ^{\Theta \ov{\alpha}}}{P _{\mc{Z} ^{\Theta \ov{\alpha}}} (\Om ) K_{-i} ^{\Theta \ov{\alpha}}} \nn \\
& =& \ip{ \ov{\alpha} K ^\Theta _{-i}}{\ov{\alpha} P _{\mc{Z} ^\Theta (\alpha)} (\Om ) K_{-i} ^\Theta } \nn \\
& =& \ip{b_{-i}}{P _{T^\Ga (\alpha)} (\Om ) b _{-i}} _\Ga. \nn \ea
Let $\{ \phi _n (\theta ) \}$ be an arbitrary ONB of eigenvectors to $T^\Ga _\theta = T ^\Ga (\alpha)$.
Then recall that by Proposition \ref{defcoeff} we can write
$$ b_{-i} = \phi _+ = \sum \frac{c_n (\theta)}{t_n (\theta ) -i} \phi _n (\theta ), $$ where
$$ | c_n (\theta ) | ^2 = \frac{1}{\pi} t_n ' (\theta) = \frac{1}{\pi} t ' (n +\theta). $$ In the above we have used the assumption that $\Ga$ is normalized so that $\| \phi _+ \| =1$. It follows that
$$ \ga _\theta = \frac{1}{\pi} \sum \frac{t_n ' (\theta ) }{1 + t_n (\theta ) ^2} \delta _{t_n (\theta )}, $$ are probability measures
and so
$$ \Ga _\theta = \frac{1}{\pi} \sum t_n ' (\theta ) \delta _{t_n (\theta )}, \quad \quad \mbox{and}
\quad \quad \sum \frac{t_n ' (\theta )}{1 + t_n (\theta ) ^2} = \pi.$$
The fact that $t _n ' (\theta ) = t' (n +\theta )$ follows as in Proposition \ref{normweight}, and the formula for $\Theta = \Theta _T$ follows.
\end{proof} 

\subsection{A spectral ordinary differential equation} \label{specodie}

In this subsection we differentiate $\Theta (t) = e^{i2\pi \tau (t)}$ to obtain a first-order ordinary differential equation that characterizes spectral functions of symmetric $T \in \mc{S} ^R$. Recall that the knowledge of the spectral function is equivalent to the knowledge of all the sampling sequences $\mbf{t} _\theta; \ \theta \in [0,1)$.

By Corollary \ref{ACinner}, any meromorphic inner function $\Theta$ such that $\Theta (i) = 0$ can be written as
$$ \Theta (z) = e^{i2\pi \theta} \frac{ \sum \left( \frac{1}{t_n (\theta ) -z} -  \frac{1}{t_n (\theta) -i } \right) t_n ' (\theta) }{  \sum \left( \frac{1}{t_n (\theta ) -z} - \frac{1}{t_n (\theta) +i } \right) t_n' (\theta) } \quad \quad \theta \in [0, 1), $$ where $t_n (\theta ) = t (n +\theta )$, and $t = \tau ^{-1}$ is the compositional inverse of the phase function $\tau$ where $\Theta (t) = e ^{i2\pi \tau (t)}$.

Fix any $\theta \in [0,1)$ and choose any $t \in \R$ such that $t \notin \{ t_n (\theta ) \}$. We can write
$$ e^{-i2\pi \theta} \Theta (t) = \frac{g(t) - c}{g(t) -\ov{c} }, \quad \mbox{where} \quad g(t) := \sum \frac{t_n ' (\theta)}{t_n (\theta) -t },  \ \quad c := \sum \frac{t_n ' (\theta)}{t_n (\theta) -i }.$$
Taking the derivative yields
$$ \Theta ' (t) = \frac{ \sum \frac{ t_n ' (\theta )}{(t_n (\theta) - t ) ^2}}{\sum t_n ' (\theta ) \left( \frac{1}{t_n (\theta ) - t} - \frac{1}{t_n (\theta) +i} \right) } \left( e^{i2\pi \theta} - \Theta (t) \right). $$
Using that $\Theta (t) = e^{i2\pi \tau (t)}$ yields
$$ \tau ' (t) := \frac{1}{2\pi i} \frac{ \sum \frac{ t_n ' (\theta )}{(t_n (\theta) - t ) ^2}}{\sum t_n ' (\theta ) \left( \frac{1}{t_n (\theta ) - t} - \frac{1}{t_n (\theta) +i} \right) }\left( e^{i2\pi \theta}\ov{\Theta (t)} - 1 \right), $$ for all $t$ such that $[ \tau (t) ] \neq \theta$, \emph{i.e.} $t \neq t_n (\theta)$ for any $n$. If one takes the limit of this formula as $t \rightarrow t_n (\theta )$ the right hand side simply becomes $\tau ' ( t_n (\theta ) ) =
\frac{1}{t_n ' (\theta )}$.
This expression can be simplified slightly. Since the AC measures $\Ga _\theta$ are all normalized,
$$ \sum \frac{ t_n ' (\theta)}{1 + t_n (\theta ) ^2 } = \pi; \quad \quad \theta \in [0, 1).$$
It follows that
\ba  \tau ' (t) & = & \frac{1}{2\pi i} \sum \frac{t_n ' (\theta)}{(t_n (\theta ) - t) ^2}  \left( \frac{1}{\sum t_n ' (\theta ) \left( \frac{1}{t_n (\theta ) -t} - \frac{1}{ t_n (\theta ) -i} \right)} - \frac{1}{\sum t_n ' (\theta )  \left( \frac{1}{t_n (\theta ) -t} - \frac{1}{ t_n (\theta ) +i} \right)} \right) \nn \\
& =& \frac{1}{2\pi i}  \sum \frac{t_n ' (\theta)}{(t_n (\theta ) - t) ^2}  \frac{2i \sum \frac{t' _n (\theta)}{1 + t_n (\theta )^2}}{\left| \sum t_n ' (\theta ) \left( \frac{1}{t_n (\theta ) -t} - \frac{1}{ t_n (\theta ) -i} \right) \right| ^2} \nn \\
& =&  \frac{\sum \frac{t_n ' (\theta)}{(t_n (\theta ) - t) ^2} }{\left| \sum t_n ' (\theta ) \left( \frac{1}{t_n (\theta ) -t} - \frac{1}{ t_n (\theta ) -i} \right) \right| ^2}  =  \frac{\sum \frac{t_n ' (\theta)}{(t_n (\theta ) - t) ^2} }{(1 + t^2) \left| \sum \frac{t_n ' (\theta ) }{( t_n (\theta ) -t)(t_n (\theta ) -i)}  \right| ^2} \nn \ea
Setting $t = t(s)$ where $t = \tau ^{-1}$ yields an ordinary differential equation:

\begin{cor} \label{corodie}
    A function $t =\tau ^{-1} : \R \rightarrow (a,b)$ is the spectral function associated with a meromorphic inner function $\Theta (t) = e^{i2\pi \tau (t)}$ if and only if there is a normalized bandlimit
pair $(\mbf{t} , \mbf{t} ' )$ so that $t$ solves the first order ordinary differential equation:
$$ t'(s) :=  \frac{\left| \sum t_n '  \left( \frac{1}{t_n  -t(s)} - \frac{1}{ t_n  -i} \right) \right| ^2}{ \sum \frac{t_n ' }{(t_n  - t(s)) ^2} } = (1 + t(s) ^2) \frac{\left| \sum \frac{t_n '}{(t_n  -t(s))(t_n  -i)}  \right| ^2}{ \sum \frac{t_n ' }{(t_n  - t(s)) ^2} } .$$ subject to the initial condition $t(n) =  t_n$, for any $n$ in the index set of $\mbf{t}$.
\end{cor}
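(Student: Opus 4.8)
\section*{Proof proposal for Corollary \ref{corodie}}

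The plan is to read the statement as a repackaging of the differentiation computation carried out immediately above it, together with Corollary \ref{ACinner}, Corollary \ref{merorep}, and one genuinely new ingredient: uniqueness of solutions of the displayed initial value problem. I would treat the two implications separately.

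For the forward implication, let $t = \tau^{-1}$ be the spectral function of a meromorphic inner $\Theta$ (vanishing at $i$), so $\Theta(x) = e^{i2\pi\tau(x)}$ on $\R$. By Corollary \ref{merorep} there is a bandlimit pair $(\mbf{t}, \mbf{t} ')$ with $\Theta$ given by formula (\ref{TVcharfun}) and $t_n = t(n)$; rescaling $\mbf{t} '$ by a positive constant normalizes the pair and changes nothing relevant, since such a rescaling only rescales $\|\phi_+\|$, and this cancels in the unitary extension formula (\ref{uniextensions}), hence leaves $\Theta_T$ and the spectral function untouched. After normalization $t_n' = t'(n)$, Corollary \ref{ACinner} at $\theta = 0$ expresses $\Theta$ through $(\mbf{t}, \mbf{t} ')$, and differentiating $\Theta(x) = e^{i2\pi\tau(x)}$ and simplifying with $\sum t_n'/(1+t_n^2) = \pi$ --- precisely the computation displayed before the corollary --- yields
$$ \tau'(x) = \frac{\displaystyle\sum \frac{t_n'}{(t_n - x)^2}}{\displaystyle(1+x^2)\,\Bigl|\sum \frac{t_n'}{(t_n - x)(t_n - i)}\Bigr|^2}, \qquad x \in \R, $$
with the right side understood by continuity at each $t_n$. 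Since $\tau' > 0$, the inverse function rule gives $t'(s) = 1/\tau'(t(s))$, which upon substituting $x = t(s)$ is exactly the stated differential equation, and $t(n) = t_n$ is just the definition of the spectral function.

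For the converse, start from a normalized bandlimit pair $(\mbf{t}, \mbf{t} ')$ and let $\Theta$ be the meromorphic inner function attached to it by formula (\ref{TVcharfun}) (Corollary \ref{merorep}, equivalently Corollary \ref{ACinner} at $\theta = 0$); its spectral function $\hat t = \hat\tau^{-1}$ satisfies $\hat t(n) = t_n$ and $\hat t'(n) = t_n'$. By the forward implication $\hat t$ solves the stated ODE with initial condition $\hat t(n) = t_n$, so it remains only to show this initial value problem has a unique solution. Reading the forward computation backwards gives $F(\hat t(s)) = \hat t'(s) = 1/\hat\tau'(\hat t(s))$, where $F$ denotes the right side of the ODE; since $\hat t$ maps onto $\R$ this identifies $F$ with $1/\hat\tau'$ on all of $\R$. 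By Theorem \ref{phasefun} the phase function $\hat\tau$ extends analytically to a neighbourhood of $\R$ with $\hat\tau' > 0$, so $F = 1/\hat\tau'$ is real-analytic and strictly positive, in particular locally Lipschitz; the Picard--Lindel\"{o}f theorem then furnishes a unique maximal solution, which is $\hat\tau^{-1}$. Hence any solution $t$ with $t(n) = t_n$ equals $\hat\tau^{-1}$, the spectral function of the meromorphic inner $\Theta$, completing the equivalence.

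The main obstacle is precisely this uniqueness step: on its face the vector field $F$ has a pole at every $t_n$, so one must be sure that these singularities are removable and that $F$ is regular enough for the initial value problem to be well posed --- this is what gives the word ``characterizes'' its content. I would not establish this by estimating the series directly; the clean route is the identification $F = 1/\hat\tau'$ together with the Hardy-space fact (Theorem \ref{phasefun}) that the phase function of a meromorphic inner function is real-analytic across $\R$ with strictly positive derivative. The only routine side point, needed even for $F$ to make sense off $\{t_n\}$, is convergence of the defining series, which follows from $\sum t_n'/(1+t_n^2) < \infty$ together with $|t_n| \to \infty$.
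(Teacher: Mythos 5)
Your proposal is correct and follows essentially the same route as the paper: the forward direction is the differentiation of $\Theta(t)=e^{i2\pi\tau(t)}$ carried out in Subsection \ref{specodie} together with Corollaries \ref{merorep} and \ref{ACinner}, and the converse reduces to uniqueness for the autonomous initial value problem. The one place you go beyond the paper is the uniqueness step: the paper's converse simply asserts that the spectral function $\la$ of the given bandlimit pair must satisfy $\la'=t'$ and that the initial condition forces $\la=t$, whereas you correctly identify this as requiring well-posedness of the IVP and supply it by recognizing the right-hand side as $1/\tau'$, which is real-analytic and strictly positive on $\R$ by Theorem \ref{phasefun}, so that Picard--Lindel\"{o}f applies; this is a genuine (and welcome) tightening of the paper's terse argument rather than a different method.
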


\begin{proof}
    We have already verified that if $t =\tau ^{-1}$ is the spectral function corresponding to
the meromorphic inner function $\Theta (t) = e^{i2\pi \tau (t)}$, that $t$ obeys the above differential equation and initial condition. Conversely if $t$ satisfies the above equation with initial condition for a normalized bandlimit pair $( \mbf{t}, \mbf{t} ' )$ then if $\la $ is the spectral function corresponding to this bandlimit pair, we must have that $\la ' (s) = t' (s)$ for all $s \in \ran{\tau}$. The initial condition ensures that $\la = t$ so that $t = \tau ^{-1}$ is a spectral function.
\end{proof}

Alternatively, setting $\alpha =0$ in formula (\ref{form2}),
$$ \tau ' (t _m (\theta) ) ^{-1} =  t ' (m + \theta )  = \frac{\pi ^2}{\sin ^2 (\pi \theta)} \left( \sum _n \frac{t_n ' (0 )}{(t_m (\theta ) -t_n )^2 }  \right) ^{-1}. $$ This can be written as an ordinary differential equation:
$$ t' (s) = \frac{\pi ^2}{\sin ^2 (\pi [s])} \left( \sum _n \frac{t_n ' (0 )}{(t (s) -t_n )^2 }  \right) ^{-1}.$$ Combined with the previous ordinary differential equation, this shows that the spectral function satisfies the functional equation:
\be \label{funeq} 1 + t(s) ^2 = \left| \frac{\sin (\pi [s])}{\pi} \sum \frac{t_n '}{(t_n -t(s) ) (t_n -i)} \right| ^{-2}. \ee

\section{Outlook: Application to Signal Processing} \label{SPchap}

Our general strategy for applying the local bandlimit spaces $\K (\mbf{t} , \mbf{t} ' ) = \K (T) = B (\om (t) )$ of $\om (t)$-bandlimited functions to signal processing can be summarized as follows: Given a raw signal $f_{raw}$ (or a class of such signals), 
\bn
    \item Estimate the local frequency content of $f_{raw}$ by, for example, computing a windowed Fourier transform of $f_{raw}$. That is, determine, roughly speaking, where $f_{raw}$ is varying rapidly/slowly.
         
    \item Choose a bandlimit pair of sequences $(\mbf{t} , \mbf{t} ' )$ so that the local density of the sequence $\mbf{t}$ and size of terms in $\mbf{t} '$ are proportional to the local frequency content of $f_{raw}$.
        
    \item Compute a one-parameter family of bandlimit pairs $(\mbf{t} _\theta , \mbf{t} _\theta ' )$ using the formulas of this paper.
        
    \item Apply the time-varying low-pass filter (\emph{i.e.} orthogonal projection) to $f_{raw}$ to obtain a locally bandlimited signal $f \in B(\om (t) )$. 
    
    \item Record the samples of $f$ taken on any sampling sequence $\mbf{t} _\theta$, \emph{e.g.}, for storage or transmission.
    
    \item Apply the generalized sampling formulas of Theorem \ref{tvbspace} to reconstruct the approximation $f$ to $f_{raw}$. 
\en
By choosing the spaces $B (\om (t) ) = \K (\mbf{t} , \mbf{t} ' )$ tailored to match the local frequency content of a given class of signals, we expect that this will yield
a more efficient sampling and reconstruction algorithm. This assertion is supported by \cite{Hao1,Hao2}. 

\begin{eg}
The Square Kilometer Array (SKA) telescope project
is a large international radio telescope project that is scheduled to be built in Australia and South Africa starting in 2018 \cite{Hall}. The SKA will consist of a large number of small telescopes with a total collecting area of one square kilometer. Synthetic aperture methods will allow one to combine these telescopes' data to obtain one effective telescope with an aperture of more than 3000km and a correspondingly fine resolution. The required data traffic is estimated to reach on the order of petabytes $(10^{15})$ per second, so that filtering and (essentially loss-less) compression methods for the data are of great interest. A key feature of interest here is that for any pair of SKA telescopes, their apparent distance (as seen from their target of observation) determines the amount of information by which the two telescopes' image data differs and this determines the required bandwidth of the data channel needed between them. As the earth rotates, the apparent distance of two telescopes on earth changes as a known function of time, and this means that the required bandwidth of the data channel for each pair of SKA telescopes can be naturally described using a time-varying bandlimit $\om (t)$ (proportional to the time-varying apparent distance). The new methods that we have presented here are of interest for the SKA project because they will allow one to apply a time-varying low-pass filter to such streams of continuous data with the known time-dependent bandwidth. Namely, any given image data signal that two telescopes need to share will be a function of time, $f_{raw} (t)$, such that the approximate time-varying Fourier bandwidth of $f_{raw}$ will be proportional to the known time-varying apparent distance.  This means that by choosing a time-varying bandlimit pair $(\mbf{t} , \mbf{t} ')$ according to the known local frequency content of $f_{raw}$, the image, $f = P_{\om (t) } f_{raw}  \in B(\om (t) ) = \K (\mbf{t} , \mbf{t} ' )$ under the time-varying low-pass filter $P _{\om (t)}$ (orthogonal projection onto $B (\om (t) )$), will be a good approximation to the original signal. The new methods, therefore, offer two advantages. One advantage is that in this way all noise above the time-varying bandwidth can be filtered (projected) out. This is in contrast to conventional constant-bandwidth low-pass filtering which allows one only to filter out the noise above the highest ever-occurring Fourier frequency in the signal. The other advantage is that the filtered signals $P_{\om (t)} f_{raw}$ can be reconstructed perfectly from any sampling sequence for $B (\om (t) )$, and these sequences have density proportional to $\om (t)$. Since $\om (t)$ has been chosen to match the local frequency content of the raw signals, this should reduce the sample rate while still allowing perfect and stable reconstruction. The performance and computational cost of the new methods are currently being explored with a collaborator at the SKA project, Dr. R. Dodson of the International Centre for Radio Astronomy Research (ICRAR) at the University of Western Australia.
\end{eg} 

\subsection{Future Research} 

In classical Shannon sampling theory, the speed of reconstruction of an $A-$bandlimited signal from its samples can be greatly increased by adding smooth tails to the Fourier transform of the sampling kernel.  This technique is called \emph{oversampling}. Using the equivalence of local bandlimit spaces and meromorphic model spaces, and exploiting the fact that any meromorphic model space $K (\Theta )$ can be embedding in a larger one, \emph{e.g.}, $K (\Theta ) \subset K (\Theta e^{i2Az})$, we expect that oversampling methods can be extended to the time-varying setting.  This will be an interesting direction of future research. 

The Paley-Wiener spaces $B(A)$ can also be viewed as spectral subspaces (ranges of spectral projections) of the Sturm-Liouville operator $H = -\frac{d^2}{dx^2}$. In \cite{Groch2015}, an alternate definition of time-varying bandlimit, and of locally bandlimited functions is proposed using spectral subspaces of more general Sturm-Liouville operators. It will be interesting to fully determine the connection between these two theories.

\paragraph{\bf Acknowledgements: \rm } RTWM and AK acknowledge support from the National Research Foundation of South Africa, NRF CPRR grant number $90551$. AK acknowledges support through the Discovery Programme of the National Science and Engineering Research Council of Canada (NSERC).


\begin{thebibliography}{10}

\bibitem{Shan}
C.E. Shannon.
\newblock A mathematical theory of communication.
\newblock {\em Bell Labs Techn. J.}, 27:379--423, 1948.

\bibitem{Jerri1977}
A.J. Jerri.
\newblock The {S}hannon sampling theorem - its various extensions and
  applications: A tutorial review.
\newblock {\em Proc. IEEE}, 65:1565--1596, 1977.

\bibitem{Ben2012}
J.J. Benedetto and P.J.S.G. Ferreira.
\newblock {\em Modern sampling theory: mathematics and applications}.
\newblock Springer, 2012.

\bibitem{Marks2012}
R.~Marks.
\newblock {\em Introduction to {S}hannon sampling and interpolation theory}.
\newblock Springer, 2012.

\bibitem{Rosen2011}
S.~Rosen and P.~Howell.
\newblock {\em Signals and systems for speech and hearing}.
\newblock Brill, 2011.

\bibitem{Groch2015}
K.~Gr{\"o}chenig and A.~Klotz.
\newblock What is variable bandwidth?
\newblock {\em arXiv:1512.06663}, 2015.

\bibitem{TVband1}
R.~Aceska and H.G. Feichtinger.
\newblock Reproducing kernels and variable bandwidth.
\newblock {\em J. Funct. Spaces}, 2012, 2012.

\bibitem{TVband2}
R.~Aceska and H.G. Feichtinger.
\newblock Functions of variable bandwidth via time-frequency analysis tools.
\newblock {\em J. Math. Anal. Appl.}, 382:275--289, 2011.

\bibitem{TVband3}
N.~Brueller, N.~Peterfreund, and M.~Porat.
\newblock Non-stationary signals: optimal sampling and instantaneous bandwidth
  estimation.
\newblock {\em Proc. IEEE}, pages 113--115, 1998.

\bibitem{TVband4}
K.~Horiuchi.
\newblock Sampling principle for continuous signals with time-varying bands.
\newblock {\em Inform. Control}, 13:53--61, 1968.

\bibitem{TVband5}
D.~Wei and A.V. Oppenheim.
\newblock Sampling based on local bandwidth.
\newblock In {\em Signals, Systems and Computers, 2007. ACSSC 2007. Conference
  Record of the Forty-First Asilomar Conference on}, pages 1103--1107. IEEE,
  2007.

\bibitem{Kempf2000}
A.~Kempf.
\newblock Fields over unsharp coordinates.
\newblock {\em Phys. Rev. Lett.}, 85:2873, 2000.

\bibitem{Kempf2004}
A.~Kempf.
\newblock Fields with finite information density.
\newblock {\em Phys. Rev. D}, 69:124014, 2004.

\bibitem{Hao2011}
Y.~Hao.
\newblock {\em Generalizing Sampling Theory for Time-Varying {N}yquist Rates
  using Self-Adjoint Extensions of Symmetric Operators with Deficiency Indices
  (1, 1) in {H}ilbert Spaces}.
\newblock PhD thesis, University of Waterloo, 2011.
\newblock \url{https://uwspace.uwaterloo.ca/handle/10012/6311}.

\bibitem{Krein1944}
M.G. Krein.
\newblock On {H}ermitian operators with deficiency indices one.
\newblock In {\em Dokl. Akad. Nauk SSSR}, volume~43, pages 339--342, 1944.

\bibitem{Krein1944one}
M.G. Krein.
\newblock On one remarkable class of {H}ermitian operators.
\newblock In {\em Dokl. Akad. Nauk SSSR}, volume~44, pages 191--195, 1944.

\bibitem{GG}
M.L. Gorbachuk and V.I. Gorbachuk.
\newblock {\em M.G. {K}rein's lectures on entire operators}, volume~97.
\newblock Birkh{\"a}user, 2012.

\bibitem{dB}
L.~deBranges.
\newblock {\em Hilbert spaces of entire functions}.
\newblock Prentice Hall, 1968.

\bibitem{Martin-dB}
R.T.W. Martin.
\newblock Representation of symmetric operators with deficiency indices $(1,1)$
  in de \uppercase{B}ranges space.
\newblock {\em Complex Anal. Oper. Theory}, 5:545--577, 2011.

\bibitem{Martin-sym}
R.T.W. Martin.
\newblock Symmetric operators and reproducing kernel {H}ilbert spaces.
\newblock {\em Complex Anal. Oper. Theory}, 4:845--880, 2010.

\bibitem{Martin-ext}
R.T.W. Martin.
\newblock Extensions of symmetric operators {I}: The inner characteristic
  function case.
\newblock {\em Concr. Oper.}, 2:53--97, 2015.

\bibitem{AMR}
A.~Aleman, R.T.W. Martin, and W.T. Ross.
\newblock On a theorem of {L}ivsic.
\newblock {\em J. Funct. Anal.}, 264:999--1048, 2013.

\bibitem{GMR}
S.R. Garcia, R.T.W. Martin, and W.T. Ross.
\newblock Partial orders on partial isometries.
\newblock {\em J. Operator Theory}, 75:409--442, 2016.

\bibitem{Glazman}
N.I. Akhiezer and I.M. Glazman.
\newblock {\em Theory of Linear Operators in {H}ilbert Space}.
\newblock Dover Publications, 1993.

\bibitem{RS}
M.~Reed and B.~Simon.
\newblock {\em Functional Analysis vol. {I}{I}: {F}ourier Analysis,
  Self-Adjointness}.
\newblock Elsevier, 1975.

\bibitem{Paulsen-rkhs}
V.~Paulsen and M.~Raghupathi.
\newblock {\em An Introduction to the theory of reproducing kernel {H}ilbert
  spaces}.
\newblock Cambridge Studies in Advanced Mathematics, 2016.

\bibitem{Habock2001}
U.~Hab{\"o}ck.
\newblock {\em Reproducing kernel spaces of entire functions}.
\newblock Diploma thesis: Technischen Universit\"{a}t Wien, 2001.

\bibitem{Hoff}
K.~Hoffman.
\newblock {\em Banach spaces of analytic functions}.
\newblock Courier Corporation, 2007.

\bibitem{Nik2012}
N.K. Nikol'skii.
\newblock {\em Treatise on the shift operator: spectral function theory},
  volume 273.
\newblock Springer, 2012.

\bibitem{Ross2006CT}
J.A. Cima, A.L. Matheson, and W.T. Ross.
\newblock {\em The {C}auchy transform}.
\newblock Amer. Math. Soc., 2006.

\bibitem{Sarason-dB}
D.~Sarason.
\newblock {\em Sub-{H}ardy {H}ilbert spaces in the unit disk}.
\newblock John Wiley \& Sons Inc., 1994.

\bibitem{Livsic1946}
M.S. Livsic.
\newblock On a certain class of linear operators in {H}ilbert space.
\newblock {\em Mat. Sbornik}, 61:239--262, 1946.

\bibitem{Hav2003}
V.~Havin and J.~Mashreghi.
\newblock Admissible majorants for model subspaces of {H}$^2$, part {I}: Slow
  winding of the generating inner function.
\newblock {\em Canad. J. of Math.}, 55:1231--1263, 2003.

\bibitem{Levin1964}
B.~Levin.
\newblock {\em Distribution of zeros of entire functions}.
\newblock Amer. Math. Soc., 1964.

\bibitem{Sarason2007}
D.~Sarason.
\newblock Algebraic properties of truncated {T}oeplitz operators.
\newblock {\em Oper. Matrices}, 1:491--526, 2007.

\bibitem{Livsic1950}
M.S. Livsic.
\newblock Isometric operators with equal deficiency indices, quasi-unitary
  operators.
\newblock {\em Mat. Sbornik}, 68:247--264, 1950.

\bibitem{Frostman}
O.~Frostman.
\newblock {\em Potentiel d'\'{e}quilibre et capacit\'{e} des ensembles avec
  quelques applications \`{a} la th\'{e}orie des fonctions}.
\newblock PhD thesis, Lund University, 1935.

\bibitem{Crofoot}
R.~Crofoot.
\newblock Multipliers between invariant subspaces of the backward shift.
\newblock {\em Pacific J. Math.}, 166:225--246, 1994.

\bibitem{GR-model}
S.R. Garcia, J.~Mashreghi, and W.T. Ross.
\newblock {\em Introduction to model spaces and their operators}, volume 148.
\newblock Cambridge University Press, 2016.

\bibitem{Clark1972}
D.N. Clark.
\newblock One dimensional perturbations of restricted shifts.
\newblock {\em J. Anal. Math.}, 25:169--191, 1972.

\bibitem{Aleks1996}
A.B. Aleksandrov.
\newblock Isometric embeddings of coinvariant subspaces of the shift operator.
\newblock {\em Zapiski Nauchnykh Seminarov POMI}, 232:5--15, 1996.

\bibitem{Hao1}
Y.~Hao and A.~Kempf.
\newblock Filtering, sampling, and reconstruction with time-varying bandwidths.
\newblock {\em IEEE Signal Proc. Let.}, 17:241--244, 2010.

\bibitem{Hao2}
Y.~Hao and A.~Kempf.
\newblock Generalized {S}hannon sampling method reduces the {G}ibbs overshoot
  in the approximation of a step function.
\newblock {\em J. Concr. Appl. Math.}, 8:540--554, 2010.

\bibitem{Hall}
P.J. Hall.
\newblock {\em The square kilometre array: {A}n engineering perspective}.
\newblock Springer, 2005.

\end{thebibliography}

\end{document}